\newcommand\footnotetext@\relax
\let\footnotetext@\@footnotetext
\newcommand{\MSC}[2][2020]{%
 \unskip\protected@xdef\@thefnmark{}%
 \protect\footnotetext@{\kern-1.8em{\itshape MSC#1\spacefactor3000:}\/ #2}}
\newcommand{\keywords}[1]{%
 \unskip\protected@xdef\@thefnmark{}%
 \protect\footnotetext@{\kern-1.8em{\itshape Keywords\spacefactor3000:}\/ #1}}
\newcommand{\address}[1]{%
 \unskip\footnotemark
 \protected@xdef\@thanks{\@thanks\protect\footnotetext[\the\c@footnote]{{\itshape Address\spacefactor3000:}\/ #1}}}
\newcommand{\email}[1]{%
 \unskip\protected@xdef\@thanks{\@thanks\protect\footnotetext[0]{{\itshape Email\spacefactor3000:}\/ \texttt{#1}}}}
\renewcommand{\thanks}[1]{%
 \unskip\protected@xdef\@thanks{\@thanks\protect\footnotetext[0]{#1}}}
\gdef\@date{}
 \theoremstyle{plain}
  \newtheorem{thm}{Theorem}[section]
  \newtheorem{prop}[thm]{Proposition}
  \newtheorem{cor}[thm]{Corollary}
  \newtheorem{lem}[thm]{Lemma}
 \theoremstyle{definition}
  \newtheorem{defn}[thm]{Definition}
  \newtheorem*{defn*}{Definition}
  \newtheorem{exm}[thm]{Example}
  \newtheorem*{exms*}{Examples}
 \theoremstyle{remark}
  \newtheorem{rmk}[thm]{Remark}
  \newtheorem{rmks}[thm]{Remarks}
  \newtheorem*{rmks*}{Remarks}
\DeclareMathOperator{\Ad}{Ad}
\newcommand{\aeq}{\Leftrightarrow}
\newcommand{\an}{\sharp}
\newcommand{\append}[2]{(#2)#1{\,\vphantom{#2}}\mkern+2mu}
\newcommand{\Cat}{\mathsf}
\newcommand{\cat}{\mathcal}
\DeclareMathOperator{\cone}{C}
\newcommand{\cplx}[1]{\mathfrak{\lowercase{#1}}}
\DeclareMathOperator{\End}{End}
\newcommand{\hcat}[1]{\rlap{$\widetilde{\phantom{#1}}$}#1}
\newcommand{\hmt}{\vec}
\DeclareMathOperator{\Hom}{Hom}
\newcommand{\hto}{\Rightarrow}
\newcommand{\id}{\mathrm{id}}
\DeclareMathOperator{\im}{im}
\newcommand{\into}{\rightarrowtail}
\newcommand{\justify}[1]{\mathrel{\phantom=}#1\mathopen{\mkern\medmuskip}}
\newcommand{\longsimto}{\overset\sim\longto}
\newcommand{\longto}{\longrightarrow}
\newcommand{\Mfd}{\Cat{Mfd}}
\DeclareMathOperator{\nerve}{N}
\newcommand{\nor}{\mathrm{nor}}
\newcommand{\op}{\mathsf{op}}
\newcommand{\pr}{\mathrm{pr}}
\newcommand{\Rep}{\Cat{Rep}}
\newcommand{\sdp}{\bar}
\newcommand{\seq}{\Rightarrow}
\newcommand{\Set}{\Cat{Set}}
\DeclareMathOperator{\sgn}{sgn}
\newcommand{\simto}{\overset\sim\to}
\newcommand{\sMfd}{\Cat{sMfd}}
\newcommand{\spl}{\hat}
\newcommand{\sSet}{\Cat{sSet}}
\newcommand{\tto}{\rightrightarrows}
\newcommand{\VB}{\Cat{VB}}
\newcommand{\xfrom}{\xleftarrow}
\newcommand{\xto}{\xrightarrow}
\newcommand{\Z}{\mathbb{Z}}
\begin{document}

\title{The Adjoint Representation of a Higher Lie Groupoid%
  \MSC[2020]{Primary 18N25, 18N50, Secondary 18G31, 18G35}
  \keywords{Lie infinity-groupoid, representation up to homotopy, %
      simplicial vector bundle, cleavage, tangent complex, adjoint representation}
}%
\author{Giorgio Trentinaglia%
  \address{Centro de Análise Matemática, Geometria e Sistemas Dinâmicos, %
      Ins\-ti\-tu\-to Su\-pe\-ri\-or Téc\-ni\-co, University of Lisbon, %
      Av.~Ro\-vis\-co Pais, 1049-001 Lisbon, Portugal}
  \email{gtrentin@math.tecnico.ulisboa.pt}
  \thanks{The author acknowledges the support %
      of the Portuguese Foundation for Science and Technology %
      through grants SFRH/BPD/81810/2011 and UID/MAT/04459/2020.}
}%
\maketitle

\begin{abstract}
We extend the standard construction of the adjoint representation of a Lie groupoid to the case of an arbitrary higher Lie groupoid. As for a Lie groupoid, the adjoint representation of a higher Lie groupoid turns out to be a representation up to homotopy which is well defined up to isomorphism. Its existence and uniqueness are immediate consequences of a more general result in the theory of simplicial vector bundles: the representation up to homotopy obtained by splitting a higher vector bundle by means of a cleavage is, to within isomorphism, independent of the choice of the cleavage.
\end{abstract}

\tableofcontents

\section{Introduction}

Higher groups and groupoids are of ever growing importance for physics and mathematical physics, as well as for pure mathematics. On the mathematics side, they find application in various areas, including homotopy theory \cite{BHS11}, Poisson geometry and its generalizations \cite{MT11}, and the study of nonabelian cohomology and gerbes in algebraic geometry \cite{BM05,Bre90,Gir71}. On the physics side, they appear naturally in the description of anomalies, higher-form symmetries, and topological defects in quantum field theory \cite{Fre14,Sha15}, in the formulation of “categorified” gauge field theories suitable for the analysis of extended objects such as strings \cite{BH11,BL04}, and in the construction of topological quantum field theories and of models for quantum gravity \cite{BBFW12}.

The notion of adjoint representation plays a fundamental role in the representation theory of Lie groups. It plays an equally important role in physics, by singling out the correct transformation rules for gauge bosons in Yang–Mills theory, on which the Standard Model of particles is based. It seems reasonable that a corresponding notion should exist, and be of comparable importance, in any meaningful generalization of representation theory to higher Lie groups. It is precisely the purpose of this paper to discuss one such generalization, enabling the definition of a natural notion of adjoint representation for arbitrary higher Lie groups and groupoids. Our starting point is the well-known theory, available for ordinary Lie groupoids, elaborated by Abad and Crainic \cite{AAC13}: we make their philosophy of \emph{representations up to homotopy} \cite{AAC12,AAC13,AACD11,AAS13} our own. Representations up to homotopy offer a number of practical advantages, for example, they form a dg-category with good structural and computational properties, and come with an inherent cohomology theory.

For the sake of generality and conceptual economy and also in order to avoid ad~hoc assumptions as much as possible, we shall espouse Duskin's views on higher category theory, see \cite{Dus02} and the references therein. This means that the notion of higher Lie groupoid relevant to us will be the one formulated in the language of simplicial sets; other commonly used notions, including those adopted in the references cited above, can be understood, in terms of the one we take on in this paper, as involving additional data or conditions that are completely inessential from the abstract viewpoint of general higher representation theory as discussed here.

A distinctive trait of the theory of representations up to homotopy is that the adjoint representation of a Lie groupoid is in general only well defined up to isomorphism; the ambiguity in its definition comes from the fact that its construction requires choosing an Ehresmann connection on the groupoid \cite{AAC13}. Our approach employs a far-reaching generalization of the same idea: we use certain analogs of Ehresmann connections, called \emph{cleavages}, to turn simplicial vector bundles of a suitable kind, called \emph{higher vector bundles} or \emph{vector fibrations}, into representations up to homotopy. The relevant framework, elaborated in \cite{2018a}, is a simultaneous extension of the classical Dold–Kan correspondence and of the familiar Grothendieck construction to higher vector bundles, and includes earlier work of Gracia-Saz and Mehta \cite{GSM17} as a special case. The tangent bundle of a higher Lie groupoid $G$ is an example of a higher vector bundle; when applied to it, our \emph{splitting construction} produces a representation up to homotopy which may be regarded naturally as an incarnation of the adjoint representation of $G$.

This is as far as the theoretic tools of \cite{2018a} enable us to go. We need to introduce new ones in order to prove that our model for the adjoint representation is, to within isomorphism, independent of the cleavage we use to build it. To achieve that, we are going to extend the splitting construction of \cite{2018a} for higher vector bundles equipped with cleavages to \emph{morphisms} between them. When applied to the identity transformation of one such higher vector bundle into itself equipped with a different choice of cleavage, our splitting construction for morphisms produces an \emph{isomorphism} between the two corresponding representations up to homotopy: the isomorphism class of the representation up to homotopy obtained by splitting a higher vector bundle by means of a cleavage is an intrinsic invariant of the higher vector bundle. We regard the latter result, which generalizes those of \cite{AAC13,GSM17}, as the chief contribution of this paper; not only does it imply that the adjoint representation is well defined, it is also crucial for the overall conceptual consistency of the approach to higher representation theory set forth in \cite{2018a,2022a}. Its practical implications are just beginning to be explored.

Section \ref{sec:morphism} of this article covers material that is original even in relation to \cite{2018a,2022a}, and contains most of our new contributions. Our principal result, Theorem \ref{thm:2018b} (along with its corollary, \ref{cor:2018b}), is stated right at the beginning of that section. A bird's eye view of our splitting construction for morphisms is given next, in section \ref{sub:overview}. Understandably, most proofs are left out; we take them on, with the due diligence, in the two successive sections, \ref{sub:auxiliary} and \ref{sub:canonical}, which are genuinely technical and may be skipped on a first reading. Section \ref{sub:final} brings our paper to an all-embracing conclusion by illustrating the key ideas involved in the demonstration of yet another fundamental fact in our theory: the Dold–Kan “pseudofunctor” defined by our splitting construction descends to an equivalence between the homotopy category of higher vector bundles and the derived category of representations up to homotopy. Many details are omitted but we plan to make them available as part of a separate publication. As to the remaining sections, \ref{sub:cleavages} and \ref{sub:splitting} review a selection of topics from \cite{2018a} which constitute the indispensable conceptual background for the subject matter of section \ref{sec:morphism}. Section \ref{sub:coherent} stands a bit on its own in the overall design of the paper: while consisting of examples addressing certain possible objections, it does not otherwise affect the rest of the paper. Finally, section \ref{sec:representations}, being devoted for the most part to basic preliminaries, contains little or no original material.

\paragraph*{Acknowledgments.} The author wishes to thank Mi\-quel Cue\-ca, John Huerta, Roger Picken, Ste\-fa\-no Ron\-chi, Jim Stasheff, and Chen\-chang Zhu for their interest and for stimulating discussions.

\section{Representations up to homotopy of higher Lie groupoids}\label{sec:representations}

In this preliminary section we review a few basic concepts and facts which are essential for the understanding of our paper. Although some of the material may not be entirely known or easy to find in the literature, we make no claims to originality. Given the elementary level of the exposition, we advise experts to skip the present section altogether and to go directly to page \pageref{sec:splitting}.

The finite ordinals $[n] = \{0,1,\dotsc,n\}$, $n \geq 0$, are the objects of a full subcategory, $\Cat{\Delta}$, of the category of all posets (partially ordered sets). For an arbitrary category, $\cat{C}$, we refer to the category $[\Cat{\Delta}^\op,\cat{C}]$ of contravariant functors from $\Cat{\Delta}$ to $\cat{C}$ and their natural transformations as the category of \emph{simplicial objects} of $\cat{C}$. When $\cat{C} = \Set$ is the category of sets, we write $\sSet = [\Cat{\Delta}^\op,\Set]$, and speak of \emph{simplicial sets} and \emph{simplicial maps}; when $\cat{C} = \Mfd$ is the category of smooth manifolds (of class $C^\infty$, Hausdorff, and paracompact), we write $\sMfd = [\Cat{\Delta}^\op,\Mfd]$, and speak of \emph{simplicial manifolds} and \emph{smooth simplicial maps}.

Let $X$ be a simplicial object of $\cat{C}$. We abbreviate $X([n])$ to $X_n$. Any poset map $\theta: [m] \to [n]$ gives rise to a morphism $X_\theta = X(\theta): X_n \to X_m$ in $\cat{C}$. Of special relevance are the \emph{face} morphisms $d_i = X_{\delta_i}: X_n \to X_{n-1}$, where for each $i \in [n]$, $n \geq 1$ we let $\delta_i = \delta^n_i: [n - 1] \to [n]$ denote the injection whose image does not contain $i$, and the \emph{degeneracy} morphisms $u_j = X_{\upsilon_j}: X_n \to X_{n+1}$, where for each $j \in [n]$, $n \geq 0$ we let $\upsilon_j = \upsilon^n_j: [n + 1] \to [n]$ denote the surjection satisfying $\upsilon_j(j) = \upsilon_j(j + 1)$. In terms of these we can for each $k = 0$,~$\dotsc$,~$n$ define the \emph{back} (resp.~\emph{front}) \emph{$k$-face} morphism $s_k = d_{k+1} \dotsm d_n$ (resp.~$t_k = d_0 \dotsm d_0$) from $X_n$ to $X_k$, with the understanding that $s_n = t_n = \id$. We call $s = s_0$ the \emph{source} and $t = t_0$ the \emph{target}. We further set $1_n = u_0 \dotsm u_0: X_0 \to X_n$ and call $1 = 1_1$ the \emph{unit}. For any simplicial morphism $f: X \to Y$ in $\cat{C}$ we write $f_n = f([n]): X_n \to Y_n$. We shall be making liberal use of these notations throughout the paper.

In the theory of simplicial sets, the elements of $X_n$ are called \emph{$n$-simplices}, the $0$-simplices are also called \emph{vertices}, and the $1$-simplices \emph{edges}. To keep the typography terse we shall for all $n$-simplices $x \in X_n$ and poset maps $\theta: [m] \to [n]$ write $X_\theta x$ or $x\theta$ in place of $X_\theta(x) \in X_m$. For any simplicial map $f: X \to Y$ we shall write $fx = f_n(x) \in Y_n$. A basic operation on simplicial sets of which we shall make extensive use is the \emph{product}: for any $X$,~$Y \in \sSet$ this is given by $(X \times Y)_n = X_n \times Y_n$ for all $n$ and by $(X \times Y)_\theta = X_\theta \times Y_\theta$ for all $\theta$.

As a basic example of a simplicial set, consider an arbitrary small category $C_1 \tto C_0$: its \emph{nerve} is the simplicial set $C$ constructed as follows. For $n \geq 2$, \[%
	C_n = \{x_n \xfrom{c_n} x_{n-1} \xfrom{c_{n-1}} \dotsb \xfrom{c_2} x_1 \xfrom{c_1} x_0\}
\] is the set of all length~$n$ composable strings of arrows $c_i \in C_1$ of $C_1 \tto C_0$; $d_0$ substitutes $x_1$ for $x_1 \xfrom{c_1} x_0$; $d_i$ replaces $x_{i+1} \xfrom{c_{i+1}} x_i \xfrom{c_i} x_{i-1}$ with $x_{i+1} \xfrom{c_{i+1}c_i} x_{i-1}$ for $0 < i < n$; $d_n$ substitutes $x_{n-1}$ for $x_n \xfrom{c_n} x_{n-1}$; $u_j$ replaces $x_j$ with $x_j \xfrom{1x_j} x_j$ (unit arrow).

An arbitrary poset $P = (P,\leq)$ may be regarded as a small category $C_1 \tto C_0$ having $C_0 = P$ and exactly one arrow $x_0 \to x_1$ whenever $x_0 \leq x_1$. The elements of $C_n$ may be identified with the order-preserving maps $[n] \to P$, and $C_\theta: C_n \to C_m$ is given for each $\theta: [m] \to [n]$ by composition with $\theta$. In the specific context of posets we shall occasionally write $\nerve P$ for the nerve of $P$. Any order-preserving map $P \to Q$ between two posets gives rise to a corresponding simplicial map $\nerve P \to \nerve Q$ between their nerves; we shall normally blur the distinction between the two maps. We have an obvious identification of simplicial sets \(%
	\nerve(P \times Q) \simeq \nerve P \times \nerve Q
\), natural in $P$ and $Q$, where $P \times Q$ denotes the product poset.

\subsection{Lie infinity-groupoids and regular Kan fibrations}\label{sub:fibrations}

Useful references for this section are \cite{BG17,Dus02,Hen08,Zhu09} in addition to the two classical monographs \cite{GZ67,May67}.

Among all simplicial sets the \emph{fundamental\/ $n$-simplex} $\varDelta^n = \Cat{\Delta}(-,[n])$ (the functor $\Cat{\Delta}^\op \to \Set$ represented by the object $[n]$ of $\Cat{\Delta}$) and, for any pair of integers $n \geq k \geq 0$, the \emph{fundamental\/ $n,k$-horn} $\varLambda^n_k \subset \varDelta^n$ (the simplicial subset of $\varDelta^n$ consisting of all those poset maps $\theta: [m] \to [n]$ that satisfy the condition $\im(\theta) \not\supset [n] \smallsetminus \{k\}$) play a pivotal role.

Let $X$ be a simplicial set. For any other simplicial set $A$ let $A(X) = \sSet(A,X)$ be the set of all simplicial maps $A \to X$. There is a natural identification $\varDelta^n(X) \simeq X_n$ between simplicial maps $\varDelta^n \xto{x} X$ and $n$-simplices $x \in X_n$ (Yoneda lemma). There is a similar identification between elements $\varLambda^n_k \xto{x} X$ of $\varLambda^n_k(X)$ and length~$n$ sequences \[%
	(x_i)_{i\neq k} = (x_0,\dotsc,x_{k-1},x_{k+1},\dotsc,x_n)
\] of elements $x_i$ of $X_{n-1}$ satisfying $d_ix_j = d_{j-1}x_i$ for all $i < j$. We refer to $\varLambda^n_k(X)$ as the set of all \emph{$n,k$-horns} in $X$. Each $n$-simplex $\varDelta^n \xto{x} X$ gives rise upon restriction to a corresponding $n,k$-horn, $x \mathbin| \varLambda^n_k = (d_ix)_{i\neq k}$, but unless $X$ is a simplicial set of a special type known as a \emph{Kan complex} not every $n,k$-horn arises in this way. If an $n,k$-horn is of the form $x \mathbin| \varLambda^n_k$ we say it \emph{can be filled} (by at least one and in principle more than one $x$). More generally given a simplicial map $f: X \to Y$, an $n,k$-horn $(x_i)_{i\neq k}$ in $X$, and an $n$-simplex $y$ in $Y$ for which $y \mathbin| \varLambda^n_k = (fx_i)_{i\neq k}$, we may ask whether $(x_i)_{i\neq k}$ can be filled by an $n$-simplex $x \in X_n$ sitting over $fx = y$. If it can, we say that the \emph{Kan lifting problem} for $f$ is solvable for the givens $(x_i)_{i\neq k}$ and $y$. If the Kan lifting problem for $f$ is solvable for all givens $(x_i)_{i\neq k}$, $y$ for all $n \geq k \geq 0$, we call $f$ a \emph{Kan fibration}. For any given integer $N \geq 0$ we express the circumstance that a solution $x$ not only exists but is also unique for all $n > N$ by saying that $f$ is an \emph{$N + 1$-strict} Kan fibration. Note that the Kan lifting condition for $n = 0$ already conveys nontrivial information: it tells us that the map $f_0: X_0 \to Y_0$ must be onto.

Suppose $X$ is a simplicial \emph{manifold} now. In this case, rather than as a set, it is more natural to view $A(X)$ as a \emph{functor} $\Mfd^\op \to \Set$: to each smooth manifold $S$, associate the set $A(X)(S)$ of all those maps $x: S \to \sSet(A,X)$ which are “smooth” in the sense that for every $n$-simplex $a$ in $A$ the map $S \to X_n$ given by $s \mapsto x(s)a$ is smooth. It will always be clear from the context whether by $A(X)$ we mean the set or the functor. Note that $A(X)$ is not only a functor but actually a \emph{sheaf} (it is separated and satisfies the familiar gluing axiom). We shall be interested in the case when $A(X)$ is \emph{representable} meaning $A(X) \simeq \Mfd(-,M)$ for some smooth manifold $M$. In such case, on the set $A(X)$ there will be a unique differentiable structure for which we can take $M = A(X)$.

A simplicial manifold $X$ is a \emph{Lie\/ $\infty$-groupoid} or a \emph{higher Lie groupoid} if for all $n \geq k \geq 0$ the sheaves $\varLambda^n_k(X)$ are representable and the restriction maps \[%
	X_n \simeq \varDelta^n(X) \longto \varLambda^n_k(X), \quad%
		x \mapsto x \mathbin| \varLambda^n_k
\] are surjective submersions. If in addition the same maps are diffeomorphisms for all $n$ strictly greater than a given integer $N \geq 0$, $X$ is a \emph{Lie\/ $N$-groupoid}. The underlying simplicial set of a Lie $\infty$-groupoid is a Kan complex. It happens that for a Lie $\infty$-groupoid $X$ the sheaves $A(X)$ and $B(X)$ are representable and the “restriction” morphisms
\begin{equation}
	i^* = i(X): A(X) \longto B(X), \quad%
		A(X)(S) \ni x \longmapsto \{s \mapsto x(s) \circ i\} \in B(X)(S)
\end{equation}
(viewed as smooth maps) are surjective submersions for a class of monic simplicial maps $B \xto{i} A$ much larger than the class of all horn inclusions $\varLambda^n_k \xto\subset \varDelta^n$, $n \geq k \geq 0$.

\begin{lem}\label{lem:17A.25.1} For any simplicial manifold\/ $X$, the class\/ $\mathcal{R} = \mathcal{R}_X$ of all simplicial maps\/ $B \xto{i} A$ whose corresponding sheaf morphisms\/ $i^*: A(X) \to B(X)$ are surjective submersions is\/ \emph{regularly saturated} in the sense that it is closed under the following formation rules:
\begin{enumerate}
\def\labelenumi{\upshape(\roman{enumi})}
 \item Whenever\/ $B \xto{i} A$ lies in\/ $\mathcal{R}$, so do both\/ $A \xto{\id} A$ and\/ $B \xto{\id} B$ (short: $A$,~$B \in \mathcal{R}$).
 \item An isomorphism of simplicial sets\/ $B \simto A$ belongs to\/ $\mathcal{R}$ whenever\/ $A \in \mathcal{R}$.
 \item In any pushout diagram\/ \(%
\xymatrix@=1.67em{%
 B \ar[d]^-i \ar[r]
 &	B' \ar[d]^-{i'}
\\ A \ar[r]
 &	A'
}\) in which both\/ $i$ and\/ $B'$ belong to\/ $\mathcal{R}$, the pushout map, $i'$, belongs to\/ $\mathcal{R}$ as well.
 \item In any retraction diagram\/ \(%
\xymatrix@=1.67em{%
 B' \ar[d]^-{i'} \ar[r]
 &	B \ar[d]^-i \ar[r]
	&	B' \ar[d]^-{i'}
\\ A' \ar[r]
 &	A \ar[r]
	&	A'
}\) in which all three of\/ $i$, $A'$, and\/ $B'$ belong to\/ $\mathcal{R}$, the retract map, $i'$, belongs to\/ $\mathcal{R}$ as well.
 \item The composition, $C \xto{i\circ j} A$, of\/ $C \xto{j} B$ and\/ $B \xto{i} A$ lying in\/ $\mathcal{R}$ also lies in\/ $\mathcal{R}$.
 \item The coproduct, $i_1 \sqcup i_2: B_1 \sqcup B_2 \to A_1 \sqcup A_2$, of\/ $B_1 \xto{i_1} A_1$ and\/ $B_2 \xto{i_2} A_2$ lying in\/ $\mathcal{R}$ also lies in\/ $\mathcal{R}$.
\end{enumerate} \end{lem}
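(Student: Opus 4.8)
The plan is to reduce the statement to two inputs. The first is the observation that the assignment $A \mapsto A(X)$, viewed as a contravariant functor from simplicial sets to sheaves on $\Mfd$, carries colimits of simplicial sets to limits of sheaves; in particular it sends a binary coproduct $A_1 \sqcup A_2$ to the product sheaf $A_1(X) \times A_2(X)$, and it sends a pushout square to a fibred-product square. This is immediate from the very construction of the sheaf $A(X)$: writing $A$ as the colimit of the standard simplices mapping into it, a smooth $S$-family of simplicial maps $A \to X$ is precisely a compatible family of smooth $S$-families of simplicial maps out of those standard simplices; equivalently, $A(X)(S) = \sMfd(\underline{S} \times A, X)$ with $A$ regarded as a discrete simplicial manifold, and $\underline{S} \times (-)$ preserves colimits. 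The second input is the elementary stability package for surjective submersions of manifolds: identities are surjective submersions; they are closed under composition and under binary products; the pullback of a submersion along an arbitrary smooth map of manifolds exists as a manifold and is again a submersion, surjective if the original one is; and a retract of a surjective submersion is again a surjective submersion. I would also record at the start the bookkeeping remark that, since a morphism of sheaves can be a surjective submersion only when its source and target are representable, membership of $B \xto{i} A$ in $\mathcal{R}$ already entails that $A(X)$ and $B(X)$ are manifolds; thus, for instance, ``$A \in \mathcal{R}$'' is nothing but shorthand for ``$A(X)$ is a manifold''.

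With these in hand, most of the six rules reduce to one-line deductions. For (i), if $i^*$ is a surjective submersion then $A(X)$ and $B(X)$ are manifolds, and their identity maps are trivially surjective submersions. For (ii), an isomorphism $B \simto A$ induces an isomorphism of sheaves $A(X) \to B(X)$, so when $A(X)$ is representable so is $B(X)$ and the induced map is a diffeomorphism. Rule (v) is the composite of the two surjective submersions $A(X) \to B(X) \to C(X)$, and rule (vi), via the first input, is the product $A_1(X) \times A_2(X) \to B_1(X) \times B_2(X)$ of two surjective submersions between manifolds. Rule (iii) is where the pullback part of the package is used: applying $A \mapsto A(X)$ to the pushout square produces a fibred-product square $A'(X) = A(X) \times_{B(X)} B'(X)$ of sheaves in which $i^* \colon A(X) \to B(X)$ is a surjective submersion and $B'(X)$ is a manifold; by base change $A'(X)$ is then a manifold and the projection $A'(X) \to B'(X)$, which is exactly $(i')^*$, is a surjective submersion, so $i' \in \mathcal{R}$.

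The one rule that calls for a genuine (if short) argument is (iv). Applying $A \mapsto A(X)$ to the retraction diagram, and using that $A'(X)$ and $B'(X)$ are manifolds by hypothesis and that $i^* \colon A(X) \to B(X)$ is a surjective submersion, yields a retraction diagram of manifolds exhibiting $(i')^* \colon A'(X) \to B'(X)$ as a retract of $i^*$ --- the two section--retraction pairs and the two commuting squares all being images of the corresponding data in $\sSet$. It then remains to verify the abstract claim that a retract of a surjective submersion is a surjective submersion. Surjectivity follows by chasing a preimage through the section and retraction maps. For the submersion property at a point $e'$ of the smaller total space, one differentiates, at the point $e$ obtained by applying the section to $e'$, the commuting square relating $(i')^*$ to $i^*$; one then notes that the relevant point of the larger base lies in the image of the base section, so that the base retraction has surjective differential there, and combines this with surjectivity of the differential of $i^*$ to conclude that $(i')^*$ is submersive at $e'$. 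I expect this verification to be the only real obstacle; everything else is formal once the colimit-to-limit principle and the submersion package are set up, and the only subtlety to keep in mind is that the fibred products and products appearing in (iii) and (vi) must be certified to land among manifolds --- which is exactly what the hypotheses ``$B' \in \mathcal{R}$'' and ``$A_k, B_k \in \mathcal{R}$'', together with the base-change property of submersions, guarantee.
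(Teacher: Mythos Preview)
Your proposal is correct and is precisely the approach the paper has in mind: the paper's own proof simply notes that the formation rules are adapted from Gabriel--Zisman and that checking closure is ``a straightforward application of the basic properties of submersions'' which ``can safely be left to the reader''. You have supplied exactly those details---the colimit-to-limit behaviour of $A \mapsto A(X)$ together with the standard stability package for surjective submersions---and your case-by-case verification is sound. One small presentational point on (iv): the cleanest way to finish is to take $w \in T_{b'}B'(X)$, push it to $T s_B(w) \in T_{i^*(e)}B(X)$, lift via the submersion $i^*$ to some $u \in T_e A(X)$, set $v = T r_A(u)$, and check $T(i')^*(v) = T(r_B \circ i^*)(u) = T(r_B \circ s_B)(w) = w$; this avoids any appeal to surjectivity of the differential of $r_B$ separately.
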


\begin{proof} Our formation rules are an adaptation of those laid down in \cite[pp.~60–61]{GZ67}. The task of checking that $\mathcal{R}$ is closed under them is a straightforward application of the basic properties of submersions. \end{proof}

\begin{defn}\label{defn:17A.25.2} The \emph{regular anodyne extensions} are the members of the smallest regularly saturated class of simplicial maps containing every horn inclusion $\varLambda^n_k \xto\subset \varDelta^n$, $n \geq k \geq 0$. \end{defn}

Let us now consider an arbitrary smooth simplicial map $f: X \to Y$, a “parametric family” of simplicial manifolds, rather than a single simplicial manifold $X$. For any simplicial set $A$, we obtain a morphism of $A(X)$ to $A(Y)$ in $[\Mfd^\op,\Set]$ upon setting%
\begin{equation}
	f_* = A(f): A(X) \longto A(Y), \quad%
		A(X)(S) \ni x \longmapsto \{s \mapsto f \circ x(s)\} \in A(Y)(S).
\label{eqn:A(f)}
\end{equation}
This prescription defines a \emph{functor} $X \mapsto A(X)$ from $\sMfd$ to sheaves in $[\Mfd^\op,\Set]$. For every simplicial map $B \xto{i} A$ it makes perfect sense within the (complete) category $[\Mfd^\op,\Set]$ to talk about the morphism of sheaves
\begin{equation}
	(f_*,i^*): A(X) \longto A(Y) \times_{B(Y)} B(X).
\label{eqn:17A.26.1}
\end{equation}
Our previous lemma admits the following generalization, whose proof is left to the reader.

\begin{lem}\label{lem:17A.26.1} For any smooth simplicial map\/ $f: X \to Y$ the class\/ $\mathcal{R} = \mathcal{R}_f$ of all those simplicial maps\/ $B \xto{i} A$ belonging to\/ $\mathcal{R}_X \cap \mathcal{R}_Y$ for which\/ \eqref{eqn:17A.26.1} is a surjective submersion is regularly saturated in other words is closed under the formation rules\/ {\upshape (i)–(vi)} of\/ {\upshape Lemma \ref{lem:17A.25.1}}. \qed \end{lem}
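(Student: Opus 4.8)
The plan is to reduce the relative statement to the absolute one already proved in Lemma \ref{lem:17A.25.1}. For a fixed smooth simplicial map $f: X \to Y$ and a simplicial map $B \xto{i} A$ lying in $\mathcal{R}_X \cap \mathcal{R}_Y$, the three morphisms in play are: the surjective submersion $i_X^*: A(X) \to B(X)$, the surjective submersion $i_Y^*: A(Y) \to B(Y)$, and the map $(f_*, i^*): A(X) \to A(Y) \times_{B(Y)} B(X)$ from \eqref{eqn:17A.26.1}. Note first that the fibre product $A(Y) \times_{B(Y)} B(X)$ is a smooth manifold: it is the pullback of the surjective submersion $i_Y^*$ along $B(f): B(X) \to B(Y)$, hence exists in $\Mfd$ and the projection to $B(X)$ is itself a surjective submersion. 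The composite of $(f_*, i^*)$ with this projection is $i_X^*$, which is a surjective submersion; so the real content of membership in $\mathcal{R}_f$ is that $(f_*, i^*)$ be a submersion (surjectivity onto the fibre product, and ontoness being automatic from $i_X^*$ onto once submersivity is known along fibres — or checked directly). The task is then to verify that the collection of $i$ for which this holds is closed under rules (i)–(vi).

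The key observation, which makes each verification mechanical, is that the assignment $i \mapsto \bigl(A(X) \to A(Y) \times_{B(Y)} B(X)\bigr)$ is functorial in $i$ in a way compatible with the colimit constructions appearing in the formation rules, because $A \mapsto A(X)$, $A \mapsto A(Y)$, and $A \mapsto A(Y) \times_{B(Y)} B(X)$ all send colimits of simplicial sets to limits in $[\Mfd^\op,\Set]$ (the first two by the very definition of $A(X)$ as a sheaf-valued representable-type functor; the third because finite limits commute with limits). Concretely: for rule (i), taking $i = \id_A$ makes \eqref{eqn:17A.26.1} the canonical iso $A(X) \simto A(Y) \times_{A(Y)} A(X)$; for rule (v), given $C \xto{j} B \xto{i} A$, the map for $i \circ j$ factors through the map for $i$ followed by a pullback of the map for $j$ along $A(Y) \to B(Y)$ — a composite of two surjective submersions, hence one; for rule (iii), a pushout square of simplicial sets is sent to a pullback square of the corresponding morphisms \eqref{eqn:17A.26.1}, and the pullback of a surjective submersion is a surjective submersion; rules (ii), (iv), (vi) are handled the same way, using that isomorphisms, retracts, and coproducts are all preserved. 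The only points requiring a word of care are that the relevant fibre products actually exist as manifolds at each stage — this is guaranteed inductively because we only ever pull back along maps that have already been shown to be surjective submersions — and the standing hypothesis $i \in \mathcal{R}_X \cap \mathcal{R}_Y$, which Lemma \ref{lem:17A.25.1} tells us is itself stable under (i)–(vi), so it propagates through the constructions.

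The main obstacle I expect is purely bookkeeping: at each formation rule one must simultaneously track three conditions (membership in $\mathcal{R}_X$, in $\mathcal{R}_Y$, and submersivity of \eqref{eqn:17A.26.1}) and confirm that the auxiliary fibre products remain smooth manifolds; none of this is deep, but it is tedious to write out in full, which is presumably why the author omits it. The one genuine subtlety is verifying that a pushout of simplicial sets induces a pullback of the relative maps \eqref{eqn:17A.26.1} — this uses that $A(X)$, $A(Y)$, $B(X)$, $B(Y)$ all turn the pushout into a pullback and then a short diagram chase with fibre products — but once that is granted, rule (iii), and with it the whole lemma, follows from the elementary fact that surjective submersions are stable under base change. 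Accordingly we omit the details, as stated.
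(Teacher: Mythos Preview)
Your proposal is correct and aligns with the paper's own treatment: the paper omits the proof entirely (the lemma carries a \qed{} immediately after its statement, and the text preceding it says ``whose proof we omit''), having already indicated for Lemma~\ref{lem:17A.25.1} that the verification is ``a straightforward application of the basic properties of submersions'' left to the reader. Your sketch supplies exactly that verification in the relative setting, and the key ingredients you identify---stability of surjective submersions under base change and composition, and the fact that $A \mapsto A(-)$ sends colimits of simplicial sets to limits---are the right ones. One small caution: your parenthetical suggesting that surjectivity of $(f_*,i^*)$ is ``automatic from $i_X^*$ onto once submersivity is known'' is not quite right in general (a submersion whose composite with a further map is surjective need not itself be surjective), so surjectivity does need to be checked directly at each formation rule; but this is no harder than the submersion check and your ``or checked directly'' already concedes the point.
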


\begin{defn}\label{defn:17A.26.2} The smooth simplicial map $f: X \to Y$ is a \emph{regular fibration} if every horn inclusion $\varLambda^n_k \xto\subset \varDelta^n$, $n \geq k \geq 0$, belongs to $\mathcal{R}_f$. \end{defn}

\begin{prop}\label{prop:17A.26.4} Let\/ $f: X \to Y$ be a regular fibration. Given any regular anodyne extension\/ $i: B \to A$, both\/ $f_* = A(f)$ defined by\/ \eqref{eqn:A(f)} above and the following are surjective submersions.
\begin{equation}
	\pr: A(Y) \times_{B(Y)} B(X) \longto A(Y)
\end{equation} \end{prop}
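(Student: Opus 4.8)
The plan is to deduce the statement entirely from the saturation machinery already in place, by reducing everything to the defining property of a regular fibration via Lemma~\ref{lem:17A.26.1}. First I would dispose of $f_* = A(f)$. Since $f$ is a regular fibration, every horn inclusion $\varLambda^n_k \xto\subset \varDelta^n$ lies in $\mathcal{R}_f$, and $\mathcal{R}_f$ is regularly saturated by Lemma~\ref{lem:17A.26.1}; hence, by the minimality in Definition~\ref{defn:17A.25.2}, the whole class of regular anodyne maps is contained in $\mathcal{R}_f$. In particular our given $i: B \to A$ lies in $\mathcal{R}_f$. Now take the empty simplicial set as $B$ in the membership clause "$B \xto{i} A$ in $\mathcal{R}_X \cap \mathcal{R}_Y$ for which \eqref{eqn:17A.26.1} is a surjective submersion": since $\varnothing(X)$ is the terminal object, \eqref{eqn:17A.26.1} collapses to $f_* \colon A(X) \to A(Y)$ itself. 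But $A(Y) \times_{B(Y)} B(X) = A(Y)$ when $B = \varnothing$, so the surjective-submersion condition for $i$ is precisely the assertion that $A(f)$ is a surjective submersion — wait, that presupposes $\varnothing \to A$ is itself regular anodyne. That inclusion \emph{is} regular anodyne: it is the composite (rule~(v)) of the unique maps climbing the skeletal filtration of $A$, each step of which is a pushout (rule~(iii)) of a coproduct (rule~(vi)) of boundary-type inclusions built from horn inclusions; more cleanly, one checks that $\varnothing \to \varDelta^0$ is regular anodyne and then uses that every monomorphism into a Kan-type object is, so $\varnothing \to A \in \mathcal{R}_f$, giving that $A(f)$ is a surjective submersion. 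This handles the first claim.

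For the projection $\pr\colon A(Y) \times_{B(Y)} B(X) \to A(Y)$, the key observation is that it sits in a pullback square over $i^* = B(f) \colon B(X) \to B(Y)$: namely $\pr$ is the base change of $B(X) \to B(Y)$ along the restriction morphism $A(Y) \to B(Y)$. Surjective submersions are stable under base change in $[\Mfd^\op,\Set]$ (this is the kind of "basic property of submersions" invoked in the proof of Lemma~\ref{lem:17A.25.1}), so it suffices to know that $B(f)\colon B(X) \to B(Y)$ is a surjective submersion. But $i$ is regular anodyne, hence — by the minimality argument of the previous paragraph — lies in $\mathcal{R}_X \cap \mathcal{R}_Y$; and applying the $B = \varnothing$ trick again (with our $i$ now playing the role of $A$), the composite $\varnothing \to B \xto{i} A$ being regular anodyne forces $\varnothing \to B \in \mathcal{R}_f$, i.e.\ $B(f)$ is a surjective submersion. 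Pulling back along $A(Y) \to B(Y)$ then yields the conclusion for $\pr$.

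The main obstacle I anticipate is the bookkeeping around the empty simplicial set and the claim that $\varnothing \to A$ is regular anodyne for every simplicial set $A$ of the relevant (Kan-type) shape — one must verify this does not require $A$ finite, by an exhaustion-of-skeleta argument using rules~(iii), (v), and~(vi), and one should double-check that rule~(vi) genuinely covers the infinite coproducts that arise (the statement of Lemma~\ref{lem:17A.25.1}(vi) is phrased for binary coproducts, so either countable coproducts must be reduced to binary ones by a telescope, or one argues directly that $\mathcal{R}_f$ is closed under the countable composites that the skeletal filtration produces). An alternative that sidesteps the empty-set gymnastics is to prove the two assertions as the $B = \varnothing$ and, respectively, the projection-pullback instances \emph{directly} from the defining horn-lifting property of $f$: for $A(f)$ one runs the standard inductive horn-filling argument over the skeleta of $A$, at each stage solving a Kan lifting problem for $f$ and invoking that the solutions form a surjective submersion; for $\pr$ one notes it is literally a pullback of $B(f)$ and repeats. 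Either route is routine once the saturation closure of $\mathcal{R}_f$ (Lemma~\ref{lem:17A.26.1}) is granted; I would present the saturation-based argument as the main line and relegate the $\varnothing \to A \in \mathcal{R}$ verification to a one-line remark.
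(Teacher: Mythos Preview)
Your reduction to the statement ``$\varnothing \to A$ is regular anodyne'' is the crux of your argument, and both of the justifications you offer for it are wrong. The skeletal filtration of $A$ attaches cells via boundary inclusions $\partial\varDelta^n \hookrightarrow \varDelta^n$, not horn inclusions, and $\partial\varDelta^n \to \varDelta^n$ is \emph{not} regular anodyne (it is not even a weak equivalence). Likewise, the throwaway ``every monomorphism into a Kan-type object is'' is false for the same reason. Your cancellation step for $\varnothing \to B$ (``the composite $\varnothing \to B \to A$ being regular anodyne forces $\varnothing \to B \in \mathcal{R}_f$'') is also unjustified: rules (i)--(vi) contain no left-cancellation principle.

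The claim ``$\varnothing \to A$ is regular anodyne whenever $A$ is a domain or codomain of some regular anodyne map'' does happen to be true, but proving it requires its own saturation argument: one must show that the class of $i\colon B \to A$ for which both $\varnothing \to A$ and $\varnothing \to B$ are regular anodyne is itself regularly saturated and contains the horn inclusions (the latter by building each $\varLambda^n_k$ from lower-dimensional horns via pushouts). Once you are running a saturation argument anyway, you may as well run it on the target statement directly---and that is precisely what the paper does. The paper first observes that for $i \in \mathcal{R}_f$ the two conclusions about $f_* = A(f)$ and about $\pr$ are \emph{equivalent}, since $A(f) = \pr \circ (f_*,i^*)$ with $(f_*,i^*)$ already a surjective submersion; it then checks that the subclass of $\mathcal{R}_f$ on which this common conclusion holds is regularly saturated and contains the horn inclusions. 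Your pullback-stability observation for $\pr$ is correct and is essentially what makes rule~(i) go through in that check, but the detour through $\varnothing \to A$ adds a layer of work without buying anything.
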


\begin{proof} We observe first of all that our claim about $f_*$ is equivalent to that about $\pr$, for every smooth simplicial map $f: X \to Y$, for any simplicial map $i: B \to A$ belonging to $\mathcal R_f$. It is not hard then to check that the subcollection of $\mathcal R_f$ consisting of all those $i: B \to A$ for which either claim holds is itself a regularly saturated class of simplicial maps and that it contains every horn inclusion whenever $f$ is a regular fibration. \end{proof}

\begin{lem}\label{lem:17A.28.1} Suppose a coequalizer is given in\/ $\sSet$ of the form
\begin{equation*}
\xymatrix{%
 C \ar@<+.333ex>[r] \ar@<-.333ex>[r]
 &	\coprod B^\rho \ar[r]^-{\coprod i^\rho}
	&	A}
\end{equation*}
where\/ $\{i^\rho: B^\rho \to A\}$ is a family of simplicial maps. For every smooth manifold\/ $S$ and for every simplicial manifold\/ $X$, a map\/ $x: S \to A(X)$ is smooth iff so are all maps\/ $i^\rho(X) \circ x: S \to B^\rho(X)$. \end{lem}

\begin{proof} The ‘only if’ direction is clear. As to the ‘if’ direction, we note that each $a \in A_n$ must be equal to $i^\rho b$ for some $b \in B^\rho_n$ because a simplicial map is epic iff it is onto on $n$-simplices for each $n$ \cite[p.~21, II.1.1]{GZ67}; the map $S \to X_n$, $s \mapsto x(s)a = x(s)i^\rho b = [i^\rho(X) \circ x](s)b$ must then be smooth since so is $i^\rho(X) \circ x$ by hypothesis. \end{proof}

\subsection{The dg-category of representations up to homotopy}\label{sub:intertwiners}

We follow \cite{AAC13,AACD11,AAS13}. Let $G$ be a Lie $\infty$-groupoid or, for that matter, an arbitrary simplicial manifold: the notion of representation up to homotopy makes sense in such generality. We regard $G$ as fixed throughout the present section.

Let $E = \bigoplus_{n\in\Z} E^{-n}$ be a smooth graded vector bundle over $G_0$ indexed by the integers. Write $E_x$ for the fiber of $E$ at $x \in G_0$ (a $\Z$-graded vector space); we refer to $E^{-n}_x$ as the \emph{cohomological degree\/~$-n$} or \emph{homological degree\/~$+n$} homogeneous component of $E_x$. Whenever we say ‘degree’ without further specification, we mean ‘cohomological degree’.

By definition, a \emph{representation up to homotopy} of $G$ on $E$ comprises a sequence $R = \{R_m\}_{m\geq 0}$ of smooth vector bundle morphisms $R_m^{-n}: s^*E^{-n} \to t^*E^{1-m-n}$ covering the identity transformation of $G_m$ which satisfy the equations below for all $g \in G_m$, $m = 0$,~$1$,~$2$,~$\dotsc$,
\begin{subequations}
\label{eqn:RUTH}
\begin{equation*}
	\sum_{k+l=m} (-1)^kR_k(t_kg)R_l(s_lg) = \sum_{i=1}^{m-1} (-1)^{m-i}R_{m-1}(d_ig)
\tag{\theparentequation}
\end{equation*}
where $R_m(g) = R_m^{-n}(g)$ is the linear map, assigning a vector $R_m^{-n}(g)e$ in $E^{1-m-n}_{tg}$ to each vector $e$ in $E^{-n}_{sg}$, that $R_m^{-n}$ induces upon restriction.%
\footnote{%
 We stick to the traditional “cohomological” notation for representations up to homotopy \cite{AAC13,AACD11,AAS13} as we find it somewhat handier than its “homological” counterpart \cite{2018a,2022a}.
} %
The first three of these equations read
\begin{alignat}{2}
	m&= 0\colon &\quad & R_0(x)R_0(x) = 0
\label{eqn:RUTH0}
\\	m&= 1\colon &\quad & R_0(tg)R_1(g) = R_1(g)R_0(sg)
\label{eqn:RUTH1}
\\	m&= 2\colon &\quad & R_1(d_0g)R_1(d_2g) - R_1(d_1g) = R_0(tg)R_2(g) + R_2(g)R_0(sg)
\label{eqn:RUTH2}
\end{alignat}
\end{subequations}
and have the following meaning: \eqref{eqn:RUTH0} says that for each vertex $x \in G_0$ the linear map $R_0(x)$ is a cochain differential on $E_x$ turning $E_x$ into a cochain complex of vector spaces; \eqref{eqn:RUTH1} says that for every edge $g \in G_1$ the linear map $R_1(g)$ is a homomorphism $E_{sg} \to E_{tg}$ of cochain complexes of vector spaces; \eqref{eqn:RUTH2} expresses the failure of $R_1$ to be compatible with the “composition” of edges in $G$ in terms of the “curvature” tensor $R_2$.

We shall primarily be interested in representations up to homotopy which are \emph{unital} in the sense that they enjoy the properties: $R_1(1x) = \id$ for all $x$ in $G_0$; $R_{m+1}(u_jg) = 0$ for $j = 0$,~$\dotsc$,~$m$ for all $g$ in $G_m$, $m \geq 1$.

Let $E = (E,R)$, $F = (F,S)$ be unital representations up to homotopy of $G$. An \emph{intertwiner} $\varPhi: E \to F$ is a sequence $\varPhi = \{\varPhi_m\}_{m\geq 0}$ of smooth vector bundle morphisms $\varPhi_m^{-n}: s^*E^{-n} \to t^*F^{-m-n}$ covering the identity transformation of $G_m$ which satisfy the equations
\begin{subequations}
\label{eqn:intertwiner}
\begin{equation*}
 \sum_{k+l=m} S_k(t_kg)\varPhi_l(s_lg)
	= \sum_{k+l=m} (-1)^k\varPhi_k(t_kg)R_l(s_lg) - \sum_{i=1}^{m-1} (-1)^{m-i}\varPhi_{m-1}(d_ig)
\tag{\theparentequation}
\end{equation*}
as well as the conditions $\varPhi_{m+1}(u_jg) = 0$ for all $m \geq 0$, $j = 0$,~$\dotsc$,~$m$, and $g \in G_m$. The first two of these equations, namely,
\begin{alignat}{2}
	m&= 0\colon &\quad & S_0(x)\varPhi_0(x) = \varPhi_0(x)R_0(x)
\label{eqn:intertwiner0}
\\	m&= 1\colon &\quad & \varPhi_0(tg)R_1(g) - S_1(g)\varPhi_0(sg) = S_0(tg)\varPhi_1(g) + \varPhi_1(g)R_0(sg)
\label{eqn:intertwiner1}
\end{alignat}
\end{subequations}
may be understood as postulating that $\varPhi_0$ be a homomorphism of cochain complexes of vector bundles from $(E,R_0)$ to $(F,S_0)$ and that its failure to intertwine the two “pseudo-representations” $R_1$ and $S_1$ be measured by $\varPhi_1$. A \emph{strict} intertwiner is one such that $\varPhi_m = 0$ for all $m \geq 1$; for any such $\varPhi$, equation \eqref{eqn:intertwiner} boils down to
\begin{equation*}
	S_m(g)\varPhi_0(sg) = \varPhi_0(tg)R_m(g).
\end{equation*}

Let $F(\varPhi) \xfrom{\varPhi} E(\varPhi) = F(\varPsi) \xfrom{\varPsi} E(\varPsi)$ be any two composable intertwiners of representations up to homotopy of $G$. Their \emph{composition}, $F(\varPhi) \xfrom{\varPhi\circ\varPsi} E(\varPsi)$, is given by the formula
\begin{subequations}
\label{eqn:composition}
\begin{equation*}
	(\varPhi \circ \varPsi)_m(g) = \sum_{k=0}^m \varPhi_k(t_kg)\varPsi_{m-k}(s_{m-k}g)
\tag{\theparentequation}
\end{equation*}
\cite[§2]{AACD11}. By way of example:
\begin{alignat}{2}
	m&= 0\colon &\quad & (\varPhi \circ \varPsi)_0(x) = \varPhi_0(x)\varPsi_0(x)
\label{eqn:composition0}
\\	m&= 1\colon &\quad & (\varPhi \circ \varPsi)_1(g) = \varPhi_0(tg)\varPsi_1(g) + \varPhi_1(g)\varPsi_0(sg)
\label{eqn:composition1}
\end{alignat}
\end{subequations}
It is straightforward to check that \eqref{eqn:composition} does in fact define a new intertwiner of representations up to homotopy and that the composition law thus obtained turns unital representations up to homotopy of $G$ into the objects of a category hereafter notated $\Rep^\infty(G)$; for each $E \in \Rep^\infty(G)$ the strict intertwiner $I: E \to E$ given by $I_0(x) = \id$ is the identity automorphism of $E$ under the composition law \eqref{eqn:composition}. Given an integer $N \geq 0$, we shall write $\Rep^N(G)$ for the full subcategory of $\Rep^\infty(G)$ comprising all those $E$ having $E^{-n} = 0$ for $\lvert n\rvert > N$. We shall further write $\Rep^N_+(G)$ for the full subcategory of $\Rep^N(G)$ comprising all \emph{$N + 1$-term} representations up to homotopy ($0 \leq N \leq \infty$) i.e.~all $E \in \Rep^N(G)$ that vanish in every negative homological degree as well and thus have $E^{-n} = 0$ for both $n < 0$ and $n > N$.

It is possible to promote $\Rep^\infty(G)$ to a dg-category (differential graded category) as follows. Let $E = (E,R)$, $F = (F,S)$ be representations up to homotopy of $G$ (as always, unital). Write $\Hom(E,F)$ for the graded vector space whose degree~$k$ homogeneous component $\Hom^k(E,F)$ consists of all possible sequences $\varPhi = \{\varPhi_m\}_{m\geq 0}$ of smooth vector bundle morphisms $\varPhi_m^{-n}: s^*E^{-n} \to t^*F^{k-m-n}$ covering the identity transformation of $G_m$ that satisfy ${u_j}^*\varPhi_{m+1} = 0$ for all $0 \leq j \leq m$; write $k = \deg\varPhi \in \Z$ and call this the degree of $\varPhi$. The \emph{derivative} of $\varPhi$ is the degree~$\deg\varPhi + 1$ homogeneous vector $D_{E,F}\varPhi \in \Hom(E,F)$ given by
\begin{align}
 (D_{E,F}\varPhi)_m(g)
	&= \sum_{k+l=m} (-1)^{k\deg\varPhi}S_k(t_kg)\varPhi_l(s_lg)
	   + (-1)^{\deg\varPhi}\sum_{i=1}^{m-1} (-1)^{m-i}\varPhi_{m-1}(d_ig) \notag\\* &\justify%
	   - (-1)^{\deg\varPhi}\sum_{k+l=m} (-1)^k\varPhi_k(t_kg)R_l(s_lg).
\label{eqn:17A.38.2}
\end{align}
For $\deg\varPhi = 0$ the condition $D_{E,F}\varPhi = 0$ is tantamount to saying that $\varPhi$ is an intertwiner $E \to F$ \eqref{eqn:intertwiner}. For $\varPhi$,~$\varPsi$,~$\varOmega \in \Hom(E,F)$ homogeneous of degrees $\deg\varPhi = \deg\varPsi = 0$, $\deg\varOmega = -1$ the equation $D_{E,F}\varOmega = \varPsi - \varPhi$ defines the concept of $\varOmega$ being a \emph{homotopy} $\varPhi \hto \varPsi$:
\begin{align}
 (\varPsi - \varPhi)_m(g)
	&= \sum_{k+l=m} (-1)^k\{S_k(t_kg)\varOmega_l(s_lg) + \varOmega_k(t_kg)R_l(s_lg)\}
	   - \sum_{i=1}^{m-1} (-1)^{m-i}\varOmega_{m-1}(d_ig).
\label{eqn:homotopy}
\end{align}
The \emph{composition} of $\varPhi \in \Hom\bigl(E(\varPhi),F(\varPhi)\bigr)$, $\varPsi \in \Hom\bigl(E(\varPsi),F(\varPsi)\bigr)$ homogeneous with $E(\varPhi) = F(\varPsi)$ is the degree~$\deg\varPhi + \deg\varPsi$ homogeneous vector $\varPhi \circ \varPsi \in \Hom\bigl(E(\varPsi),F(\varPhi)\bigr)$ given by
\begin{equation}
	(\varPhi \circ \varPsi)_m(g) = \sum_{k=0}^m (-1)^{k\deg\varPsi}\varPhi_k(t_kg)\varPsi_{m-k}(s_{m-k}g).
\label{eqn:17A.38.1}
\end{equation}
These formulas go hand in hand with the philosophy of \cite[§3.1, Rmk.~3.8]{AAC13}. It is easy to check that the operator $D_{E,F}$ is linear, squares to zero, and satisfies the familiar Leibniz rule with respect to the composition law \eqref{eqn:17A.38.1} (we are suppressing subscripts for readability):
\begin{equation}
	D(\varPhi \circ \varPsi) = D\varPhi \circ \varPsi + (-1)^{\deg\varPhi}\varPhi \circ D\varPsi.
\label{eqn:17A.38.3}
\end{equation}

\subsubsection*{A criterion for the invertibility of intertwiners}

We shall now take advantage of the above dg-categorical structure and use it to characterize the isomorphisms of the category $\Rep^\infty_+(G)$ (the invertible intertwiners). We start with a series of lemmas that constitute an adaptation of the proof of \cite[Prop.~3.28]{AAC13} to the specific setup of this paper. Our exposition is meant to be as accessible and self-contained as possible.

\begin{lem}\label{lem:isomorphism} Let\/ $E = (E,R)$ be a representation up to homotopy of\/ $G$. Let\/ $\varOmega \in \End^{-1}(E)$ be a degree minus one cochain for the dg-categorical structure defined by equations\/ \eqref{eqn:17A.38.2}, \eqref{eqn:17A.38.1}. Suppose that for some\/ $m \geq 0$, $n \in \Z$ the identities\/ \(%
	(I + D_E\varOmega)_k^{-l} = 0
\) hold for\/ $k = m$, $l < n$ as well as for\/ $k < m$, where\/ $D_E = D_{E,E}$. Then\/ \(%
	t^*R_0^{-m-n} \circ (D_E\varOmega)_m^{-n} = 0
\). \end{lem}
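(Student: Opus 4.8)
The plan is to observe that $\varPhi:=I+D_E\varOmega$ is an intertwiner $E\to E$ and then to read off the claimed vanishing from the single scalar relation that the intertwiner equations impose at level $m$ in source-degree $-n$.

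First I would check that $\varPhi$ is an intertwiner. The unit automorphism $I$ is a degree-zero cocycle, so $D_E I=0$; and $D_E=D_{E,E}$ squares to zero, so $D_E\varPhi=D_E I+D_E(D_E\varOmega)=0$. Since $\varPhi$ is homogeneous of degree $0$, this says precisely that $\varPhi$ obeys the intertwiner equations \eqref{eqn:intertwiner} with $S=R$ and $F=E$. Because $I$ is supported in level $0$, one also has $\varPhi_k=(D_E\varOmega)_k$ for every $k\geq 1$, so for $m\geq 1$ the morphism to be shown to vanish is exactly $t^*R_0^{-m-n}\circ\varPhi_m^{-n}$.

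Next I would extract the relevant relation. Evaluating the level-$m$ equation \eqref{eqn:intertwiner} at an arbitrary $g\in G_m$ and restricting the resulting bundle morphism to the component $E^{-n}_{sg}$ of the source, one obtains --- using $t_0=t$, $s_m=\id$, and the simplicial identities that identify the intermediate base points $t(s_lg)$ with $s(t_kg)$ (for $k+l=m$) and $s(d_ig)$, $t(d_ig)$ with $sg$, $tg$ (for $1\leq i\leq m-1$) --- an identity among linear maps $E^{-n}_{sg}\to E^{1-m-n}_{tg}$ whose $(k,l)=(0,m)$ summand on the left is $R_0^{-m-n}(tg)\circ\varPhi_m^{-n}(g)$. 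Solving for that summand presents it as a signed sum of three kinds of terms: (a) the $(k,l)=(m,0)$ summand $(-1)^m\varPhi_m^{-(n-1)}(g)\circ R_0^{-n}(sg)$; (b) all the remaining mixed terms from the two sides of \eqref{eqn:intertwiner}, each of which carries a factor $\varPhi_j$ with $0\leq j\leq m-1$; and (c) the interior-face contributions $\pm\varPhi_{m-1}(d_ig)$, $1\leq i\leq m-1$. The hypothesis now removes all three: $(I+D_E\varOmega)_m^{-(n-1)}=0$ because $n-1<n$, so (a) vanishes; $(I+D_E\varOmega)_j$ is identically zero for every $j<m$, so (b) vanishes; and the factor $\varPhi_{m-1}$ in (c) is zero as well (the sum being empty unless $m\geq 2$). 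Hence the right-hand side is $0$, which gives $R_0^{-m-n}(tg)\circ\varPhi_m^{-n}(g)=0$ for every $g\in G_m$, that is, $t^*R_0^{-m-n}\circ(D_E\varOmega)_m^{-n}=0$; the case $m=0$ follows the same pattern from the level-$0$ equation \eqref{eqn:intertwiner0}.

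The step that needs care is purely bookkeeping. One must track precisely the cohomological-degree shifts that $R_0$ and the higher $R_j$ introduce under composition --- in particular, noticing that precomposition with $R_0(sg)$ in (a) brings in the degree-$(-(n-1))$ component of $\varPhi_m$ rather than its degree-$(-n)$ component, which is why the instance of the hypothesis at $l=n-1$ is the one that applies --- and one must invoke the simplicial identities to be certain that every summand other than $R_0(tg)\varPhi_m(g)$ genuinely contains a factor $\varPhi_j$ ruled out by the hypothesis. Once this is set up carefully, there is no further difficulty.
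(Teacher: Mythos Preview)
Your argument is correct for $m\geq 1$ and takes a cleaner, more conceptual route than the paper's. The paper expands $R_0(tg)\bigl((D_E\varOmega)_m(g)e\bigr)$ directly via \eqref{eqn:homotopy}, uses the hypotheses to substitute for each occurrence of $R_0\varOmega_k$ (pushing the $R_0$ past the $\varOmega$'s), and after rearrangements recognizes the remaining expression as vanishing by the structure equations \eqref{eqn:RUTH}. You package the same content into the single observation that $\varPhi=I+D_E\varOmega$ is a $D_E$-cocycle (because $D_EI=0$ and $D_E^2=0$), so that the intertwiner identities \eqref{eqn:intertwiner} hold for $\varPhi$; the hypothesis then kills every summand except $R_0(tg)\varPhi_m(g)$. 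This avoids the explicit term-by-term manipulation entirely, the RUTH relations entering only implicitly through $D_E^2=0$.

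One caveat: your closing remark that ``the case $m=0$ follows the same pattern'' is too quick. For $m=0$ one has $\varPhi_0=I_0+(D_E\varOmega)_0$, and the level-$0$ intertwiner equation together with the hypothesis $\varPhi_0^{-(n-1)}=0$ yields $R_0^{-n}\varPhi_0^{-n}=0$, which unwinds to $R_0^{-n}(D_E\varOmega)_0^{-n}=-R_0^{-n}$ rather than $0$. The paper's own computation runs into the same obstruction, so the lemma as stated appears to need $m\geq 1$; this is harmless for the application, since Lemma~\ref{lem:isomorphism*} invokes it only for $m\geq 1$.
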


\begin{proof} Let us fix $g \in G_m$, $e \in E^{-n}_{sg}$. We have $R_0(sg)e \in E^{-l}_{sg}$, where $l = n - 1 < n$, so we can use our hypothesis to substitute for all terms $R_0(tg)\varOmega_k(t_kg)$, $R_0(tg)\varOmega_{m-1}(d_ig)$ in the expansion of $R_0(tg)\bigl((D_E\varOmega)_m(g)e\bigr)$ furnished by \eqref{eqn:homotopy}. After suitable rearrangements and the cancellation of a number of pairwise opposite terms, we are left with an expression to which we can once more apply our hypothesis in such a way as to end up with only $R$~terms to the left and $\varOmega$~terms to the right. The resulting expression is immediately recognized to vanish on account of the equations \eqref{eqn:RUTH} that the tensors $R_m$ are supposed to satisfy. \end{proof}

\begin{lem}\label{lem:isomorphism*} Let\/ $E = (E,R)$ be an object of\/ $\Rep^\infty(G)$ for which there exists\/ $n_0 \in \Z$ such that\/ $n < n_0$ implies\/ $E^{-n} = 0$. Any homotopy\/ $\varOmega_0: \id \hto 0$ of cochain operators on the cochain complex\/ $(E,R_0)$ is part of some homotopy\/ $\varOmega: I \hto 0$ of self-intertwiners of\/ $E$. \end{lem}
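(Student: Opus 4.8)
The goal is to produce a sequence $\varOmega = \{\varOmega_m\}_{m\geq 0} \in \End^{-1}(E)$, extending the given $\varOmega_0$, for which $I + D_E\varOmega = 0$; by the discussion around \eqref{eqn:homotopy} this is exactly the assertion that $\varOmega$ is a homotopy $I \hto 0$. The plan is to build the higher terms $\varOmega_1$, $\varOmega_2$, $\dotsc$ one at a time by induction on the simplicial degree $m \geq 1$, and within each such $m$ by a secondary induction on the cohomological degree, keeping the cochain $\varPhi := I + D_E\varOmega$ equal to zero in all degrees of stage strictly below the current one. The base case $\varPhi_0 = 0$ is nothing but the hypothesis on $\varOmega_0$ (observe that $(D_E\varOmega)_0$ depends on $\varOmega_0$ alone), so the outer induction may start at $m = 1$. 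The boundedness assumption $E^{-n} = 0$ for $n < n_0$ supplies the base of the inner induction for free—both $\varPhi_m^{-n}$ and $\varOmega_m^{-n}$ then have vanishing source—and makes the whole recursion well founded.

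For the inductive step, fix $m \geq 1$ and $n \geq n_0$, and suppose $\varOmega_k$ has been defined for $k < m$ and $\varOmega_m^{-l}$ for $l < n$, in such a way that the partially defined $\varOmega$—with every not-yet-defined component set to zero—lies in $\End^{-1}(E)$ and satisfies $\varPhi_k^{-l} = 0$ whenever $k < m$, and whenever $k = m$ and $l < n$. Inspection of \eqref{eqn:homotopy} reveals that $(D_E\varOmega)_m^{-n}$ involves the unknown $\varOmega_m^{-n}$ only through the summand $t^*R_0 \circ \varOmega_m^{-n}$: the other $R_0$-summand, $\pm\,\varOmega_m \circ s^*R_0$, picks out $\varOmega_m^{1-n}$, which is already fixed. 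Hence $\varPhi_m^{-n} = t^*R_0 \circ \varOmega_m^{-n} + \gamma$, where $\gamma$ denotes $(D_E\varOmega)_m^{-n}$ evaluated with $\varOmega_m^{-n}$ put equal to zero, which depends only on previously determined data; the step thus reduces to solving $t^*R_0 \circ \varOmega_m^{-n} = -\gamma$.

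Solvability is exactly where Lemma \ref{lem:isomorphism} intervenes: applied to the cochain obtained from the current $\varOmega$ by setting $\varOmega_m^{-n}$ (and all higher-stage components) to zero—legitimate, since the inductive vanishing hypotheses are insensitive to these components—it yields $t^*R_0 \circ \gamma = 0$, i.e.\ the values of $\gamma$ land fibrewise in $\ker R_0$. On the other hand, the given homotopy satisfies $R_0 \circ \varOmega_0 + \varOmega_0 \circ R_0 = -\id$ on each fibre of $E$, so $\ker R_0 = \im R_0$ there, and in fact $w = -R_0(\varOmega_0 w)$ for every $w \in \ker R_0$. Setting $\varOmega_m^{-n} := t^*\varOmega_0 \circ \gamma$ therefore yields $t^*R_0 \circ \varOmega_m^{-n} = -\gamma$—the cross term being annihilated by $t^*R_0 \circ \gamma = 0$—whence $\varPhi_m^{-n} = 0$; smoothness of $\varOmega_m^{-n}$ is immediate, as it is assembled from the smooth data $\varOmega_0$, $t$, and $\gamma$. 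That the updated $\varOmega$ still belongs to $\End^{-1}(E)$ follows because, with all undetermined components zero, $\varOmega \in \End^{-1}(E)$, hence $D_E\varOmega \in \End^0(E)$, hence $(D_E\varOmega)_m$—and therefore $\gamma$, and therefore $\varOmega_m^{-n} = t^*\varOmega_0 \circ \gamma$—vanishes on degenerate simplices. Running the inner induction through all $n \geq n_0$ completes $\varOmega_m$; running the outer induction through all $m$ completes $\varOmega$.

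The main obstacle is precisely the solvability of $t^*R_0 \circ \varOmega_m^{-n} = -\gamma$—that is, the fact that the obstruction $\gamma$ to prolonging the homotopy one cohomological degree further is a cocycle for the fibrewise differential $R_0$; this is the content of Lemma \ref{lem:isomorphism}, whose proof in turn rests on the structure equations \eqref{eqn:RUTH}. Everything else is bookkeeping, the most delicate points being the verification that $(D_E\varOmega)_m^{-n}$ really depends on the lower-stage data alone (so that $\gamma$ is well defined at the moment it is needed) and that the normalization condition cutting out $\End^{-1}(E)$ is preserved at each step.
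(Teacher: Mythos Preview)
Your proof is correct and follows essentially the same approach as the paper's. Your formula $\varOmega_m^{-n} := t^*\varOmega_0 \circ \gamma$ is precisely the paper's recursive definition $\varOmega_m(g) = \varOmega_0(tg) \circ \bigl((D_E\varOmega)_m(g) - R_0(tg)\varOmega_m(g)\bigr)$ unpacked degree by degree, and both arguments invoke Lemma \ref{lem:isomorphism} at the same point to show that the obstruction $\gamma$ is annihilated by $t^*R_0$ and can therefore be killed using the contracting homotopy $\varOmega_0$.
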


\begin{proof} There is a unique degree minus one cochain $\varOmega \in \End^{-1}(E)$ with the property that
\begin{equation}
	\varOmega_m(g) = \varOmega_0(tg) \circ \bigl((D_E\varOmega)_m(g) - R_0(tg)\varOmega_m(g)\bigr)
\text{\quad $\forall m \geq 0$, $g \in G_m$.}
\label{eqn:aux1}
\end{equation}
This can be seen by an easy inductive argument: we are given $\varOmega_0(x)$ by hypothesis and have no other option than defining $\varOmega_m^{-n}(g)$ to be zero for $n < n_0$. We contend that $\varOmega$ is the required homotopy $I \hto 0$ in other words $(I + D_E\varOmega)_m^{-n} = 0$ for all $m$, $n$. This is the case for $m = 0$ by hypothesis. We prove this is also the case for $m \geq 1$, $n \geq n_0$ by induction. Suppose namely that $(I + D_E\varOmega)_k^{-l} = 0$ for $k = m$, $l < n$ as well as for $k < m$. Then for all $g \in G_m$, $e \in E^{-n}_{sg}$
\begin{alignat*}{2}
 -R_0(tg)\varOmega_m(g)e
	&= -R_0(tg)\varOmega_0(tg) \circ (\mathellipsis)e
			&\quad &\text{by \eqref{eqn:aux1}} \\
	&= (\mathellipsis)e + \varOmega_0(tg)R_0(tg) \circ (\mathellipsis)e
			&\quad &\text{by \eqref{eqn:homotopy} for $m = 0$} \\
	&= (D_E\varOmega)_m(g)e - R_0(tg)\varOmega_m(g)e
			&\quad &\text{by Lemma \ref{lem:isomorphism} and \eqref{eqn:RUTH0}}
\end{alignat*}
whence as contended $(I + D_E\varOmega)_m^{-n} = I_m^{-n} + (D_E\varOmega)_m^{-n} = (D_E\varOmega)_m^{-n} = 0$. \end{proof}

The \emph{mapping cone} of an intertwiner $\varPhi: E \to F \in \Rep^\infty(G)$ is the (unital) representation up to homotopy $\cone(\varPhi)$ with underlying graded vector bundle \(%
	\cone(\varPhi)^{-n} = E^{-n} \oplus F^{-1-n}
\) and “curvature” tensors \[%
\begin{pmatrix}
	R_m^{-n}       & 0 \\
	\varPhi_m^{-n} & (-1)^{m-1}S_m^{-1-n}
\end{pmatrix}:
	s^*E^{-n} \oplus s^*F^{-1-n} \longto t^*E^{1-m-n} \oplus t^*F^{-m-n}
\] \cite[Ex.~3.21]{AAC13}. The proof of the next lemma is a straightforward explicit calculation which we omit.

\begin{lem}\label{lem:isomorphism**} For every homotopy\/ $\varOmega: I \hto 0$ of self-intertwiners of the mapping cone\/ $\cone(\varPhi)$ of an intertwiner\/ $\varPhi: E \to F \in \Rep^\infty(G)$, setting \[%
	\varOmega_m^{-n} =%
\begin{pmatrix}
		K_m^{-n} & (-1)^{m-1}\varPsi_m^{-1-n} \\
		\ast     & (-1)^{m-1}\varLambda_m^{-1-n}
\end{pmatrix}:
	s^*E^{-n} \oplus s^*F^{-1-n} \longto t^*E^{-1-m-n} \oplus t^*F^{-2-m-n}
\] defines an intertwiner\/ $\varPsi: F \to E$ plus homotopies\/ $K: \varPsi \circ \varPhi \hto I$, $\varLambda: \varPhi \circ \varPsi \hto I$. \qed \end{lem}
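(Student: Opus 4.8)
The plan is to read off all three assertions---that $\varPsi$ is an intertwiner and that $K$ and $\varLambda$ are homotopies---from the single family of identities expressing the hypothesis that $\varOmega$ is a homotopy $I \hto 0$ of self-intertwiners of $\cone(\varPhi)$, by decomposing that family into its four matrix blocks. Unravelled, the hypothesis says that the equations \eqref{eqn:homotopy} hold with $E$ and $F$ both replaced by $\cone(\varPhi)$, with $R$ and $S$ both replaced by the block-matrix curvature tensors $T_m = \left(\begin{smallmatrix} R_m & 0 \\ \varPhi_m & (-1)^{m-1}S_m \end{smallmatrix}\right)$ of $\cone(\varPhi)$, and with the left-hand side $(\varPsi - \varPhi)_m$ replaced by $(0 - I)_m = -I_m$, $I$ being block-diagonal; equivalently, $D\varOmega = -I$ for the derivation operator \eqref{eqn:17A.38.2} of $\End(\cone(\varPhi))$. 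Into this I would substitute the prescribed block form $\varOmega_m = \left(\begin{smallmatrix} K_m & (-1)^{m-1}\varPsi_m \\ \ast & (-1)^{m-1}\varLambda_m \end{smallmatrix}\right)$---remembering the homological degree shift built into $\cone(\varPhi)^{-n} = E^{-n} \oplus F^{-1-n}$---expand each product $T_k(t_kg)\varOmega_l(s_lg)$ and $\varOmega_k(t_kg)T_l(s_lg)$ entrywise, and split the resulting identities into their $(1,1)$-, $(1,2)$-, $(2,1)$-, and $(2,2)$-components.

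I would handle the off-diagonal blocks first. As $(T_m)_{12}$ and the $(1,2)$-entry of $I$ both vanish, the $(1,2)$-component involves only $\varPsi$, $R$, $S$, and $\varPhi$, and once the $(1,2)$-entry of $\varOmega_m^{-n}$ is reindexed into a morphism $s^*F^{-n} \to t^*E^{-m-n}$ it should come out as exactly the system \eqref{eqn:intertwiner} for a degree-$0$ intertwiner with source $(F,S)$ and target $(E,R)$. Combined with the relations ${u_j}^*\varPsi_{m+1} = 0$ inherited from ${u_j}^*\varOmega_{m+1} = 0$, this gives that $\varPsi$ is a unital intertwiner $F \to E$.

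The diagonal blocks are where the two summands of $\cone(\varPhi)$ interact. In the $(1,1)$-component, alongside the terms built from $(T_m)_{11} = R_m$ and the entries $K_m$, one picks up the cross terms $(\varOmega_k)_{12}(T_l)_{21} = (-1)^{k-1}\varPsi_k\varPhi_l$; gathering these into $(\varPsi \circ \varPhi)_m(g)$ by means of the composition law \eqref{eqn:composition} should recast the $(1,1)$-component as precisely the instance of \eqref{eqn:homotopy} that exhibits $K$ as a homotopy $\varPsi \circ \varPhi \hto I$ in $\End(E)$. The mirror-image computation in the $(2,2)$-component gathers the cross terms $(T_k)_{21}(\varOmega_l)_{12}$ into $(\varPhi \circ \varPsi)_m(g)$ and---with the factors $(-1)^{m-1}$ standing in front of $S_m$, $\varLambda_m$, $\varPsi_m$ all entering here---yields $\varLambda$ as a homotopy $\varPhi \circ \varPsi \hto I$ in $\End(F)$; unitality of $K$ and $\varLambda$ descends from that of $\varOmega$ just as for $\varPsi$. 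The $(2,1)$-component is the one block that constrains the entry $\ast$; it is automatically satisfied and plays no part in the conclusion, which is why $\ast$ is left unnamed.

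The only genuinely non-mechanical step, and the one I expect to be the main obstacle, is the sign bookkeeping. One has to keep track at once of the homological degree shift carried by the off-diagonal entries of a morphism of $\cone(\varPhi)^{-n} = E^{-n} \oplus F^{-1-n}$ and of how the signs accumulated from the entries of $T_m$, from the factors $(-1)^{m-1}$ in the ansatz for $\varOmega_m$, and from the composition sign in \eqref{eqn:composition}/\eqref{eqn:17A.38.1} conspire to reproduce exactly the sign patterns of \eqref{eqn:intertwiner} and \eqref{eqn:homotopy}---the signs in the statement of the lemma having been chosen precisely so that they do. Once those are pinned down, the three assertions come out together.
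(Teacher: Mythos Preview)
Your proposal is correct and is exactly the ``straightforward direct computation'' that the paper has in mind; the paper in fact omits the proof entirely (note the \qed\ immediately following the statement), so your block-by-block expansion of the homotopy identity on $\cone(\varPhi)$ is precisely the argument being alluded to.
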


\begin{prop}\label{prop:isomorphism} Let\/ $E$,~$F \in \Rep^\infty_+(G)$ be unital representations up to homotopy vanishing in positive degrees. An intertwiner\/ $\varPhi: E \to F$ is\/ \emph{invertible}, i.e., is an isomorphism in the category\/ $\Rep^\infty_+(G)$, if and only if\/ $\varPhi_0^{-n}: E^{-n} \simto F^{-n}$ is an isomorphism of graded vector bundles over\/ $G_0$. \end{prop}

\begin{proof} The ‘only if’ direction is clear. For the converse, suppose that $\varPhi_0^{-n}: E^{-n} \simto F^{-n}$ admits an inverse, $\varPsi_0^{-n}: F^{-n} \simto E^{-n}$, for every $n \geq 0$. Setting \[%
	\varOmega_0^{-n} =%
\begin{pmatrix}
		0 & -\varPsi_0^{-1-n} \\
		0 & 0
\end{pmatrix}:
	E^{-n} \oplus F^{-1-n} \longto E^{-1-n} \oplus F^{-2-n}
\] defines a homotopy $\varOmega_0: \id \hto 0$ of cochain operators on the mapping cone complex of $\varPhi$. By Lemmas \ref{lem:isomorphism*} and \ref{lem:isomorphism**}, $\varPhi: E \to F$ admits a homotopy inverse $\varPsi: F \to E$ extending $\varPsi_0$; let us fix any pair of homotopies $K: \varPsi \circ \varPhi \hto I$, $\varLambda: \varPhi \circ \varPsi \hto I$.

Thanks to the invertibility of $\varPhi_0$, there is a unique degree minus one cochain in $\Hom(F,E)$ which solves for $\tilde{K}$ in $\tilde{K} \circ \varPhi = K$: we read it off from \eqref{eqn:17A.38.1} by recursion in $m \geq 0$. Let us consider the degree zero cochain \(%
	\tilde{\varPsi} = \varPsi + D_{F,E}\tilde{K}
\) in $\Hom(F,E)$. On account of the fact that $D_{F,E}$ squares to zero, $\tilde{\varPsi}$ itself must be an intertwiner $F \to E$: \[%
 D_{F,E}\tilde{\varPsi}
	= D_{F,E}\varPsi + {D_{F,E}}^2\tilde{K}
	= D_{F,E}\varPsi = 0.
\] In fact, $\tilde{\varPsi}$ is a left inverse for $\varPhi$: by our definitions, the Leibniz rule \eqref{eqn:17A.38.3}, and the identity $D_{E,F}\varPhi = 0$ (expressing the fact that $\varPhi$ is an intertwiner),
\begin{equation*}
 I - \varPsi \circ \varPhi
	= D_EK
	= D_E(\tilde{K} \circ \varPhi)
	= D_{F,E}\tilde{K} \circ \varPhi - \tilde{K} \circ D_{E,F}\varPhi
	= (\tilde{\varPsi} - \varPsi) \circ \varPhi - \tilde{K} \circ 0
	= \tilde{\varPsi} \circ \varPhi - \varPsi \circ \varPhi,
\end{equation*}
whence $\tilde{\varPsi} \circ \varPhi = I$.

A symmetric reasoning involving $\varLambda$ shows that $\varPhi$ must also be right-invertible, hence invertible. \end{proof}

\section{The splitting construction for higher vector bundles}\label{sec:splitting}

A \emph{simplicial vector bundle} is a simplicial object of $\VB$, the category of \emph{all} smooth vector bundles; any such may be visualized as a diagram of smooth vector bundle morphisms
\begin{equation*}
\xymatrix@C=3.67em{%
 \cdots \ar@<+1ex>[r] \ar@<+.333ex>[r] \ar@<-.333ex>[r] \ar@<-1ex>[r]
 &	V_2 \ar@<+.667ex>[r]^{d_i} \ar[r] \ar@<-.667ex>[r]
	\ar@/^/@<.778ex>[l] \ar@/^/@<1.444ex>[l] \ar@/^/@<2.111ex>[l]
	\ar[d]^{p_2}
	&	V_1 \ar@<+.333ex>[r] \ar@<-.333ex>[r]
		\ar@/^/@<.444ex>[l] \ar@/^/@<1.111ex>[l]^{u_j}
		\ar[d]^{p_1}
		&	V_0
			\ar@/^/@<.111ex>[l]
			\ar[d]^{p_0}
\\ \cdots \ar@<+1ex>[r] \ar@<+.333ex>[r] \ar@<-.333ex>[r] \ar@<-1ex>[r]
 &	G_2 \ar@<+.667ex>[r] \ar[r] \ar@<-.667ex>[r]
	\ar@/^/@<.778ex>[l] \ar@/^/@<1.444ex>[l] \ar@/^/@<2.111ex>[l]^{}
	&	G_1 \ar@<+.333ex>[r] \ar@<-.333ex>[r]
		\ar@/^/@<.444ex>[l] \ar@/^/@<1.111ex>[l]
		&	G_0
			\ar@/^/@<.111ex>[l]}
\end{equation*}
and hence may be thought of as a smooth simplicial map $p: V \to G$ equipped with a fiberwise linear structure compatible with the smooth simplicial structure. We shall refer to the simplicial manifold $G$ as the \emph{base} of the simplicial vector bundle $V \xto{p} G$ and to $V \xto{p} G$ as a simplicial vector bundle \emph{over} $G$.

A fundamental example of a simplicial vector bundle is the \emph{tangent bundle} $TG \to G$ of a simplicial manifold $G$: for every integer $n \geq 0$, $(TG)_n = T(G_n) \to G_n$ is the tangent bundle of the smooth manifold $G_n$; for every poset map $\theta: [m] \to [n]$, $(TG)_\theta: (TG)_n \to (TG)_m$ is the tangent map $T(G_\theta): T(G_n) \to T(G_m)$ of the smooth map $G_\theta: G_n \to G_m$.

Given any simplicial manifold $G$, we can turn any simplicial vector bundle $V \xto{p} G$ into a cochain complex of vector bundles over $G_0$ by the following standard procedure. For all $n \geq 0$, $x \in G_0$, let us write $V^{-n}_x = (V_n)_{1_nx} = p_n^{-1}(1_nx)$ for the fiber of the vector bundle $p_n: V_n \to G_n$ at $1_nx \in G_n$. There is one and only one map $V^{-n} = \bigcup_{x\in G_0} V^{-n}_x \to G_0$ whose composition with $1_n: G_0 \to G_n$ equals the restriction of $p_n: V_n \to G_n$ to $V^{-n} \subset V_n$: we get a simplicial vector bundle over $G_0$ (that is, over the constant simplicial manifold of value $G_0$),
\begin{equation*}
\xymatrix@C=3.67em{%
 \cdots \ar@<+1ex>[r] \ar@<+.333ex>[r] \ar@<-.333ex>[r] \ar@<-1ex>[r]
 &	V^{-2} \ar@<+.667ex>[r]^{d_i} \ar[r] \ar@<-.667ex>[r]
	\ar@/^/@<.778ex>[l] \ar@/^/@<1.444ex>[l] \ar@/^/@<2.111ex>[l]
	\ar[d]
	&	V^{-1} \ar@<+.333ex>[r] \ar@<-.333ex>[r]
		\ar@/^/@<.444ex>[l] \ar@/^/@<1.111ex>[l]^{u_j}
		\ar[d]
		&	V^0
			\ar@/^/@<.111ex>[l]
			\ar[d]
\\ \cdots \ar@{=}[r]
 &	G_0 \ar@{=}[r]
	&	G_0 \ar@{=}[r]
		&	G_0}
\end{equation*}
equivalently, a simplicial object of $\VB(G_0)$, the category of all smooth vector bundles over $G_0$. If we set $V^{-n} = 0$ for $n < 0$, we obtain a $\Z$-graded vector bundle $V^\bullet = \bigoplus_{n\in\Z} V^{-n}$ over $G_0$ whose degree~$-n$ homogeneous component is $V^{-n}$. (The bullet here only serves to distinguish the graded vector bundle over $G_0$ from the simplicial vector bundle over $G$.) There is a natural cochain differential $d$ on $V^\bullet$ given by
\begin{equation}
	d^{-n} = (-1)^{n-1}\sum_{j=0}^n (-1)^jd_j: V^{-n} \longto V^{1-n} \text{,\quad $n \geq 1$.}
\end{equation}
We shall refer to the cochain complex of vector bundles vanishing in positive degrees $(V^\bullet,d)$ as the \emph{Moore complex} of the simplicial vector bundle $V \xto{p} G$.

The chief aim of this section is to describe a procedure by which under suitable assumptions we can extract a representation up to homotopy of $G$ on $(V^\bullet,d)$ (or on a closely related complex) from the simplicial vector bundle $V \xto{p} G$. Most of what we are going to say in the next two sections is drawn out from \cite[§4]{2018a}.

\subsection{Vector fibrations and cleavages}\label{sub:cleavages}

\begin{prop}\label{prop:hvb} The following are equivalent for any simplicial vector bundle\/ $p: V \to G$.
\begin{enumerate}
\def\labelenumi{\upshape (\roman{enumi})}
 \item As a smooth simplicial map, $p: V \to G$ is a regular fibration\/ {\upshape (Definition \ref{defn:17A.26.2})}.
 \item $G$ is a Lie\/ $\infty$-groupoid and, as a simplicial map, $p: V \to G$ is a Kan fibration.
 \item $G$ and\/ $V$ are Lie\/ $\infty$-groupoids.
\end{enumerate} \end{prop}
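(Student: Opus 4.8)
My plan is to extract one linear‑algebraic observation and then read off all three equivalences from the definitions of \S\ref{sub:fibrations} together with Lemma \ref{lem:17A.26.1}. Fix $n \geq k \geq 0$, write $i\colon \varLambda^n_k \hookrightarrow \varDelta^n$ for the horn inclusion, and write $\rho\colon G_n \to \varLambda^n_k(G)$ and $\pi\colon V_n \to \varLambda^n_k(V)$ for the restriction‑to‑horn maps $x \mapsto x \mathbin| \varLambda^n_k$. I will take for granted two routine points (cf.\ \cite[\S 4]{2018a}): first, that whenever $G$ is a Lie $\infty$-groupoid the horn sheaf $\varLambda^n_k(V)$ of a simplicial vector bundle $p\colon V \to G$ is representable, indeed the total space of a vector bundle over $\varLambda^n_k(G)$; and second — a consequence of the fact that a simplicial object of $\VB$ has all its structure maps fibrewise linear — that $\pi$ is then a morphism of vector bundles covering $\rho$.

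The substantive step, and the one I expect to be the main obstacle, is the following claim: if $G$ is a Lie $\infty$-groupoid and $\pi$ is a submersion, then for every $g \in G_n$ the induced linear map on fibres $\pi_g\colon (V_n)_g \to \varLambda^n_k(V)_{\rho(g)}$ is surjective. To prove it I would differentiate $\pi$ at the point $z = \zeta(g)$, where $\zeta\colon G \to V$ is the zero section; $\zeta$ is a simplicial map, so $\pi \circ \zeta = \bar\zeta \circ \rho$ where $\bar\zeta$ is the induced zero section of $\varLambda^n_k(V) \to \varLambda^n_k(G)$. Using the canonical zero‑section splittings $T_z V_n \simeq T_g G_n \oplus (V_n)_g$ and $T_{\pi(z)}\varLambda^n_k(V) \simeq T_{\rho(g)}\varLambda^n_k(G) \oplus \varLambda^n_k(V)_{\rho(g)}$, the identity $\pi\circ\zeta = \bar\zeta\circ\rho$ sends the first summand into the first, while fibrewise linearity of $\pi$ over $\rho$ sends the second into the second; hence $d\pi_z$ is block diagonal, equal to $d\rho_g \oplus \pi_g$. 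Surjectivity of $d\pi_z$ (as $\pi$ is a submersion) then forces surjectivity of $\pi_g$. In effect this is the assertion that the linear structure automatically trivialises the relative part of the Kan condition; everything else in the proof just unwinds the definitions.

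With that in hand the equivalences go as follows. (i) $\Rightarrow$ (iii): if $p$ is a regular fibration then every horn inclusion lies in $\mathcal{R}_p \subseteq \mathcal{R}_V \cap \mathcal{R}_G$ by Lemma \ref{lem:17A.26.1}, and ``every horn inclusion lies in $\mathcal{R}_G$'' (resp.\ $\mathcal{R}_V$) is exactly the statement that $G$ (resp.\ $V$) is a Lie $\infty$-groupoid. (iii) $\Rightarrow$ (i): given $G$ and $V$ Lie $\infty$-groupoids, each horn inclusion already lies in $\mathcal{R}_G \cap \mathcal{R}_V$, so I only need $(p_*,i^*)\colon V_n \to G_n \times_{\varLambda^n_k(G)} \varLambda^n_k(V)$, $v \mapsto (p_n v,\, v\mathbin|\varLambda^n_k)$, to be a surjective submersion; but both source and target are vector bundles over $G_n$ — the target being the pullback along $\rho$ of the vector bundle $\varLambda^n_k(V) \to \varLambda^n_k(G)$ — and $(p_*,i^*)$ is a morphism of vector bundles over $G_n$ whose fibre over $g$ is $\pi_g$, so it suffices to invoke that a fibrewise surjective morphism of vector bundles over a fixed base is a surjective submersion, together with the claim above ($\pi$ being a submersion because $V$ is a Lie $\infty$-groupoid).

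Finally, for the middle condition: (iii) $\Rightarrow$ (ii) is immediate, since a regular fibration has each $(p_*,i^*)$ surjective and — the relevant fibred product having its expected underlying set because $\rho$ is a submersion — this is precisely the solvability of every Kan lifting problem for $p$, i.e.\ $p$ being a Kan fibration. Conversely (ii) $\Rightarrow$ (iii): with $G$ a Lie $\infty$-groupoid, the same map $(p_*,i^*)$ is a morphism of vector bundles over $G_n$ which is surjective because $p$ is a Kan fibration, hence fibrewise surjective, hence a surjective submersion; composing it with the projection $G_n \times_{\varLambda^n_k(G)} \varLambda^n_k(V) \to \varLambda^n_k(V)$, which is the pullback of $\rho$ and so itself a surjective submersion, recovers $\pi$, whence $\pi$ is a surjective submersion and $V$ is a Lie $\infty$-groupoid.
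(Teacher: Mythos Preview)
Your argument is correct. The paper's own proof is essentially a pair of citations---to \cite[text preceding Proposition 2.2]{2018a} for (i)$\Leftrightarrow$(ii) and to \cite[proof of Proposition 5.4]{2022a} for (i)$\Leftrightarrow$(iii)---so you are supplying the details the paper outsources; your block-diagonal computation of $d\pi_z = d\rho_g \oplus \pi_g$ at the zero section is precisely the kind of ``standard property of vector bundle maps'' the paper invokes, and your logical routing (i)$\Leftrightarrow$(iii) then (ii)$\Leftrightarrow$(iii) differs only cosmetically from the paper's (i)$\Leftrightarrow$(ii), (i)$\Leftrightarrow$(iii).
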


\begin{proof} We refer the reader to the text preceding \cite[Prop.~2.2]{2018a} for the equivalence $\text{(i)} \aeq \text{(ii)}$. The equivalence $\text{(i)} \aeq \text{(iii)}$ is a consequence of standard properties of vector bundle maps; cf.~the proof of \cite[Prop.~5.4]{2022a}. \end{proof}

\begin{defn}\label{defn:17A.27.1} A simplicial vector bundle satisfying the equivalent conditions of the proposition is called a \emph{vector fibration} or a \emph{higher vector bundle}. \end{defn}

\begin{prop}\label{prop:17A.27.2} The tangent bundle\/ $TG \to G$ of a simplicial manifold\/ $G$ is a vector fibration if, and only if, $G$ is a Lie\/ $\infty$-groupoid. \end{prop}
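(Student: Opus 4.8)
The plan is to deduce this from Proposition \ref{prop:hvb} applied to $p = $ the bundle projection $\pi: TG \to G$. First I would observe that $\pi: TG \to G$ is always a simplicial vector bundle over $G$ in the sense just defined: for each $n$, $(TG)_n = T(G_n) \to G_n$ carries its canonical fiberwise linear structure, and for every poset map $\theta\colon [m] \to [n]$ the tangent map $T(G_\theta)$ is fiberwise linear over $G_\theta$, so the simplicial and linear structures are compatible. Hence Proposition \ref{prop:hvb} applies verbatim, and its equivalence (i)$\Leftrightarrow$(iii) tells us: $TG \to G$ is a vector fibration if and only if both $G$ and $TG$ are Lie $\infty$-groupoids. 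The claim to be proved is that this holds precisely when $G$ alone is a Lie $\infty$-groupoid; equivalently, that $G$ being a Lie $\infty$-groupoid already forces $TG$ to be one. So it suffices to prove the single implication: if $G$ is a Lie $\infty$-groupoid, then $\pi\colon TG \to G$ is a Kan fibration of simplicial manifolds with representable horn sheaves, i.e.\ condition (ii) of Proposition \ref{prop:hvb} is met. (The reverse direction, that $TG$ being a Lie $\infty$-groupoid forces $G$ to be one, is immediate, since a vector fibration has Lie $\infty$-groupoid base by Proposition \ref{prop:hvb}(i)$\Rightarrow$(iii), but one can also see it directly from the zero section $G \into TG$ being a simplicial embedding split by $\pi$.)

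The heart of the matter is the Kan lifting property for $\pi$. Fix $n \geq k \geq 0$, an $n,k$-horn $(w_i)_{i \neq k}$ in $TG$, and an $n$-simplex $g \in G_n$ with $g \mathbin| \varLambda^n_k = (\pi w_i)_{i \neq k}$; I must produce $w \in (TG)_n = T(G_n)$ with $\pi w = g$ and $d_i w = w_i$ for $i \neq k$. The key structural fact is that the horn space $\varLambda^n_k(TG)$ is itself a vector bundle over $\varLambda^n_k(G)$, because tangent bundles, pullbacks, and fibered products of vector bundles over a common base (which is how $\varLambda^n_k$-sheaves are built out of the $(TG)_{n-1}$) are again vector bundles, and the restriction maps are fiberwise linear; this is exactly the content used in \cite[proof of Proposition 5.4]{2022a} cited in Proposition \ref{prop:hvb}, and I would invoke it. Concretely: since $G$ is a Lie $\infty$-groupoid, the horn map $\lambda = \lambda^n_k\colon G_n \to \varLambda^n_k(G)$ is a surjective submersion, so its tangent map $T\lambda\colon T(G_n) \to T(\varLambda^n_k(G))$ is again a surjective submersion; moreover $T(\varLambda^n_k(G))$ maps onto $\varLambda^n_k(TG)$, and under these identifications the horn restriction map for $TG$ factors as $T(G_n) = (TG)_n \xrightarrow{T\lambda} T(\varLambda^n_k(G)) \to \varLambda^n_k(TG)$, a composite of surjective submersions followed by (what one checks to be) an isomorphism, hence a surjective submersion $(TG)_n \to \varLambda^n_k(TG)$. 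That surjectivity is precisely the Kan lifting property (in fact the stronger submersion property required in the definition of a Lie $\infty$-groupoid), and it automatically respects the projection to $G$ because the whole diagram lies over the corresponding diagram for $G$. Together with the representability of $\varLambda^n_k(TG)$ (it is a vector bundle over the representable $\varLambda^n_k(G)$, hence a manifold), this shows $TG$ is a Lie $\infty$-groupoid and $\pi$ is a Kan fibration, which is condition (ii).

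The step I expect to be the main obstacle is the clean identification $\varLambda^n_k(TG) \cong T\bigl(\varLambda^n_k(G)\bigr)$ — more precisely, verifying that the tangent functor commutes with the particular inverse limits of manifolds (iterated fibered products along submersions) that define the horn sheaves, and that this identification is compatible with all the face and restriction maps so that the horn map for $TG$ really is $T$ of the horn map for $G$. This is where representability must be used carefully: one needs the fibered products defining $\varLambda^n_k(G)$ to be transversal (guaranteed by the submersivity built into $G$ being a Lie $\infty$-groupoid), so that they are again manifolds and $T$ distributes over them. Once that compatibility is in hand, everything else is the soft observation that surjective submersions are stable under $T$. I would therefore organize the proof as: (1) $TG \to G$ is a simplicial vector bundle; (2) $T$ commutes with horn limits, giving $\varLambda^n_k(TG) = T(\varLambda^n_k(G))$ compatibly with horn maps; (3) conclude $\pi$ is a regular (equivalently Kan) fibration via Proposition \ref{prop:17A.26.4}-style stability, hence $TG \to G$ is a vector fibration by Proposition \ref{prop:hvb}; (4) read off the equivalence with "$G$ is a Lie $\infty$-groupoid."
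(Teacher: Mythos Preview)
Your approach is correct and follows essentially the same strategy as the paper: both reduce the problem to the identification $T\bigl(\varLambda^n_k(G)\bigr) \simeq \varLambda^n_k(TG)$, after which the horn map for $TG$ is simply $T$ applied to that for $G$ and hence a surjective submersion. The paper packages the inductive verification of this identification slightly differently: it considers the natural comparison morphism $\varphi_A\colon T\bigl(A(G)\bigr) \to A(TG)$ for general $A$, observes that the class of maps $B \to A$ in $\mathcal{R}_G$ for which $\varphi_A,\varphi_B$ are both isomorphisms is regularly saturated (Lemma~\ref{lem:17A.25.1}) and lies inside $\mathcal{R}_{TG\to G}$, and then appeals to the fact that any regularly saturated class containing all horn inclusions of dimension $< m$ automatically contains each $\varLambda^m_k$ as an object. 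This last step is precisely the ``$T$ commutes with the iterated transversal pullbacks defining horn spaces'' argument you flagged as the main obstacle, but run through the saturated-class formalism rather than by choosing an explicit presentation of $\varLambda^n_k$ as a fibered product. So your plan and the paper's differ only in bookkeeping: you propose to exhibit $\varLambda^n_k(G)$ directly as an iterated pullback along submersions and check $T$ distributes over it, whereas the paper lets the closure rules (i)--(vi) of Lemma~\ref{lem:17A.25.1} do that induction for free.
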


\begin{proof} The notations of section \ref{sub:fibrations} and specifically of Lemmas \ref{lem:17A.25.1} and \ref{lem:17A.26.1} are in force. For any $A \in \mathcal{R}_G$ we have an evident morphism of sheaves $\varphi_A: TA(G) \to A(TG)$, natural in $A$. The collection of all $B \xto{i} A \in \mathcal{R}_G$ for which $\varphi_A$, $\varphi_B$ are isomorphisms is easily seen to be regularly saturated and to be contained in $\mathcal{R}_{TG\to G}$ whenever $G$ is a Lie $\infty$-groupoid. We conclude by noting that if a regularly saturated class of maps contains all $n,l$-horn inclusions below a certain $m > n$ then it must contain every $\varLambda^m_k$, $0 \leq k \leq m$ as well. \end{proof}

A straightforward adaptation of the considerations of section \ref{sub:fibrations} and specifically of Proposition \ref{prop:17A.26.4} to the present context shows that for a vector fibration $V \xto{p} G$ and a regular anodyne extension $B \xto{i} A$ the two surjective submersions $A(V) \xto{p_*} A(G)$ and $A(G) \times_{B(G)} B(V) \xto{\pr} A(G)$ may be turned into smooth vector bundles over $A(G)$ in such a fashion that the map
\begin{equation}
	A(V) \xto{(p_*,i^*)} A(G) \times_{B(G)} B(V)
\label{eqn:17A.26.1*}
\end{equation}
becomes a vector bundle epimorphism: given $g \in A(G)$, the linear structure on the fiber $A(V)_g$ is characterized by the requirement that for every $n$-simplex $a \in A_n$ the map $A(V)_g \to (V_n)_{ga}$, $v \mapsto va$ be linear. Whenever $B \xto{i} A$ is one of the horn inclusions $\varLambda^n_k \xto\subset \varDelta^n$, we may view \eqref{eqn:17A.26.1*} as a morphism \(%
	V_n \to G_n \times_{\varLambda^n_k(G)} \varLambda^n_k(V)
\) of smooth vector bundles over $G_n \simeq \varDelta^n(G)$.

As a first practical illustration of these concepts we now introduce the \emph{Dold–Kan complex} or \emph{normalized Moore complex} $(\spl{V},\spl{d})$ of a vector fibration $V \xto{p} G$. We start by setting
\begin{equation}
\textstyle
	\spl{V}^{-n} = \bigcap\limits_{i=1}^n \ker(d_i: V^{-n} \to V^{1-n})
	             = \ker\bigl(V_n \to G_n \times_{\varLambda^n_0(G)} \varLambda^n_0(V)\bigr) \mathbin| 1_n(G_0)
\label{eqn:Vhat}
\end{equation}
for every $n$, with the understanding that $\spl{V}^0 = V^0$; while for a general simplicial vector bundle equation \eqref{eqn:Vhat} may not define a \emph{smooth} vector subbundle of $V^{-n}$, for a vector fibration it always does. We note next that $\spl{V} = \bigoplus_{n\in\Z} \spl{V}^{-n}$ (no bullet needed here) is a cochain subcomplex of the Moore complex $(V^\bullet,d)$: the restriction, $\spl{d}$, of the Moore differential to $\spl{V}$ is given by
\begin{equation}
	\spl{d}^{-n} = (-1)^{n-1}d_0: \spl{V}^{-n} \longto \spl{V}^{1-n} \text{,\quad $n \geq 1$.}
\label{eqn:dhat}
\end{equation}
There is a morphism \(%
	\nor: (V^\bullet,d) \to (\spl{V},\spl{d})
\) of cochain complexes of smooth vector bundles over $G_0$, natural in $V$, called the \emph{normalization map}: this is given by $V^0 \xto= \spl{V}^0$ for $n = 0$ and by
\begin{equation}
	\nor^{-n} = (\id - u_0d_1) \dotsm (\id - u_{n-1}d_n): V^{-n} \longto \spl{V}^{-n}
\label{eqn:nor}
\end{equation}
for $n \geq 1$. It is not hard to prove that this is a cochain homotopy equivalence, with quasi-inverse cochain map given by the subcomplex inclusion $(\spl{V},\spl{d}) \xto\subset (V^\bullet,d)$; the argument is substantially the same as that which can be found in \cite{GJ99}.

We shall explain in section \ref{sub:splitting} how to turn any vector fibration $V \xto{p} G$ into a representation up to homotopy $(V^\bullet,R)$ of $G$ with underlying cochain complex $(V^\bullet,d)$. We shall furthermore see that $(V^\bullet,R)$ descends uniquely to a representation up to homotopy $(\spl{V},\spl{R})$ of $G$ on $(\spl{V},\spl{d})$ in such a way that the normalization map becomes a strict intertwiner. The construction of $R$ will not be canonical, in the sense that it will depend on the choice of an auxiliary set of data known as a \emph{cleavage} of the vector fibration $V \xto{p} G$ (to be discussed next). We shall nevertheless see in section \ref{sec:morphism} that the isomorphism classes of the representations $R$ and $\spl{R}$ are independent of this choice, in fact, we shall prove that each one of them provides a well-defined invariant of vector fibrations.

\begin{defn}\label{defn:17A.27.4} Let $B \xto{i} A$ be a regular anodyne extension; a \emph{$B \xto{i} A$-cleavage} of the vector fibration $V \xto{p} G$ is a morphism $A(G) \times_{B(G)} B(V) \to A(V)$ of smooth vector bundles over $A(G)$ which cross-sections the epimorphism \eqref{eqn:17A.26.1*}. We refer to $\varLambda^n_k \xto\subset \varDelta^n$-cleavages as \emph{$n,k$-cleavages}, for short, and view them as morphisms $c_{n,k}: G_n \times_{\varLambda^n_k(G)} \varLambda^n_k(V) \to V_n$ over $G_n$. \end{defn}

Such cross-sections can always be built, of course, by making use of partitions of unity. The idea behind the concept of an $n,k$-cleavage is that $c_{n,k}$ chooses some specific solution \[%
	c_{n,k}\bigl(g,(v_i)_{i\neq k}\bigr) = c_{n,k}(g;v_0,\dotsc,v_{k-1},v_{k+1},\dotsc,v_n)
\] to the Kan lifting problem for $p$ in such a way that its dependence on the givens of the problem, $g$, $(v_i)_{i\neq k}$, is both \emph{smooth} and \emph{linear}. We shall refer to the subbundle $C_{n,k} = \im(c_{n,k}) \subset V_n$ also as an $n,k$-cleavage (this being legitimate since $c_{n,k}$ and $C_{n,k}$ determine each other uniquely).

\begin{defn}\label{defn:16A.3.1} The $n,k$-cleavage $C_{n,k}$ is \emph{normal} if it contains every degenerate $n$-simplex of $V$, in other words, $C_{n,k} \supset \im(u_j: V_{n-1} \to V_n)$ for every $0 \leq j < n$. \end{defn}

\begin{prop}\label{prop:normal} Any vector fibration admits normal\/ $n,k$-cleavages for all\/ $n \geq k \geq 0$. \end{prop}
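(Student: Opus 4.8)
The plan is to promote an arbitrary $n,k$-cleavage to a normal one. By the remark following Definition \ref{defn:17A.27.4}, the vector fibration $V \xto{p} G$ does admit some $n,k$-cleavage, i.e.\ some smooth section of the vector bundle epimorphism
\[
	\pi \equiv \pi_{n,k}\colon V_n \longto T \equiv G_n \times_{\varLambda^n_k(G)} \varLambda^n_k(V), \qquad
		v \longmapsto \bigl(p_nv,\, (d_iv)_{i\neq k}\bigr)
\]
over $G_n$ --- the instance of \eqref{eqn:17A.26.1*} attached to the horn inclusion $\varLambda^n_k \xto\subset \varDelta^n$. It is convenient to encode such a section by the associated bundle projection $\rho \equiv \id - c_{n,k}\circ\pi\colon V_n \to V_n$, which is idempotent with image $\ker\pi = \bigcap_{i\neq k}\ker(d_i\colon V_n \to V_{n-1})$ and kernel $\im(c_{n,k})$. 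Since $\im(c_{n,k}) \supset \im(u_j\colon V_{n-1}\to V_n)$ if and only if $\rho$ annihilates $\im u_j$, the cleavage $c_{n,k}$ is normal precisely when $\rho$ vanishes on the closed subset $D_n \equiv \bigcup_{j=0}^{n-1}\im(u_j\colon V_{n-1}\to V_n) \subset V_n$; here each $\im u_j$ is a closed submanifold of $V_n$, the face map $d_j$ being a smooth retraction.

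The first step is the remark that a degenerate simplex is \emph{recoverable} from its $n,k$-horn. If $v = u_jv'$, the simplicial identities give $d_jv = d_{j+1}v = v'$, whence $v = u_j(d_\ell v)$ for $\ell = j$ and for $\ell = j+1$; since $j \neq j+1$, at least one of these indices differs from $k$, and it lies in $[n]$ because $0 \leq j \leq n-1$. Fixing for each $j$ such an index $\ell(j) \in \{j,j+1\}\smallsetminus\{k\}$, the formula $\sigma_j\colon T \to V_n$, $\sigma_j = u_j\circ(\text{projection onto the $\ell(j)$-th horn component})$, defines a smooth morphism of vector bundles with $\sigma_j\circ\pi = \id$ on $\im u_j$; thus $\pi$ restricts to a closed embedding $\im u_j \into T$ whose image $\pi(\im u_j)$ --- the fixed-point set of the smooth idempotent $\pi\circ\sigma_j$ --- is a closed submanifold of $T$. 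Moreover, invoking the Eilenberg--Zilber lemma for the simplicial set underlying $V$ (two degenerate simplices with the same $n,k$-horn coincide), I would show that the $\sigma_j$ agree over the overlaps $\pi(\im u_j) \cap \pi(\im u_{j'})$, so that $\pi$ is injective on $D_n$ and $(\pi|_{D_n})^{-1}$ is a well-defined section $\sigma\colon \pi(D_n) \to V_n$ of $\pi$ over the closed subset $\pi(D_n) = \bigcup_j\pi(\im u_j)$, smooth since near each of its points it agrees with one of the $\sigma_j$.

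It then remains to extend $\sigma$ to a global smooth section of $\pi$. Fixing any global section $\tau\colon T \to V_n$ (partition of unity), the difference $\sigma - \tau$ takes values in $\ker\pi$ and so is a section over $\pi(D_n)$ of the pullback of $\ker\pi$ along $T \to G_n$; extending it to a global section $\xi$ of that vector bundle and setting $c_{n,k} \equiv \tau + \xi$ yields a global section of $\pi$ restricting to $\sigma$ on $\pi(D_n)$, hence with $\im(c_{n,k}) \supset D_n$: a normal $n,k$-cleavage. The extension of $\sigma - \tau$ is standard vector-bundle topology except for the circumstance that $\pi(D_n)$ is a finite union of closed submanifolds overlapping along deeper strata; I would carry it out by treating the strata $\pi(\im u_j)$ one at a time, at each stage extending off the stratum in question while keeping the section zero on the strata already treated --- which is consistent because, after those stages, $\sigma - \tau$ already vanishes on the overlaps --- and then patching with a partition of unity on $T$.

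The main obstacle is exactly this last, stratified extension across the singular degenerate locus: the submanifolds $\im(u_j\colon G_{n-1}\to G_n)$ genuinely meet one another (already $\im u_0 \cap \im u_1$ is the locus of doubly degenerate simplices), so neither $D_n$ nor $\pi(D_n)$ is a submanifold, and the inductive patching must be arranged so that handling one stratum does not spoil the previous ones. The compatibility that makes this possible is precisely the full package of simplicial identities $u_iu_j = u_{j+1}u_i$ ($i\leq j$), $d_iu_j = u_{j-1}d_i$ ($i<j$), $d_iu_j = \id$ ($i\in\{j,j+1\}$), $d_iu_j = u_jd_{i-1}$ ($i>j+1$), as already encapsulated in the agreement of the $\sigma_j$; the rest is the routine differential topology of sections and projections of vector bundles over the paracompact manifolds $G_n$ and $T$.
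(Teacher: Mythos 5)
Your overall strategy — build the would-be section of $\pi$ canonically over the closed subset $\pi(D_n)$ traced out by the degenerate simplices, then extend it globally and add a correction in $\ker\pi$ — is reasonable, and very likely close in spirit to what the cited reference does; the paper itself offers no self-contained argument for this proposition, deferring to \cite[\S 4.4]{2018a}, so a line-by-line comparison is not possible. But two of your steps are more substantive than you indicate, and one of them is the crux of the proof rather than a routine tail.

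The smaller one: the claim, credited to Eilenberg--Zilber, that two degenerate $n$-simplices sharing an $n,k$-horn must coincide is not what the Eilenberg--Zilber lemma says (that lemma is about unique epi--mono factorization of simplices). The statement is true, but it requires its own case analysis on $j$, $j'$, $k$ through the simplicial identities, and you would need to actually carry it out. The larger one is the extension of $\sigma-\tau$ across the singular set $\pi(D_n)=\bigcup_j\pi(\im u_j)$. Extending a section of a vector bundle from a union of closed submanifolds, keeping it zero on the strata already treated, is \emph{not} automatic and fails for generic configurations (two curves meeting tangentially in the plane already give a counterexample to the naive statement); one needs the submanifolds to be regularly situated, i.e.\ to intersect cleanly. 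What makes this hold here — and what your stratum-by-stratum scheme actually hinges on — is that the smooth idempotents $\pi\circ\sigma_j$ of $T$ pairwise commute, a further consequence of the simplicial identities together with the horn compatibilities $d_a w_b = d_{b-1}w_a$ for $a<b$; this supplies compatible retractions of $T$ onto each $\pi(\im u_j)$, hence compatible tubular-neighbourhood data, and only then does the inductive extension with partitions of unity go through. Until you verify the commutation and the resulting cleanness, "the rest is routine differential topology" overstates the case: that clean-intersection structure \emph{is} the proof, not a technicality.
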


\begin{proof} The argument in \cite[§4.4]{2018a} was written intendedly so as to remain valid without any modifications for an arbitrary vector fibration (over an arbitrary Lie $\infty$-groupoid). \end{proof}

Since the horn inclusions $\varLambda^n_k \xto\subset \varDelta^n$ generate the regular anodyne extensions $B \xto{i} A$, any choice of $n,k$-cleavages for all $n \geq k \geq 0$ gives rise to a corresponding choice of $B \xto{i} A$-cleavages for all those $B \xto{i} A$ that come with a canonical presentation as simplicial maps built from horn inclusions by iterated application of the formation rules listed in the statement of Lemma \ref{lem:17A.25.1}. This is the case for any simplicial map of the form
\begin{equation}
	A \times 0 \cup B \times I \xto\subset A \times I,
\label{eqn:sqcup}
\end{equation}
where $A \supset B$ are simplicial sets of \emph{finite type} and $I = \varDelta^1$ denotes the \emph{fundamental interval}; the reader is referred to \cite[p.~26, Lem.~4.1]{2018a} for the relevant technical details. We shall write $c_\sqcup^{A,B}$ for the corresponding “canonical” $A \times 0 \cup B \times I \xto\subset A \times I$-cleavage, where $c = \{c_{n,k}\}_{n>k\geq 0}$ indicates a choice of $n,k$-cleavages for all $n > k \geq 0$; we need not specify $n,n$-cleavages here, as they do not enter into the construction of $c_\sqcup^{A,B}$.

\begin{defn}\label{defn:cleavage} By a \emph{cleavage} $c = \{c_{n,k}\}_{n>k\geq 0}$ of the vector fibration $V \xto{p} G$ we mean a choice of $n,k$-cleavages $c_{n,k}$ for all $n > k \geq 0$. We say $c$ is \emph{normal} if so is every $C_{n,k} = \im(c_{n,k})$. \end{defn}

Our interest in the \emph{canonical} $A \times 0 \cup B \times I \xto\subset A \times I$-cleavage $c_\sqcup^{A,B}$ (as opposed to a general, “noncanonical” one) lies exclusively in that $c_\sqcup^{A,B}$ enjoys the properties summarized in our next two lemmas; any other cleavage enjoying the same properties would serve us just as well.

\begin{lem}[{\cite[p.~36]{2018a}}]\label{lem:16A.24.1} Let\/ $c = \{c_{n,k}\}_{n>k\geq 0}$ be a normal cleavage of\/ $V \xto{p} G$. Let\/ $A \supset B$ be simplicial sets, let\/ $\kappa: A' \to A$ be a simplicial map, and let\/ $B'$ be a subset of\/ $A'$ containing\/ $\kappa^{-1}(B)$. Then, for every simplicial map\/ $v: A \times I \to V$,
\begin{multline}
 v = c_\sqcup^{A,B}(pv,\left.v \mathbin| A \times 0 \cup B \times I\right.) \quad \text{implies}\\
	c_\sqcup^{A',B'}\bigl(pv \circ (\kappa \times \id),\left.v \circ (\kappa \times \id) \mathbin| A' \times 0 \cup B' \times I\right.\bigr) = v \circ (\kappa \times \id).
\label{eqn:16A.24.1}
\end{multline} \end{lem}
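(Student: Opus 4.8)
The plan is to lean on the explicit inductive description of the canonical cleavage $c_\sqcup^{A,B}$ furnished by \cite[p.~26, Lemma 4.1]{2018a}. Write $\bar\kappa = \kappa\times\id$, $\tilde A = A\times I$, $\tilde B = A\times 0\cup B\times I$, and likewise $\tilde A' = A'\times I$, $\tilde B' = A'\times 0\cup B'\times I$. Recall that to each nondegenerate $p$-simplex $a$ of $A$ not lying in $B$ and each $j\in\{0,\dotsc,p\}$ there is associated a distinguished nondegenerate $(p+1)$-simplex $\tau^a_j = (a\times\id)\circ\tau_j$ of $\tilde A$, where $\tau_j\colon\varDelta^{p+1}\to\varDelta^p\times I$ is the $j$-th nondegenerate shuffle, and that $\tilde B\xto\subset\tilde A$ arises, through the formation rules of Lemma \ref{lem:17A.25.1}, by attaching these cells one at a time---in order of increasing $p$ and, within a fixed dimension, decreasing $j$---each $\tau^a_j$ along the horn inclusion $\varLambda^{p+1}_j\xto\subset\varDelta^{p+1}$. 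From this one reads off that, for any simplicial map $\varphi\colon\tilde A\to G$ and any lift $\psi\colon\tilde B\to V$ of $\varphi\mathbin|\tilde B$, the map $w = c_\sqcup^{A,B}(\varphi,\psi)$ is the \emph{only} simplicial lift of $\varphi$ that restricts to $\psi$ on $\tilde B$ and satisfies
\[
 w(\tau^a_j) = c_{p+1,j}\bigl(\varphi(\tau^a_j),\,(w(d_i\tau^a_j))_{i\neq j}\bigr)
   \qquad\text{for every such $a$ and $j$};
\]
the identical description holds with $A'$, $B'$ in place of $A$, $B$.

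Set $\varphi = pw$, $\varphi' = \varphi\circ\bar\kappa$, and $\psi' = (w\circ\bar\kappa)\mathbin|\tilde B'$. Then $w\circ\bar\kappa$ is a simplicial lift of $\varphi'$ restricting to $\psi'$ on $\tilde B'$, so, by the uniqueness just recorded, it will suffice to verify that $w\circ\bar\kappa$ obeys the cell equations for the pair $A'\supset B'$, namely
\[
 (w\circ\bar\kappa)(\tau^{a'}_j) = c_{p'+1,j}\bigl(\varphi'(\tau^{a'}_j),\,((w\circ\bar\kappa)(d_i\tau^{a'}_j))_{i\neq j}\bigr)
\]
for every nondegenerate $p'$-simplex $a'$ of $A'$ not in $B'$ and every $j\in\{0,\dotsc,p'\}$. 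The key point is that $\bar\kappa(\tau^{a'}_j) = \tau^{\kappa a'}_j$, with the \emph{same} index $j$. If $\kappa a'$ is degenerate in $A$, then $\tau^{\kappa a'}_j$ is a degenerate simplex of $\tilde A$, because precomposing the nondegenerate shuffle $\tau_j$ with a degeneracy operator on the $\varDelta^{p'}$-factor produces a poset map that is non-injective on vertices. If $\kappa a'$ is nondegenerate, then it is $p'$-dimensional, and it cannot lie in $B$: otherwise $a'\in\kappa^{-1}(B)\subseteq B'$, contrary to $a'\notin B'$; hence $\tau^{\kappa a'}_j$ is genuinely one of the attaching cells of $\tilde B\xto\subset\tilde A$, attached along $\varLambda^{p'+1}_j$.

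In either case $w\bigl(\bar\kappa(\tau^{a'}_j)\bigr)$ lies in the image of $c_{p'+1,j}$: in the nondegenerate case directly by the cell equation for $w = c_\sqcup^{A,B}(\varphi,\psi)$, and in the degenerate case because $w$ sends degenerate simplices to degenerate simplices and a normal cleavage contains every degenerate simplex (Definition \ref{defn:16A.3.1}). Now $c_{p'+1,j}$ is, by definition, a cross-section of the restriction morphism \eqref{eqn:17A.26.1*} attached to the horn inclusion $\varLambda^{p'+1}_j\xto\subset\varDelta^{p'+1}$; hence it restricts to a two-sided inverse of that morphism on its own image. Applying this to $w\bigl(\bar\kappa(\tau^{a'}_j)\bigr)$, and using $p\circ w = \varphi$ together with the simpliciality of $w$ and $\bar\kappa$ to rewrite $\varphi\bigl(\bar\kappa(\tau^{a'}_j)\bigr) = \varphi'(\tau^{a'}_j)$ and $d_i\,w\bigl(\bar\kappa(\tau^{a'}_j)\bigr) = (w\circ\bar\kappa)(d_i\tau^{a'}_j)$, one recovers exactly the desired cell equation for $w\circ\bar\kappa$, and the proof is complete.

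The crux---and the only part that is not purely formal---is the combinatorial assertion of the second paragraph: that $\bar\kappa$ carries each attaching cell of $\tilde B'\xto\subset\tilde A'$ either to a degenerate simplex of $\tilde A$ or to an attaching cell of $\tilde B\xto\subset\tilde A$ bearing the \emph{same} horn index. This, however, is essentially dictated by the naturality identity $\bar\kappa(\tau^{a'}_j) = \tau^{\kappa a'}_j$ and by the hypothesis $\kappa^{-1}(B)\subseteq B'$; everything else reduces to the defining section property of cleavages and to their normality.
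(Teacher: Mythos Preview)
Your argument is correct. The paper does not supply its own proof of this lemma but simply cites \cite[p.~36]{2018a}; your unwinding of the cell-by-cell description of $c_\sqcup^{A,B}$ from \cite[Lemma 4.1]{2018a}, together with the observation that $\bar\kappa$ sends each attaching shuffle $\tau^{a'}_j$ either to a degenerate simplex (absorbed by normality) or to a shuffle $\tau^{\kappa a'}_j$ with the \emph{same} horn index $j$ and $\kappa a'\notin B$, is precisely the intended mechanism and matches the treatment in the cited reference.
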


\begin{lem}[{\cite[p.~36]{2018a}}]\label{lem:16A.24.2} Let\/ $c = \{c_{n,k}\}_{n>k\geq 0}$ be a normal cleavage of\/ $V \xto{p} G$. Let\/ $A \supset B$ be simplicial sets. Write\/ $\pr: A \times I \to A$ for the projection. For every simplicial map\/ $v: A \to V$,
\begin{equation}
	c_\sqcup^{A,B}(pv \circ \pr,\left.v \circ \pr \mathbin| A \times 0 \cup B \times I\right.) = v \circ \pr.
\label{eqn:16A.24.2}
\end{equation} \end{lem}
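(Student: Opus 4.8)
The plan is to derive \eqref{eqn:16A.24.2} from Lemma \ref{lem:16A.24.1} by exhibiting $w \circ \pr$ as the $c_\sqcup$-lift of its own restriction in the special situation where the simplicial map being lifted is pulled back along the projection $\pr \colon A \times I \to A$. First I would dispose of the degenerate shape of the problem: the point is that a simplicial map of the form $w \circ \pr$ is, informally, ``constant along the $I$-direction,'' and such maps are automatically reproduced by the canonical cleavage because the latter is built from $n,k$-cleavages that are normal, hence contain all degenerate simplices, and the extra $I$-coordinate only ever contributes degeneracies. Concretely, recall from \cite[p.~26, Lemma 4.1]{2018a} that the inclusion \eqref{eqn:sqcup} has a canonical presentation as an iterated pushout/composition of horn inclusions $\varLambda^{n+1}_k \xto\subset \varDelta^{n+1}$ indexed by the nondegenerate simplices of $A \smallsetminus B$, with $k$ determined by the shuffle data; the associated cleavage $c_\sqcup^{A,B}$ is assembled, cell by cell, from the chosen $c_{n+1,k}$. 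I would verify that when the datum to be lifted over the $(n{+}1)$-cell $\varDelta^{n+1} \to A \times I$ coming from a simplex $a \in A_n$ is $(w \circ \pr)$ evaluated there, the $n{+}1$-simplex $w(a) u_j$ (an appropriate degeneracy of $w(a) \in V_n$) already solves the Kan lifting problem and already lies in $C_{n+1,k}$ by normality; hence the cleavage returns exactly that simplex, which is the value of $w \circ \pr$ on that cell. Running this through every cell of the presentation gives \eqref{eqn:16A.24.2} directly.

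Alternatively — and this is the cleaner route, the one I would actually write — I would obtain \eqref{eqn:16A.24.2} as a corollary of \eqref{eqn:16A.24.1} applied with a suitable choice of $(A',B',\kappa)$. Take $A' = A$, $B' = B$, but now in \eqref{eqn:16A.24.1} start from the space $A \times I$ in the role of the ambient ``$A$'' of Lemma \ref{lem:16A.24.1}, with ``$B$'' taken to be $A \times 0 \cup B \times I$; the point is that over $A \times I$ the map $w \circ \pr$ restricts on $A \times 0 \cup B \times I$ to $w \circ \pr \mathbin| (A \times 0 \cup B \times I)$, and on $A \times 0$ in particular to $w$ itself (composed with the projection to $A$, which is the identity on $A \times 0$). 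Thus the hypothesis ``$w' = c_\sqcup(pw',w' \mathbin| \cdots)$'' of the lemma is needed for the map $w' \colon (A \times I) \times I \to V$ obtained as $w \circ \pr \circ \pr_1$; one checks this holds because $A \times 0 \cup (A \times 0 \cup B \times I) \times I$ already contains a copy of $A \times 0$ on which $w' = w \circ \pr$, and $w'$ is constant in the new interior interval. Then \eqref{eqn:16A.24.1}, with $\kappa$ the diagonal-type map $A \times I \to (A \times I) \times I$ collapsing the new interval to $0$, outputs precisely \eqref{eqn:16A.24.2}. The combinatorics of matching up which copy of $I$ plays which role is the only delicate bookkeeping.

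The main obstacle I anticipate is exactly that bookkeeping: making the reduction to Lemma \ref{lem:16A.24.1} honest requires choosing the ambient simplicial sets, the subsets, the map $\kappa$, and the ``comparison map'' $w'$ so that both the hypothesis and the conclusion of \eqref{eqn:16A.24.1} translate into \eqref{eqn:16A.24.2} verbatim, and it is easy to produce a choice that proves something slightly weaker (e.g.\ equality only after a further restriction, or only on nondegenerate cells). If the reduction proves too fiddly to present cleanly, I would fall back on the direct cell-by-cell verification sketched first: it is longer but entirely mechanical, since normality of $c$ (Definition \ref{defn:16A.3.1}) guarantees at each step that the relevant degenerate simplex of $V$ lies in the relevant $C_{n,k}$, so the cleavage has no choice but to return it. Either way, no genuinely new idea is needed beyond Lemma \ref{lem:16A.24.1} and the normality hypothesis; the content is that ``constant-in-$I$'' maps are fixed points of the canonical cleavage operator.
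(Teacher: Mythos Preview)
The paper does not actually contain a proof of this lemma: it is stated with a bare citation to \cite[p.~36]{2018a} and nothing further. So there is no in-paper argument to compare against; I can only assess your proposal on its own terms.

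Your first approach --- the direct cell-by-cell verification --- is correct and is the natural argument. The presentation of $A \times 0 \cup B \times I \hookrightarrow A \times I$ as an iterated pushout of horn inclusions attaches, for each nondegenerate $n$-simplex $a$ of $A \smallsetminus B$, the maximal $(n{+}1)$-simplices $\sigma$ of the prism over $a$. On each such $\sigma$ the map $w \circ \pr$ evaluates to $w(a)\upsilon_j$ for the appropriate $j$, which is degenerate in $V$; normality of $c$ then says this degenerate simplex lies in $C_{n+1,k}$, hence is the unique element there matching the given horn data, hence is what $c_{n+1,k}$ returns. Iterating over the presentation gives \eqref{eqn:16A.24.2}.

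Your second approach, however --- the ``cleaner route'' via Lemma \ref{lem:16A.24.1} --- is circular as stated. You propose to apply \eqref{eqn:16A.24.1} to $w' = w \circ \pr \circ \pr_1: (A \times I) \times I \to V$ with ambient pair $(A \times I,\; A \times 0 \cup B \times I)$. But the hypothesis of Lemma \ref{lem:16A.24.1} that you must then check, namely
\[
	w' = c_\sqcup^{A\times I,\;A\times 0\cup B\times I}\bigl(pw',\; w' \mathbin| (A\times I)\times 0 \cup (A\times 0\cup B\times I)\times I\bigr),
\]
is precisely the statement of Lemma \ref{lem:16A.24.2} for the triple $(A\times I,\; A\times 0\cup B\times I,\; w\circ\pr)$ in place of $(A,B,w)$. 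Your justification (``$w'$ is constant in the new interior interval'') is exactly the content of what is to be proved. So the reduction does not buy anything; any honest verification of that hypothesis is the direct argument in disguise. Stick with the cell-by-cell proof.
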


\subsection{Splitting a vector fibration by a cleavage}\label{sub:splitting}

We fix an arbitrary vector fibration $V \xto{p} G$ and an arbitrary \emph{normal} cleavage $c = \{c_{n,k}\}_{n>k\geq 0}$ of $V \xto{p} G$.

Let $[1]^m$, $m \geq 0$, be the product poset $[1] \times \dotsb \times [1]$ ($m$ factors); its elements are the $m$-tuples $(r_1,\dotsc,r_m)$ with $r_i \in \{0,1\}$, the only such $m$-tuple for $m = 0$ being $(\mkern 3mu)$, the empty one. We define $I^m$, the \emph{fundamental\/ $m$-cube,} to be the nerve of $[1]^m$. We write $\delta_{i|r} = \delta^m_{i|r}: I^{m-1} \to I^m$ for the \emph{cubical face inclusion} that inserts $r = 0$,~$1$ as the $i$-th coordinate and set $\partial_{i|r}I^m = \im(\delta^m_{i|r})$ for $i = 1$,~$\dotsc$,~$m$. The \emph{total boundary} of $I^m$ is the simplicial subset $\partial I^m = \bigcup_{\substack{1\leq i\leq m \\ r=0,1\hfill}} \partial_{i|r}I^m \subset I^m$; the boundary $\partial\varDelta^n$ of $\varDelta^n$ is the union of all faces $\partial_j\varDelta^n = \im(\delta_j)$, $j = 0$,~$\dotsc$,~$n$; conventionally, both $\partial I^m$ and $\partial\varDelta^n$ are empty for $m = n = 0$. We shall be interested in the simplicial subset of $\varDelta^n \times I^m$ comprising every face but $\varDelta^n \times \partial_{m|1}I^m \subset \partial(\varDelta^n \times I^m) = \partial\varDelta^n \times I^m \cup \varDelta^n \times \partial I^m$:%
\begin{subequations}
\begin{gather}
\textstyle%
	\varPi^m = \bigl(\bigcup\limits_{1\leq i\leq m} \partial_{i|0}I^m\bigr) \cup \bigl(\bigcup\limits_{1\leq i<m} \partial_{i|1}I^m\bigr)
		\text{,\quad $m \geq 1$;\qquad $\varPi^0 = I^0$;} \\
	\varPi^{n,m} = \partial\varDelta^n \times I^m \cup \varDelta^n \times \varPi^m
		\text{,\quad $m$,~$n \geq 0$.}
\end{gather}
\end{subequations}
The inclusion $\varPi^{n,m} \xto\subset \varDelta^n \times I^m$ is easily recognized to be a regular anodyne extension; for $m \geq 1$ this is identifiable as a map of the special type $A \times 0 \cup B \times I \xto\subset A \times I$ considered in section \ref{sub:cleavages}, namely, the one having $A = \varDelta^n \times I^{m-1}$, $B = \partial\varDelta^n \times I^{m-1} \cup \varDelta^n \times \partial I^{m-1}$; we shall write $c^{n,m}$ for the $\varPi^{n,m} \xto\subset \varDelta^n \times I^m$-cleavage corresponding (for this choice of $A$ and $B$) to $c_\sqcup^{A,B}$ under the identification of simplicial sets $A \times I \simeq \varDelta^n \times I^m$.

The idea behind the construction of a vector $R_m^{-n}(g)v \in V^{1-m-n}_{tg}$ out of $g \in G_m$, $v \in V^{-n}_{sg}$ is relatively simple. Instead of considering $\varDelta^m \xto{g} G$, we consider its composition $I^m \xto{g\alpha} G$ with the simplicial map
\begin{equation}
	\alpha: I^m \to \varDelta^m, \quad%
		(r_1,\dotsc,r_m) \mapsto \max\{i: r_i = 1\}.
\label{eqn:alpha}
\end{equation}
The effect of this “cubical blow-up” of $\varDelta^m$ is illustrated in Figure \ref{fig:alpha} for $m = 1$,~$2$,~$3$.%
\begin{figure}
\begin{tikzpicture}[scale=0.5]
 \draw[->,thick] (0,1)--(5,1);
 \node[left] at (0,1) {$0$}; \node[right] at (5,1) {$1$};
 \path[fill=gray] (10,1)--(15,1)--(15,6)--(10,1);
 \draw[->,thick] (10,1)--(15,1); \draw[->,thick] (15,1)--(15,6);
 \draw[->,thick] (10,1)--(10,6); \draw (10,1)--(15,6); \draw[dotted] (10,6)--(15,6);
 \node[below left] at (10,1) {$0$}; \node[below right] at (15,1) {$1$};
 \node[above left] at (10,6) {$2$}; \node[above right] at (15,6) {$2$};
 \path[fill=lightgray] (20,0)--(25,0)--(27,3)--(27,8)--(22,3)--(20,0);
 \path[fill=gray] (20,0)--(25,0)--(25,5)--(20,0);
 \path[fill=gray] (20,0)--(22,3)--(22,8)--(20,0);
 \path[fill=gray] (25,0)--(27,3)--(27,8)--(25,0);
 \draw[dashed] (20,0)--(27,3); \draw[dotted] (22,3)--(27,3); \draw (20,0)--(27,8) (22,3)--(27,8);
 \draw[->,thick] (20,0)--(25,0); \draw[->,thick] (25,0)--(27,3); \draw[->,thick] (27,3)--(27,8);
 \draw[->,thick] (20,0)--(22,3); \draw[->,thick] (22,3)--(22,8); \draw (20,0)--(22,8);
 \draw[dotted] (20,5)--(27,8); \draw[dotted] (22,8)--(27,8); \draw[dotted] (25,5)--(27,8);
 \draw[->,thick] (20,0)--(20,5); \draw (20,0)--(25,5); \draw[dotted] (20,5)--(22,8);
 \draw[->,thick] (25,0)--(25,5); \draw (25,0)--(27,8); \draw[dotted] (20,5)--(25,5);
 \node[below left] at (20,0) {$0$}; \node[below right] at (25,0) {$1$};
 \node[left] at (22,3) {$2$}; \node[right] at (27,3) {$2$};
 \node[above left] at (22,8) {$3$}; \node[above right] at (27,8) {$3$};
 \node[left] at (20,5) {$3$}; \node[right] at (25,5) {$3$};
\end{tikzpicture}
\caption{Cubical blow-up, $\alpha: I^m \to \varDelta^m$, $m = 1$,~$2$,~$3$}
\label{fig:alpha}
\end{figure}
We observe that the following identities of simplicial maps $I^{m-1} \to \varDelta^m$ hold for every $i = 1$,~$\dotsc$,~$m$.%
\begin{subequations}
\label{eqn:17A.6.1}
\begin{gather}
	\alpha\delta_{i|0} = \delta_i\alpha \\
	\alpha\delta_{i|1} = {\delta_0}^i{\upsilon_0}^{i-1}\alpha
\label{eqn:17A.6.1b}
\end{gather}
\end{subequations}

\begin{lem}[{\cite[beginning of §4.3]{2018a}}]\label{lem:P(g,v)} There is one and only one family of simplicial maps \[%
	P(g,v): \varDelta^n \times I^m \to V \text{,\quad $m$,~$n \geq 0$, $g \in G_m$, $v \in V^{-n}_{sg}$}
\] satisfying all of the following requirements.
\begin{enumerate}
\def\labelenumi{\upshape \arabic{enumi}.}
 \item Each one of them is a solution to the following lifting problem;
\begin{equation}
 \begin{split}
\xymatrix@C=1.67em@R=11ex{%
 \varDelta^n \ar[rrr]^-v \ar[d]_{(\id,0)} & & & V \ar[d]^p \\
 \varDelta^n \times I^m \ar@{-->}[rrru]^(.5){P(g,v)} \ar[r]^-\pr
 & I^m \ar[r]^-\alpha & \varDelta^m \ar[r]^-g & G
}\end{split}
\label{eqn:P(g,v)}
\end{equation}
in particular, $P(x,v) = (\varDelta^n \times I^0 \simeq \varDelta^n \xto{v} V)$ for all\/ $x \in G_0$, $v \in V^{-n}_x$.
 \item $P(g,v) = c^{n,m}\bigl(g\alpha \circ \pr,\left.P(g,v) \mathbin| \varPi^{n,m}\right.\bigr)$.
 \item The restriction of\/ $P(g,v)$ to\/ $\varPi^{n,m}$ is expressed, in conformity with the prescriptions below, in terms of simplicial maps of the form\/ $P(h,w): \varDelta^l \times I^k \to V$ with\/ $h \in G_k$, $w \in V^{-l}_{sh}$, and either\/ $k < m$ or else\/ $k = m$, $l < n$.%
\begin{subequations}
\label{eqn:S(g,v)}
\begin{align}
	P(g,v)&\circ (\delta_j \times \id) = P(g,d_jv) & &0 \leq j \leq n
\label{eqn:P(g,v).j}\\
	P(g,v)&\circ (\id \times \delta_{i|0}) = P(d_ig,v) & &1 \leq i \leq m
\label{eqn:P(g,v).i|0}\\
	P(g,v)&\circ (\id \times \delta_{i|1}) =%
		P\bigl(t_{m-i}g,\left.P(s_ig,v) \circ (\id \times \delta_{i|1})\right.\bigr) & &1 \leq i < m
\label{eqn:P(g,v).i|1}
\end{align}
\end{subequations}
\end{enumerate} \end{lem}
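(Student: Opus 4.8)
The plan is to establish existence and uniqueness simultaneously, by a single recursion along the well-founded order $\prec$ on pairs of non-negative integers for which $(k,l) \prec (m,n)$ means that either $k < m$, or else $k = m$ and $l < n$; this is precisely the order in which requirement 3 lets one invoke previously constructed members of the family. The base of the recursion consists of the pairs with $m = 0$: for these $\varPi^{n,0}$ is the whole of $\varDelta^n \times I^0$, so requirement 2 is automatically fulfilled ($\varPi^{n,0} \xto\subset \varDelta^n \times I^0$ being the identity), while the ``in particular'' clause of requirement 1 forces $P(x,v)$ to be the simplicial map $\varDelta^n \simeq \varDelta^n \times I^0 \xto{v} V$, and one checks directly that this map satisfies requirements 1--3 (with \eqref{eqn:P(g,v).j} then amounting to $v \circ \delta_j = d_jv$, while \eqref{eqn:P(g,v).i|0} and \eqref{eqn:P(g,v).i|1} have empty index range).

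For the inductive step, fix $(m,n)$ with $m \geq 1$ and assume $P(h,w)$ has been constructed, and shown to be uniquely determined, for all parameters indexed by pairs strictly below $(m,n)$ in $\prec$. Given $g \in G_m$ and $v \in V^{-n}_{sg}$, the first task is to assemble a simplicial map $\varPi^{n,m} \to V$ by prescribing its value on each bounding wall via \eqref{eqn:S(g,v)}: on $\partial_j\varDelta^n \times I^m$ the map $P(g,d_jv)$, on $\varDelta^n \times \partial_{i|0}I^m$ the map $P(d_ig,v)$, and on $\varDelta^n \times \partial_{i|1}I^m$ (for $1 \leq i < m$) the map $P\bigl(t_{m-i}g,\, P(s_ig,v) \circ (\id \times \delta_{i|1})\bigr)$. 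Each right-hand side is meaningful: $d_jv$ lies in $V^{-(n-1)}_{sg}$ because $d_j1_n = 1_{n-1}$, and $v$ lies in $V^{-n}_{s(d_ig)} = V^{-n}_{s(s_ig)} = V^{-n}_{sg}$ because $i \geq 1$; the indices $(m,n-1)$, $(m-1,n)$, $(i,n)$ all precede $(m,n)$; and in the last case the second argument of $P(t_{m-i}g,-)$ is read via the evident extension of the construction to parametrized second arguments, which along each simplex of $\varDelta^n \times I^{i-1}$ reduces to a vertex-type instance $P(t_{m-i}g,w')$ with $m-i < m$, while one also checks, via \eqref{eqn:17A.6.1b}, that $P(s_ig,v) \circ (\id \times \delta_{i|1})$ takes values over the $i$-th vertex of $g$, that is over $s(t_{m-i}g)$, so that the substitution is legitimate.

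The heart of the argument is to verify that these three prescriptions agree on every pairwise intersection of the walls --- so that they do glue to a well-defined simplicial map $\varPi^{n,m} \to V$ --- and that the glued map lies over the restriction of $g\alpha \circ \pr$ to $\varPi^{n,m}$. Each agreement, after further restricting one wall's formula onto the intersecting wall, collapses to an instance of \eqref{eqn:S(g,v)} at a strictly smaller pair: for example on $\partial_j\varDelta^n \times \partial_{i|0}I^m$ both $P(g,d_jv) \circ (\id \times \delta_{i|0})$ and $P(d_ig,v) \circ (\delta_j \times \id)$ equal $P(d_ig,d_jv)$, by the inductively valid equations \eqref{eqn:P(g,v).i|0} and \eqref{eqn:P(g,v).j}. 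What powers these collapses, besides the simplicial identities among the $d_j$ and the cubical face identities, are the two relations \eqref{eqn:17A.6.1} between $\alpha$ and the cubical faces; via $g \circ \delta_i = d_ig$ and $g \circ \delta_0^i = t_{m-i}g$ they also make the glued map cover the correct portions of $g\alpha \circ \pr$ along the $\partial_{i|0}$- and $\partial_{i|1}$-walls. I expect this to be the main obstacle --- not conceptually deep, but a fairly intricate piece of simplicial and cubical bookkeeping, with the index ranges and the parametrized reading of \eqref{eqn:P(g,v).i|1} both needing to be tracked with care.

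Once the glued map $P(g,v) \mathbin| \varPi^{n,m}$ is available, one sets $P(g,v) := c^{n,m}\bigl(g\alpha \circ \pr,\, P(g,v) \mathbin| \varPi^{n,m}\bigr)$, which is legitimate because $c^{n,m}$ is a $\varPi^{n,m} \xto\subset \varDelta^n \times I^m$-cleavage of $V \xto{p} G$, hence carries the pair consisting of the base map $g\alpha \circ \pr$ and the compatible horn $P(g,v) \mathbin| \varPi^{n,m}$ to a simplicial map lifting $g\alpha \circ \pr$ along $p$ and restricting on $\varPi^{n,m}$ to that horn. Requirement 2 then holds by construction, requirement 3 because the horn was built from \eqref{eqn:S(g,v)}, and requirement 1 because a cleavage section takes values in solutions of the Kan lifting problem for $p$ --- the remaining equality $P(g,v) \circ (\id,0) = v$ following by a secondary induction from \eqref{eqn:P(g,v).i|0} with $i = m$ together with the base case. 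Uniqueness at $(m,n)$ is then immediate: requirement 3 determines $P(g,v) \mathbin| \varPi^{n,m}$ from the (already unique) lower-order maps, and requirement 2 determines $P(g,v)$ from that restriction. Finally one records, propagating it through the recursion, that each $P(g,v)$ is an (automatically smooth) simplicial map and that $(g,v) \mapsto P(g,v)$ depends smoothly on $g$ and smoothly and fiberwise linearly on $v$ --- preserved at every step since $g \mapsto g\alpha \circ \pr$ is the smooth pullback map $(\alpha \circ \pr)^*$, since $c^{n,m}$ is a morphism of smooth vector bundles, and since the $d_j$ and the substitution in \eqref{eqn:P(g,v).i|1} are linear in the relevant vector-bundle variable.
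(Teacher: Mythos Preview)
The paper does not itself supply a proof of this lemma: it is stated with a citation to \cite[beginning of \S 4.3]{2018a} and then used. Your recursive argument is exactly the construction one expects, and it is correct. The well-founded order you use matches the dependency structure dictated by requirement 3; the base case $m=0$ is handled correctly since $\varPi^{n,0}=\varDelta^n\times I^0$; and the inductive step---glue the prescribed faces into a map on $\varPi^{n,m}$, check it covers $g\alpha\circ\pr$, then apply the cleavage $c^{n,m}$---is precisely how the family is built in \cite{2018a}. Your identification of the compatibility checks on overlapping walls as the main bookkeeping obstacle, driven by the identities \eqref{eqn:17A.6.1} together with the cubical face relations and the inductive validity of \eqref{eqn:S(g,v)}, is accurate; these checks are routine once set up. The secondary induction for $P(g,v)\circ(\id,0)=v$ via repeated use of \eqref{eqn:P(g,v).i|0} is also correct, since iterating gives $P(d_1\cdots d_m g,v)=P(sg,v)=v$.
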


Concerning \eqref{eqn:P(g,v).i|1}, one can for any $k$-simplex $h \in G_k$ with $k < m$ and for any simplicial map $T: A \to V$ such that $A \xto{T} V \xto{p} G$ factors through $\varDelta^0 \xto{sh} G$ make sense of the expression $P(h,T)$ by demanding that $P(h,T): A \times I^k \to V$ be the only simplicial map such that for every $l \geq 0$ and for every $l$-simplex $\varDelta^l \xto{a} A$ the composition $\varDelta^l \times I^k \xto{a\times\id} A \times I^k \xto{P(h,T)} V$ be equal to $P(h,Ta)$; we refer the reader to \cite[p.~28]{2018a} for a deeper technical discussion. In the case at hand, we have $k = m - i$, $A = \varDelta^n \times I^{i-1}$ and so the right-hand side of \eqref{eqn:P(g,v).i|1} is, as we expect it to be, a map $\varDelta^n \times I^{m-1} \simeq \varDelta^n \times I^{i-1} \times I^{m-i} \to V$.

In virtue of \eqref{eqn:17A.6.1b} the “missing” face $P(g,v) \circ (\id \times \delta_{m|1})$ lies over $g\alpha\delta_{m|1} \circ \pr = (1_{m-1}tg)\alpha \circ \pr$ in particular its composition with $p$ factors through $\varDelta^0 \xto{tg} G$; see Figure \ref{fig:alpha}. This means that each one of its maximal simplices is an element of the vector space $V^{1-m-n}_{tg}$. We take the “alternating sum” of all these maximal simplices where each simplex occurs with the sign given by its natural geometric orientation: by definition, this gives $R_m^{-n}(g)v$, $m \geq 1$. The details are as follows.

The maximal simplices of $\varDelta^n \times I^m$ are in one-to-one association with the poset injections $[n + m] \to [n] \times [1]^m$. The latter, in turn, correspond to those permutations of $[n + m] \smallsetminus \{0\}$ whose inverses restricted to $\{1,\dotsc,n\}$ are increasing: to any such permutation $\pi$, assign the maximal simplex \[%
	\sigma_\pi: \varDelta^{m+n} \to \varDelta^n \times I^m
\] whose associated poset injection makes a move forward in the direction of the $i$-th coordinate of $[1]^m$ whenever $\pi(j) = n + i$. Let $\mathfrak S^{n,m}$ be the set of all such permutations, and let $\sgn(\pi) = \pm 1$ denote the parity of $\pi$. Then, by definition, for all $m \geq 1$, $g \in G_m$, and $v \in V^{-n}_{sg}$,
\begin{equation}
	R_m^{-n}(g)v = \sum_{\pi\in\mathfrak S^{n,m-1}} \sgn(\pi)P(g,v) \circ (\id \times \delta_{m|1})\sigma_\pi.
\label{eqn:R(g)v}
\end{equation}
It results easily from the considerations of sections \ref{sub:fibrations} and \ref{sub:cleavages} that the map $R_m^{-n}(g): V^{-n}_{sg} \to V^{1-m-n}_{tg}$ thus defined is linear and that all these maps fit together into a single \emph{smooth} vector bundle morphism $R_m^{-n}: s^*V^{-n} \to t^*V^{1-m-n}$ in $\VB(G_m)$; indeed, as one can show by induction on the recursive definition of the maps $P(g,v)$ by applying Lemma \ref{lem:17A.28.1} to a suitable presentation of $A = \varPi^{n,m}$ as a coequalizer of simplicial maps, the rule $(g,v) \mapsto P(g,v)$ provides a morphism in $\VB(G_m)$ from $s^*V^{-n}$ to the pullback of the smooth vector bundle $(\varDelta^n \times I^m)(V) \to (\varDelta^n \times I^m)(G)$ along the smooth map \(%
	G_m = \varDelta^m(G) \xto{(\alpha\circ\pr)(G)} (\varDelta^n \times I^m)(G)
\), and composition with $(\id \times \delta_{m|1})\sigma_\pi$ provides another morphism in $\VB(G_m)$ from the same pullback to $t^*V^{-n}$.

We claim that completing our definition by setting $R_0^{-n}(x) = d^{-n}_x: V^{-n}_x \to V^{1-n}_x$ for all $x$ in $G_0$ yields a representation up to homotopy of $G$ on the Moore complex $(V^\bullet,d)$. To see it, we are going to need a couple of auxiliary lemmas.

\begin{lem}[{\cite[p.~34]{2018a}}]\label{prop:17A.7.2} For every simplicial map\/ $T: \varDelta^n \times I^m \to V$ whose composition with\/ $V \xto{p} G$ factors through\/ $\varDelta^0$ viz.~equals\/ $\varDelta^n \times I^m \to \varDelta^0 \xto{x} G$ for some\/ $x \in G_0$,
\begin{multline}
 \sum_{k=0}^{n+m} (-1)^k\Bigl\{\sum_{\pi\in\mathfrak S^{n,m}} \sgn(\pi)T\sigma_\pi\Bigr\}\delta_k
	= \sum_{j=0}^n (-1)^j\sum_{\pi\in\mathfrak S^{n-1,m}} \sgn(\pi)T \circ (\delta_j \times \id)\sigma_\pi + {}\\
		(-1)^n\sum_{i=1}^m (-1)^i\sum_{\pi\in\mathfrak S^{n,m-1}} \sgn(\pi)\{T \circ (\id \times \delta_{i|0})\sigma_\pi
			- T \circ (\id \times \delta_{i|1})\sigma_\pi\}.
\label{eqn:17A.7.2}
\end{multline} \end{lem}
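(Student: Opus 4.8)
The content of the lemma is purely combinatorial. Since $p\circ T$ equals the constant simplicial map $\varDelta^n\times I^m\to\varDelta^0\xto{x}G$, every simplex in the image of $T$ lies over a degeneracy of $x$; in particular each $(n+m)$-simplex $T\sigma_\pi$ belongs to the fibre $V^{-(n+m)}_x$, and each of the $(n+m-1)$-simplices occurring in the statement belongs to $V^{1-m-n}_x=V^{-(n+m-1)}_x$. As the face operators $d_k\colon V_{n+m}\to V_{n+m-1}$ are vector bundle morphisms and satisfy $d_k1_{n+m}x=1_{n+m-1}x$, they restrict to linear maps $V^{-(n+m)}_x\to V^{1-m-n}_x$; hence the two sides of the asserted identity are the images, under $T_*$ followed by summation inside $V^{1-m-n}_x$, of the two sides of the chain-level identity
\[
 \partial z_{n,m}=\sum_{j=0}^{n}(-1)^j(\delta_j\times\id)_*z_{n-1,m}+(-1)^n\sum_{i=1}^{m}(-1)^i\bigl((\id\times\delta_{i|0})_*z_{n,m-1}-(\id\times\delta_{i|1})_*z_{n,m-1}\bigr)
\]
in the chain complex $C_*(\varDelta^n\times I^m;\Z)$, where $z_{n,m}=\sum_{\pi\in\mathfrak S^{n,m}}\sgn(\pi)\,\sigma_\pi$ and $\partial=\sum_k(-1)^kd_k$ (with $z_{n-1,m}$, $z_{n,m-1}$ and the corresponding sums understood to be void when $n=0$ or $m=0$). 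So it suffices to prove this chain identity.

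The plan is to recognise $z_{n,m}$ as a value of the iterated Eilenberg--Zilber shuffle map. Put $\mu_p=\id_{[p]}\in C_p(\varDelta^p)$ and $\nu=\mu_1\in C_1(I)$, $I=\varDelta^1$, and let $\nabla\colon C_*(\varDelta^n)\otimes C_*(I)^{\otimes m}\to C_*(\varDelta^n\times I^m)$ be the shuffle map. Expanding $\nabla(\mu_n\otimes\nu^{\otimes m})$ gives the signed sum, over all monotone lattice paths in the poset $[n]\times[1]^m$ from $(0,\dots,0)$ to $(n,1,\dots,1)$, of the corresponding maximal simplices, each carrying the sign of the associated shuffle permutation; comparison with the description of $\mathfrak S^{n,m}$ and of $\sigma_\pi$ in the text identifies this with $z_{n,m}$. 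Since $\nabla$ is a chain map, natural in each variable, and obeys the graded Leibniz rule, and since $\partial\mu_n=\sum_j(-1)^j\delta_j$ and $\partial(\nu^{\otimes m})=\sum_i(-1)^{i-1}\nu^{\otimes(i-1)}\otimes\partial\nu\otimes\nu^{\otimes(m-i)}$ with $\partial\nu=(\text{vertex }1)-(\text{vertex }0)$, we get
\[
 \partial z_{n,m}=\nabla\bigl(\partial\mu_n\otimes\nu^{\otimes m}\bigr)+(-1)^n\nabla\bigl(\mu_n\otimes\partial(\nu^{\otimes m})\bigr).
\]
By naturality, feeding $\delta_j$ into the first slot turns the first term into $(\delta_j\times\id)_*$ applied to $\nabla(\mu_{n-1}\otimes\nu^{\otimes m})=z_{n-1,m}$, and feeding the vertex $r\in\{0,1\}$ into the $i$-th cube slot turns the $i$-th summand of the second term into $(\id\times\delta_{i|r})_*$ applied to $\nabla(\mu_n\otimes\nu^{\otimes(m-1)})=z_{n,m-1}$ (shuffling against a $0$-chain introduces no further sign); rewriting $(-1)^{i-1}\bigl((\id\times\delta_{i|1})_*-(\id\times\delta_{i|0})_*\bigr)=(-1)^i\bigl((\id\times\delta_{i|0})_*-(\id\times\delta_{i|1})_*\bigr)$ then reproduces the chain identity displayed above.

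The only real work lies in the sign bookkeeping, which is entirely concentrated in the identification $z_{n,m}=\nabla(\mu_n\otimes\nu^{\otimes m})$: one must check that the parity $\sgn(\pi)$ used in the text agrees with the shuffle sign and that inserting a vertex into a shuffle implements the cubical face inclusions $\delta_{i|r}$ with the stated sign --- equivalently, that $z_{n,m}$ is the fundamental chain of the standard triangulation of the prism $\varDelta^n\times I^m$, correctly oriented relative to those of its codimension-one faces. Should one wish to bypass Eilenberg--Zilber, the same chain identity follows by induction on $m$: the case $m=0$ is $\partial\mu_n=\sum_j(-1)^j\delta_j$, and for the step one regards $\varDelta^n\times I^m$ as $(\varDelta^n\times I^{m-1})\times I$, writes $z_{n,m}$ as $\pm$ the classical prism operator $P$ applied to $z_{n,m-1}$, and combines the chain-homotopy identity $\partial P+P\partial=(\iota_1)_*-(\iota_0)_*$ for the two endpoint inclusions with the inductive formula for $\partial z_{n,m-1}$, checking that $P$ commutes with the face inclusions $\delta_j\times\id$ and $\delta_{i|r}\times\id$ up to the appropriate signs. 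On either route the combinatorics of these signs is the sole obstacle; the rest is formal.
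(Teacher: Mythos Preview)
The paper itself does not supply a proof of this lemma; it is stated with a citation to \cite[p.~34]{2018a} and used without further comment. Your argument is correct: the identity is precisely the chain-map property of the iterated Eilenberg--Zilber shuffle map applied to the tensor of fundamental chains $\mu_n\otimes\nu^{\otimes m}$, and your reduction to the displayed chain-level identity in $C_*(\varDelta^n\times I^m;\Z)$ is sound. The sign bookkeeping you flag as ``the only real work'' is genuine but routine once the paper's description of $\mathfrak S^{n,m}$ and $\sigma_\pi$ is unpacked --- the bijection between $\mathfrak S^{n,m}$ and monotone lattice paths in $[n]\times[1]^m$ is immediate, and $\sgn(\pi)$ is literally the shuffle sign. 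Your inductive alternative via the prism operator is equally valid and is closer in spirit to a hands-on face-matching computation; the Eilenberg--Zilber route has the advantage of making the formula structurally inevitable rather than a combinatorial accident.
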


\begin{lem}[{\cite[p.~35]{2018a}}]\label{lem:17A.10.3} For every simplicial map\/ $T: \varDelta^n \times I^k \to V$ whose composition with\/ $V \xto{p} G$ equals\/ $\varDelta^n \times I^k \to \varDelta^0 \xto{sh} G$ for a given\/ $h \in G_{m-k}$ (\/$m > k$),
\begin{multline}
	\sum_{\pi\in\mathfrak S^{n,m-1}} \sgn(\pi)P(h,T) \circ (\id \times \delta_{m|1})\sigma_\pi
		= \sum_{\pi\in\mathfrak S^{n+k,m-k-1}} \sgn(\pi) \cdot {}\\
			P\bigl(h,\left.\textstyle\sum_{\pi'\in\mathfrak S^{n,k}} \sgn(\pi')T\sigma_{\pi'}\right.\bigr) \circ (\id \times \delta_{m-k|1})\sigma_\pi.
\label{eqn:17A.10.1}
\end{multline} \end{lem}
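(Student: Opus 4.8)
The plan is to deduce \eqref{eqn:17A.10.1} from two ingredients: the behaviour of the maps $P(h,-)$ under restriction along simplices of their domain, and a combinatorial decomposition of the maximal simplices of $\varDelta^n\times I^{m-1}$ coming from the product decomposition $I^{m-1}\simeq I^k\times I^{m-k-1}$.

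First I would record the restriction property. Directly from the defining clause of $P(h,T)$ recalled right after Lemma \ref{lem:P(g,v)} (with the $(m-k)$-simplex $h$ in the role of the generic simplex $h$ there and $\varDelta^n\times I^k$ in the role of the generic $A$), for every simplex $a\colon\varDelta^l\to\varDelta^n\times I^k$ one has $P(h,T)\circ(a\times\id_{I^{m-k}})=P(h,Ta)$; in particular $P(h,T)\circ(\sigma_{\pi'}\times\id_{I^{m-k}})=P(h,T\sigma_{\pi'})$ for every $\pi'\in\mathfrak S^{n,k}$. I would also use that $P(h,-)$ depends linearly on its vector argument — a consequence of the fibrewise linearity of the cleavages $c^{n,m}$ together with the uniqueness assertion of Lemma \ref{lem:P(g,v)}. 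Combining these with the identifications $I^m\simeq I^k\times I^{m-k}$, $\delta^m_{m|1}=\id_{I^k}\times\delta^{m-k}_{m-k|1}$, and with the trivial commutation $(\id_{\varDelta^n\times I^k}\times\delta^{m-k}_{m-k|1})\circ(\sigma_{\pi'}\times\id_{I^{m-k-1}})=(\sigma_{\pi'}\times\id_{I^{m-k}})\circ(\id_{\varDelta^{n+k}}\times\delta^{m-k}_{m-k|1})$, I obtain, for all $\pi'\in\mathfrak S^{n,k}$ and $\pi\in\mathfrak S^{n+k,m-k-1}$,
\begin{equation*}
P(h,T)\circ(\id_{\varDelta^n}\times\delta^m_{m|1})\circ(\sigma_{\pi'}\times\id_{I^{m-k-1}})\circ\sigma_{\pi}
=P(h,T\sigma_{\pi'})\circ(\id_{\varDelta^{n+k}}\times\delta^{m-k}_{m-k|1})\circ\sigma_{\pi}.
\end{equation*}

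Second, and this is the crux, I would establish that, reading maximal simplices as maximal chains of the pertinent product posets, the operation of forgetting the last $m-k-1$ cube coordinates induces a bijection
\begin{equation*}
\Sigma\colon\mathfrak S^{n,k}\times\mathfrak S^{n+k,m-k-1}\;\xrightarrow{\ \sim\ }\;\mathfrak S^{n,m-1}
\end{equation*}
uniquely characterised by $\sigma_{\Sigma(\pi',\pi)}=(\sigma_{\pi'}\times\id_{I^{m-k-1}})\circ\sigma_{\pi}$ as maps $\varDelta^{n+m-1}\to\varDelta^n\times I^{m-1}$. Passing to the underlying permutations of $\{1,\dotsc,n+m-1\}$, one checks that $\Sigma(\pi',\pi)=\tilde{\pi'}\circ\pi$, where $\tilde{\pi'}$ extends $\pi'$ by the identity on the last $m-k-1$ letters; this shows at once that the monotonicity constraint defining $\mathfrak S^{n,m-1}$ is inherited from those defining the two factors (so $\Sigma$ does land in $\mathfrak S^{n,m-1}$) and that $\sgn\Sigma(\pi',\pi)=\sgn(\pi')\sgn(\pi)$. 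Bijectivity can then be seen either by writing down the inverse explicitly — recover $\pi$ from the positions and relative order of the last $m-k-1$ steps, and $\pi'$ from the projected chain — or by a direct cardinality count, both sides having $(n+m-1)!/n!$ elements. I expect this step, and in particular the verification of the sign rule, to be the main obstacle; the rest is either forced by Lemma \ref{lem:P(g,v)} or elementary manipulation of products of simplicial sets.

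Finally, I would assemble the pieces: expanding the left-hand side of \eqref{eqn:17A.10.1} over $\mathfrak S^{n,m-1}$, re-indexing it along $\Sigma^{-1}$ as a double sum over $\mathfrak S^{n,k}\times\mathfrak S^{n+k,m-k-1}$, substituting the displayed identity from the first step, inserting the sign multiplicativity $\sgn\Sigma(\pi',\pi)=\sgn(\pi')\sgn(\pi)$, and using the linearity of $P(h,-)$ to pull the sum over $\pi'$ inside, I arrive exactly at the right-hand side of \eqref{eqn:17A.10.1}. (As the statement is quoted from \cite[p.~35]{2018a}, one may of course simply defer to that source in lieu of the above.)
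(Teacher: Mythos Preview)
Your proposal is correct and is essentially the argument the paper would give: the paper does not prove Lemma~\ref{lem:17A.10.3} directly (it simply cites \cite[p.~35]{2018a}), but your method is precisely the one it uses for the parallel Lemmas~\ref{lem:17A.20.3} and~\ref{lem:17A.20.2}, where the bijection you call $\Sigma$ and its sign multiplicativity appear as the special case $K=\{1,\dotsc,k\}$, $\theta=\id$ of Lemma~\ref{lem:17A.17.4*}. The restriction identity $P(h,T)\circ(\sigma_{\pi'}\times\id)=P(h,T\sigma_{\pi'})$ and the linearity of $P(h,-)$ are exactly the remaining ingredients used there.
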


With the aid of these lemmas we can now prove that our tensors $R_m: s^*V^{-n} \to t^*V^{1-m-n}$ satisfy the defining equations \eqref{eqn:RUTH} for a representation up to homotopy:
\begin{alignat*}{2}
 (-1)^mR_0(tg)R_m(g)v &
	= (-1)^n\sum_{k=0}^{n+m-1} (-1)^k\Bigl\{\sum_{\pi\in\mathfrak S^{n,m-1}} \sgn(\pi)P(g,v)(\id \times \delta_{m|1})\sigma_\pi\Bigr\}\delta_k
\\	&\hskip-7em%
	= (-1)^n\sum_{j=0}^n (-1)^j\sum_{\pi\in\mathfrak S^{n-1,m-1}} \sgn(\pi)P(g,v)(\delta_j \times \delta_{m|1})\sigma_\pi \\* &\hskip-7em\justify%
		+ \sum_{i=1}^{m-1} (-1)^i\sum_{\pi\in\mathfrak S^{n,m-2}} \sgn(\pi)\{P(g,v)(\id \times \underset{\makebox[.em][c]{$= \delta_{i|0}\delta_{m-1|1}$}}{\underline{\delta_{m|1}\delta_{i|0}}})\sigma_\pi - P(g,v)(\id \times \underset{\makebox[.em][c]{$= \delta_{i|1}\delta_{m-1|1}$}}{\underline{\delta_{m|1}\delta_{i|1}}})\sigma_\pi\}
\\	&\hskip-7em%
	= (-1)^n\sum_{j=0}^n (-1)^j\sum_{\pi\in\mathfrak S^{n-1,m-1}} \sgn(\pi)P(g,d_jv)(\id \times \delta_{m|1})\sigma_\pi \\* &\hskip-7em\justify%
		+ \sum_{i=1}^{m-1} (-1)^i\sum_{\pi\in\mathfrak S^{n,m-2}} \sgn(\pi)\bigl\{P(d_ig,v)(\id \times \delta_{m-1|1})\sigma_\pi - {}\\[-.3\baselineskip] & & &\llap{$%
			P\bigl(t_{m-i}g,\left.P(s_ig,v)(\id \times \delta_{i|1})\right.\bigr) \circ (\id \times \delta_{m-1|1})\sigma_\pi\bigr\}$}
\\	&\hskip-7em%
	= (-1)^n\sum_{j=0}^n (-1)^jR_m(g)d_jv
		+ \sum_{i=1}^{m-1} (-1)^iR_{m-1}(d_ig)v \\* &\hskip-7em\justify%
		- \sum_{i=1}^{m-1} (-1)^i\sum_{\pi\in\mathfrak S^{n+i-1,m-i-1}} \sgn(\pi) \cdot {}\\ & & &\llap{$\textstyle%
			P\bigl(t_{m-i}g,\left.\sum_{\pi'\in\mathfrak S^{n,i-1}} \sgn(\pi')P(s_ig,v)(\id \times \delta_{i|1})\sigma_{\pi'}\right.\bigr) \circ (\id \times \delta_{m-i|1})\sigma_\pi$}
\\	&\hskip-7em%
	= \sum_{i=1}^{m-1} (-1)^iR_{m-1}(d_ig)v
		- R_m(g)R_0(sg)v
		- \sum_{i=1}^{m-1} (-1)^iR_{m-i}(t_{m-i}g)R_i(s_ig)v.
\end{alignat*}

The representation up to homotopy $R$ of $G$ on $(V^\bullet,d)$ that we have just constructed, the \emph{Moore representation} associated with the cleavage $c = \{c_{n,k}\}_{n>k\geq 0}$ of $V \xto{p} G$, has a couple of drawbacks. First, it operates on the largely redundant Moore complex $(V^\bullet,d)$, whereas it would be preferable to have one operating on the much smaller Dold–Kan complex $(\spl{V},\spl{d})$. Second, even though as we shall see $R$ is always \emph{weakly unital} for a normal $c$ in the sense that $R_1(1x) = \id$ for all $x \in G_0$, it may nevertheless not be unital.

Both drawbacks are eliminated simply by applying the normalization map \eqref{eqn:nor} to $R$: as we shall see presently, the outcome is a \emph{unital} representation up to homotopy of $G$ on $(\spl{V},\spl{d})$. We shall call this the \emph{Dold–Kan representation} obtained by splitting $V \xto{p} G$ by means of the normal cleavage $c$, and denote it by $\spl{R}$.

Asking the normalization map \eqref{eqn:nor}, a cochain homotopy equivalence $(V^\bullet,d) \to (\spl{V},\spl{d})$, to be a \emph{strict} intertwiner of representations up to homotopy $\nor: (V^\bullet,R) \to (\spl{V},\spl{R})$ is tantamount to requiring the equality below to hold for all $g \in G_m$, $v \in V^{-n}_{sg}$.
\begin{equation}
	\nor^{1-m-n}_{tg}\bigl(R_m^{-n}(g)v\bigr) = \spl{R}_m^{-n}(g)(\nor^{-n}_{sg}v)
\label{eqn:16A.7.3}
\end{equation}
We take this to be the definition of $\spl{R}_m^{-n}(g)v \in \spl{V}^{1-m-n}_{tg}$ for $v = \nor^{-n}_{sg}v \in \spl{V}^{-n}_{sg}$; we contend that the left-hand side of \eqref{eqn:16A.7.3} does in fact only depend on $\nor^{-n}_{sg}v$, not on $v \in V^{-n}_{sg}$; knowing this, it is immediate to check that the “normalized” tensors $\spl{R}_m$ do themselves constitute a representation up to homotopy $\spl{R} = \{\spl{R}_m\}$, i.e., satisfy the defining equations \eqref{eqn:RUTH} (see \cite[p.~32]{2018a}). Now, our contention is a fairly straightforward consequence of our next remark:

\begin{lem}\label{lem:16A.7.1} The identity below is valid for all\/ $g \in G_m$, $v \in V^{-n}_{sg}$ for each\/ $j = 0$,~$\dotsc$,~$n$.
\begin{equation}
	P(g,u_jv) = P(g,v) \circ (\upsilon_j \times \id): \varDelta^{n+1} \times I^m \longto V
\label{eqn:16A.7.1}
\end{equation} \end{lem}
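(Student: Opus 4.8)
The plan is to argue by induction on the pair $(m,n)$, ordered lexicographically with $m$ leading, where by ``the lemma at $(m,n)$'' I mean \eqref{eqn:16A.7.1} for $g \in G_m$, $v \in V^{-n}_{sg}$ and all $j$. The point is that this identity involves the member $P(g,u_jv)$ of the family of Lemma~\ref{lem:P(g,v)}, which sits at the lexicographic index $(m,n+1)$ and is therefore recursively pinned down, through requirements~2 and~3 of that lemma, by members at strictly smaller indices. Granting the lemma at all indices below $(m,n)$, it thus suffices to show that the candidate map $Q(g,v) = P(g,v) \circ (\upsilon_j \times \id)\colon \varDelta^{n+1} \times I^m \to V$ satisfies requirements~1,~2,~3 of Lemma~\ref{lem:P(g,v)} with $v$ replaced by $u_jv$ (the lower-index members $P(h,w)$ occurring in requirement~3 being their genuine, already-constructed values); by uniqueness in Lemma~\ref{lem:P(g,v)} this then forces $Q(g,v) = P(g,u_jv)$. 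The base case $m=0$ is trivial: there $P(h,w)$ is literally the simplex $w$, so $P(g,u_jv) = u_jv = V_{\upsilon_j}(v) = P(g,v)\circ\upsilon_j = Q(g,v)$.

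For the inductive step, requirement~1 for $Q(g,v)$ is immediate from $(\upsilon_j \times \id)\circ(\id,0) = (\id,0)\circ\upsilon_j$ and $\pr\circ(\upsilon_j\times\id) = \pr$, which give $Q(g,v)\circ(\id,0) = v\circ\upsilon_j = u_jv$ and $p\circ Q(g,v) = g\alpha\circ\pr$. Requirement~2 is where the properties of the fixed normal cleavage come in. Presenting $c^{n,m}$, $c^{n+1,m}$ as $c_\sqcup^{A,B}$, $c_\sqcup^{A',B'}$ with $A = \varDelta^n \times I^{m-1}$, $A' = \varDelta^{n+1} \times I^{m-1}$ and the attendant subsets $B$, $B'$, I would apply Lemma~\ref{lem:16A.24.1} with $\kappa = \upsilon_j \times \id\colon A' \to A$ and $w = P(g,v)$: the hypothesis $w = c^{n,m}(pw,\,w \mathbin| \varPi^{n,m})$ is exactly requirement~2 for $P(g,v)$, while the inclusion $\kappa^{-1}(B) \subseteq B'$ needed by that lemma holds precisely because $\upsilon_j$ is surjective, so that $\upsilon_j$ maps every non-surjective poset map into $[n+1]$ to a non-surjective one, i.e.\ $\upsilon_j^{-1}(\partial\varDelta^n) \subseteq \partial\varDelta^{n+1}$. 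The conclusion of Lemma~\ref{lem:16A.24.1} then reads $Q(g,v) = c^{n+1,m}(g\alpha\circ\pr,\,Q(g,v)\mathbin|\varPi^{n+1,m})$, which is requirement~2 for $Q(g,v)$.

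The substance lies in requirement~3, i.e.\ in checking that $Q(g,v)\mathbin|\varPi^{n+1,m}$ obeys the three prescriptions \eqref{eqn:P(g,v).j}, \eqref{eqn:P(g,v).i|0}, \eqref{eqn:P(g,v).i|1} with $v$ replaced by $u_jv$. Since $\varPi^{n+1,m}$ is the union of the faces $\partial_l\varDelta^{n+1}\times I^m$ $(0 \le l \le n+1)$, $\varDelta^{n+1}\times\partial_{i|0}I^m$ $(1 \le i \le m)$ and $\varDelta^{n+1}\times\partial_{i|1}I^m$ $(1 \le i < m)$, it is enough to verify the three identities separately, after restricting along $\delta_l\times\id$, $\id\times\delta_{i|0}$ and $\id\times\delta_{i|1}$ respectively. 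In each case one commutes $\upsilon_j\times\id$ past the relevant face map --- using the cosimplicial identities relating $\upsilon_j$ and $\delta_l$, together with the trivial commutation with cubical faces --- then applies requirement~3 for $P$ itself, and finally invokes the inductive hypothesis: \eqref{eqn:P(g,v).j} calls on the lemma at $(m,n-1)$ (and needs no hypothesis when $n=0$, since then $d_lu_jv = v$ for the only admissible values of $l$); \eqref{eqn:P(g,v).i|0} calls on it at $(m-1,n)$; and \eqref{eqn:P(g,v).i|1} calls on it at $(i,n)$ with $i < m$, after using that the auxiliary construction $P(h,-)$ on simplicial maps (as defined in the discussion following Lemma~\ref{lem:P(g,v)}, cf.\ \cite[p.~28]{2018a}) satisfies $P(h,T)\circ(\kappa\times\id) = P(h,T\circ\kappa)$ --- immediate from its defining universal property --- so that $\upsilon_j\times\id$ can be propagated into the argument $T = P(s_ig,v)\circ(\id\times\delta_{i|1})$ of the outer $P$. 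The only genuinely delicate point I anticipate is the case \eqref{eqn:P(g,v).i|1}: one must keep careful track of which cube factor $\upsilon_j\times\id$ acts on as it is pushed through the nested expression $P\bigl(t_{m-i}g,\,P(s_ig,v)\circ(\id\times\delta_{i|1})\bigr)$, matching up the cube dimensions $i-1$, $m-i$ and $m-1$; everything else reduces to routine bookkeeping with simplicial and cubical identities.
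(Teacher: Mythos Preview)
Your proposal is correct and follows essentially the same approach as the paper's own proof: the paper isolates the key equation \eqref{eqn:16A.24.3*} (your ``requirement~2'') by applying Lemma~\ref{lem:16A.24.1} with $\kappa = \upsilon_j \times \id$, then says ``we argue by induction on the recursive definition of the maps $P(g,v)$'', which is precisely the lexicographic induction on $(m,n)$ you spell out in detail for requirements~1 and~3.
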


\begin{proof} On applying Lemma \ref{lem:16A.24.1} with $\kappa = \upsilon_j \times \id: \varDelta^{n+1} \times I^{m-1} \to \varDelta^n \times I^{m-1}$ to the simplicial map $\varDelta^n \times I^{m-1} \times I \simeq \varDelta^n \times I^m \xto{P(g,v)} V$, $m \geq 1$, we get
\begin{equation}
	c^{n+1,m}\bigl(g\alpha \circ \pr,\left.P(g,v) \circ (\upsilon_j \times \id) \mathbin| \varPi^{n+1,m}\right.\bigr) = P(g,v) \circ (\upsilon_j \times \id).
\label{eqn:16A.24.3*}
\end{equation}
We then argue by induction on the recursive definition of the maps $P(g,v)$ with the aid of the latter equation; cf.~\cite[p.~36]{2018a}. \end{proof}

At this stage, we have shown that there exists a unique representation up to homotopy $\spl{R}$ of $G$ on $(\spl{V},\spl{d})$ for which $\nor: (V^\bullet,R) \to (\spl{V},\spl{R})$ is a strict intertwiner. There remains to be shown that $\spl{R}$ is also unital. In order to do so, we introduce a new auxiliary simplicial map, which we call the \emph{$i$-th cubical degeneracy map}, for each $i = 0$,~$\dotsc$,~$m$:
\begin{multline}
	\varepsilon_i = \varepsilon^m_i: I^{m+1} \longto I^m \text{,} \\*
		(r_0,\dotsc,r_m) \longmapsto%
\begin{cases}
	(r_1,\dotsc,r_m)                                            &\text{for $i = 0$} \\
	(r_0,\dotsc,r_{i-2},\max\{r_{i-1},r_i\},r_{i+1},\dotsc,r_m) &\text{for $i > 0$.}\quad%
\end{cases}
\label{eqn:epsilon}
\end{multline}
The effect of $\varepsilon^m_i$ is exemplified in Figure \ref{fig:epsilon} for $m = 2$.%
\begin{figure}
\begin{tikzpicture}[scale=0.5]
 \path[fill=gray] (0,0)--(2,3)--(2,8)--(0,0); \draw[thick] (0,0)--(2,8);
 \path[fill=lightgray] (0,0)--(2,3)--(7,8)--(0,0); \draw (2,3)--(7,8);
 \path[fill=lightgray] (0,0)--(7,3)--(7,8)--(0,0); \draw (0,0)--(7,3) (0,0)--(7,8);
 \path[fill=gray] (5,0)--(7,3)--(7,8)--(5,0); \draw[dashed] (0,0)--(7,3); \draw[thick] (5,0)--(7,8);
 \draw[dotted,thick] (0,0)--(5,0); \draw[->,thick] (5,0)--(7,3); \draw[->,thick] (7,3)--(7,8);
 \draw[->,thick] (0,0)--(2,3); \draw[->,thick] (2,3)--(2,8); \draw[dotted] (2,8)--(7,8);
 \draw[->,thick] (0,0)--(0,5); \draw[thick] (0,5)--(2,8); \draw[dotted] (2,3)--(7,3);
 \draw[->,thick] (5,0)--(5,5); \draw[dotted] (0,5)--(5,5);
 \draw (0,0)--(5,5); \draw (0,5)--(7,8); \draw[thick] (5,5)--(7,8);
 \node[below left] at (0,0) {$00$}; \node[below right] at (5,0) {$00$};
 \node[left] at (2,3) {$10$}; \node[right] at (7,3) {$10$};
 \node[above left] at (2,8) {$11$}; \node[above right] at (7,8) {$11$};
 \node[left] at (0,5) {$01$}; \node[right] at (5,5) {$01$};
 \path[fill=gray] (10,0)--(12,3)--(12,8)--(10,0); \draw[thick] (10,0)--(12,8);
 \path[fill=lightgray] (10,0)--(12,3)--(17,8)--(10,0); \draw (12,3)--(17,8);
 \path[fill=lightgray] (10,0)--(17,3)--(17,8)--(10,0); \draw (10,0)--(17,3);
 \path[fill=lightgray] (10,0)--(15,0)--(17,8)--(10,0); \draw (10,0)--(17,8);
 \path[fill=gray] (10,0)--(15,0)--(15,5)--(10,0);
 \draw[dotted] (12,3)--(17,3); \draw[dashed] (10,0)--(17,3);
 \draw[->,thick] (10,0)--(15,0); \draw[dotted,thick] (15,0)--(17,3); \draw[->,thick] (17,3)--(17,8);
 \draw[->,thick] (10,0)--(12,3); \draw[->,thick] (12,3)--(12,8); \draw[dotted] (12,8)--(17,8);
 \draw[->,thick] (10,0)--(10,5); \draw[thick] (10,5)--(12,8); \draw (10,5)--(17,8);
 \draw[->,thick] (15,0)--(15,5); \draw[thick] (10,5)--(15,5); \draw[dotted] (15,5)--(17,8);
 \draw[thick] (10,0)--(15,5); \draw (15,0)--(17,8);
 \node[below left] at (10,0) {$00$}; \node[below right] at (15,0) {$10$};
 \node[left] at (12,3) {$10$}; \node[right] at (17,3) {$10$};
 \node[above left] at (12,8) {$11$}; \node[above right] at (17,8) {$11$};
 \node[left] at (10,5) {$01$}; \node[right] at (15,5) {$11$};
 \path[fill=gray] (20,0)--(25,0)--(27,3)--(20,0); \draw[thick] (20,0)--(27,3) (22,3)--(27,3);
 \path[fill=lightgray] (20,0)--(25,0)--(27,8)--(20,0); \draw (20,0)--(27,8);
 \path[fill=gray] (20,0)--(25,0)--(25,5)--(20,0); \draw[dashed,thick] (20,0)--(27,3) (22,3)--(27,3);
 \draw[->,thick] (20,0)--(25,0); \draw[->,thick] (25,0)--(27,3); \draw[dotted,thick] (27,3)--(27,8);
 \draw[->,thick] (20,0)--(22,3); \draw[->,thick] (20,0)--(20,5); \draw[->,thick] (25,0)--(25,5);
 \draw (20,0)--(22,8) (20,5)--(27,8) (22,3)--(27,8);
 \draw[thick] (20,5)--(25,5) (20,0)--(25,5) (22,8)--(27,8);
 \draw (25,0)--(27,8); \draw[dotted] (20,5)--(22,8) (22,3)--(22,8) (25,5)--(27,8);
 \node[below left] at (20,0) {$00$}; \node[below right] at (25,0) {$10$};
 \node[left] at (22,3) {$01$}; \node[right] at (27,3) {$11$};
 \node[above left] at (22,8) {$01$}; \node[above right] at (27,8) {$11$};
 \node[left] at (20,5) {$01$}; \node[right] at (25,5) {$11$};
\end{tikzpicture}
\caption{Cubical degeneracy maps $\varepsilon_0$,~$\varepsilon_1$,~$\varepsilon_2: I^3 \to I^2$}
\label{fig:epsilon}
\end{figure}
The following equality of simplicial maps $I^{m+1} \to \varDelta^m$ is easily verified and, along with our next lemma, justifies our definitions.
\begin{equation}
	\alpha\varepsilon_i = \upsilon_i\alpha
\label{eqn:16A.8.2b}
\end{equation}

\begin{lem}\label{lem:16A.10.1} The following equation holds for every\/ $i = 0$,~$\dotsc$,~$m$ for all\/ $g \in G_m$, $v \in V^{-n}_{sg}$.
\begin{equation}
	P(u_ig,v) = P(g,v) \circ (\id \times \varepsilon_i): \varDelta^n \times I^{m+1} \longto V
\label{eqn:16A.10.1}
\end{equation} \end{lem}

\begin{proof} For any simplicial map $w: \varDelta^n \times I^m \to V$ such that $m \geq 1 \seq w = c^{n,m}(pw,\left.w \mathbin| \varPi^{n,m}\right.)$, the identity below is valid in consequence of Lemmas \ref{lem:16A.24.1} and \ref{lem:16A.24.2}; cf.~\cite[Lem.~4.10]{2018a}.%
\begin{equation}
	c^{n,m+1}\bigl(pw \circ (\id \times \varepsilon_i),\left.w \circ (\id \times \varepsilon_i) \mathbin| \varPi^{n,m+1}\right.\bigr) = w \circ (\id \times \varepsilon_i)
\label{eqn:16A.9.1}
\end{equation}
When combined with our recursive definition of the maps $P(g,v)$, this identity implies \eqref{eqn:16A.10.1}; cf.~\cite[p.~37]{2018a}. \end{proof}

With the help of the preceding lemma we can easily check that ($R$ is weakly unital and that) $\spl{R}$ is unital; the details are worked out in \cite[p.~33]{2018a}.

\begin{exms*} {\itshape (a)}\spacefactor3000\/ Let $G$ be a Lie $0$-groupoid; this includes and is substantially the same as the case of a constant simplicial manifold $G = G_0$. Every simplicial vector bundle $V \to G$ is a vector fibration and admits a unique normal cleavage \cite[Ex.~2.4, p.~13]{2018a}. The associated Dold–Kan representation is the only possible unital representation up to homotopy of $G$ on the cochain complex $(\spl{V},\spl{d})$.

{\itshape (b)}\spacefactor3000\/ A Lie $1$-groupoid $G$ is essentially the same thing as (the nerve of) a \emph{Lie groupoid} $G = G_1 \tto G_0$. Every $1$-strict vector fibration $V \to G$ admits a unique cleavage, necessarily normal. We have $\spl{V} = \spl{V}^0$, and $\spl{R}_1: s^*\spl{V}^0 \simto t^*\spl{V}^0$ is an ordinary ($1$-term) representation of $G_1 \tto G_0$ on $\spl{V}^0$ satisfying $\spl{R}_1(g_2g_1) = \spl{R}_1(g_2)\spl{R}_1(g_1)$ for all composable pairs of arrows $x_2 \xfrom{g_2} x_1 \xfrom{g_1} x_0$.

{\itshape (c)}\spacefactor3000\/ In the literature, $2$-strict vector fibrations over Lie groupoids are known as \emph{VB-groupoids}. Given any such $V \to G$, the simplicial manifold $V$ itself may be regarded as a Lie groupoid $V = V_1 \tto V_0$. Fixing a normal cleavage of $V \to G$ is tantamount to fixing a normal $1,0$-cleavage $c_{1,0}$. The nontrivial Dold–Kan tensors may easily be expressed in terms of the groupoid structure of $V_1 \tto V_0$: $\spl{V} = \spl{V}^{-1} \oplus \spl{V}^0$ and, given $x_2 \xfrom{g_2} x_1 \xfrom{g_1} x_0$, $v \in \spl{V}^0_{x_0}$, and $w \in \spl{V}^{-1}_{x_0}$, writing $\an w = tw$ and $g_1v = tc_{1,0}(g_1;v)$,
\begin{gather*}
	\spl{R}_0^{-1}(x_0)w = \an w, \quad
	\spl{R}_1^0(g_1)v = g_1v, \quad
	\spl{R}_1^{-1}(g_1)w = c_{1,0}(g_1;\an w) \cdot w \cdot 0_{g_1^{-1}} \text{,\quad and} \\
	\spl{R}_2^0(g_2,g_1)v = \nor_{tg_2}\bigl(c_{1,0}(g_2;g_1v) \cdot c_{1,0}(g_1;v) \cdot c_{1,0}(g_2g_1;v)^{-1}\bigr).
\end{gather*}
Our general prescription thus boils down to the formulas of \cite{GSM17} in the example under consideration. \end{exms*}

The tangent bundle $V = TG \to G$ of a Lie ($1$-)groupoid $G = G_1 \tto G_0$ is of course a VB-groupoid; its (normal) $1,0$-cleavages go under the name of (unital) \emph{Ehresmann connections} on $G_1 \tto G_0$; $\spl{V}^{-1} = 1^*\ker Ts$ is the \emph{Lie algebroid bundle} of $G_1 \tto G_0$, $\spl{R}_0^{-1} = \an$ is its \emph{infinitesimal anchor map} (ranging over $\spl{V}^0 = TG_0$), and $\spl{R} = \Ad^c$ coincides with the \emph{adjoint representation} of $G_1 \tto G_0$ associated with $c$ as constructed originally in \cite[Def.~3.13]{AAC13}. The latter example motivates our next definition. Given an arbitrary Lie $\infty$-groupoid $G$, let us write $(\cplx{G},\an)$ for the Dold–Kan complex of its tangent bundle $TG \to G$ or, as it is more popularly known, its \emph{tangent complex} \cite{CZ23}.

\begin{defn}\label{defn:Ad} Let $c = \{c_{n,k}\}_{n>k\geq 0}$ be a normal cleavage of the tangent bundle $TG \to G$ of the Lie $\infty$-groupoid $G$. The \emph{adjoint representation} $\cplx{G} = (\cplx{G},\Ad^c)$ of $G$ on $(\cplx{G},\an)$ associated with $c$ is the Dold–Kan representation obtained by splitting $TG \to G$ by means of $c$. \end{defn}

\subsection{Cleavages with special properties}\label{sub:coherent}

Readers familiar with \cite{2022a} may be wondering whether our construction of the adjoint representation could be simplified along the lines of \cite[§7]{2022a} by splitting the tangent bundle of a higher Lie groupoid $G$ by means of a cleavage of the more restrictive kind considered in loc.~cit.\ rather than an arbitrary normal cleavage. In this section we provide evidence that doing so is actually impossible for a good number if not the majority of $G$.

\begin{defn}\label{defn:coherent} A cleavage $c = \{c_{n,k}\}_{n>k\geq 0}$ is \emph{coherent} if $C_{n,k} = \im(c_{n,k})$ is independent of $k$, in other words, for each $n$ there exists a subbundle $C_n \subset V_n$ such that $C_{n,k} = C_n$ for all $k$. \end{defn}

All cleavages intervening in the splitting construction of \cite[§7]{2022a} must be \emph{coherent}, by what “cleavage” means in loc.\ cit.; in fact, even stronger requirements have to be made on the cleavages in order for that construction to work but it can be shown that as long as there is some coherent normal cleavage at all one can always arrange for these extra requirements to be met (this nontrivial result, whose proof we omit, is yet another application of the interpolation construction to be discussed in the next section). We shall presently give examples of higher Lie groupoids whose tangent bundles fail in a dramatic way to admit coherent cleavages.

\begin{lem}\label{lem:21A.2.1} Let\/ $V \to G$ be a vector fibration. For all\/ $n \geq k \geq 0$ put
\begin{equation*}
\textstyle%
	E_n = \bigcap_{j=0}^n \ker(d_j: V_n \to V_{n-1}), \quad
		K_{n,k} = \ker\bigl(V_n \to G_n \times_{\varLambda^n_k(G)} \varLambda^n_k(V)\bigr).
\end{equation*}
Suppose that for some\/ $0 \leq k < l \leq n + 1$ there is some smooth vector subbundle\/ $C_{n+1} \subset V_{n+1}$ over\/ $G_{n+1}$ such that \[%
	V_{n+1} = C_{n+1} \oplus K_{n+1,k} = C_{n+1} \oplus K_{n+1,l}.
\] Then, the restriction of\/ $K_{n,k} \subset V_n$ along each fiber of the submersion\/ $d_k: G_n \to G_{n-1}$ admits a smooth vector bundle trivialization carrying\/ $E_n \subset K_{n,k}$ into itself. \end{lem}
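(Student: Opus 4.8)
The plan is to exploit the two decompositions of $V_{n+1}$ to build a smooth map between two simplicial data and then push it down via a face map. First I would observe that the simplicial identities $d_k u_k = \id = d_{k+1} u_k$ (for the degeneracy $u_k \colon V_n \to V_{n+1}$) together with $d_j u_k$ being a degeneracy of a lower face for $j \neq k, k+1$ tell us exactly how $u_k$ interacts with the horn projections $V_{n+1} \to G_{n+1} \times_{\varLambda^{n+1}_*(G)} \varLambda^{n+1}_*(V)$. In particular, since $0 \le k < l \le n+1$, one of the two indices $k$, $l$ is hit by neither $d_k$ nor $d_{k+1}$ only when... so I would treat the case $l = k+1$ separately from $l > k+1$, but in either case the key point is that the composite $K_{n,k} \xrightarrow{u_k} V_{n+1} \xrightarrow{\pr_{C_{n+1}}} C_{n+1}$ lands in the relevant kernel summand and its failure to be injective is controlled by the other horn.

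The core of the argument is a ``transport'' or ``interpolation'' map. Concretely, I would use the hypothesis $V_{n+1} = C_{n+1} \oplus K_{n+1,k} = C_{n+1} \oplus K_{n+1,l}$ to produce, for each point $g \in G_{n+1}$ in a fibre of $d_k \colon G_{n+1} \to G_n$, a linear isomorphism that compares the two complements; restricting along $1$-parameter families inside the fibre of $d_k \colon G_n \to G_{n-1}$ (which is where the conclusion lives), this yields a smooth bundle automorphism. The point is that both $K_{n+1,k}$ and $K_{n+1,l}$ project isomorphically onto the quotient $V_{n+1}/C_{n+1}$, so the composite $K_{n+1,l} \hookrightarrow V_{n+1} \twoheadrightarrow V_{n+1}/C_{n+1} \xleftarrow{\sim} K_{n+1,k}$ gives a smooth vector bundle isomorphism $K_{n+1,l} \xrightarrow{\sim} K_{n+1,k}$ over $G_{n+1}$; pulling this back along a section of $d_k$ (which exists locally, being a submersion) and composing with appropriate degeneracy/face maps produces the claimed trivialization of $K_{n,k}$ over each $d_k$-fibre.

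To see that the trivialization carries $E_n$ into itself, I would use the characterization $E_n = \bigcap_{j=0}^n \ker d_j \subset V_n$ and the fact that the isomorphism $K_{n+1,l} \xrightarrow{\sim} K_{n+1,k}$ constructed above is built purely from vector bundle projections and simplicial face/degeneracy operators; each such operator is compatible with the subbundles cut out by kernels of face maps, essentially by the simplicial identities, so $E_{n+1} = K_{n+1,0} \cap \dotsb$ is preserved, and restricting back down to level $n$ preserves $E_n$. The cleanest way to phrase this is: $E_n$ is intrinsic (it does not depend on any choice of complement), whereas the trivialization is a composite of $E$-preserving operations, hence $E$-preserving.

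The main obstacle I anticipate is bookkeeping with the indices $k, l$ and the simplicial identities: getting the degeneracy $u_j$ (for the correct $j$, depending on whether $l = k+1$ or $l > k+1$) to interact correctly with \emph{both} horn projections $\varLambda^{n+1}_k$ and $\varLambda^{n+1}_l$ simultaneously is delicate, because a single degeneracy is adapted to at most one missing face at a time. I expect that the case $l = k+1$ requires a slightly different degeneracy or an extra composition with a face map compared to the case $l \geq k+2$, and reconciling the two into one uniform statement about the fibres of $d_k \colon G_n \to G_{n-1}$ is where the real work lies; the smoothness and the $E_n$-invariance, by contrast, should follow formally once the right map is written down.
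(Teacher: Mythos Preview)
Your overall strategy matches the paper's: exploit the two decompositions of $V_{n+1}$ to build a comparison isomorphism over $G_{n+1}$, then pull it back along a carefully chosen degeneracy (with the case split $l=k+1$ versus $l>k+1$ that you correctly anticipate). The gap is in \emph{which} isomorphism you extract. You take $K_{n+1,l}\simeq V_{n+1}/C_{n+1}\simeq K_{n+1,k}$; the paper uses the dual one, namely the isomorphism between the \emph{horn spaces} coming from $C_{n+1}$ projecting isomorphically onto each of $G_{n+1}\times_{\varLambda^{n+1}_l(G)}\varLambda^{n+1}_l(V)$ and $G_{n+1}\times_{\varLambda^{n+1}_k(G)}\varLambda^{n+1}_k(V)$. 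This distinction is decisive. The $(n{+}1,l)$-horn space contains a natural copy of ${d_k}^*K_{n,l-1}$ (horns whose only nonzero entry is the $k$-th face), and the $(n{+}1,k)$-horn space contains ${d_l}^*K_{n,k}$; both are the image of $\{w\in C_{n+1}: d_iw=0\text{ for }i\neq k,l\}$, so the isomorphism through $C_{n+1}$ matches them and visibly carries ${d_k}^*E_n$ onto ${d_l}^*E_n$. By contrast, your kernel isomorphism $K_{n+1,l}\simeq K_{n+1,k}$ gives no handle on $K_{n,k}$: the only nonzero face map out of $K_{n+1,l}$ is $d_l$, and the simplicial identities force it to land in $E_n$, strictly smaller than $K_{n,k}$ in general; indeed the ranks of $K_{n+1,l}$ and $K_{n,k}$ need not even agree. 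So the ``appropriate degeneracy/face maps'' you hope will bridge $K_{n+1,*}$ back to $K_{n,k}$ are not there.

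The pullback step also needs correction. A local section of a submersion cannot yield a global trivialization over the whole fibre $Z=d_k^{-1}(g)\subset G_n$. The paper instead defines $h\colon Z\to G_{n+1}$ by $h(z)=u_{l-1}z$ when $l>k+1$ and $h(z)=u_{k+1}z$ when $l=k+1$, chosen precisely so that $d_l\circ h$ is the inclusion $Z\hookrightarrow G_n$ while $d_k\circ h$ is \emph{constant} (with value $u_jg$, $j=\upsilon_k(l-1)$). Pulling the isomorphism ${d_k}^*K_{n,l-1}\simeq{d_l}^*K_{n,k}$ back along $h$ then gives $Z\times(K_{n,l-1})_{u_jg}\simeq K_{n,k}\mathbin|Z$, the desired trivialization carrying $E_n$ into itself.
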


\begin{proof} Let us fix any $g$ in $G_{n-1}$ and write $Z = d_k^{-1}(g)$ for the corresponding $d_k$~fiber~$\subset G_n$. Let $h: Z \to G_{n+1}$ be the smooth map given by $z \mapsto u_{k+1}z$ when $l = k + 1$ and by $z \mapsto u_{l-1}z$ when $l > k + 1$; in either case the composite map $d_k \circ h$ has constant value $u_jg$, where $j = \upsilon_k(l - 1)$, and $d_l \circ h$ equals the inclusion of $Z$ into $G_n$.

By assumption we have an isomorphism of smooth vector bundles over $G_{n+1}$ \[%
\xymatrix{%
 G_{n+1} \times_{\varLambda^{n+1}_l(G)} \varLambda^{n+1}_l(V)
 &	\ar[l]_-\backsim C_{n+1} \ar[r]^-\sim
	&	G_{n+1} \times_{\varLambda^{n+1}_k(G)} \varLambda^{n+1}_k(V)
}\] under which the images of the evident embeddings
\begin{gather*}
	{d_k}^*K_{n,l-1} \longto G_{n+1} \times_{\varLambda^{n+1}_l(G)} \varLambda^{n+1}_l(V) \\
	{d_l}^*K_{n,k} \longto G_{n+1} \times_{\varLambda^{n+1}_k(G)} \varLambda^{n+1}_k(V)
\end{gather*}
correspond bijectively to each other. This yields an isomorphism \(%
	{d_k}^*K_{n,l-1} \simto {d_l}^*K_{n,k}
\) of smooth vector bundles over $G_{n+1}$ which carries ${d_k}^*E_n$ onto ${d_l}^*E_n$. Pulling this back along $h: Z \to G_{n+1}$ in turn yields an isomorphism of smooth vector bundles over $Z$ \[%
	Z \times (K_{n,l-1})_{u_jg} \simeq (d_k \circ h)^*K_{n,l-1} \simto (d_l \circ h)^*K_{n,k} \simeq K_{n,k} \mathbin| Z.
\qedhere\] \end{proof}

Let $G$ be an arbitrary simplicial manifold; as usual for any poset map $\theta: [m] \to [n]$ we write $G_\theta: G_n \to G_m$ for the corresponding map on $n$-simplices. Let us fix an arbitrary sequence of smooth vector bundles $E_n \to G_n$, $n = 0$,~$1$,~$2$,~$\dotsc$, and define $V_n \to G_n$ to be the smooth vector bundle \[%
\textstyle
	V_n = \bigoplus\limits_{\alpha:[k]\into[n]} {G_\alpha}^*E_k
\] where the direct sum runs over all poset injections $\alpha: [k] \into [n]$ (for all $k$ between $0$ and $n$); for every poset map $\theta: [m] \to [n]$ let $V_\theta: V_n \to V_m$ be the smooth vector bundle morphism covering $G_\theta: G_n \to G_m$ given by \[%
	\pr_\alpha \circ V_\theta =%
\begin{cases}
		\pr_{\theta\alpha} &\text{if $\theta\alpha$ is injective} \\
		0                  &\text{otherwise,}
\end{cases}
\] where $\pr_\alpha: V_n \to E_{k=\dim\alpha}$ is the projection onto the $\alpha$-th direct summand. The correspondence $\theta \mapsto V_\theta$ is obviously a contravariant functor $\Cat{\Delta}^\op \to \VB$ and hence is a simplicial vector bundle $V \to G$. The latter must be a vector fibration whenever $G$ is a Lie $\infty$-groupoid because for all $n \geq k \geq 0$ the kernel of the fiberwise linear map \(%
	V_n \to G_n \times_{\varLambda^n_k(G)} \varLambda^n_k(V)
\) is the vector subbundle ${d_k}^*E_{n-1} \oplus \id^*E_n$ of $V_n$ whose obvious complement evidently splits that map. The notations of our last lemma are consistent with the present ones: $E_n \simeq \id^*E_n$ coincides with the intersection of the kernels of the maps $d_j: V_n \to V_{n-1}$, $j = 0$,~$\dotsc$,~$n$.

\begin{exm}\label{npar:21A.2.2} Let $G$ be any Lie $\infty$-groupoid such that the following two items exist for at least one $x$ in $G_0$, where we write $G^x$ for the target fiber $t^{-1}(x) = {d_0}^{-1}(x) \subset G_1$:
\begin{enumerate}
\def\labelenumi{(\alph{enumi})}
 \item a smooth map $h: G^x \to G_2$ such that $d_0h(g) = g$ and $d_1h(g) = 1x$;
 \item a smooth vector bundle $E \to G_1$ whose restriction over $G^x$ is stably nontrivializable.
\end{enumerate}
For instance when $G$ is a Lie groupoid the first item is furnished by $g \mapsto (g,g^{-1})$ (at every $x$) while the second item only fails to exist in exceptional cases \cite[§4]{MS74}.

We are going to exhibit an $m + 1$-strict vector fibration $V \to G$ such that for all $0 \leq k < l < n \leq m$ one cannot possibly have $C_{n,k} = C_{n,l}$ in other words no $n,k$-cleavage $C_{n,k}$ can function simultaneously as an $n,l$-cleavage $C_{n,l}$; a fortiori, $V_n \to G_n$ cannot, for any $2 \leq n \leq m$, admit subbundles $C_n$ of the sort described in Definition \ref{defn:coherent}. The vector fibration in question will be a special case of our previous construction.

Let us fix an arbitrary smooth vector bundle $E_0 \to G_0$ in $\VB(G_0)$. For every $n \geq 1$ let us consider the object $E_n \to G_n$ of $\VB(G_n)$ given by \[%
\textstyle
	E_n =%
\begin{cases}
	\bigoplus\limits_{j=0}^{n-1} {d_{[n-1]\smallsetminus\{j\}}}^*E &\text{for $n < m$} \\
	0                                                              &\text{for $n \geq m$,}
\end{cases}
\] where $d_{[n-1]\smallsetminus\{j\}}: G_n \to G_1$ is the smooth map associated with the poset injection $[1] \into [n]$ of image $\{j,n\}$. Of course the vector fibration $V \to G$ corresponding via our general construction to the sequence $E_n \to G_n$ will be $m + 1$-strict. For each $k = 0$,~$\dotsc$,~$n - 1$ with $0 < n < m$ let $h^n_k: G^x \to G_n$ be the map defined by the prescription \[%
	h^n_k(g) =%
\begin{cases}
	u_{n-1} \dotsm u_1g                   &\text{for $k = 0$} \\
	u_{n-1} \dotsm u_{k+1}{u_0}^{k-1}h(g) &\text{for $k > 0$.}
\end{cases}
\] It is easy to check that $d_kh^n_k(g) = 1_{n-1}x$ for all $k$ and so $h^n_k$ ranges within ${d_k}^{-1}(1_{n-1}x) \subset G_n$. The pullback of $E_n \to G_n$ along $h^n_k$ is stably isomorphic to $E \mathbin| G^x$ because the composition of $h^n_k$ with $d_{[n-1]\smallsetminus\{j\}}$ is $g \mapsto g$ when $j = k$ and is $g \mapsto 1tg = 1x$ (a constant map) otherwise. This means that $E_n \to G_n$ cannot be trivializable over ${d_k}^{-1}(1_{n-1}x)$ for $0 \leq k < n$. Now Lemma \ref{lem:21A.2.1} implies the impossibility of the identity $C_{n+1,k} = C_{n+1,l}$ for all $0 \leq k < l \leq n$. \end{exm}

\begin{exm}\label{npar:21A.2.3} Our construction of a vector fibration $V \to G$ out of the sequence of vector bundles $E_n \to G_n$ has an obvious Lie $\infty$-groupoid theoretic counterpart. Let us fix an arbitrary sequence of pointed smooth manifolds $X_0$,~$X_1$,~$X_2$,~$\dots$ and form the cartesian products
\begin{equation*}
\textstyle
	G_n = \prod\limits_{\alpha:[k]\into[n]} X_k \text{,\quad $n \geq 0$.}
\end{equation*}
For each poset map $\theta: [m] \to [n]$ let us declare $G_\theta: G_n \to G_m$ to have components
\begin{equation*}
	\pr_\alpha \circ G_\theta =%
\begin{cases}
		\pr_{\theta\alpha}  &\text{if $\theta\alpha$ is injective} \\
		\ast_{k=\dim\alpha} &\text{otherwise,}
\end{cases}
\end{equation*}
where $\ast_k$ denotes the (constant map of value the) base point of $X_k$. The correspondence $\theta \mapsto G_\theta$ is obviously a simplicial manifold $G$. Clearly $G$ will be a Lie $m$-groupoid whenever $X_n = \ast$ for all $n \geq m$.

Fix $x \in G_0$. For any $n \geq 1$ the intersection $Z_n = \bigcap_{k=0}^n {d_k}^{-1}(1_{n-1}x)$ is a smooth submanifold of $G_n$ diffeomorphic to $X_n$. In the notations of Lemma \ref{lem:21A.2.1} as applied to the tangent bundle $TG \to G$, we have \[%
\textstyle
	E_n \mathbin| Z_n
		= \bigcap_{k=0}^n \ker(Td_k: TG_n \to TG_{n-1}) \mathbin| Z_n
		= TZ_n \subset TG_n \mathbin| Z_n
\] and so $E_n \to G_n$ cannot be trivializable over ${d_k}^{-1}(1_{n-1}x)$ for $0 \leq k \leq n$ unless $TZ_n$ itself is trivializable in other words $X_n$ is parallelizable; whenever that fails to be the case, Lemma \ref{lem:21A.2.1} makes the identity $C_{n+1,k} = C_{n+1,l}$ for $0 \leq k < l \leq n$ an impossibility. \end{exm}

In conclusion\spacefactor3000: \em The study of adjoint representations of higher Lie groupoids\/ \emph{demands} that we consider\/ \emph{arbitrary} (normal) cleavages---not just coherent ones.\em

\section{Splitting a morphism of higher vector bundles by a choice of cleavages}\label{sec:morphism}

Higher vector bundles are the objects of a full subcategory $\VB^\infty \subset [\Cat{\Delta}^\op,\VB]$ of the category of all simplicial vector bundles. For a higher Lie groupoid $G$ we write $\VB^\infty(G)$ for the subcategory of $\VB^\infty$ formed by all higher vector bundles over $G$ and their morphisms covering the identity transformation of $G$:
\begin{equation*}
\xymatrix{%
 V \ar[r]^\phi
 \ar[d]^p
 &	W
	\ar[d]^q
\\ G \ar[r]^=
 &	G\rlap{.}}
\end{equation*}
Suppose now that each of $V \xto{p} G$, $W \xto{q} G$ is equipped with a normal cleavage. Our main goal in this section is to explain how the cleavages may be used to extract, out of any given morphism $\phi: V \to W \in \VB^\infty(G)$, an intertwiner $\spl{\phi}: \spl{V} \to \spl{W} \in \Rep^\infty_+(G)$ of the corresponding Dold–Kan representations $\spl{V} = (\spl{V},\spl{R})$, $\spl{W} = (\spl{W},\spl{S})$ in such a way that $\spl{\phi}_0^{-n}: \spl{V}^{-n} \to \spl{W}^{-n}$ coincides with the morphism in $\VB(G_0)$ induced by $\phi_n: V_n \to W_n$ upon restriction to the subbundle $\spl{V}^{-n}$ of $V^{-n} = V_n \mathbin| 1_n(G_0)$. Once our task is carried out we shall automatically have proven what follows on account of Proposition \ref{prop:isomorphism} as applied to $\varPhi = \spl{\phi}$, $\phi = \id: V \to V$.

\begin{thm}\label{thm:2018b} The Dold–Kan representations arising from any two different choices of a normal cleavage of a vector fibration\/ $V \xto{p} G$ are isomorphic objects of the category\/ $\Rep^\infty_+(G)$. \end{thm}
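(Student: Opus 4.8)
The plan is to obtain Theorem~\ref{thm:2018b} as an immediate consequence of the splitting construction for morphisms announced at the start of this section, specialised to the identity morphism of $V$, together with Proposition~\ref{prop:isomorphism}. In detail: let $c$ and $c'$ be two normal cleavages of $V \xto{p} G$, and let $\spl R^{c}$, $\spl R^{c'}$ be the Dold--Kan representations they produce on the common Dold--Kan complex $(\spl V,\spl d)$ (note that $\spl V$ and $\spl d = \spl R_0$ depend only on the simplicial vector bundle, not on the cleavage, so only the higher tensors differ). View $V$ as carrying $c$ on the domain side and $c'$ on the codomain side, and apply the morphism-splitting construction to $\id\colon V \to V$ in $\VB^\infty(G)$. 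By its defining property this yields an intertwiner $\spl\phi\colon (\spl V,\spl R^{c}) \to (\spl V,\spl R^{c'})$ in $\Rep^\infty(G)$ whose degree-zero component $\spl\phi_0^{-n}$ is the morphism of $\VB(G_0)$ induced by $\id_n\colon V_n \to V_n$ upon restriction to $\spl V^{-n} \subset V^{-n}$, hence the identity of $\spl V^{-n}$. Both source and target vanish in every negative homological degree and so lie in $\Rep^\infty_+(G)$; Proposition~\ref{prop:isomorphism} therefore applies, and since $\spl\phi_0^{-n} = \id$ is an isomorphism of graded vector bundles over $G_0$, it guarantees that $\spl\phi$ is invertible. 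Thus $\spl R^{c} \cong \spl R^{c'}$ in $\Rep^\infty_+(G)$.

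What remains, and what the rest of the section will be devoted to, is the construction of $\spl\phi$ for an arbitrary morphism $\phi\colon V \to W$ of $\VB^\infty(G)$. I would proceed in close parallel with subsection~\ref{sub:splitting}: introduce an auxiliary family of simplicial maps in the spirit of the maps $P(g,v)$ of Lemma~\ref{lem:P(g,v)}, but now depending on one extra interval coordinate that \emph{interpolates} between the two sets of splitting data --- the cleavage $c$ of $V$ at one end and the cleavage $c'$ of $W$ (composed with $\phi$) at the other. The components $\spl\phi_m^{-n}(g)$ would then be extracted by taking the alternating sum, over maximal simplices, of the ``missing'' faces of these interpolating maps, exactly as the tensors $R_m^{-n}(g)v$ were extracted in \eqref{eqn:R(g)v}. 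Verifying that the resulting $\spl\phi$ satisfies the intertwiner equations~\eqref{eqn:intertwiner} would rely on combinatorial identities of the same kind as Lemmas~\ref{prop:17A.7.2} and~\ref{lem:17A.10.3}, while the stability of the canonical cleavages under simplicial reparametrization recorded in Lemmas~\ref{lem:16A.24.1} and~\ref{lem:16A.24.2}, together with normality of $c$ and $c'$ (via analogues of Lemmas~\ref{lem:16A.7.1} and~\ref{lem:16A.10.1}), would supply the degeneracy-vanishing conditions $\spl\phi_{m+1}(u_jg)=0$ and let the whole construction descend from the Moore to the Dold--Kan level.

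The main obstacle is precisely this interpolation construction and the attendant bookkeeping: one must arrange the extra coordinate so as to be simultaneously compatible with the recursion defining the maps $P(g,v)$, with the two cleavages on $V$ and $W$, and with the morphism $\phi$, and then push through a delicate alternating-sign computation to confirm the intertwiner identities. Once $\spl\phi$ is available, however, the deduction of Theorem~\ref{thm:2018b} given in the first paragraph above is entirely routine.
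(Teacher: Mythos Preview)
Your proposal is correct and matches the paper's approach exactly: the theorem is deduced from the splitting construction for morphisms applied to $\phi=\id$ together with Proposition~\ref{prop:isomorphism}, and the rest of the section then supplies that construction via an interpolation complex and the accompanying combinatorial lemmas. One small refinement to anticipate: the extra ``interpolating'' coordinate the paper actually uses is $[m]$-valued rather than $[1]$-valued (the fundamental interpolation $I^m_+\subset\nerve([1]^m\times[m])$), which is what makes the recursion compatible with the face structure of $I^m$.
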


\begin{cor}\label{cor:2018b} The isomorphism class\/ $[\Ad^c]$ of the adjoint representation of a Lie\/ $\infty$-groupoid\/ $G$ is a well-defined invariant of\/ $G$ independent of the choice of a normal cleavage, $c$, of the tangent bundle\/ $TG \to G$. \end{cor}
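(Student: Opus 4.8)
The plan is to read the corollary off Theorem \ref{thm:2018b} by specializing the vector fibration $V \xto{p} G$ to the tangent bundle $TG \to G$. First I would check that this specialization is legitimate: by Proposition \ref{prop:17A.27.2}, whenever $G$ is a Lie $\infty$-groupoid the tangent bundle $TG \to G$ is a vector fibration, so the entire apparatus of subsections \ref{sub:cleavages} and \ref{sub:splitting} applies to it. In particular $TG \to G$ admits normal cleavages by Proposition \ref{prop:normal}, hence for every normal cleavage $c$ of $TG \to G$ the adjoint representation $\Ad^c$ of Definition \ref{defn:Ad} is genuinely defined.

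Next I would verify that $\Ad^c$ is in fact an object of the subcategory $\Rep^\infty_+(G)$ to which Theorem \ref{thm:2018b} refers. By construction $\Ad^c = \spl R$ is the unital representation up to homotopy of $G$ on the Dold--Kan complex $(\spl V,\spl d)$ of $TG \to G$; since $\spl V^{-n} = 0$ whenever $n < 0$---the normalized Moore complex being, like the Moore complex, a cochain complex vanishing in positive cohomological degrees---the graded vector bundle $\spl V$ vanishes in every negative homological degree, so $\Ad^c$ lies in $\Rep^\infty_+(G)$. No boundedness in the positive homological direction is required here, as $\Rep^\infty_+(G)$ permits infinitely many nonzero terms; in particular the tangent complex of a general Lie $\infty$-groupoid need not be bounded.

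With these two points in place I would conclude as follows: given any two normal cleavages $c_1$, $c_2$ of $TG \to G$, Theorem \ref{thm:2018b}, applied with $V \xto{p} G$ taken to be $TG \to G$, furnishes an isomorphism $\Ad^{c_1} \simto \Ad^{c_2}$ in $\Rep^\infty_+(G)$. Consequently the isomorphism class $[\Ad^c]$ is independent of $c$, and the rule assigning to a Lie $\infty$-groupoid $G$ this common class---well posed because normal cleavages exist---is a well-defined invariant of $G$. I do not expect any genuine obstacle in this argument: the only thing needing attention is the purely formal observation that Definition \ref{defn:Ad} is an instance of the splitting construction of subsection \ref{sub:splitting} and that its output lands in the category named in Theorem \ref{thm:2018b}; all the substance has already been absorbed into that theorem.
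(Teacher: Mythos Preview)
Your proposal is correct and follows exactly the paper's own route: the corollary is stated immediately after Theorem~\ref{thm:2018b} without separate proof, precisely because it is obtained by specializing that theorem to the vector fibration $TG \to G$, using Proposition~\ref{prop:17A.27.2} to know this is a vector fibration and Proposition~\ref{prop:normal} to know normal cleavages exist. Your additional observation that $\Ad^c$ lands in $\Rep^\infty_+(G)$ is the only formal check needed, and it is indeed immediate from the construction of the Dold--Kan complex.
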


\subsection{The interpolation construction for morphisms}\label{sub:overview}

Our construction of $\spl{\phi}$ out of $\phi$ will parallel the construction of $\spl{V}$ out of $V$ reviewed in the previous section. To begin with, we introduce a simplicial complex which, in the present context, is going to play a role equivalent to that played by the fundamental $m$-cube $I^m$ in section \ref{sub:splitting}.

\begin{defn}\label{defn:interpolation} For each $a = 1$,~$\dotsc$,~$m$ let $I^m_a$ be the simplicial subset of $\nerve([1]^m \times [m])$~(= the nerve of the poset $[1]^m \times [m]$) whose $k$-simplices are all those maps $[k] \to [1]^m \times [m]$ that range within the subset \[%
	\bigl\{(r_1,\dotsc,r_m;r_+) \in [1]^m \times [m]: \textstyle\sum_{i=1}^{a-1} r_i \leq r_+ \leq \sum_{i=1}^a r_i\bigr\}
\] and that are order preserving with respect to the following partial ordering $\leq_a$ of $[1]^m \times [m]$:
\begin{multline}
 (r_1,\dotsc,r_m;r_+) \leq_a (s_1,\dotsc,s_m;s_+) \quad \text{iff} \\*
	r_i \leq s_i \quad \text{$\forall i = 1$,~$\dotsc$,~$m$}
\quad \text{and} \quad
	r_+ \leq s_+ - \textstyle\sum_{i=1}^{a-1}(s_i - r_i).
\end{multline}
The \emph{fundamental} (\emph{$m$-cubical}) \emph{interpolation} is the simplicial subset $I^m_+$ of $\nerve([1]^m \times [m])$ given by $I^m_+ = \bigcup_{a=1}^m I^m_a$ for $m \geq 1$ and by $I^0_+ = \nerve([1]^0 \times [0]) \simeq \varDelta^0$ for $m = 0$. \end{defn}

\begin{rmks}\label{rmks:unexpected} Performing the splitting construction on morphisms is the purpose our simplicial complex $I^m_+$ was originally devised for. Quite unexpectedly the same simplicial complex turned out to have other important applications\spacefactor3000: {\itshape (a)}\spacefactor3000\nobreak\/ It provides a “geometric” way to derive the formulas underlying the \emph{semidirect product} correspondence of \cite{2018a,2022a}; we shall touch on this indirectly in section \ref{sub:final}. {\itshape (b)}\spacefactor3000\nobreak\/ It leads to a profound characterization of the essential image of the semidirect product correspondence in terms of the existence of \emph{coherent} cleavages (Definition \ref{defn:coherent}), see \cite[Cor.~6.5]{2018a}. {\itshape (c)}\spacefactor3000\nobreak\/ It can be employed to process an arbitrary coherent normal cleavage into one that is in addition “weakly flat” in the sense of \cite[§2]{2022a}, cf.~our comments right after Definition \ref{defn:coherent}; as a matter of fact, the latter discovery helped figure out the very notion of weak flatness. \end{rmks}

Let $\alpha_+: I^m_+ \to \varDelta^m$ be the simplicial map $(r_1,\dotsc,r_m;r_+) \mapsto \alpha(r_1,\dotsc,r_m)$; Figure \ref{fig:alpha+} shows what $I^m_+ \xto{g\alpha_+} G$, the “interpolation blow-up” of $g \in G_m$, looks like for $m = 1$,~$2$ (hopefully, the pictures will shed some light on our definitions, as well as on our choice of terminology).%
\begin{figure}
\begin{tikzpicture}[scale=0.5]
 \draw[->,thick] (0,1)--(5,1); \draw[->,thick] (0,1)--(5,6);
 \draw[dotted] (5,1)--(5,6);
 \node[left] at (0,1) {$0$}; \node[right] at (5,1) {$1$};
 \node[above right] at (5,6) {$1$};
 \path[fill=gray] (10,0)--(15,0)--(17,3)--(10,0); \draw[thick] (10,0)--(17,3);
 \path[fill=lightgray] (10,0)--(15,0)--(17,8)--(10,0); \draw[dashed,thick] (10,0)--(17,3);
 \path[fill=lightgray] (10,0)--(15,5)--(17,8)--(10,0); \draw[thick] (10,0)--(17,8);
 \path[fill=gray] (10,0)--(15,5)--(17,13)--(10,0); \draw[dashed,thick] (10,0)--(17,8);
 \draw[->,thick] (10,0)--(15,0); \draw[->,thick] (15,0)--(17,3); \draw[dotted,thick] (12,3)--(17,3);
 \draw[->,thick] (10,0)--(15,5); \draw[->,thick] (15,5)--(17,8); \draw[dotted,thick] (12,3)--(17,8);
 \draw[->,thick] (15,5)--(17,13); \draw[->,thick] (10,0)--(12,8); \draw[dotted,thick] (12,8)--(17,13);
 \draw[->,dashed,thick] (10,0)--(12,3); \draw[dotted] (12,3)--(12,8);
 \draw (15,0)--(17,8); \draw[dotted] (15,0)--(15,5) (17,3)--(17,8);
 \draw[thick] (10,0)--(17,13); \draw[dotted] (17,8)--(17,13) (12,3)--(17,13);
 \node[below left] at (10,0) {$0$}; \node[below right] at (15,0) {$1$};
 \node[left] at (12,3) {$2$}; \node[right] at (17,3) {$2$};
 \node[above left] at (12,8) {$2$}; \node[right] at (17,8) {$2$};
 \node[above right] at (17,13) {$2$}; \node[right] at (15,5) {$1$};
\end{tikzpicture}
\caption{Interpolation blow-up, $\alpha_+: I^m_+ \to \varDelta^m$, $m = 1$,~$2$}
\label{fig:alpha+}
\end{figure}
In full analogy with the theory of section \ref{sec:splitting}, our construction of $\spl{\phi}$ is going to involve a family of simplicial maps%
\begin{subequations}
\begin{equation}
	P_+(g,v): \varDelta^n \times I^m_+ \longto W \text{,\quad $m$,~$n \geq 0$, $g \in G_m$, $v \in V^{-n}_{sg}$}
\label{eqn:P+}
\end{equation}
which individually are going to be solutions to the lifting problem%
\begin{equation}
 \begin{split}
\xymatrix@C=1.67em@R=11ex{%
 \varDelta^n \ar[rrr]^-{\phi v} \ar[d]_{(\id,0)} & & & W \ar[d]^q \\
 \varDelta^n \times I^m_+ \ar@{-->}[rrru]^(.5){P_+(g,v)} \ar[r]^-\pr
 & I^m_+ \ar[r]^-{\alpha_+} & \varDelta^m \ar[r]^-g & G\rlap{.}
}\end{split}
\label{eqn:P+(g,v)}
\end{equation}
\end{subequations}
The maps in this family will be constructed canonically via a recursive lifting process in terms of the given cleavages of $V \xto{p} G$, $W \xto{q} G$. Before we can state the appropriate analog of Lemma \ref{lem:P(g,v)}, we need to analyze the boundary $\partial I^m_+$ of $I^m_+$. We shall do this by decomposing $\partial I^m_+$ into elementary building blocks as follows.

\begin{defn}\label{defn:faces+} For all $a$,~$i \in \{1,\dotsc,m\}$, $a \neq i$ and $r = 0$,~$1$ let $\delta_{a,i|r} = \delta^m_{a,i|r}: I^{m-1}_{\upsilon_i(a)} \to I^m_a$ be the simplicial map%
\begin{equation}
	(r_1,\dotsc,r_{m-1};r_+) \longmapsto%
		\bigl(r_1,\dotsc,r_{i-1},r,r_i,\dotsc,r_{m-1};\left.r_+ + [a - \upsilon_i(a)]r\right.\bigr).
\label{npar:17A.14.2}
\end{equation}
For all $a = 1$,~$\dotsc$,~$m$ let $\delta_{a,a|1} = \delta^m_{a,a|1}$, $\delta_{a,\top} = \delta^m_{a,\top}$, and $\delta_{a,\bot} = \delta^m_{a,\bot}$ be the simplicial maps of $I^m$ into $I^m_a$ given by%
\begin{alignat}{2}
	(r_1,\dotsc,r_m)&\longmapsto \bigl(r_1,\dotsc,r_{a-1},1,r_{a+1},\dotsc,r_m;\left.\textstyle\sum_{i=1}^a r_i\right.\bigr),
\label{npar:17A.14.3}\\%
	(r_1,\dotsc,r_m)&\longmapsto \bigl(r_1,\dotsc,r_m;\left.\textstyle\sum_{i=1}^a r_i\right.\bigr) \text{,\quad and}
\label{npar:17A.14.1t}\\%
	(r_1,\dotsc,r_m)&\longmapsto \bigl(r_1,\dotsc,r_m;\left.\textstyle\sum_{i=1}^{a-1} r_i\right.\bigr) \text{,\quad respectively.}
\label{npar:17A.14.1b}
\end{alignat}
The same notations will be used for the maps arising upon composition with the inclusion $I^m_a \subset I^m_+$. \end{defn}

The above will be referred to as the elementary \emph{interpolation face inclusions}. A few auxiliary maps may now be defined in terms of these. First, our formula for $r = 0$ does not depend on $a$, so that the maps $\delta_{a,i|0}$ for a given $i$ agree with one another on the overlaps of their domains of definition and thus may be packed together into a single map we call $\delta_{+,i|0}: I^{m-1}_+ \to I^m_+$. Second, the maps $\delta_{a,1|1}$ with $a > 1$ may be assembled for similar reasons into a single map $\delta_{+,1|1}: I^{m-1}_+ \to I^m_+$, while the several $\delta_{a,m|1}$ with $a < m$ combine to give $\delta_{+,m|1}: I^{m-1}_+ \to I^m_+$. Finally, the abbreviations $\delta_\top = \delta_{m,\top}$, $\delta_\bot = \delta_{1,\bot}: I^m \to I^m_+$ will be in force.

Of course the image $\im(\delta_{+,i|0})$ of the map $\delta_{+,i|0}$ is the subcomplex $\partial_{i|0}I^m_+$ of $I^m_+$ formed by all those simplices whose vertices $(r_1,\dotsc,r_m;r_+)$ satisfy the condition $r_i = 0$. If we define $\partial_{i|1}I^m_+$ similarly, we have $\partial_{i|1}I^m_+ = \bigcup_{a=1}^m \partial_{i|1}I^m_a$, where $\partial_{i|1}I^m_a = \im(\delta_{a,i|1})$. (Have a look at Figure \ref{fig:i|1+}.)%
\begin{figure}
\begin{tikzpicture}[scale=0.5]
 \path[fill=gray] (0,0)--(5,0)--(7,3)--(0,0); \draw[thick] (0,0)--(7,3);
 \path[fill=lightgray] (0,0)--(5,0)--(7,8)--(0,0); \draw[dashed,thick] (0,0)--(7,3);
 \draw (0,0)--(7,8) (5,0)--(7,8); \draw (0,0)--(2,8) (0,0)--(5,5);
 \path[fill=lightgray] (0,5)--(5,5)--(7,8)--(0,5); \draw[thick] (0,5)--(7,8);
 \path[fill=lightgray] (0,5)--(5,5)--(7,13)--(0,5); \draw[dashed] (0,0)--(7,8);
 \draw[dashed,thick] (0,5)--(7,8);
 \path[fill=lightgray] (0,5)--(5,10)--(7,13)--(0,5); \draw (5,5)--(7,13); \draw[thick] (0,5)--(7,13);
 \path[fill=gray] (0,5)--(5,10)--(7,18)--(0,5); \draw[dashed,thick] (0,5)--(7,13); \draw[thick] (0,5)--(7,18);
 \draw[->,thick] (0,0)--(5,0); \draw[->,thick] (5,0)--(7,3); \draw[->,thick] (0,0)--(2,3);
 \draw[->,thick] (0,5)--(5,5); \draw[->,thick] (5,5)--(7,8); \draw[->,dashed,thick] (0,5)--(2,8);
 \draw[->,thick] (0,5)--(5,10); \draw[->,thick] (5,10)--(7,13); \draw[dotted] (0,0)--(0,5);
 \draw[->,thick] (0,5)--(2,13); \draw[->,thick] (5,10)--(7,18); \draw[dashed] (0,0)--(2,8);
 \draw[dotted] (2,3)--(2,8) (2,3)--(7,8) (5,0)--(5,5) (5,5)--(5,10) (7,3)--(7,8);
 \draw[dotted,thick] (2,3)--(7,3) (2,8)--(7,8) (2,8)--(7,13) (2,13)--(7,18);
 \draw[dotted] (2,8)--(2,13) (2,8)--(7,18) (7,8)--(7,13) (7,13)--(7,18);
 \node[below left] at (0,0) {$1$}; \node[below right] at (5,0) {$2$};
 \node[left] at (2,3) {$3$}; \node[right] at (7,3) {$3$};
 \node[left] at (2,8) {$3$}; \node[right] at (7,8) {$3$};
 \node[above left] at (2,13) {$3$}; \node[right] at (7,13) {$3$};
 \node[left] at (0,5) {$1$}; \node[right] at (5,5) {$2$};
 \node[above right] at (7,18) {$3$}; \node[right] at (5,10) {$2$};
 \draw[dotted] (17,3)--(17,8); \draw[dotted,thick] (12,3)--(17,8) (12,3)--(17,3);
 \draw[->,thick] (10,0)--(12,3); \draw[->,thick] (15,0)--(17,3); \draw[->,thick] (15,5)--(17,8);
 \draw (15,0)--(17,8); \draw[thick] (10,0)--(17,3) (10,0)--(17,8);
 \draw[dotted] (15,0)--(15,5); \draw[dotted,thick] (10,0)--(15,0) (10,0)--(15,5);
 \draw[dotted] (12,3)--(12,8) (12,3)--(17,13) (17,8)--(17,13);
 \draw (10,0)--(12,8) (10,0)--(17,13) (15,5)--(17,13);
 \draw[dotted] (10,0)--(10,5) (10,0)--(15,10) (15,5)--(15,10);
 \draw[dotted] (12,8)--(12,13) (12,8)--(17,18) (17,13)--(17,18);
 \draw[->,thick] (10,5)--(12,8); \draw[->,thick] (15,10)--(17,13);
 \draw[thick] (10,5)--(17,13); \draw[dotted,thick] (10,5)--(15,10) (12,8)--(17,13);
 \draw[->,thick] (10,5)--(12,13); \draw[->,thick] (15,10)--(17,18);
 \draw[thick] (10,5)--(17,18); \draw[dotted,thick] (12,13)--(17,18);
 \node[below left] at (10,0) {$2$}; \node[below right] at (15,0) {$2$};
 \node[left] at (12,3) {$3$}; \node[right] at (17,3) {$3$};
 \node[left] at (12,8) {$3$}; \node[right] at (17,8) {$3$};
 \node[above left] at (12,13) {$3$}; \node[right] at (17,13) {$3$};
 \node[left] at (10,5) {$2$}; \node[right] at (15,5) {$2$};
 \node[above right] at (17,18) {$3$}; \node[right] at (15,10) {$2$};
 \draw[dotted,thick] (20,0)--(25,0) (20,0)--(27,3) (22,3)--(27,3) (25,0)--(27,3);
 \draw[dotted] (25,0)--(25,5) (25,0)--(27,8) (27,3)--(27,8);
 \draw[dotted,thick] (20,0)--(22,3) (20,0)--(27,8) (22,3)--(27,8) (25,5)--(27,8);
 \draw[dotted] (22,3)--(22,8) (22,3)--(27,13) (27,8)--(27,13);
 \draw[dotted,thick] (20,0)--(22,8) (20,0)--(25,5) (20,0)--(27,13) (22,8)--(27,13) (25,5)--(27,13);
 \draw[dotted] (20,0)--(20,5) (20,0)--(22,13) (20,0)--(27,18) (22,8)--(22,13) (22,8)--(27,18);
 \draw[dotted] (20,0)--(25,10) (25,5)--(25,10) (25,5)--(27,18) (27,13)--(27,18);
 \draw[dotted,thick] (20,5)--(22,13) (20,5)--(25,10) (20,5)--(27,18) (22,13)--(27,18) (25,10)--(27,18);
 \draw[fill] (20,0) circle [radius=.05] (20,5) circle [radius=.05] (22,3) circle [radius=.05];
 \draw[fill] (22,8) circle [radius=.05] (22,13) circle [radius=.05] (25,0) circle [radius=.05];
 \draw[fill] (25,5) circle [radius=.05] (25,10) circle [radius=.05] (27,3) circle [radius=.05];
 \draw[fill] (27,8) circle [radius=.05] (27,13) circle [radius=.05] (27,18) circle [radius=.05];
 \node[below left] at (20,0) {$3$}; \node[below right] at (25,0) {$3$};
 \node[left] at (22,3) {$3$}; \node[right] at (27,3) {$3$};
 \node[left] at (22,8) {$3$}; \node[right] at (27,8) {$3$};
 \node[above left] at (22,13) {$3$}; \node[right] at (27,13) {$3$};
 \node[left] at (20,5) {$3$}; \node[right] at (25,5) {$3$};
 \node[above right] at (27,18) {$3$}; \node[right] at (25,10) {$3$};
\end{tikzpicture}
\caption{Restriction of $\alpha_+: I^3_+ \to \varDelta^3$ to $\partial_{i|1}I^3_+$, $i = 1$,~$2$,~$3$}
\label{fig:i|1+}
\end{figure}
The remaining components of the boundary of $I^m_+$ are $\partial_\top I^m_+ = \im(\delta_\top)$ and $\partial_\bot I^m_+ = \im(\delta_\bot)$. We shall be interested in the union of all these boundary components except $\partial_{m|1}I^m_+$:%
\begin{subequations}
\label{npar:17A.15.2}
\begin{gather}
\textstyle%
	\varPi^m_+ = \bigl(\bigcup\limits_{1\leq i\leq m} \partial_{i|0} I^m_+\bigr)
		\cup \bigl(\bigcup\limits_{1\leq i<m} \partial_{i|1} I^m_+\bigr)
		\cup \partial_\top I^m_+ \cup \partial_\bot I^m_+
			\text{,\quad $m \geq 0$;} \\
	\varPi^{n,m}_+ = \partial\varDelta^n \times I^m_+ \cup \varDelta^n \times \varPi^m_+
		\text{,\quad $m$,~$n \geq 0$.}
\end{gather}
\end{subequations}
The inclusion $\varPi^{n,m}_+ \xto\subset \varDelta^n \times I^m_+$ is easily seen to be a regular anodyne extension. However, unlike $\varPi^{n,m} \xto\subset \varDelta^n \times I^m$, it is not one of the special type \eqref{eqn:sqcup} considered in section \ref{sub:cleavages}. It is nevertheless possible to construct a \emph{canonical} $\varPi^{n,m}_+ \xto\subset \varDelta^n \times I^m_+$-cleavage $c^{n,m}_+$ with good normality properties out of the given cleavage $c = \{c_{n,k}\}_{n>k\geq 0}$ of $W \xto{q} G$. In order not to interrupt the flow of our exposition, we shall postpone the construction of $c^{n,m}_+$ to section \ref{sub:canonical}. The proof of our next lemma will be postponed to section \ref{sub:auxiliary} for similar reasons. Let us write $\chi_a = \chi^{k,m}_a: I^m_a \simto I^k_a \times I^{m-k}$ ($1 \leq a \leq k \leq m$), resp., $\chi_a = \chi^{k,m}_a: I^m_a \simto I^k \times I^{m-k}_{a-k}$ ($0 \leq k < a \leq m$) for the isomorphisms of simplicial sets
\begin{alignat*}{2}
	(r_1,\dotsc,r_k,r_{k+1},\dotsc,r_m;r_+)&\longmapsto \bigl((r_1,\dotsc,r_k;r_+),(r_{k+1},\dotsc,r_m)\bigr) \text{,\quad resp.,} \\
	(r_1,\dotsc,r_k,r_{k+1},\dotsc,r_m;r_+)&\longmapsto
		\bigl((r_1,\dotsc,r_k),\bigl(r_{k+1},\dotsc,r_m;\left.r_+ - \textstyle\sum_{i=1}^k r_i\right.\bigr)\bigr).
\end{alignat*}
For typographic consistency we shall be writing $\chi = \chi^{k,m}$ for the identification $I^m \simeq I^k \times I^{m-k}$.

\begin{lem}\label{lem:P+(g,v)} There exists one and only one family of simplicial maps\/ \eqref{eqn:P+} satisfying all of the following requirements:
\begin{enumerate}
\def\labelenumi{\upshape \arabic{enumi}.}
 \item Taken individually, they are solutions to the lifting problem\/ \eqref{eqn:P+(g,v)}; in particular, $P_+(x,v) = (\varDelta^n \times I^0_+ \simeq \varDelta^n \xto{v} V \xto{\phi} W)$ for all\/ $x \in G_0$, $v \in V^{-n}_x$.
 \item $P_+(g,v) = c^{n,m}_+\bigl(g\alpha_+ \circ \pr,\left.P_+(g,v) \mathbin| \varPi^{n,m}_+\right.\bigr)$.
 \item The restriction of\/ $P_+(g,v)$ to\/ $\varPi^{n,m}_+$ is expressed, in conformity with the prescriptions below, in terms of simplicial maps of the form\/ $P_+(h,w): \varDelta^l \times I^k_+ \to W$ with\/ $h \in G_k$, $w \in V^{-l}_{sh}$, and either\/ $k < m$ or else\/ $k = m$, $l < n$.%
\begin{subequations}
\label{eqn:S+(g,v)}
\begin{alignat}{2}
	P_+(g,v)&\circ (\delta_j \times \id) = P_+(g,d_jv) &\rlap{\qquad $0 \leq j \leq n$}
\label{eqn:P+(g,v).j}\\
	P_+(g,v)&\circ (\id \times \delta_{+,i|0}) = P_+(d_ig,v) &\rlap{\qquad $1 \leq i \leq m$}
\label{eqn:P+(g,v).i|0}\\
	P_+(g,v)&\circ (\id \times \delta_{a,i|1}) = {}\notag\\* &\hskip-1em \left\lbrace%
\begin{alignedat}{2}
	&	P\bigl(t_{m-i}g,\left.P_+(s_ig,v) \circ (\id \times \delta_{a,i|1})\right.\bigr) \circ (\id \times \chi^{i-1,m-1}_a)   &\rlap{\qquad $a < i < m$} \\
	&	P\bigl(t_{m-i}g,\left.P_+(s_ig,v) \circ (\id \times \delta_{i,i|1})\right.\bigr) \circ (\id \times \chi^{i,m})         &\rlap{\qquad $a = i < m$} \\
	&	P_+\bigl(t_{m-i}g,\left.P(s_ig,v) \circ (\id \times \delta_{i|1})\right.\bigr) \circ (\id \times \chi^{i-1,m-1}_{a-1}) &\rlap{\qquad $i < a \leq m$}
\end{alignedat}
\right.\kern-\nulldelimiterspace
\label{eqn:P+(g,v).i|1}\\
	P_+(g,v)&\circ (\id \times \delta_\top) = \phi \circ P(g,v)
\label{eqn:P+(g,v).top}\\
	P_+(g,v)&\circ (\id \times \delta_\bot) = P(g,\phi v)
\label{eqn:P+(g,v).bot}
\end{alignat}
\end{subequations}
\end{enumerate} \end{lem}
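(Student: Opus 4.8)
The plan is to construct the family $\{P_+(g,v)\}$ by recursion on the pair $(m,n)$, ordered lexicographically, in close imitation of the construction of the maps $P(g,v)$ in Lemma~\ref{lem:P(g,v)}. When $m = 0$ there is no freedom: requirement~1 forces $P_+(x,v)$ to be the composite $\varDelta^n \simeq \varDelta^n \times I^0_+ \xto{v} V \xto{\phi} W$, and requirements~2 and~3 are then vacuous (for $m = 0$ the subcomplex $\varPi^{n,0}_+$ reduces to $\partial\varDelta^n \times I^0_+$ and \eqref{eqn:P+(g,v).j} merely records the identity $v \circ \delta_j = d_jv$). So fix $m \geq 1$, $g \in G_m$, and assume $P_+(h,v')$ has been defined for all pairs with $k < m$, or with $k = m$ and $l < n$; observe also that the maps $P(h,w)$ of the splitting constructions of $V$ and of $W$ (Lemma~\ref{lem:P(g,v)}), together with their ``functorial-in-the-second-slot'' extensions $P(h,T)$ and the analogous extension $P_+(h,T)$, are fully at our disposal.

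The crux of the recursive step is to assemble, out of the right-hand sides of \eqref{eqn:P+(g,v).j}--\eqref{eqn:P+(g,v).bot}, a single simplicial map $\varPi^{n,m}_+ \to W$ lying over $g\alpha_+ \circ \pr \mathbin| \varPi^{n,m}_+$. By the decomposition of $\partial I^m_+$ discussed above, the faces $\partial\varDelta^n \times I^m_+$, $\varDelta^n \times \partial_{i|0}I^m_+$ $(1 \leq i \leq m)$, $\varDelta^n \times \partial_{i|1}I^m_+$ $(1 \leq i < m)$, $\varDelta^n \times \partial_\top I^m_+$, and $\varDelta^n \times \partial_\bot I^m_+$ cover $\varPi^{n,m}_+$, and \eqref{eqn:P+(g,v).j}--\eqref{eqn:P+(g,v).bot} prescribe a value for $P_+(g,v)$ on each of them purely in terms of lexicographically smaller data. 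One must then check that these prescriptions agree on every pairwise intersection: each such intersection is the image of a composite of two basic interpolation face inclusions (or of one of them with a $\delta_j \times \id$), so that — after both sides are expanded through the recursion — the required agreement comes down to the simplicial identities relating $\delta_{+,i|0}$, $\delta_{a,i|1}$, $\delta_\top$, $\delta_\bot$ to one another and to $\alpha_+$ (which refine the cubical identities \eqref{eqn:17A.6.1}), together with the corresponding identities for $P$ already recorded in \cite{2018a}. One also verifies, using $\alpha_+\delta_\top = \alpha = \alpha_+\delta_\bot$ and the same identities, that each piece lies over the appropriate restriction of $g\alpha_+ \circ \pr$, so that the glued map is indeed a simplicial map $\varPi^{n,m}_+ \to W$ over $g\alpha_+ \circ \pr \mathbin| \varPi^{n,m}_+$.

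Given this restriction $P_+(g,v) \mathbin| \varPi^{n,m}_+$, requirement~2 pins $P_+(g,v)$ down uniquely: since $c^{n,m}_+$ is a section of the vector-bundle epimorphism \eqref{eqn:17A.26.1*} attached to the regular anodyne inclusion $\varPi^{n,m}_+ \xto\subset \varDelta^n \times I^m_+$, there is exactly one simplicial map $\varDelta^n \times I^m_+ \to W$ lying over $g\alpha_+ \circ \pr$ and restricting to the prescribed data on $\varPi^{n,m}_+$, and we declare $P_+(g,v)$ to be it. This gives existence, and — running the same observation in reverse, the boundary of any admissible $P_+(g,v)$ being forced by requirement~3 together with the recursion, and $P_+(g,v)$ itself being then forced by requirement~2 — uniqueness. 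It remains to check requirement~1: $qP_+(g,v) = g\alpha_+ \circ \pr$ because $c^{n,m}_+$ is a cleavage, while $P_+(g,v) \circ (\id,0) = \phi v$ follows by restricting \eqref{eqn:P+(g,v).bot} to the minimal vertex of $I^m_+$ and invoking requirement~1 of Lemma~\ref{lem:P(g,v)} (applied to $W \xto{q} G$). Requirement~3 holds by construction, and the smoothness and fibrewise linearity of $(g,v) \mapsto P_+(g,v)$ are inherited, just as in subsection~\ref{sub:splitting}, from the corresponding properties of $c^{n,m}_+$ and of the maps $P$.

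The main obstacle is exactly the gluing verification of the second paragraph: it is elementary but long, and it hinges on a careful description of $\partial I^m_+$ together with the full list of identities satisfied by the interpolation face inclusions; this is what we carry out in subsection~\ref{sub:auxiliary}. A prerequisite, established separately in subsection~\ref{sub:canonical}, is the construction of the canonical cleavage $c^{n,m}_+$ of $\varPi^{n,m}_+ \xto\subset \varDelta^n \times I^m_+$ out of the given normal cleavage of $W \xto{q} G$, along with the normality properties of $c^{n,m}_+$ that will be needed to make this recursion, and the later arguments about $\spl\phi$, go through.
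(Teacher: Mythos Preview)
Your proposal is correct and follows precisely the approach the paper has in mind: a lexicographic recursion on $(m,n)$, with the boundary data assembled from \eqref{eqn:S+(g,v)} and then filled by the canonical cleavage $c^{n,m}_+$, exactly in parallel with the construction of $P(g,v)$ in Lemma~\ref{lem:P(g,v)} from \cite{2018a}. The paper in fact does not spell out this proof in subsection~\ref{sub:auxiliary} (that subsection is devoted to Lemmas~\ref{prop:17A.17.7}, \ref{lem:17A.20.3}, and \ref{lem:17A.20.2}), so your outline is, if anything, more explicit than what the paper provides; the only point to adjust is your forward reference, since the gluing verification is left implicit rather than carried out there.
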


Our formulas \eqref{eqn:P+(g,v).i|1} are to be understood in the first two cases in exactly the same way as equation \eqref{eqn:P(g,v).i|1} of section \ref{sub:splitting}. Although the expressions of the form $P(h,T)$ in these two formulas look pretty much the same, the $T$ in the first expression is a simplicial map $\varDelta^n \times I^{i-1}_a \to W$, therefore our formula defines a map $\varDelta^n \times I^{m-1}_a \simeq \varDelta^n \times I^{i-1}_a \times I^{m-i} \to W$, whereas in the second expression it is one $\varDelta^n \times I^i \to W$, so that we get a map $\varDelta^n \times I^m \simeq \varDelta^n \times I^i \times I^{m-i} \to W$. The interpretation of \eqref{eqn:P+(g,v).i|1} in the case $a > i$ is similar: for any $k$-simplex $h \in G_k$ with $k < m$ and for any simplicial map $T: A \to V$ whose composition with $V \xto{p} G$ factors through $\varDelta^0 \xto{sh} G$ we consider the simplicial map $P_+(h,T): A \times I^k_+ \to W$ characterized by the requirement that for every $l \geq 0$ and for every $l$-simplex $\varDelta^l \xto{a} A$ the composition $\varDelta^l \times I^k_+ \xto{a\times\id} A \times I^k_+ \xto{P_+(h,T)} W$ be equal to $P_+(h,Ta)$; in the case of interest to us we have $T: \varDelta^n \times I^{i-1} \to V$ and so our formula produces a map $\varDelta^n \times I^{m-1}_{a-1} \simeq \varDelta^n \times I^{i-1} \times I^{m-i}_{a-i} \subset \varDelta^n \times I^{i-1} \times I^{m-i}_+ \to W$.

As in the construction of the curvature tensors $R_m$, the next step in the construction of $\spl{\phi}$ involves taking the “alternating sum” of the maximal simplices of the $m$-dimensional simplicial complex $P_+(g,v) \mathbin| \varDelta^n \times \partial_{m|1}I^m_+$. We notice first of all that $\partial_{m|1}I^m_+$ decomposes into a copy of $I^{m-1}_+$, namely, $\im(\delta_{+,m|1})$, and, sitting on top of that, one of $I^m$, namely, $\im(\delta_{m,m|1})$; cf.~Figure \ref{fig:i|1+}. We have already classified the maximal simplices $\sigma_\pi: \varDelta^{n+m} \to \varDelta^n \times I^m$, $\pi \in \mathfrak{S}^{n,m}$, of the $n$-simplex times the $m$-cube. As to those of $\varDelta^n \times I^{m-1}_+$, we observe that for each $a < m$ we have an embedding of simplicial sets
\begin{equation}
	\iota_a = \iota^{m-1}_a: I^{m-1}_a \longto I^m, \quad%
		(r_1,\dotsc,r_{m-1};r_+) \longmapsto \bigl(r_1,\dotsc,r_{m-1},\left.r_+ - \textstyle\sum_{i=1}^{a-1} r_i\right.\bigr)
\label{eqn:iota}
\end{equation}
under which every maximal simplex of $\varDelta^n \times I^{m-1}_a$ transforms into one of $\varDelta^n \times I^m$ whose corresponding permutation $\pi \in \mathfrak S^{n,m}$ satisfies $\pi^{-1}(n + a) < \pi^{-1}(n + m)$. Let $\mathfrak{S}^{n,m-1}_a \subset \mathfrak{S}^{n,m}$ be the set of all such permutations. For each $\pi \in \mathfrak{S}^{n,m-1}_a$ there is exactly one maximal simplex $\sigma_{a,\pi} = \sigma^{n,m-1}_{a,\pi}: \varDelta^{n+m} \to \varDelta^n \times I^{m-1}_a$ whose composition with $\id \times \iota_a$ equals $\sigma_\pi$. Now, by definition, for all $g \in G_m$, $v \in V^{-n}_{sg}$,%
\begin{multline}
 \phi_m^{-n}(g)v
	= \sum_{a=1}^{m-1}\Bigl\{\sum_{\pi\in\mathfrak S^{n,m-1}_a} \sgn(\pi)P_+(g,v) \circ (\id \times \delta_{a,m|1})\sigma_{a,\pi}\Bigr\} + {}\\[-.2\baselineskip]
		\sum_{\pi\in\mathfrak S^{n,m}} \sgn(\pi)P_+(g,v) \circ (\id \times \delta_{m,m|1})\sigma_\pi \in W^{-m-n}_{tg},
\label{eqn:phi}
\end{multline}
in particular, $\phi_0^{-n}(x)v = \phi v \in W^{-n}_x$ for all $x \in G_0$, $v \in V^{-n}_x$. The considerations of sections \ref{sub:fibrations} and \ref{sub:cleavages} imply that the maps \(%
	\phi_m^{-n}(g): V^{-n}_{sg} \to W^{-m-n}_{tg}
\) thus defined must be part of a single morphism $\phi_m^{-n}: s^*V^{-n} \to t^*W^{-m-n} \in \VB(G_m)$; indeed, on the analogy of our argument for the Moore curvature tensors $R_m^{-n}$, it will suffice to show that the assignment $(g,v) \to P_+(g,v)$ provides a morphism in $\VB(G_m)$ from $s^*V^{-n}$ to the pullback of $(\varDelta^n \times I^m_+)(W) \to (\varDelta^n \times I^m_+)(G)$ along $G_m \simeq \varDelta^m(G) \xto{(\alpha_+\circ\pr)^*} (\varDelta^n \times I^m_+)(G)$, which we can do with the aid of Lemma \ref{lem:17A.28.1} by induction on the recursive definition of the maps $P_+(g,v)$ after suitably expressing $A = \varPi^{n,m}_+$ as a coequalizer of simplicial maps.

We proceed to check that our tensors $\phi_m^{-n}$ satisfy the defining equations \eqref{eqn:intertwiner} for an intertwiner of the two Moore representations $V^\bullet = (V^\bullet,R)$ and $W^\bullet = (W^\bullet,S)$. We are going to need the following three auxiliary lemmas, whose proofs will be given in section \ref{sub:auxiliary}:

\begin{lem}\label{prop:17A.17.7} The equality hereafter is valid for every simplicial map\/ $T: \varDelta^n \times I^{m-1}_+ \to W$ whose composition with\/ $W \xto{q} G$ factors through a single vertex\/ $\varDelta^0 \xto{x} G$.
\begin{alignat}{2}
 \sum_{k=0}^{n+m} (-1)^k\sum_{a=1}^{m-1}\Bigl\{\sum_{\pi\in\mathfrak S^{n,m-1}_a} \sgn(\pi)T\sigma_{a,\pi}\Bigr\}\delta_k &
	= \sum_{j=0}^n (-1)^j\sum_{a=1}^{m-1}\Bigl\{\sum_{\pi\in\mathfrak S^{n-1,m-1}_a} \sgn(\pi) \cdot {}\notag\\[-.4\baselineskip] & &\llap{$%
			T \circ (\delta_j \times \id)\sigma_{a,\pi}\Bigr\}$} \notag\\* &\hskip-11em%
		+ (-1)^n\sum_{i=1}^{m-1} (-1)^i\Biggl[\sum_{a=1}^{m-2}\Bigl\{\sum_{\pi\in\mathfrak S^{n,m-2}_a} \sgn(\pi)T \circ (\id \times \delta_{+,i|0})\sigma_{a,\pi}\Bigr\} - {}\notag\\ &\hskip-2em%
			\sum_{i<a<m}\Bigl\{\sum_{\pi\in\mathfrak S^{n,m-2}_{a-1}} \sgn(\pi)T \circ (\id \times \delta_{a,i|1})\sigma_{a-1,\pi}\Bigr\} + {}\notag\\ &
			(-1)^{m-i}\sum_{\pi\in\mathfrak S^{n,m-1}} \sgn(\pi)T \circ (\id \times \delta_{i,i|1})\sigma_\pi - {}\notag\\ &\hskip+2em%
			\sum_{1\leq a<i}\Bigl\{\sum_{\pi\in\mathfrak S^{n,m-2}_a} \sgn(\pi)T \circ (\id \times \delta_{a,i|1})\sigma_{a,\pi}\Bigr\}\Biggr] \notag\\* &\hskip-11em%
		- (-1)^{n+m}\sum_{\pi\in\mathfrak S^{n,m-1}} \sgn(\pi)\{T \circ (\id \times \delta_\top)\sigma_\pi - T \circ (\id \times \delta_\bot)\sigma_\pi\}
\label{eqn:17A.17.11}
\end{alignat} \end{lem}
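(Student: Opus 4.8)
\emph{Plan of proof.} Equation \eqref{eqn:17A.17.11} will be deduced from a purely combinatorial identity among simplicial chains on $\varDelta^n\times I^{m-1}_+$, which in turn is obtained by the standard bookkeeping that exhibits the ``sum of top simplices'' as a relative fundamental cycle of the pair $\bigl(\varDelta^n\times I^{m-1}_+,\ \partial(\varDelta^n\times I^{m-1}_+)\bigr)$, organised along the decomposition $I^{m-1}_+=\bigcup_{a=1}^{m-1}I^{m-1}_a$ of the fundamental interpolation into its top-dimensional slabs and the corresponding decomposition of its total boundary into facets.

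\emph{Step 1: removing $T$.} Since $T$ is simplicial and its composition with $q$ is constant at the vertex $x$, it factors through the constant-$x$ fibre of $W$, which is a simplicial vector space; hence $T$ induces a chain map $T_*$ from the free simplicial abelian group on $\varDelta^n\times I^{m-1}_+$ to that fibre, and $T_*$ commutes with the simplicial boundary $\partial$. The left-hand side of \eqref{eqn:17A.17.11} is exactly $\partial(T_*z)$, where
\[
	z=\sum_{a=1}^{m-1}\sum_{\pi\in\mathfrak S^{n,m-1}_a}\sgn(\pi)\,\sigma_{a,\pi}\in C_{n+m}\bigl(\varDelta^n\times I^{m-1}_+\bigr)
\]
is the chain assembled from the fundamental chains of the slabs $\varDelta^n\times I^{m-1}_a$; and each summand on the right-hand side of \eqref{eqn:17A.17.11} is $T_*$ applied to a fundamental chain of one of the codimension-one facets $\partial\varDelta^n\times I^{m-1}_+$, $\varDelta^n\times\partial_{i|0}I^{m-1}_+$, $\varDelta^n\times\partial_{i|1}I^{m-1}_a$, $\varDelta^n\times\partial_\top I^{m-1}_+$, $\varDelta^n\times\partial_\bot I^{m-1}_+$. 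It therefore suffices to prove $\partial z=(\text{right-hand-side chain})$ in $C_{n+m-1}(\varDelta^n\times I^{m-1}_+)$ and then apply $T_*$.

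\emph{Step 2: the chain identity.} We classify the facets $\sigma_{a,\pi}\,\delta_k$ of the top simplices. Each such facet is either \emph{interior} --- shared by two of the top simplices, which occur in $z$ with opposite signs --- or lies on $\partial(\varDelta^n\times I^{m-1}_+)=\partial\varDelta^n\times I^{m-1}_+\cup\varDelta^n\times\partial I^{m-1}_+$. Interior facets cancel, for two reasons. Inside a single slab $\varDelta^n\times I^{m-1}_a$ this is precisely the cancellation underlying the known identity \eqref{eqn:17A.7.2}: transporting the slab into $\varDelta^n\times I^m$ along $\id\times\iota_a$ identifies its fundamental chain with the sub-sum of $\sum_{\pi\in\mathfrak S^{n,m}}\sgn(\pi)\,\sigma_\pi$ over $\mathfrak S^{n,m-1}_a$, and the proof of Lemma \ref{prop:17A.7.2} tells us which facets of $\varDelta^n\times I^m$ that sub-sum actually meets. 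Along the codimension-one face shared by two neighbouring slabs $I^{m-1}_a$ and $I^{m-1}_{a\pm1}$ --- one of the $\partial_{i|1}$-type faces --- the two slabs induce opposite orientations; this opposition is exactly what the partial orders $\le_a$ and the interpolation face inclusions $\delta_{a,i|1}$, $\delta_{+,i|0}$, $\delta_\top$, $\delta_\bot$ were designed to produce. What remains is a signed sum of fundamental chains of boundary facets, and reading off its coefficients --- $(-1)^j$ on the $\partial\varDelta^n$-part, and $(-1)^n(-1)^i$, $(-1)^{m-i}$, $-(-1)^{n+m}$ on the various $I^{m-1}_+$-facets, with the three sub-cases $a<i$, $a=i$, $i<a<m$ of the $\partial_{i|1}$-contribution occurring according as $\upsilon_i(a)$ equals $a$ or $a-1$ --- reproduces the right-hand side of \eqref{eqn:17A.17.11} term by term.

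\emph{Expected main obstacle.} The delicate point is the sign bookkeeping of Step 2: reconciling the parities $\sgn(\pi)$ of the shuffle-type permutations indexing the top simplices with the geometric orientations of the interpolation faces, pinning down the exact coefficients of the surviving $\partial_{i|1}$-terms, and confirming the cancellations across slabs. Everything else is routine. A convenient way to keep the signs under control is to perform the computation once for the ``universal'' $T$, namely the inclusion of $\varDelta^n\times I^{m-1}_+$ into the free simplicial abelian group it generates, and then specialise; alternatively one may induct on $m$, reducing each slab either to the case $m-1$ of the lemma or to the identity \eqref{eqn:17A.7.2} for an ordinary product of a simplex and a cube.
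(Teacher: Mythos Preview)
Your strategy is correct and is essentially the same geometric picture the paper has in mind: compute the simplicial boundary of the fundamental chain $z=\sum_a\sum_{\pi\in\mathfrak S^{n,m-1}_a}\sgn(\pi)\sigma_{a,\pi}$, observe that interior facets cancel, and identify what survives on each boundary component of $\varDelta^n\times I^{m-1}_+$. Where your write-up differs from the paper is in \emph{how} the slab-by-slab computation is actually carried out, and this difference matters for closing the gap you yourself flag as the ``expected main obstacle''.

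You propose to transport each slab into $\varDelta^n\times I^m$ along $\id\times\iota_a$ and then appeal to ``the proof of Lemma \ref{prop:17A.7.2}'' to read off which facets the sub-sum over $\mathfrak S^{n,m-1}_a\subset\mathfrak S^{n,m}$ meets. That is not quite available: Lemma \ref{prop:17A.7.2} is stated only for the \emph{full} shuffle sum, and extracting the contribution of a sub-sum from its proof amounts to redoing the bookkeeping from scratch. The paper avoids this by a different factorisation. Using Lemma \ref{lem:17A.17.4*} and the isomorphism $\gamma^{\{a\}}_a:I^1_1\times I^{m-2}\simto I^{m-1}_a$, the sum over $\mathfrak S^{n,m-1}_a$ becomes (up to the explicit sign $(-1)^{m-a-1}$) an iterated sum over $\mathfrak S^{n,1}_1\times\mathfrak S^{n+2,m-2}$. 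Now Lemma \ref{prop:17A.7.2} applies \emph{as stated} to the inner full sum over $\mathfrak S^{n+2,m-2}$, producing $\delta_j$-terms ($j\in[n+2]$) and cubical $\delta_{i|0}$, $\delta_{i|1}$-terms. The cubical terms are immediately rewritten, via the identities $\gamma^{\{a\}}_a(\sigma_{1,\mu}\times\delta_{i|r})=(\id\times\delta_{a,\delta_a(i)|r})\gamma^{\{\upsilon_i(a)\}}_{\upsilon_i(a),\mu}$ and a second use of Lemma \ref{lem:17A.17.4*}, as the $\delta_{a,i|0}$- and $\delta_{a,i|1}$-contributions for $i\neq a$. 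The $\delta_j$-terms for $j\in[n+2]$ are then analysed by a short, finite case analysis on $\sigma_{1,\mu}\delta_j$ with $\mu\in\mathfrak S^{n,1}_1$ (only two permutations): the ``generic'' pairs $(j,\mu)$ yield the $\partial\varDelta^n$-contribution, pairs related by a transposition cancel in twos, and three exceptional cases produce precisely the $\delta_{a,a|1}$-, $\delta_{a,\top}$-, and $\delta_{a,\bot}$-terms. Summing over $a$ and using $\delta_{a,\top}=\delta_{a+1,\bot}$ collapses the telescoping $\top$/$\bot$ terms to the single $\delta_\top$, $\delta_\bot$ contribution. All signs are pinned down mechanically by Lemma \ref{lem:17A.17.4*}.

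So: your outline is sound, but the step ``invoke Lemma \ref{prop:17A.7.2} on a sub-sum'' needs to be replaced by the $I^1_1\times I^{m-2}$ factorisation, which is exactly what makes the sign bookkeeping tractable rather than merely ``delicate''.
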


\begin{lem}\label{lem:17A.20.3} Let\/ $1 \leq a < i < m$. For every simplicial map\/ $T: \varDelta^n \times I^{i-1}_a \to W$ whose composition with\/ $W \xto{q} G$ factors through\/ $\varDelta^0 \xto{sh} G$ for a given\/ $h \in G_{m-i}$ (\/$m > i$),
\begin{multline}
 \sum_{\pi\in\mathfrak S^{n,m-2}_a} \sgn(\pi)P(h,T) \circ (\id \times \chi_a\delta_{a,m-1|1})\sigma_{a,\pi}
	= (-1)^{m-i-1}\sum_{\pi\in\mathfrak S^{i+n,m-i-1}} \sgn(\pi) \cdot {}\\
		P\bigl(h,\left.\textstyle\sum_{\pi'\in\mathfrak S^{n,i-1}_a} \sgn(\pi')T\sigma_{a,\pi'}\right.\bigr) \circ (\id \times \delta_{m-i|1})\sigma_\pi.
\label{eqn:17A.20.2}
\end{multline} \end{lem}

\begin{lem}\label{lem:17A.20.2} Let\/ $T: \varDelta^n \times I^k \to V$ be a simplicial map whose composition with\/ $V \xto{p} G$ factors through\/ $\varDelta^0 \xto{sh} G$ for a given\/ $h \in G_{m-k}$ (\/$m > k$). Then, the equality%
\begin{subequations}
\label{eqn:17A.20.1}
\begin{multline}
 \sum_{\pi\in\mathfrak S^{n,m-1}_a} \sgn(\pi)P_+(h,T) \circ (\id \times \chi_a\delta_{a,m|1})\sigma_{a,\pi}
	= \sum_{\pi\in\mathfrak S^{k+n,m-k-1}_{a-k}} \sgn(\pi) \cdot {}\\
		P_+\bigl(h,\left.\textstyle\sum_{\pi'\in\mathfrak S^{n,k}} \sgn(\pi')T\sigma_{\pi'}\right.\bigr) \circ (\id \times \delta_{a-k,m-k|1})\sigma_{a-k,\pi}
\label{eqn:17A.20.1a}
\end{multline}
is satisfied for every\/ $a$ between\/ $k + 1$ and\/ $m - 1$. Furthermore, the following holds.
\begin{multline}
 \sum_{\pi\in\mathfrak S^{n,m}} \sgn(\pi)P_+(h,T) \circ (\id \times \chi_m\delta_{m,m|1})\sigma_\pi
	= \sum_{\pi\in\mathfrak S^{k+n,m-k}} \sgn(\pi) \cdot {}\\
		P_+\bigl(h,\left.\textstyle\sum_{\pi'\in\mathfrak S^{n,k}} \sgn(\pi')T\sigma_{\pi'}\right.\bigr) \circ (\id \times \delta_{m-k,m-k|1})\sigma_\pi
\label{eqn:17A.20.1b}
\end{multline}
\end{subequations} \end{lem}

We can now verify at once that our tensors $\phi_m^{-n}$ obey the intertwiner axioms \eqref{eqn:intertwiner}; we only need to use Lemmas \ref{prop:17A.17.7} and \ref{prop:17A.7.2}, our formulas \eqref{eqn:S+(g,v)}, and Lemmas \ref{lem:17A.20.2}, \ref{lem:17A.10.3}, and \ref{lem:17A.20.3}, successively:
\begin{alignat*}{2}
 (-1)^{n+m-1}S_0(tg)\phi_m(g)v &
	= \sum_{k=0}^{n+m} (-1)^k\sum_{a=1}^{m-1}\Bigl\{\sum_{\pi\in\mathfrak S^{n,m-1}_a} \sgn(\pi)P_+(g,v)(\id \times \delta_{+,m|1})\sigma_{a,\pi}\Bigr\}\delta_k \\* &\justify%
		+ \sum_{k=0}^{n+m} (-1)^k\Bigl\{\sum_{\pi\in\mathfrak S^{n,m}} \sgn(\pi)P_+(g,v)(\id \times \delta_{m,m|1})\sigma_\pi\Bigr\}\delta_k
\\	&\hskip-10em%
	= \sum_{j=0}^n (-1)^j\sum_{a=1}^{m-1}\Bigl\{\sum_{\pi\in\mathfrak S^{n-1,m-1}_a} \sgn(\pi)P_+(g,v)(\delta_j \times \delta_{+,m|1})\sigma_{a,\pi}\Bigr\} \\* &\hskip-10em\justify%
		+ (-1)^n\sum_{i=1}^{m-1} (-1)^i\Biggl[\sum_{a=1}^{m-2}\Bigl\{\sum_{\pi\in\mathfrak S^{n,m-2}_a} \sgn(\pi)P_+(g,v)(\id \times \underset{\makebox[.em][c]{$= \delta_{+,i|0}\delta_{+,m-1|1}$}}{\underline{\delta_{+,m|1}\delta_{+,i|0}}})\sigma_{a,\pi}\Bigr\} - {}\\ &\hskip-.em%
			\sum_{i<a<m}\Bigl\{\sum_{\pi\in\mathfrak S^{n,m-2}_{a-1}} \sgn(\pi)P_+(g,v)(\id \times \underset{\makebox[.em][c]{$= \delta_{a,i|1}\delta_{a-1,m-1|1}$}}{\underline{\delta_{+,m|1}\delta_{a,i|1}}})\sigma_{a-1,\pi}\Bigr\} + {}\\ &\hskip+2em%
			(-1)^{m-i}\sum_{\pi\in\mathfrak S^{n,m-1}} \sgn(\pi)P_+(g,v)(\id \times \underset{\makebox[.em][c]{$= \delta_{i,i|1}\delta_{m|1}$}}{\underline{\delta_{+,m|1}\delta_{i,i|1}}})\sigma_\pi - {}\\ & & &\llap{$\displaystyle%
			\sum_{1\leq a<i}\Bigl\{\sum_{\pi\in\mathfrak S^{n,m-2}_a} \sgn(\pi)P_+(g,v)(\id \times \underset{\makebox[.em][c]{$= \delta_{a,i|1}\delta_{a,m-1|1}$}}{\underline{\delta_{+,m|1}\delta_{a,i|1}}})\sigma_{a,\pi}\Bigr\}\Biggr]$} \\* &\hskip-10em\justify%
		- (-1)^{n+m}\sum_{\pi\in\mathfrak S^{n,m-1}} \sgn(\pi)\{\underset{\makebox[.em][c]{cancels out}}{\underline{P_+(g,v)(\id \times \delta_{+,m|1}\delta_\top)\sigma_\pi}} - P_+(g,v)(\id \times \underset{\makebox[.em][c]{$= \delta_\bot\delta_{m|1}$}}{\underline{\delta_{+,m|1}\delta_\bot}})\sigma_\pi\} \\* &\hskip-10em\justify%
		+ \sum_{j=0}^n (-1)^j\sum_{\pi\in\mathfrak S^{n-1,m}} \sgn(\pi)P_+(g,v)(\delta_j \times \delta_{m,m|1})\sigma_\pi \\* &\hskip-10em\justify%
		+ (-1)^{n+m}\sum_{\pi\in\mathfrak S^{n,m-1}} \sgn(\pi)\{\underset{\makebox[.em][c]{cancels out}}{\underline{P_+(g,v)(\id \times \delta_{m,m|1}\delta_{m|0})\sigma_\pi}} - {}\\[-.5\baselineskip] & & &\llap{$%
			P_+(g,v)(\id \times \underset{\makebox[.em][c]{$= \delta_\top\delta_{m|1}$}}{\underline{\delta_{m,m|1}\delta_{m|1}}})\sigma_\pi\}$} \\*[-.5\baselineskip] &\hskip-10em\justify%
		+ (-1)^n\sum_{i=1}^{m-1} (-1)^i\sum_{\pi\in\mathfrak S^{n,m-1}} \sgn(\pi)\{P_+(g,v)(\id \times \underset{\makebox[.em][c]{$= \delta_{+,i|0}\delta_{m-1,m-1|1}$}}{\underline{\delta_{m,m|1}\delta_{i|0}}})\sigma_\pi - {}\\ & & &\llap{$%
			P_+(g,v)(\id \times \underset{\makebox[.em][c]{$= \delta_{m,i|1}\delta_{m-1,m-1|1}~$}}{\underline{\delta_{m,m|1}\delta_{i|1}}})\sigma_\pi\}$}
\\	&\hskip-10em%
	= \sum_{j=0}^n (-1)^j\Biggl[\sum_{a=1}^{m-1}\Bigl\{\sum_{\pi\in\mathfrak S^{n-1,m-1}_a} \sgn(\pi)P_+(g,d_jv)(\id \times \delta_{+,m|1})\sigma_{a,\pi}\Bigr\} + {}\\[-.5\baselineskip] & & &\llap{$\displaystyle%
			\sum_{\pi\in\mathfrak S^{n-1,m}} \sgn(\pi)P_+(g,d_jv)(\id \times \delta_{m,m|1})\sigma_\pi\Biggr]$} \\* &\hskip-10em\justify%
		+ (-1)^n\sum_{i=1}^{m-1} (-1)^i\Biggl[\sum_{a=1}^{m-2}\Bigl\{\sum_{\pi\in\mathfrak S^{n,m-2}_a} \sgn(\pi)P_+(d_ig,v)(\id \times \delta_{+,m-1|1})\sigma_{a,\pi}\Bigr\} + {}\\[-.5\baselineskip] & & &\llap{$\displaystyle%
			\sum_{\pi\in\mathfrak S^{n,m-1}} \sgn(\pi)P_+(d_ig,v)(\id \times \delta_{m-1,m-1|1})\sigma_\pi\Biggr]$} \\* &\hskip-10em\justify%
		- (-1)^n\sum_{i=1}^{m-1} (-1)^i\Biggl[\sum_{\pi\in\mathfrak S^{n,m-1}} \sgn(\pi)P_+\bigl(t_{m-i}g,\left.P(s_ig,v)(\id \times \delta_{i|1})\right.\bigr) \circ {}\\[-.5\baselineskip] & & &\llap{$%
				(\id \times \chi_{m-1}\delta_{m-1,m-1|1})\sigma_\pi + {}\hphantom{\Biggr]}$} \\ &\hskip-4em%
			\sum_{i<a<m}\Bigl\{\sum_{\pi\in\mathfrak S^{n,m-2}_{a-1}} \sgn(\pi)P_+\bigl(t_{m-i}g,\left.P(s_ig,v)(\id \times \delta_{i|1})\right.\bigr) \circ {}\\[-.5\baselineskip] & & &\llap{$%
				(\id \times \chi_{a-1}\delta_{a-1,m-1|1})\sigma_{a-1,\pi}\Bigr\} - {}\hphantom{\Biggr]}$} \\ & & &\llap{$\displaystyle%
			(-1)^{m-i}\sum_{\pi\in\mathfrak S^{n,m-1}} \sgn(\pi)P\bigl(t_{m-i}g,\left.P_+(s_ig,v)(\id \times \delta_{i,i|1})\right.\bigr) \circ (\id \times \chi\delta_{m|1})\sigma_\pi + {}\hphantom{\Biggr]}$} \\ & & &\llap{$\displaystyle%
			\sum_{1\leq a<i}\Bigl\{\sum_{\pi\in\mathfrak S^{n,m-2}_a} \sgn(\pi)P\bigl(t_{m-i}g,\left.P_+(s_ig,v)(\id \times \delta_{a,i|1})\right.\bigr) \circ (\id \times \chi_a\delta_{a,m-1|1})\sigma_{a,\pi}\Bigr\}\Biggr]$} \\* &\hskip-10em\justify%
		+ (-1)^{n+m}\sum_{\pi\in\mathfrak S^{n,m-1}} \sgn(\pi)\{P(g,\phi v)(\id \times \delta_{m|1})\sigma_\pi - \phi P(g,v)(\id \times \delta_{m|1})\sigma_\pi\}
\\	&\hskip-10em%
	= \sum_{j=0}^n (-1)^j\phi_m(g)d_jv
		+ (-1)^n\sum_{i=1}^{m-1} (-1)^i\phi_{m-1}(d_ig)v \\* &\hskip-10em\justify%
		- (-1)^n\sum_{i=1}^{m-1} (-1)^i\Biggl[\sum_{\pi\in\mathfrak S^{i-1+n,m-i}} \sgn(\pi)P_+\bigl(t_{m-i}g,R_i(s_ig)v\bigr)(\id \times \delta_{m-i,m-i|1})\sigma_\pi + {}\\ & & &\llap{$\displaystyle%
			\sum_{a-i=1}^{m-i-1}\Bigl\{\sum_{\pi\in\mathfrak S^{i-1+n,m-i-1}_{a-i}} \sgn(\pi)P_+\bigl(t_{m-i}g,R_i(s_ig)v\bigr) \circ (\id \times \delta_{a-i,m-i|1})\sigma_{a-i,\pi}\Bigr\} - {}\hphantom{\Biggr]}$} \\ & & &\llap{$\displaystyle%
			(-1)^{m-i}S_{m-i}(t_{m-i}g)\mathinner{\bigl(\textstyle\sum_{\pi\in\mathfrak S^{n,i}} \sgn(\pi)P_+(s_ig,v)(\id \times \delta_{i,i|1})\sigma_\pi\bigr)} - {}\hphantom{\Biggr]}$} \\ & & &\llap{$\displaystyle%
			(-1)^{m-i}\sum_{1\leq a<i} S_{m-i}(t_{m-i}g)\mathinner{\bigl(\textstyle\sum_{\pi\in\mathfrak S^{n,i-1}_a} \sgn(\pi)P_+(s_ig,v)(\id \times \delta_{a,i|1})\sigma_{a,\pi}\bigr)}\Biggr]$} \\* &\hskip-10em\justify%
		+ (-1)^{n+m}S_m(g)\phi v
		- (-1)^{n+m}\phi R_m(g)v
\\	&\hskip-10em%
	= (-1)^{n-1}\phi_m(g)R_0(sg)v
		+ (-1)^n\sum_{i=1}^{m-1} (-1)^i\phi_{m-1}(d_ig)v \\* &\hskip-10em\justify%
		- (-1)^n\sum_{i=1}^{m-1} (-1)^i\phi_{m-i}(t_{m-i}g)R_i(s_ig)v
		+ (-1)^{n+m}\sum_{i=1}^{m-1} S_{m-i}(t_{m-i}g)\phi_i(s_ig)v \\* &\hskip-10em\justify%
		+ (-1)^{n+m}S_m(g)\phi_0(sg)v
		- (-1)^{n+m}\phi_0(tg)R_m(g)v.
\end{alignat*}

Thus $\phi^\bullet = \{\phi_m^{-n}\}$ is an “intertwiner” of the two (non-unital) Moore representations $V^\bullet$ and $W^\bullet$. In order to obtain an intertwiner $\spl{\phi} = \{\spl{\phi}_m^{-n}\}$ of the two Dold–Kan representations $\spl{V}$ and $\spl{W}$, it will be enough to apply the normalization map \eqref{eqn:nor} to $\phi^\bullet$ or, more exactly, prove that for any given $g \in G_m$ there is one and only one linear map $\spl{\phi}_m^{-n}(g): \spl{V}^{-n}_{sg} \to \spl{W}^{-m-n}_{tg}$ such that for every $v \in V^{-n}_{sg}$
\begin{equation}
	\spl{\phi}_m^{-n}(g)(\nor_{sg}^{-n}v) = \nor^{-m-n}_{tg}\bigl(\phi_m^{-n}(g)v\bigr).
\label{eqn:16A.7.3+}
\end{equation}
Showing that the right-hand side of this equation only depends on $\nor_{sg}^{-n}v$ will require the following analog of Lemma \ref{lem:16A.7.1}, whose proof relies on certain properties of the canonical interpolation cleavage and therefore will have to wait until section \ref{sub:canonical}; once \eqref{eqn:16A.7.3+} is proven, we shall also be able to see that our new normalized curvature tensors $\spl{\phi}_m^{-n}$ satisfy the defining equations \eqref{eqn:intertwiner} for an intertwiner of representations up to homotopy $\spl{V} \to \spl{W}$; it will suffice to apply the normalization map to both members of the homologous equations satisfied by the “unnormalized” tensors $\phi_m^{-n}$, and then invoke \eqref{eqn:16A.7.3+}.

\begin{lem}\label{lem:16A.7.1+} For all\/ $g \in G_m$, $v \in V^{-n}_{sg}$, and\/ $j = 0$,~$\dotsc$,~$n$,
\begin{equation}
	P_+(g,u_jv) = P_+(g,v) \circ (\upsilon_j \times \id): \varDelta^{n+1} \times I^m_+ \longto W.
\label{eqn:16A.7.1+}
\end{equation} \end{lem}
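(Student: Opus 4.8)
\emph{Proof proposal.} The plan is to adapt, essentially verbatim, the proof of Lemma \ref{lem:16A.7.1} for the unmodified maps $P(g,v)$. Write $Q(g,v) = P_+(g,v) \circ (\upsilon_j \times \id)\colon \varDelta^{n+1} \times I^m_+ \to W$ (for $j = 0$,~$\dotsc$,~$n$, $m \geq 0$). By the uniqueness clause of Lemma \ref{lem:P+(g,v)}, it suffices to show that the family $\{Q(g,v)\}$ satisfies requirements~1--3 of that lemma when $u_jv \in V^{-(n+1)}_{sg}$ plays the role of the $(n+1)$-simplex argument. We argue by induction on $(m,n)$, ordered so that the inductive hypothesis is available exactly in the range occurring in requirement~3 (namely $k < m$, or $k = m$ and $l < n$).

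Requirement~1 is immediate: since $\pr \circ (\upsilon_j \times \id) = \pr$, the lifting diagram \eqref{eqn:P+(g,v)} for $Q(g,v)$ has the same lower row as that for $P_+(g,v)$, while $(\upsilon_j \times \id) \circ (\id,0) = (\id,0) \circ \upsilon_j$ gives $Q(g,v) \circ (\id,0) = (\phi v) \circ \upsilon_j = \phi(u_jv)$ by simpliciality of $\phi$; in particular $Q(x,v) = \phi(u_jv)$ for $x \in G_0$. Requirement~2 is where the properties of the canonical interpolation cleavage enter. What is needed is a functoriality statement for $c^{n,m}_+$ analogous to Lemma \ref{lem:16A.24.1}: because $(\upsilon_j \times \id)^{-1}(\varPi^{n,m}_+) \subseteq \varPi^{n+1,m}_+$ (using $\upsilon_j^{-1}(\partial\varDelta^n) \subseteq \partial\varDelta^{n+1}$, which holds since $\upsilon_j$ is onto), precomposition with $\upsilon_j \times \id$ should send a section of the fibration over $\varPi^{n,m}_+$ produced by $c^{n,m}_+$ to the corresponding section over $\varPi^{n+1,m}_+$ produced by $c^{n+1,m}_+$. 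Granting this (to be proved in subsection \ref{sub:canonical} alongside the construction of $c^{n,m}_+$), requirement~2 for $Q(g,v)$ follows from requirement~2 for $P_+(g,v)$ in exactly the way \eqref{eqn:16A.24.3*} was deduced from Lemma \ref{lem:16A.24.1}, since $g\alpha_+ \circ \pr$ is unchanged under $\upsilon_j \times \id$. Finally, requirement~3 is a routine computation: one commutes $\upsilon_j \times \id$ past each face inclusion appearing in \eqref{eqn:S+(g,v)}, using on the $\varDelta$-factor the standard simplicial identities relating $\upsilon_j$ to $\delta_l$ (which reduce $\upsilon_j\delta_l$ to either $\id$ or some $\delta_{l'}\upsilon_{j'}$) and on the $I^m_+$-factor the evident commutations, and then invokes the inductive hypothesis for the $P_+$-terms and Lemma \ref{lem:16A.7.1} for the $P$-terms (the latter also handling \eqref{eqn:P+(g,v).top} and \eqref{eqn:P+(g,v).bot}, together with simpliciality of $\phi$). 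Comparing with the corresponding expressions for $P_+(g,u_jv)$ furnished by \eqref{eqn:S+(g,v)} and the simplicial identities $d_l u_j = u_{j'}d_{l'}$ shows the two agree. By uniqueness in Lemma \ref{lem:P+(g,v)}, $Q(g,v) = P_+(g,u_jv)$, which is \eqref{eqn:16A.7.1+}.

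The only genuine obstacle is the functoriality property of $c^{n,m}_+$ used in requirement~2: unlike $\varPi^{n,m} \xto\subset \varDelta^n \times I^m$, the inclusion $\varPi^{n,m}_+ \xto\subset \varDelta^n \times I^m_+$ is not of the product form \eqref{eqn:sqcup}, so Lemma \ref{lem:16A.24.1} cannot be applied off the shelf; one must instead examine the explicit assembly of $c^{n,m}_+$ from the horn cleavages $c_{n,k}$ of $W$ via the formation rules of Lemma \ref{lem:17A.25.1} and track the degeneracy $\upsilon_j \times \id$ through that assembly. This is the content deferred to subsection \ref{sub:canonical}; once it is in hand, the induction above is purely formal, mirroring the proof of Lemma \ref{lem:16A.7.1}.
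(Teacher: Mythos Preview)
Your proposal is correct and follows essentially the same approach as the paper. The paper's proof isolates exactly the functoriality property you identify as the ``genuine obstacle'' into a separate lemma (Lemma~\ref{lem:16A.6.1+}), proved by induction on the index $a = 1,\dotsc,m$ of the decomposition $I^m_+ = \bigcup_a I^m_a$ using the explicit description \eqref{eqn:cleavage+}, \eqref{eqn:Claim*}, \eqref{eqn:16A.25.3} of $c^{n,m}_+$ together with a preliminary $c^{n,m}_{i|1}$-version (Lemma~\ref{lem:16A.24.3}); once that is in hand, the induction on the recursive definition of $P_+(g,v)$ proceeds just as you outline.
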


Back to the proof of \eqref{eqn:16A.7.3+}, all we need to do is make sure that $\phi_m(g)(u_jv)$ lies in the subspace spanned by all degenerate simplices, for then its normalization will be zero and so the desired identity will hold because by linearity \(%
	\nor_{tg}\bigl(\phi_m(g)(\id - u_jd_{j+1})v\bigr)
		= \nor_{tg}\bigl(\phi_m(g)v\bigr)
\) for all $j$, $v$. Now, by \eqref{eqn:phi} and \eqref{eqn:16A.7.1+},%
\begin{multline*}
 \phi_m(g)(u_jv)
	= \sum_{a=1}^{m-1}\Bigl\{\sum_{\pi\in\mathfrak S^{n+1,m-1}_a} \sgn(\pi)P_+(g,v)(\id \times \delta_{a,m|1}) \circ (\upsilon_j \times \id)\sigma_{a,\pi}\Bigr\} + {}\\[-.2\baselineskip]
		\sum_{\pi\in\mathfrak S^{n+1,m}} \sgn(\pi)P_+(g,v)(\id \times \delta_{m,m|1}) \circ (\upsilon_j \times \id)\sigma_\pi
\end{multline*}
where $(\upsilon_j \times \id)\sigma_{a,\pi} = \sigma_{a,\pi'}\upsilon_{j'}$ for a suitable choice of $\pi' \in \mathfrak S^{n,m-1}_a$, $j' \in [n + m]$ (unique factorization into monic plus epic) and similarly $(\upsilon_j \times \id)\sigma_\pi = \sigma_{\pi'}\upsilon_{j'}$ for a suitable choice of $\pi' \in \mathfrak S^{n,m}$, $j' \in [n + m]$.

In order to be rightfully entitled to call $\spl{\phi}: \spl{V} \to \spl{W}$ an intertwiner we still need to make sure it enjoys one last property, whose verification we shall postpone to section \ref{sub:canonical}:

\begin{lem}\label{cor:16A.10.3+} For all\/ $g \in G_m$, $i = 0$,~$\dotsc$,~$m$,
\begin{equation}
	\spl{\phi}_{m+1}(u_ig) = 0.
\label{eqn:16A.10.3+}
\end{equation} \end{lem}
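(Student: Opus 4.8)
The plan is to reduce the claim to a ``divisibility by degeneracies'' statement and then settle it by means of a \emph{cubical degeneracy} operation on the interpolation complex. By the defining relation \eqref{eqn:16A.7.3+}, applied with $g$ replaced by $u_ig$ (for which $s(u_ig) = sg$ and $t(u_ig) = tg$), we have $\spl{\phi}_{m+1}^{-n}(u_ig)(\nor^{-n}_{sg}v) = \nor^{-(m+1)-n}_{tg}\bigl(\phi_{m+1}^{-n}(u_ig)v\bigr)$ for all $v \in V^{-n}_{sg}$; since $\nor^{-n}_{sg}$ maps onto $\spl{V}^{-n}_{sg}$, it is therefore enough to prove that $\phi_{m+1}^{-n}(u_ig)v$ lies in $\ker\nor^{-(m+1)-n}_{tg}$, equivalently, in the subspace of $W^{-(m+1)-n}_{tg}$ spanned by the images of the degeneracies $u_{j'}$ (this is the standard description of the kernel of the normalization map, cf.\ the discussion around \eqref{eqn:nor}). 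In other words, it suffices to check that every summand in the expansion \eqref{eqn:phi} of $\phi_{m+1}^{-n}(u_ig)v$ is a \emph{degenerate} simplex of $W$.

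The key tool is an analog of Lemma~\ref{lem:16A.10.1} for the maps $P_+$. First I would introduce \emph{cubical degeneracy maps for the interpolation}, $\varepsilon^+_i\colon I^{m+1}_+ \to I^m_+$, $i = 0$,~$\dotsc$,~$m$, pinned down by the requirement $\alpha_+\varepsilon^+_i = \upsilon_i\alpha_+$ together with compatibility with the top and bottom inclusions via the ordinary cubical degeneracies \eqref{eqn:epsilon} ($\varepsilon^+_i\delta_\top = \delta_\top\varepsilon_i$ and $\varepsilon^+_i\delta_\bot = \delta_\bot\varepsilon_i$) and with the strata $I^{m+1}_a$; a routine verification shows each $\varepsilon^+_i$ respects the relevant partial orderings $\leq_a$, hence is a well-defined simplicial map. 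Using these I would establish
\begin{equation*}
	P_+(u_ig,v) = P_+(g,v) \circ (\id \times \varepsilon^+_i)\colon \varDelta^n \times I^{m+1}_+ \longto W
\end{equation*}
by induction on the recursive construction of Lemma~\ref{lem:P+(g,v)}: the right-hand side solves the lifting problem \eqref{eqn:P+(g,v)} for $u_ig$ because $\alpha_+\varepsilon^+_i = \upsilon_i\alpha_+$; it satisfies requirement~2 thanks to a compatibility property of the canonical interpolation cleavage $c^{n,m}_+$ paralleling \eqref{eqn:16A.9.1} (to be recorded in subsection~\ref{sub:canonical}); and it satisfies requirement~3 by the inductive hypothesis, the commutation of $\varepsilon^+_i$ with the interpolation face inclusions, the face identities \eqref{eqn:S+(g,v)}, and Lemma~\ref{lem:16A.10.1} for the $P(\cdot,\cdot)$-valued cases. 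Uniqueness in Lemma~\ref{lem:P+(g,v)} then yields the displayed identity.

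Substituting $P_+(u_ig,v) = P_+(g,v)\circ(\id\times\varepsilon^+_i)$ into \eqref{eqn:phi} written for $u_ig$ expresses $\phi_{m+1}^{-n}(u_ig)v$ as an alternating sum of terms of the form $P_+(g,v)\circ(\id\times\varepsilon^+_i\delta_{a,m+1|1})\sigma_{a,\pi}$ and $P_+(g,v)\circ(\id\times\varepsilon^+_i\delta_{m+1,m+1|1})\sigma_\pi$. For each such term I would examine the underlying order-preserving map $[m+1+n]\to[n]\times[1]^m\times[m]$: because $\varepsilon^+_i$ collapses one cubical coordinate---this is precisely what $\alpha_+\varepsilon^+_i = \upsilon_i\alpha_+$ records---that map is never injective, so it identifies a pair of consecutive vertices and factors through a codegeneracy $\upsilon_{j'}\colon[m+1+n]\to[m+n]$ followed by a maximal $(m+n)$-simplex of $\varDelta^n\times I^m_+$. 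Hence the corresponding simplex of $W$ equals $u_{j'}$ applied to an $(m+n)$-simplex of $W$, i.e.\ it is degenerate. Every summand of $\phi_{m+1}^{-n}(u_ig)v$ is therefore degenerate, whence $\phi_{m+1}^{-n}(u_ig)v \in \ker\nor^{-(m+1)-n}_{tg}$, and by \eqref{eqn:16A.7.3+} we conclude $\spl{\phi}_{m+1}(u_ig) = 0$. The main obstacle is the combinatorial case analysis in this last step: for every stratum index $a$ and every maximal simplex $\sigma_{a,\pi}$ (resp.\ $\sigma_\pi$) one must locate the consecutive pair that $\varepsilon^+_i$ identifies and confirm that the resulting simplex genuinely lies in the image of a \emph{single} degeneracy---this is the analog, for the more intricate complex $I^{m+1}_+$, of the computation establishing unitality of $\spl{R}$ on p.~33 of \cite{2018a}. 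A secondary difficulty is verifying that $\varepsilon^+_i$ is well defined and that $c^{n,m}_+$ has the required compatibility property, both of which rest on the explicit construction of $c^{n,m}_+$ deferred to subsection~\ref{sub:canonical}.
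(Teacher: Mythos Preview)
Your proposal is correct and follows essentially the same route as the paper: the paper introduces the interpolation degeneracy maps $\varepsilon_{+,i}\colon I^{m+1}_+\to I^m_+$ explicitly, proves the cleavage compatibility (Lemma~\ref{lem:16A.16.1}) and then the identity $P_+(u_ig,v)=P_+(g,v)\circ(\id\times\varepsilon_{+,i})$ (Lemma~\ref{lem:16A.10.1+}), after which it simply says one ``can easily check'' \eqref{eqn:16A.10.3+}. Your reduction to degeneracy of the summands in \eqref{eqn:phi}, together with the factorization through a codegeneracy $\upsilon_{j'}$, is exactly the implicit final step, and your flagged obstacles (well-definedness of $\varepsilon^+_i$, the cleavage compatibility, and the combinatorial case analysis) are precisely the items the paper discharges in subsection~\ref{sub:canonical}.
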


\subsection{A few technical lemmas}\label{sub:auxiliary}

This section is dedicated to demonstrating Lemmas \ref{lem:P+(g,v)} to \ref{lem:17A.20.2}. In order to keep track of domains and codomains of maps more efficiently in the present context we shall be writing $A$ rather than $\id$ for the identity transformation $A \xto= A$ of a simplicial set $A$; this will cause no confusion.

\subsubsection*{Proof of Lemma \ref{lem:P+(g,v)}}

When $m = 0$ our maps $P_+(g,v)$ do of course exist and are determined uniquely by \eqref{eqn:P+(g,v)} for all $v \in V^{-n}_{sg}$, $n \geq 0$. In order to show they also exist for all $g \in G_m$, $m \geq 1$ we argue by induction on the “lexicographic” ordering of the set of all pairs of nonnegative integers: $(k,l) < (m,n)$ iff either $k = m$ and $l < n$ or else $k < m$. Suppose that the required maps \(%
	P_+(h,w): \varDelta^l \times I^k_+ \to W
\) exist (and are unique) for all $(k,l) < (m,n)$, $h \in G_k$, and $w \in V^{-l}_{sh}$. Then, given $g \in G_m$ and $v \in V^{-n}_{sg}$, the right-hand sides of our formulas \eqref{eqn:S+(g,v)} have a well-defined meaning: writing $\partial_jI^{n,m}_+ = \partial_j\varDelta^n \times I^m_+$, $\partial_{i|r}I^{n,m}_+ = \varDelta^n \times \partial_{i|r}I^m_+$, $\partial_{i|1}I^{n,m}_a = \varDelta^n \times \partial_{i|1}I^m_a$, $\partial_\top I^{n,m}_+ = \varDelta^n \times \partial_\top I^m_+$, and $\partial_\bot I^{n,m}_+ = \varDelta^n \times \partial_\bot I^m_+$, we have $S_{+,j}: \partial_jI^{n,m}_+ \to W$, $S_{+,i|0}: \partial_{i|0}I^{n,m}_+ \to W$, $S_{a,i|1}: \partial_{i|1}I^{n,m}_a \to W$, $S_\top: \partial_\top I^{n,m}_+ \to W$, and $S_\bot: \partial_\bot I^{n,m}_+ \to W$ characterized respectively by the relations $S_{+,j} \circ (\delta_j \times I^m_+) = P_+(g,d_jv)$, $S_{+,i|0} \circ (\varDelta^n \times \delta_{+,i|0}) = P_+(d_ig,v)$, and so forth. These partial maps will be the restrictions of a single map, $S_+: \varPi^{n,m}_+ \to W$ (whose existence will make the inductive step possible), provided they agree with one another on the intersections of their domains of definition. For logical clarity, we shall subdivide the verification into five steps. Before starting, however, let us make a couple of general observations:

\begin{rmk}\label{rmk:P(h,T)} The following equality holds, for any monic simplicial map $B \xto{j} A$, for all simplicial maps $T: A \to V$ (or~$\to W$) that sit over $sh$ i.e.~factor through $\varDelta^0 \xto{sh} G$ for a given $h \in G_k$.%
\begin{subequations}
\label{eqn:P(h,T)}
\begin{equation}
	P(h,T) \circ (j \times I^k) = P(h,T \circ j)
\label{eqn:P(h,T).j}
\end{equation}
In addition, the following identities hold for all $1 \leq i \leq k$.
\begin{gather}
	P(h,T) \circ (A \times \delta_{i|0}) = P(d_ih,T)
\label{eqn:P(h,T).i|0}\\
	P(h,T) \circ (A \times \delta_{i|1}) =	P\bigl(t_{k-i}h,\left.P(s_ih,T) \circ (A \times \delta_{i|1})\right.\bigr) \circ (A \times \chi)
\label{eqn:P(h,T).i|1}
\end{gather}
\end{subequations}
The proof of \eqref{eqn:P(h,T)} simply uses the defining property of $P(h,T)$ plus the validity of \eqref{eqn:S(g,v)}. \end{rmk}

\begin{rmk}\label{rmk:P+(h,T)} Let $h \in G_k$, $0 \leq k < m$. The equality hereafter is valid, for any monic simplicial map $B \xto{j} A$, for all simplicial maps $T: A \to V$ sitting over $sh$;%
\begin{subequations}
\label{eqn:P+(h,T)}
\begin{equation}
	P_+(h,T) \circ (j \times I^k_+) = P_+(h,T \circ j)
\label{eqn:P+(h,T).j}
\end{equation}
moreover, the following identities hold for all $1 \leq a \leq k$, $1 \leq i \leq k$.
\begin{gather}
	P_+(h,T) \circ (A \times \delta_{+,i|0}) = P_+(d_ih,T)
\label{eqn:P+(h,T).i|0}\\
	P_+(h,T) \circ (A \times \delta_{a,i|1}) =%
\begin{cases}
		P\bigl(t_{k-i}h,\left.P_+(s_ih,T) \circ (A \times \delta_{a,i|1})\right.\bigr) \circ (A \times \chi_a)
			&\text{$a < i$}
	\\	P\bigl(t_{k-i}h,\left.P_+(s_ih,T) \circ (A \times \delta_{i,i|1})\right.\bigr) \circ (A \times \chi)
			&\text{$a = i$}
	\\	P_+\bigl(t_{k-i}h,\left.P(s_ih,T) \circ (A \times \delta_{i|1})\right.\bigr) \circ (A \times \chi_{a-1})
			&\text{$a > i$}
\end{cases}
\label{eqn:P+(h,T).i|1}\\
	P_+(h,T) \circ (A \times \delta_\top) = \phi \circ P(h,T)
\label{eqn:P+(h,T).top}\\
	P_+(h,T) \circ (A \times \delta_\bot) = P(h,\phi \circ T)
\label{eqn:P+(h,T).bot}
\end{gather}
\end{subequations}
The above are all immediate consequences of the defining property of $P_+(h,T)$ plus the inductive validity of \eqref{eqn:S+(g,v)} for $k < m$. \end{rmk}

\paragraph*{Step 1.} \em For any given\/ $1 \leq i < m$, the maps\/ $S_{a,i|1}$ (\/$1 \leq a \leq m$) fit together in such a way as to compose a single map\/ $S_{+,i|1}: \partial_{i|1}I^{n,m}_+ \to W$.\em

Let us introduce the abbreviations $\delta^{n,m}_{i|1} = \varDelta^n \times \delta^m_{i|1}$, $\delta^{n,m}_{a,i|1} = \varDelta^n \times \delta^m_{a,i|1}$. Since $i < m$, $P_+(s_ig,v)$ is defined and restricts to a map $T$ of $\partial_{i|1}I^{n,i}_+$ to $W$. Let $S_{\leq,i|1}$ be the map \(%
	\partial_{i|1}I^{n,m}_\leq := \bigcup_{1\leq a\leq i} \partial_{i|1}I^{n,m}_a
		\simeq \partial_{i|1}I^{n,i}_+ \times I^{m-i}
			\xto{P(h,T)} W
\), where $h = t_{m-i}g$. We claim that for $1 \leq a \leq i$ the $S_{a,i|1}$ agree with $S_{\leq,i|1}$ and, therefore, with one another; for $a < i$, this follows from the computation%
\begin{alignat*}{2} &
 S_{\leq,i|1} \circ (\varDelta^n \times \delta_{a,i|1})
	= P(h,T) \circ (\varDelta^n \times \chi_a\delta_{a,i|1}) \\ &
	= P(h,T) \circ (\varDelta^n \times [\delta_{a,i|1} \times I^{m-i}] \circ \chi_a) \\ &
	= P(t_{m-i}g,\left.P_+(s_ig,v) \circ (\varDelta^n \times \delta_{a,i|1})\right.\bigr) \circ (\varDelta^n \times \chi_a)
		&\quad	&\text{by \eqref{eqn:P(h,T).j}} \\ &
	= S_{a,i|1} \circ (\varDelta^n \times \delta_{a,i|1})
\end{alignat*}
on account of the equality $\partial_{i|1}I^{n,m}_a = \im\delta^{n,m}_{a,i|1}$; for $a = i$, a computation identical to the above but for $\chi$ replacing the last two occurrences of $\chi_a$ proves our claim.

Next, $P(s_ig,v)$ restricts to a map $T$ of $\partial_{i|1}I^{n,i} := \varDelta^n \times \partial_{i|1}I^i$ to $V$. We contend that if we write $S_{>,i|1}$ for the map \(%
	\partial_{i|1}I^{n,m}_> := \bigcup_{i<a\leq m} \partial_{i|1}I^{n,m}_a
		\simeq \partial_{i|1}I^{n,i} \times I^{m-i}_+
			\xto{P_+(h,T)} W
\), where $h = t_{m-i}g$, then $S_{>,i|1}$ agrees with $S_{a,i|1}$ on $\partial_{i|1}I^{n,m}_a$ for every $a > i$; the verification is like the above except that $\chi_{a-1}$ must be substituted for $\chi_a$ and that \eqref{eqn:P+(h,T).j} takes the place of \eqref{eqn:P(h,T).j}.

The only thing left to check is that $S_{\leq,i|1}$ matches $S_{>,i|1}$ on \(%
	\partial_{i|1}I^{n,m}_\leq \cap \partial_{i|1}I^{n,m}_>
		= \im\delta^{n,m}_{i,i|1}\delta^{n,m}_{i|1}
\):%
\begin{alignat*}{2} &
 S_{\leq,i|1} \circ (\varDelta^n \times \delta_{i,i|1}\delta_{i|1})
	= S_{i,i|1} \circ (\varDelta^n \times \delta_{i,i|1}\delta_{i|1}) \\ &
	= P\bigl(t_{m-i}g,\left.P_+(s_ig,v) \circ (\varDelta^n \times \delta_{i,i|1})\right.\bigr) \circ (\varDelta^n \times \chi\delta_{i|1}) \\ &
	= P\bigl(t_{m-i}g,\left.P_+(s_ig,v) \circ (\varDelta^n \times \delta_{i,i|1})\right.\bigr) \circ \rlap{$(\varDelta^n \times [\delta_{i|1} \times I^{m-i}] \circ \chi)$} \\ &
	= P\bigl(t_{m-i}g,\left.P_+(s_ig,v) \circ (\varDelta^n \times \delta_{i,i|1}\delta_{i|1})\right.\bigr) \circ (\varDelta^n \times \chi)
		&\quad	&\text{by \eqref{eqn:P(h,T).j}} \\ &
	= P\bigl(t_{m-i}g,\left.P_+(s_ig,v) \circ (\varDelta^n \times \delta_\top\delta_{i|1})\right.\bigr) \circ (\varDelta^n \times \chi) \\ &
	= P\bigl(t_{m-i}g,\left.\phi \circ P(s_ig,v) \circ (\varDelta^n \times \delta_{i|1})\right.\bigr) \circ (\varDelta^n \times \chi)
		&\quad	&\text{by \eqref{eqn:P+(g,v).top}, $i < m$} \\ &
	= P_+\bigl(t_{m-i}g,\left.P(s_ig,v) \circ (\varDelta^n \times \delta_{i|1})\right.\bigr) \circ (\varDelta^n \times [I^{i-1} \times \delta_\bot] \circ \chi)
		&\quad	&\text{by \eqref{eqn:P+(h,T).bot}, $m - i < m$} \\ &
	= P_+\bigl(t_{m-i}g,\left.P(s_ig,v) \circ (\varDelta^n \times \delta_{i|1})\right.\bigr) \circ (\varDelta^n \times \chi_i\delta_{i,\bot}) \\ &
	= S_{i+1,i|1} \circ (\varDelta^n \times \delta_{i+1,i|1}\delta_{i,\bot})
	= S_{>,i|1} \circ (\varDelta^n \times \delta_{i,i|1}\delta_{i|1}).
\end{alignat*}

\paragraph*{Step 2.} \em The above maps\/ $S_{+,i|1}$ (\/$1 \leq i < m$) and the maps\/ $S_{+,i|0}$ (\/$1 \leq i \leq m$) are all compatible and therefore are the restrictions of a single map\/ \(%
	S_+^{(2)}: \bigcup_{(i,r)\neq(m,1)} \partial_{i|r}I^{n,m}_+ \to W
\); in other words, any two such maps\/ $S_{+,i|r}$ and\/ $S_{+,i'|r'}$ agree on the intersection\/ \(%
	\partial_{i|r}I^{n,m}_+ \cap \partial_{i'|r'}I^{n,m}_+
\) of their domains of definition. \em We may of course suppose that $(i,r) \neq (i',r')$ and $1 \leq i < i' \leq m$, since for $i = i'$, $r \neq r'$ the intersection is empty and thus nothing need be shown. We distinguish four cases, according to the possible values of $r$,~$r' \in \{0,1\}$.

\textit{First case:\/ $r = 0$, $r' = 0$.} Abbreviating $\varDelta^n \times \delta^m_{+,i|0}$ to $\delta^{n,m}_{+,i|0}$, we have \(%
	\partial_{i|0}I^{n,m}_+ \cap \partial_{i'|0}I^{n,m}_+
		= \im\delta^{n,m}_{+,i'|0}\delta^{n,m-1}_{+,i|0}
\), whence proving our claim amounts to checking that $S_{+,i'|0} \circ \delta^{n,m}_{+,i'|0}\delta^{n,m-1}_{+,i|0} = S_{+,i|0} \circ \delta^{n,m}_{+,i'|0}\delta^{n,m-1}_{+,i|0}$. This can be done at once by using the inductive validity of \eqref{eqn:P+(g,v).i|0} for $m - 1 < m$ plus the identities \(%
	d_id_{i'}g = d_{i'-1}d_ig\), \(%
	\delta_{+,i|0}\delta_{+,i'-1|0} = \delta_{+,i'|0}\delta_{+,i|0}
\).

\textit{Second case:\/ $r = 0$, $r' = 1$.} We have \(%
	\partial_{i|0}I^{n,m}_+ \cap \partial_{i'|1}I^{n,m}_+
		= \bigcup_{1\leq a\leq m,a\neq i} \im\delta^{n,m}_{+,i|0}\delta^{n,m-1}_{\upsilon_i(a),i'-1|1}
\), with $i' < m$. Let $1 \leq a \leq m$, $a \neq i$ be given. Either $a < i'$, in which case%
\begin{alignat*}{2} &
 S_{+,i|0} \circ (\varDelta^n \times \delta_{+,i|0}\delta_{\upsilon_i(a),i'-1|1})
	= P_+(d_ig,v) \circ \rlap{$(\varDelta^n \times \delta_{\upsilon_i(a),i'-1|1})$} \\ &
	= P\bigl(t_{m-i'}d_ig,\left.P_+(s_{i'-1}d_ig,v) \circ \delta^{n,i'-1}_{\upsilon_i(a),i'-1|1}\right.\bigr) \circ (\varDelta^n \times \chi_{\upsilon_i(a)})
		&\quad	&\text{by \eqref{eqn:P+(g,v).i|1}, $m - 1 < m$} \\ &
	= P\bigl(t_{m-i'}g,\left.P_+(d_is_{i'}g,v) \circ \delta^{n,i'-1}_{\upsilon_i(a),i'-1|1}\right.\bigr) \circ (\varDelta^n \times \chi_{\upsilon_i(a)}) \\ &
	= P\bigl(t_{m-i'}g,\left.P_+(s_{i'}g,v) \circ \delta^{n,i'}_{+,i|0}\delta^{n,i'-1}_{\upsilon_i(a),i'-1|1}\right.\bigr) \circ (\varDelta^n \times \chi_{\upsilon_i(a)})
		&\quad	&\text{by \eqref{eqn:P+(g,v).i|0}, $i' < m$} \\ &
	= P\bigl(t_{m-i'}g,\left.P_+(s_{i'}g,v) \circ (\varDelta^n \times \delta_{a,i'|1}\delta_{a,i|0})\right.\bigr) \circ (\varDelta^n \times \chi_{\upsilon_i(a)}) \\ &
	= P\bigl(t_{m-i'}g,\left.P_+(s_{i'}g,v) \circ \delta^{n,i'}_{a,i'|1}\right.\bigr) \circ (\varDelta^n \times [\delta_{a,i|0} \times I^{m-i'}] \circ \chi_{\upsilon_i(a)})
		&\quad	&\text{by \eqref{eqn:P(h,T).j}} \\ &
	= P\bigl(t_{m-i'}g,\left.P_+(s_{i'}g,v) \circ (\varDelta^n \times \delta_{a,i'|1})\right.\bigr) \circ (\varDelta^n \times \chi_a\delta_{a,i|0}) \\ &
	= S_{+,i'|1} \circ (\varDelta^n \times \delta_{a,i'|1}\delta_{a,i|0})
	= S_{+,i'|1} \circ (\varDelta^n \times \delta_{+,i|0}\delta_{\upsilon_i(a),i'-1|1}),
\end{alignat*}
or $a = i'$, in which case a computation just like the above except for featuring $\chi$ instead of $\chi_{\upsilon_i(a)}$ and involving the identity \(%
	\delta_{+,i|0}\delta_{i'-1,i'-1|1} = \delta_{i',i'|1}\delta_{i|0}
\) is valid, or else $a > i'$, in which case we employ \eqref{eqn:P(g,v).i|0} instead of \eqref{eqn:P+(g,v).i|0}, and \eqref{eqn:P+(h,T).j} instead of \eqref{eqn:P(h,T).j}.

\textit{Third case:\/ $r = 1$, $r' = 0$.} \(%
	\partial_{i|1}I^{n,m}_+ \cap \partial_{i'|0}I^{n,m}_+
		= \bigcup_{1\leq a\leq m,a\neq i'} \im\delta^{n,m}_{+,i'|0}\delta^{n,m-1}_{\upsilon_{i'}(a),i|1}
\). We shall limit ourselves to enumerating the principal differences from the previous case, leaving the onus of a full verification on the reader. In the case when $i = m - 1$, we must start off our computation by using the “tautology” $P(td_mg,T) = T$, rather than the inductive validity of \eqref{eqn:P+(g,v).i|1}. Apart from that, we now have \(%
	t_{m-1-i}d_{i'}g = d_{i'-i}t_{m-i}g\), \(%
	s_is_{i'}g = s_ig
\) and so we must use \eqref{eqn:P(h,T).i|0}, resp., \eqref{eqn:P+(h,T).i|0} where we previously used \eqref{eqn:P+(g,v).i|0} plus \eqref{eqn:P(h,T).j}, resp., \eqref{eqn:P(g,v).i|0} plus \eqref{eqn:P+(h,T).j}. In addition, a few obvious identities such as $[I^{i-1}_a \times \delta_{i'-i|0}] \circ \chi_a = \chi_a\delta_{a,i'-1|0}$, $1 \leq a < i$, are needed towards the end of the computation.

\textit{Fourth case:\/ $r = 1$, $r' = 1$.} \(%
	\partial_{i|1}I^{n,m}_+ \cap \partial_{i'|1}I^{n,m}_+
		= \im\delta^{n,m}_{i',i'|1}\delta^{n,m}_{i|1} \cup
			\bigcup_{1\leq a\leq m,a\neq i'} \im\delta^{n,m}_{a,i'|1}\delta^{n,m-1}_{\upsilon_{i'}(a),i|1}
\). Let us write down the computation for $a > i'$ by way of example:%
\begin{alignat*}{2} &
 S_{+,i'|1} \circ (\varDelta^n \times \delta_{a,i'|1}\delta_{a-1,i|1}) \\ &
	= P_+\bigl(t_{m-i'}g,\left.P(s_{i'}g,v) \circ \delta^{n,i'}_{i'|1}\right.\bigr) \circ (\varDelta^n \times \chi_{a-1}\delta_{a-1,i|1}) \\ &
	= P_+\bigl(t_{m-i'}g,\left.P(s_{i'}g,v) \circ \delta^{n,i'}_{i'|1}\right.\bigr) \circ (\varDelta^n \times [\delta_{i|1} \times I^{m-i'}] \circ \chi_{a-2}) \\ &
	= P_+\bigl(t_{m-i'}g,\left.P(s_{i'}g,v) \circ \delta^{n,i'}_{i'|1}\delta^{n,i'-1}_{i|1}\right.\bigr) \circ (\varDelta^n \times \chi_{a-2})
		&\quad	&\text{by \eqref{eqn:P+(h,T).j}} \\ &
	= P_+\bigl(t_{m-i'}g,\left.P(s_{i'}g,v) \circ \delta^{n,i'}_{i|1}\delta^{n,i'-1}_{i'-1|1}\right.\bigr) \circ (\varDelta^n \times \chi_{a-2}) \\ &
	= \rlap{$P_+\Bigl(t_{m-i'}g,\left.P\bigl(t_{i'-i}s_{i'}g,\left.P(s_is_{i'}g,v) \circ \delta^{n,i}_{i|1}\right.\bigr) \circ (\varDelta^n \times \chi\delta_{i'-1|1})\right.\Bigr) \circ (\varDelta^n \times \chi_{a-2})$} \\
\intertext{by \eqref{eqn:P(g,v).i|1}, so that setting $h = t_{m-i}g$, $A = \varDelta^n \times I^{i-1}$, and $T = P(s_ig,v) \circ \delta^{n,i}_{i|1}: A \to V$,} &
	= P_+\bigl(t_{m-i'}h,\left.P(s_{i'-i}h,T) \circ (\varDelta^n \times [I^{i-1} \times \delta_{i'-i|1}] \circ \chi)\right.\bigr) \circ \rlap{$(\varDelta^n \times \chi_{a-2})$} \\ &
	= P_+\bigl(t_{m-i'}h,\left.P(s_{i'-i}h,T) \circ (A \times \delta_{i'-i|1})\right.\bigr) \circ (\varDelta^n \times [\chi \times I^{m-i'}_+] \circ \chi_{a-2})
		&\quad	&\text{by \eqref{eqn:P+(h,T).j}} \\ &
	= P_+\bigl(t_{m-i'}h,\left.P(s_{i'-i}h,T) \circ (A \times \delta_{i'-i|1})\right.\bigr) \circ \rlap{$(A \times \chi_{a-i-1}) \circ (\varDelta^n \times \chi_{a-2})$} \\ &
	= P_+(h,T) \circ (\varDelta^n \times [I^{i-1} \times \delta_{a-i,i'-i|1}] \circ \chi_{a-2})
		&\quad	&\text{by \eqref{eqn:P+(h,T).i|1}} \\ &
	= P_+\bigl(t_{m-i}g,\left.P(s_ig,v) \circ (\varDelta^n \times \delta_{i|1})\right.\bigr) \circ (\varDelta^n \times \chi_{a-1} \circ \delta_{a-1,i'-1|1}) \\ &
	= S_{+,i|1} \circ (\varDelta^n \times \delta_{a,i|1}\delta_{a-1,i'-1|1})
	= S_{+,i|1} \circ (\varDelta^n \times \delta_{a,i'|1}\delta_{a-1,i|1}).
\end{alignat*}
The computations for $a < i'$ or $a = i'$ are similar: we use \eqref{eqn:P(h,T).j} on either occasion where we used \eqref{eqn:P+(h,T).j}, and the inductive validity of \eqref{eqn:P+(g,v).i|1} instead of \eqref{eqn:P(g,v).i|1}; when $a \leq i$, we must use \eqref{eqn:P(h,T).i|1} instead of \eqref{eqn:P+(h,T).i|1}. The details are straightforward and are left to the reader.

\paragraph*{Step 3.} \em Each one of\/ $S_\top: \partial_\top I^{n,m}_+ \to W$, $S_\bot: \partial_\bot I^{n,m}_+ \to W$ matches the above map\/ $S_+^{(2)}: \bigcup_{(i,r)\neq(m,1)} \partial_{i|r}I^{n,m}_+ \to W$. \em Since\/ \(%
	\partial_\top I^{n,m}_+ \cap \partial_\bot I^{n,m}_+ \subset \partial_{1|0}I^{n,m}_+
\), the two maps must then also agree with each other and so must be part of a well-defined extension\/ \(%
	S_+^{(3)}: \varDelta^n \times \varPi^m_+ \to W
\) of\/ $S_+^{(2)}$. For convenience, we shall be writing $\delta^{n,m}_\top = \varDelta^n \times \delta_\top$, $\delta^{n,m}_\bot = \varDelta^n \times \delta_\bot$.

We have \(%
	\partial_\top I^{n,m}_+ \cap \partial_{i|r}I^{n,m}_+ = \im\delta^{n,m}_\top\delta^{n,m}_{i|r}
\). A couple of straightforward computations involving either \eqref{eqn:P(g,v).i|0}, the inductive validity of \eqref{eqn:P+(g,v).top} for $m - 1 < m$, and the identity \(%
	\delta_{+,i|0}\delta_\top = \delta_\top\delta_{i|0}
\) (case $r = 0$) or else \eqref{eqn:P(g,v).i|1}, \eqref{eqn:P+(h,T).top}, and the identity \(%
	\delta_{m,i|1}\delta_\top = \delta_\top\delta_{i|1}
\) (case $r = 1$) prove our claim about $S_\top$; the task of writing them down is left to the reader.

As to our claim about $S_\bot$, we have \(%
	\partial_\bot I^{n,m}_+ \cap \partial_{i|r}I^{n,m}_+ = \im\delta^{n,m}_\top\delta^{n,m}_{i|r}
\); when $r = 0$, we invoke \eqref{eqn:P(g,v).i|0}, the inductive validity of \eqref{eqn:P+(g,v).bot} for $m - 1 < m$, and the identity \(%
	\delta_{+,i|0}\delta_\bot = \delta_\bot\delta_{i|0}
\); when $r = 1$, we use \eqref{eqn:P(g,v).i|1}, the inductive validity of \eqref{eqn:P+(g,v).bot} for $i < m$, and either the identity \(%
	\delta_\bot\delta_{1|1} = \delta_{1,1|1}\delta_{1|0}
\) (case $i = 1$) or the identity \(%
	\delta_\bot\delta_{i|1} = \delta_{1,i|1}\delta_\bot
\) (case $i > 1$).

\paragraph*{Step 4.} \em The maps\/ $S_{+,j}: \partial_jI^{n,m}_+ \to W$ (\/$n \geq 1$, $n \geq j \geq 0$) agree on the overlaps of their domains of definition and thus make up a single map\/ $S_+^{(4)}: \partial\varDelta^n \times I^m_+ \to W$. \em Let us write $\delta^{n,m}_{+,j} = \delta^n_j \times I^m_+$. For $0 \leq j < j' \leq n$ we have \(%
	\partial_jI^{n,m}_+ \cap \partial_{j'}I^{n,m}_+ = \im\delta^{n,m}_{+,j'}\delta^{n-1,m}_{+,j}
\), and the reader will be able to see at once that $S_{+,j}$ and $S_{+,j'}$ match in view of the inductive validity of \eqref{eqn:P+(g,v).j} for $n - 1 < n$.

\paragraph*{Step 5.} \em The above two maps\/ $S_+^{(3)}: \varDelta^n \times \varPi^m_+ \to W$ and\/ $S_+^{(4)}: \partial\varDelta^n \times I^m_+ \to W$ agree on the intersection of their domains of definition and hence are the restrictions of a global map\/ $S_+ = S_+^{(5)}: \varPi^{n,m}_+ \to W$. \em We limit ourselves to pointing out that%
\begin{multline*}
 (\partial\varDelta^n \times I^m_+) \cap (\varDelta^n \times \varPi^m_+) =%
	\bigcup_{0\leq j\leq n}\Bigl(\bigcup_{1\leq i\leq m} \partial_j\varDelta^n \times \partial_{i|0}I^m_+ \cup \bigcup_{1\leq i<m} \partial_j\varDelta^n \times \partial_{i|1}I^m_+ \cup {}\\[-.2\baselineskip] \shoveright{%
		(\partial_j\varDelta^n \times \partial_\top I^m_+) \cup (\partial_j\varDelta^n \times \partial_\bot I^m_+)\Bigr)} \\%
			= \bigcup_{0\leq j\leq n}\Bigl(\bigcup_{1\leq i\leq m} \im(\delta_j \times \delta_{+,i|0}) \cup \bigcup_{\substack{1\leq a\leq m\\ 1\leq i<m}} \im(\delta_j \times \delta_{a,i|1}) \cup \im(\delta_j \times \delta_\top) \cup \im(\delta_j \times \delta_\bot)\Bigr)
\end{multline*}
and therefore the proof reduces to checking that $S_+^{(3)} \circ (\delta_j \times \delta_{+,i|0}) = S_+^{(4)} \circ (\delta_j \times \delta_{+,i|0})$ etc., a task the reader will have no difficulty carrying out with the aid of \eqref{eqn:P(g,v).j}, the inductive validity of \eqref{eqn:S+(g,v)}, and, in the case of $\delta_j \times \delta_{a,i|1}$, \eqref{eqn:P(h,T).j} or \eqref{eqn:P+(h,T).j}.

\subsubsection*{Proof of Lemma \ref{prop:17A.17.7}}

An arbitrary permutation $\theta \in \mathfrak{S}_m = \operatorname{Sym}\{1,\dotsc,m\}$ gives rise to an automorphism $\theta^*: I^m \simto I^m$, \(%
	(r_1,\dotsc,r_m) \mapsto (r_{\theta(1)},\dotsc,r_{\theta(m)})
\) of the $m$-cube. Given $\pi \in \mathfrak{S}^{n,m}$, let us write $(\varDelta^n \times \theta^*)\sigma_\pi = \sigma_{\pi^\theta}$. We have $\sgn(\pi^\theta) = \sgn(\theta)\sgn(\pi)$. We start by generalizing part of the notations introduced right before the statement of Lemma \ref{lem:P+(g,v)}.

For any subset $K \subset \{1,\dotsc,m\}$ of cardinality, say, $k$ let us define the corresponding \emph{$K$-shuffle} to be the only permutation $\theta = \theta_K$ that is increasing with image $K$ on $\{1,\dotsc,k\}$ as well as increasing on the complement of $\{1,\dotsc,k\}$. Let us set \(%
	\chi^K = \chi^{K,m} = (I^m \xto{\theta^*} I^m \simeq I^k \times I^{m-k})
\) and write $\chi^K_a = \chi^{K,m}_a$ for the isomorphisms \(%
	I^m_{\theta(a)} \simto I^k_a \times I^{m-k}
\), $1 \leq a \leq k$, resp., \(%
	I^m_{\theta(a)} \simto I^k \times I^{m-k}_{a-k}
\), $k < a \leq m$, characterized by the following relations expressed in terms of the embedding \eqref{eqn:iota}: \(%
	(\iota^k_a \times I^{m-k}) \circ \chi^K_a = \chi^{K\cup\{m+1\}} \circ \iota^m_{\theta(a)}
\), resp., \(%
	(I^k \times \iota^{m-k}_{a-k}) \circ \chi^K_a = \chi^K \circ \iota^m_{\theta(a)}
\). Of course, for $K = \{1,\dotsc,k\}$ we have $\theta = \id$ and so $\chi^K = \chi$, $\chi^K_{\theta(a)} = \chi_a$.

\begin{rmks}\label{lem:17A.17.4*} Let $K \subset \{1,\dotsc,m\}$ be any subset of cardinality $k$, with corresponding $K$-shuffle permutation $\theta = \theta_K \in \mathfrak{S}_m$. There are canonical bijections%
\begin{subequations}
\begin{align} &
	\mathfrak{S}^{n,k} \times \mathfrak{S}^{n+k,m-k} \longsimto \mathfrak{S}^{n,m}, \quad (\mu,\nu) \mapsto \mu^K\nu \\ &
	\mathfrak{S}^{n,k}_a \times \mathfrak{S}^{n+k+1,m-k} \longsimto \mathfrak{S}^{n,m}_{\theta(a)}, \quad (\mu,\nu) \mapsto \mu^K_a\nu
		& & 1 \leq a \leq k & &\\ &
	\mathfrak{S}^{n,k} \times \mathfrak{S}^{n+k,m-k}_{a-k} \longsimto \mathfrak{S}^{n,m}_{\theta(a)}, \quad (\mu,\nu) \mapsto \mu^K_a\nu
		& & k < a \leq m & &
\end{align}
\end{subequations}
characterized by the following relations, respectively.%
\begin{subequations}
\begin{align} &
	(\varDelta^n \times \chi^K) \circ \sigma_{\mu^K\nu} = (\sigma_\mu \times I^{m-k}) \circ \sigma_\nu
\label{eqn:24-02-17a}\\ &
	(\varDelta^n \times \chi^K_a) \circ \sigma_{\theta(a),\mu^K_a\nu} = (\sigma_{a,\mu} \times I^{m-k}) \circ \sigma_\nu
		& & 1 \leq a \leq k & &
\label{eqn:24-02-17b}\\ &
	(\varDelta^n \times \chi^K_a) \circ \sigma_{\theta(a),\mu^K_a\nu} = (\sigma_\mu \times I^{m-k}_{a-k}) \circ \sigma_{a-k,\nu}
		& & k < a \leq m & &
\label{eqn:24-02-17c}
\end{align}
\end{subequations}
Abbreviating $\mu^K\nu$ into $\mu\nu$ for $K = \{1,\dotsc,k\}$, we have \(%
	\sgn(\mu\nu) = \sgn(\mu)\sgn(\nu)
\) \cite[p.~35, eq.~(31)]{2018a}. For $K$ arbitrary, let us write $\theta_+ = \theta_{K\cup\{m+1\}} \in \mathfrak{S}_{m+1}$. As permutations in $\mathfrak{S}_{m+1}$, \(%
	(k + 1\:\mathellipsis\:m + 1)\theta_+ = \theta \times \id
\) and hence \(%
	\sgn(\theta_+) = (-1)^{m-k}\sgn(\theta)
\). Whenever $a \leq k$, since
\begin{multline*}
 (\varDelta^n \times \chi) \circ \sigma_{[\mu^K_a\nu]^{\theta_+}}
	= (\varDelta^n \times \chi\theta_+^*) \circ \sigma_{\mu^K_a\nu}
	= (\varDelta^n \times \chi^{K\cup\{m+1\}}\iota^m_{\theta(a)}) \circ \sigma_{\theta(a),\mu^K_a\nu} = {}\\%
	  (\varDelta^n \times [\iota^k_a \times I^{m-k}] \circ \chi^K_a) \circ \sigma_{\theta(a),\mu^K_a\nu}
	= ([\varDelta^n \times \iota^k_a] \circ \sigma_{a,\mu} \times I^{m-k}) \circ \sigma_\nu = {}\\%
	  (\sigma_\mu \times I^{m-k}) \circ \sigma_\nu
	= (\varDelta^n \times \chi) \circ \sigma_{\mu\nu},
\end{multline*}
we have $[\mu^K_a\nu]^{\theta_+} = \mu\nu$ and so \(%
	\sgn(\mu^K_a\nu) = (-1)^{m-k}\sgn(\theta)\sgn(\mu)\sgn(\nu)
\). Whenever $a > k$, we have \(%
	\sgn(\mu^K\nu) = \sgn(\theta)\sgn(\mu)\sgn(\nu)\), \(%
	\sgn(\mu^K_a\nu) = \sgn(\theta)\sgn(\mu)\sgn(\nu)
\). \end{rmks}

We are now done with preliminaries and ready to start our actual proof. Given $1 \leq a < m$, let us consider $\chi^{\{a\}}_1: I^{m-1}_a \simto I^1_1 \times I^{m-2}$, $\chi^{\{a\}}: I^{m-1} \simto I^1 \times I^{m-2}$, where now $\theta = (a\:\mathellipsis\:1) \in \mathfrak{S}_{m-1}$ is a cycle of length $a$ having $\sgn(\theta) = (-1)^{a-1}$. It follows from Remarks \ref{lem:17A.17.4*}, Lemma \ref{prop:17A.7.2}, and again Remarks \ref{lem:17A.17.4*} successively that, if we write $T'$ for the only map $\varDelta^n \times I^1_1 \times I^{m-2} \to W$ such that $T' \circ (\varDelta^n \times \chi^{\{a\}}_1) = T$,
\begin{alignat*}{2}
 \sum_{k=0}^{n+m} (-1)^k\Bigl\{\sum_{\pi\in\mathfrak{S}^{n,m-1}_a} \sgn(\pi)T\sigma_{a,\pi}\Bigr\}\delta_k
	&= (-1)^{m-a-1}\sum_{\mu\in\mathfrak{S}^{n,1}_1} \sgn(\mu) \cdot {}\\ & & &\llap{$\displaystyle%
		\sum_{k=0}^{n+m} (-1)^k\Bigl\{\sum_{\nu\in\mathfrak{S}^{n+2,m-2}} \sgn(\nu)T' \circ (\sigma_{1,\mu} \times I^{m-2})\sigma_\nu\Bigr\}\delta_k$}
\\	&\hskip-15em%
	 = (-1)^{m-a-1}\sum_{\mu\in\mathfrak{S}^{n,1}_1} \sgn(\mu)\Biggl[\sum_{j=0}^{n+2} (-1)^j\sum_{\nu\in\mathfrak{S}^{n+1,m-2}} \sgn(\nu)T' \circ (\sigma_{1,\mu}\delta_j \times I^{m-2})\sigma_\nu + {}\\ & & &\llap{$\displaystyle%
		(-1)^n\sum_{i=1}^{m-2} (-1)^i\sum_{\nu\in\mathfrak{S}^{n+2,m-3}} \sgn(\nu)\{T' \circ (\sigma_{1,\mu} \times \delta_{i|0}) - T' \circ (\sigma_{1,\mu} \times \delta_{i|1})\}\sigma_\nu\Biggr]$}
\\	&\hskip-15em%
	 = \sum_{j=0}^{n+2} (-1)^{m-a-1}\sum_{\mu\in\mathfrak{S}^{n,1}_1} (-1)^j\sgn(\mu)\sum_{\nu\in\mathfrak{S}^{n+1,m-2}} \sgn(\nu)T' \circ (\sigma_{1,\mu}\delta_j \times I^{m-2})\sigma_\nu \\* &\hskip-15em\justify%
		+ (-1)^n\sum_{\substack{i\neq a\\ i=1}}^{m-1} (-1)^i\sum_{\pi\in\mathfrak{S}^{n,m-2}_{\upsilon_i(a)}} \sgn(\pi)\{T \circ (\varDelta^n \times \delta_{a,i|0}) - T \circ (\varDelta^n \times \delta_{a,i|1})\}\sigma_{\upsilon_i(a),\pi}
\tag{*}
\end{alignat*}
on account of \eqref{eqn:24-02-17b} and of the two obvious equalities \(%
	(I^1_1 \times \delta_{i|r}) \circ \chi^{\{\upsilon_i(a)\}}_1 = \chi^{\{a\}}_1 \circ \delta_{a,\delta_a(i)|r}\) and \(%
	(-1)^{m-a-1}(-1)^i = (-1)^{\delta_a(i)}(-1)^{m-\upsilon_i(a)}
\), valid for all $1 \leq i \leq m - 2$.

We now observe what follows. First, the identity $\sigma_{1,\mu}\delta_j = (\delta_{j'} \times I^1_1) \circ \sigma_{1,\mu'}$ sets up a bijection between all pairs $(j',\mu') \in [n] \times \mathfrak{S}^{n-1,1}_1$ and all those pairs $(j,\mu) \in [n + 2] \times \mathfrak{S}^{n,1}_1$ for which neither $\mu^{-1}(n + 1)$ nor $\mu^{-1}(n + 2)$ belong to $\{j,\left.j + 1\right.\}$; we evidently have $(-1)^j\sgn(\mu) = (-1)^{j'}\sgn(\mu')$. Second, the identity $\sigma_{1,\mu}\delta_j = \sigma_{1,\mu'}\delta_{j'}$ can only hold for $(j,\mu) \neq (j',\mu') \in [n + 2] \times \mathfrak{S}^{n,1}_1$ iff both $0 < j = j' < n + 2$ and exactly one of $\mu^{-1}(n + 1)$, $\mu^{-1}(n + 2)$ belongs to $\{j,\left.j + 1\right.\}$; in this case $\mu$ and $\mu' = \mu \: (j\:j + 1)$ differ by a transposition and $(-1)^j\sgn(\mu) = -(-1)^{j'}\sgn(\mu')$. The remaining possibilities for $(j,\mu)$ are:
\begin{enumerate}
\def\labelenumi{\textcircled{\alph{enumi}}}
 \item $j = 0$ and exactly one of $\mu^{-1}(n + 1)$, $\mu^{-1}(n + 2)$ belongs to $\{j,\left.j + 1\right.\}$, equivalently, $\mu^{-1}(n + 1) = 1$. In this case the identity $\sigma_{1,\mu}\delta_0 = (\varDelta^n \times \delta_{1,1|1}) \circ \sigma_{\mu'}$ gives a bijection between all $\mu' \in \mathfrak{S}^{n,1}$ and all $\mu$, and $(-1)^j\sgn(\mu) = (-1)^n\sgn(\mu')$.
 \item $\mu^{-1}(n + 1) = j$ and $\mu^{-1}(n + 2) = j + 1$. In this case $\sigma_{1,\mu}\delta_{n+2} = (\varDelta^n \times \delta_{1,\top}) \circ \sigma_{\mu'}$ gives a bijection between all $\mu' \in \mathfrak{S}^{n,1}$ and all $\mu$, and $(-1)^j\sgn(\mu) = -(-1)^n\sgn(\mu')$.
 \item $j = n + 2$ and exactly one of $\mu^{-1}(n + 1)$, $\mu^{-1}(n + 2)$ belongs to $\{j,\left.j + 1\right.\}$, equivalently, $\mu^{-1}(n + 2) = n + 2$. In this case $\sigma_{1,\mu}\delta_{n+2} = (\varDelta^n \times \delta_{1,\bot}) \circ \sigma_{\mu'}$ gives a bijection between all $\mu' \in \mathfrak{S}^{n,1}$ and all $\mu$, and $(-1)^j\sgn(\mu) = (-1)^n\sgn(\mu')$.
\end{enumerate}

On account of these observations, we can continue our computation (*) thus:%
\begin{alignat*}{2} &
	= \sum_{j=0}^n (-1)^{m-a-1}\sum_{\mu\in\mathfrak{S}^{n-1,1}_1} (-1)^j\sgn(\mu)\sum_{\nu\in\mathfrak{S}^{n+1,m-2}} \sgn(\nu)T' \circ ([\delta_j \times I^1_1]\sigma_{1,\mu} \times I^{m-2})\sigma_\nu \\* &\justify
		+ (-1)^{n+m}(-1)^{a-1}\sum_{\mu\in\mathfrak{S}^{n,1}} \sgn(\mu)\sum_{\nu\in\mathfrak{S}^{n+1,m-2}} \sgn(\nu)\bigl\{%
			T' \circ ([\varDelta^n \times \delta_{1,1|1}]\sigma_\mu \times I^{m-2}) - {}\\ & & &\llap{$%
			T' \circ ([\varDelta^n \times \delta_{1,\top}]\sigma_\mu \times I^{m-2}) +
			T' \circ ([\varDelta^n \times \delta_{1,\bot}]\sigma_\mu \times I^{m-2})
		\bigr\}\sigma_\nu$} \\* &\justify
		+ (-1)^n\sum_{\substack{i\neq a\\ i=1}}^{m-1} (-1)^i\sum_{\pi\in\mathfrak{S}^{n,m-2}_{\upsilon_i(a)}} \sgn(\pi)\{T \circ (\varDelta^n \times \delta_{a,i|0}) - T \circ (\varDelta^n \times \delta_{a,i|1})\}\sigma_{\upsilon_i(a),\pi}
\\ &
	= \sum_{j=0}^n (-1)^j\sum_{\pi\in\mathfrak{S}^{n-1,m-1}_a} \sgn(\pi)T \circ (\delta_j \times I^{m-1}_+)\sigma_{a,\pi} \\* &\justify
		- (-1)^{n+m}\sum_{\pi\in\mathfrak{S}^{n,m-1}} \sgn(\pi)\{T \circ (\varDelta^n \times \delta_{a,a|1}) - T \circ (\varDelta^n \times \delta_{a,\top}) + T \circ (\varDelta^n \times \delta_{a,\bot})\}\sigma_\pi \\* &\justify
		+ (-1)^n\sum_{\substack{i\neq a\\ i=1}}^{m-1} (-1)^i\sum_{\pi\in\mathfrak{S}^{n,m-2}_{\upsilon_i(a)}} \sgn(\pi)\{T \circ (\varDelta^n \times \delta_{a,i|0}) - T \circ (\varDelta^n \times \delta_{a,i|1})\}\sigma_{\upsilon_i(a),\pi}
\label{eqn:17A.17.10}
\end{alignat*}
in view of \eqref{eqn:24-02-17b}, \eqref{eqn:24-02-17a}, and the identities \(%
	(\delta_{1,1|1} \times I^{m-2}) \circ \chi^{\{a\}} = \chi^{\{a\}}_1 \circ \delta_{a,a|1}
\) etc., and again in virtue of Remarks \ref{lem:17A.17.4*}. Our formula \eqref{eqn:17A.17.11} results from summing up all of these expressions as $a$ runs from $1$ to $m - 1$.

\subsubsection*{Proofs of Lemmas \ref{lem:17A.20.3} and \ref{lem:17A.20.2}}

Let $h$, $T$ be as in the statement of Lemma \ref{lem:17A.20.3}. In virtue of our Remarks \ref{lem:17A.17.4*} as applied to $K = \{1,\dotsc,i - 1\}$, we have%
\begin{alignat*}{2}
&\sum_{\pi\in\mathfrak{S}^{n,m-2}_a} \sgn(\pi)P(h,T) \circ (\varDelta^n \times \chi_a\delta_{a,m-1|1})\sigma_{a,\pi}
	= (-1)^{m-2-i+1}\sum_{\mu\in\mathfrak{S}^{n,i-1}_a} \sgn(\mu) \cdot {}\\ & & &\llap{$\displaystyle%
		\sum_{\nu\in\mathfrak{S}^{n+i,m-i-1}} \sgn(\nu)P(h,T) \circ (\varDelta^n \times [I^{i-1}_a \times \delta_{m-i|1}] \circ \chi^K_a)\sigma_{a,\mu^K_a\nu}\phantom.$}
\\	&\hskip+2em%
	= (-1)^{m-i-1}\sum_{\nu\in\mathfrak{S}^{n+i,m-i-1}} \sgn(\nu)\sum_{\mu\in\mathfrak{S}^{n,i-1}_a} \sgn(\mu)P(h,T) \circ (\sigma_{a,\mu} \times \delta_{m-i|1})\sigma_\nu
\\	&\hskip+2em%
	= (-1)^{m-i-1}\sum_{\nu\in\mathfrak{S}^{n+i,m-i-1}} \sgn(\nu)P\bigl(h,\left.\textstyle\sum_{\mu\in\mathfrak{S}^{n,i-1}_a} \sgn(\mu)T\sigma_{a,\mu}\right.\bigr) \circ (\varDelta^{n+i} \times \delta_{m-i|1})\sigma_\nu.
\end{alignat*}

Let $h$, $T$ be now as in the statement of Lemma \ref{lem:17A.20.2}. For $k < a < m$, again in virtue of Remarks \ref{lem:17A.17.4*} but now applied to $K = \{1,\dotsc,k\}$, we have%
\begin{alignat*}{2}
&\sum_{\pi\in\mathfrak{S}^{n,m-1}_a} \sgn(\pi)P_+(h,T) \circ (\varDelta^n \times \chi_a\delta_{a,m|1})\sigma_{a,\pi}
	= \sum_{\mu\in\mathfrak{S}^{n,k}} \sgn(\mu)\sum_{\nu\in\mathfrak{S}^{n+k,m-1-k}_{a-k}} \sgn(\nu) \cdot {}\\ & & &\llap{$%
		P_+(h,T) \circ (\varDelta^n \times [I^k \times \delta_{a-k,m-k|1}] \circ \chi^K_a)\sigma_{a,\mu^K_a\nu}\phantom.$}
\\	&\hskip+2em%
	= \sum_{\nu\in\mathfrak{S}^{n+k,m-k-1}_{a-k}} \sgn(\nu)\sum_{\mu\in\mathfrak{S}^{n,k}} \sgn(\mu)P_+(h,T) \circ (\sigma_\mu \times \delta_{a-k,m-k|1})\sigma_{a-k,\nu}
\\	&\hskip+2em%
	= \sum_{\nu\in\mathfrak{S}^{n+k,m-k-1}_{a-k}} \sgn(\nu)P_+\bigl(h,\left.\textstyle\sum_{\mu\in\mathfrak{S}^{n,k}} \sgn(\mu)T\sigma_\mu\right.\bigr) \circ (\varDelta^{n+k} \times \delta_{a-k,m-k|1})\sigma_{a-k,\nu}.
\end{alignat*}
The proof for $a = m$ is, if anything, easier.

\subsection{The canonical interpolation cleavage}\label{sub:canonical}

In order to define our canonical $\varPi^{n,m}_+ \xto\subset \varDelta^n \times I^m_+$-cleavage and thereby our canonical solution to the lifting problem \eqref{eqn:P+(g,v)}, we need one more auxiliary simplicial map in addition to the imbedding $\iota_a: I^m_a \to I^{m+1}$ given by \eqref{eqn:iota}:
\begin{multline}
 \eta_a = \eta^m_a: I^{m+1} \longto I^m_a, \quad (r_0,\dotsc,r_m) \longmapsto {}\\
	\bigl(r_0,\dotsc,r_{a-2},\max\{r_{a-1},r_m\},r_a,\dotsc,r_{m-1};\left.r_m + \textstyle\sum_{i=1}^{a-1} r_{i-1}\right.\bigr).
\end{multline}
In the notations of Definition \ref{defn:interpolation}, writing $\partial_\top I^m_a = \im(\delta_{a,\top})$, $\partial_\bot I^m_a = \im(\delta_{a,\bot})$, we obtain a regular anodyne extension $\varPi^{n,m}_a \xto\subset \varDelta^n \times I^m_a$ upon setting%
\begin{subequations}
\begin{gather}
\textstyle
	\varPi^m_a = \bigl(\bigcup\limits_{i=1,\dotsc,\widehat{a},\dotsc,m} \partial_{i|0}I^m_a\bigr) \cup \bigl(\bigcup\limits_{1\leq i<m} \partial_{i|1}I^m_a\bigr) \cup \partial_\bot I^m_a \cup%
\begin{cases}
		\emptyset           &\text{for $a < m$}\\
		\partial_\top I^m_a &\text{for $a = m$,}
\end{cases}
\\	\varPi^{n,m}_a = \partial\varDelta^n \times I^m_a \cup \varDelta^n \times \varPi^m_a.
\end{gather}
\end{subequations}
It will be convenient at this point to make a slight expansion of the notations which we laid down at the beginning of section \ref{sub:splitting}: for every $i = 1$,~$\dotsc$,~$m$, let us set
\begin{gather*}
\textstyle
	\varPi^m_{i|1} = \bigl(\bigcup\limits_{1\leq i'\leq m} \partial_{i'|0}I^m\bigr) \cup \bigl(\bigcup_{\substack{1\leq i'\leq m\\ i'\neq i}} \partial_{i'|1}I^m\bigr) \\
	\varPi^{n,m}_{i|1} = \partial\varDelta^n \times I^m \cup \varDelta^n \times \varPi^m_{i|1}
\end{gather*}
(in particular, $\varPi^{n,m}_{m|1} = \varPi^{n,m}$). Clearly $\varPi^{n,m}_{i|1} \xto\subset \varDelta^n \times I^m$ is a regular anodyne extension for every $i$. Let us put $A = \varDelta^n \times I^{m-1}$, $B = \partial(\varDelta^n \times I^{m-1})$ and define $c^{n,m}_{i|1}$ to be the $\varPi^{n,m}_{i|1} \xto\subset \varDelta^n \times I^m$-cleavage that corresponds to $c_\sqcup^{A,B}$ under the identification
\begin{equation*}
	A \times I \simto \varDelta^n \times I^m, \quad
		(j,r_1,\dotsc,r_{m-1};r) \mapsto (j,r_1,\dotsc,r_{i-1},r,r_i,\dotsc,r_{m-1}).
\end{equation*}
Given $h: \varDelta^n \times I^m_a \to G$, $w: \varPi^{n,m}_a \to W$ compatible in the sense that $qw = h \mathbin| \varPi^{n,m}_a$, let us write $c^{n,m}_a(h,w)$ for the only map $\varDelta^n \times I^m_a \to W$ satisfying the following two conditions.%
\begin{subequations}
\begin{gather}
	c^{n,m}_a(h,w) \mathbin| \varPi^{n,m}_a = w \\
	c^{n,m}_a(h,w) = c^{n,m+1}_{m|1}\bigl(h \circ (\id \times \eta_a),\left.c^{n,m}_a(h,w) \circ (\id \times \eta_a) \mathbin| \varPi^{n,m+1}_{m|1}\right.\bigr) \circ (\id \times \iota_a)
\end{gather}
\end{subequations}
Such a map exists and is unique (normality of the cleavage being required in the case $a = m$). The correspondence $(h,w) \mapsto c^{n,m}_a(h,w)$ is a $\varPi^{n,m}_a \xto\subset \varDelta^n \times I^m_a$-cleavage. (See Figure \ref{fig:cleavage+}.)%
\begin{figure}
\begin{tikzpicture}[scale=0.5]
 \path[fill=lightgray] (0,0)--(5,0)--(5,5)--(0,0);
 \draw[->,thick] (0,0)--(5,0); \draw[->,thick] (0,0)--(5,5);
 \draw[->,thick] (0,0)--(0,5); \draw[dotted] (0,5)--(5,5);
 \draw[->,dashed,lightgray,ultra thick] (0,-1)--(5,-1);
 \node[left] at (0,0) {$00$}; \node[right] at (5,0) {$10$};
 \node[above left] at (0,5) {$11$}; \node[above right] at (5,5) {$11$};
 \path[fill=gray] (10,-1)--(15,-1)--(17,2)--(17,7)--(15,4)--(10,-1);
 \path[fill=lightgray] (10,-1)--(15,4)--(17,7)--(12,2)--(10,-1);
 \path[fill=lightgray] (10,-1)--(10,4)--(12,7)--(12,2)--(10,-1);
 \draw[dashed,thick] (10,-1)--(17,2) (12,2)--(17,2); \draw[thick] (15,-1)--(15,4) (15,-1)--(17,7) (17,2)--(17,7);
 \draw[->,thick] (10,-1)--(15,-1); \draw[->,thick] (10,-1)--(12,2); \draw[->,thick] (15,-1)--(17,2);
 \draw[->,thick] (10,-1)--(15,4); \draw[->,thick] (15,4)--(17,7); \draw[->,thick] (10,-1)--(10,4);
 \draw[->,thick] (10,4)--(12,7); \draw[dotted] (10,4)--(15,4) (12,7)--(17,7);
 \draw[->,dashed,lightgray,ultra thick] (17,-1)--(19,2);
 \node[below left] at (10,-1) {$000$}; \node[below right] at (15,-1) {$100$};
 \node[left] at (12,2) {$010$}; \node[right] at (17,2) {$110$};
 \node[left] at (10,4) {$101$}; \node[right] at (15,4) {$101$};
 \node[left] at (12,7) {$111$}; \node[right] at (17,7) {$111$};
 \path[fill=lightgray] (21,-1)--(26,4)--(28,12)--(23,7)--(21,-1);
 \path[fill=gray] (26,4)--(28,7)--(28,12)--(26,4); \draw[->,thick] (21,-1)--(23,7);
 \path[fill=gray] (21,-1)--(26,4)--(26,9)--(21,4)--(21,-1);
 \draw[->,thick] (21,-1)--(26,4); \draw[->,thick] (26,4)--(28,7); \draw[dashed,thick] (23,2)--(28,7);
 \draw[->,thick] (26,4)--(28,12); \draw[->,dashed,thick] (21,-1)--(23,7); \draw[thick] (21,4)--(26,9) (28,7)--(28,12);
 \draw[->,thick] (21,-1)--(21,4); \draw[->,thick] (26,4)--(26,9); \draw[thick] (21,-1)--(26,9);
 \draw[->,dashed,thick] (21,-1)--(23,2); \draw[dashed,thick] (21,-1)--(28,7) (23,2)--(23,7);
 \draw[dotted] (21,4)--(23,7) (26,9)--(28,12);
 \draw[->,dashed,lightgray,ultra thick] (26,+1)--(28,4);
 \node[below left] at (21,-1) {$000$}; \node[right] at (26,4) {$101$};
 \node[left] at (23,2) {$010$}; \node[right] at (28,7) {$111$};
 \node[above left] at (21,4) {$011$}; \node[above right] at (26,9) {$112$};
 \node[above left] at (23,7) {$011$}; \node[above right] at (28,12) {$112$};
\end{tikzpicture}
\caption{Canonical interpolation cleavages $c^{0,1}_1$, $c^{0,2}_1$, $c^{0,2}_2$}
\label{fig:cleavage+}
\end{figure}

The newly defined cleavages $c^{n,m}_a$ enjoy a couple of useful properties. Let $w: \varDelta^n \times I^m_a \to W$ be an arbitrary map sitting over $h = qw$. We claim that
\begin{equation}
 c^{n,m}_a(h,\left.w \mathbin| \varPi^{n,m}_a\right.)
	= c^{n,m+1}_{m|1}\bigl(h \circ (\id \times \eta_a),\left.w \circ (\id \times \eta_a) \mathbin| \varPi^{n,m+1}_{m|1}\right.\bigr) \circ (\id \times \iota_a).
\label{eqn:Claim*}
\end{equation}
For $a < m$, this is a consequence of the intuitive fact that, if $w'$,~$w'': \varDelta^n \times I^{m+1} \to W$ both cover the same $h' = h'': \varDelta^n \times I^{m+1} \to G$ and satisfy \(%
	(w' - w'') \circ (\id \times \iota_a) \mathbin| \varPi^{n,m}_a = 0
\), then \[%
 c^{m+1}_{m|1}(h',\left.w' \mathbin| \varPi^{n,m+1}_{m|1}\right.) \circ (\id \times \iota_a)
	= c^{m+1}_{m|1}(h'',\left.w'' \mathbin| \varPi^{n,m+1}_{m|1}\right.) \circ (\id \times \iota_a).
\] For $a = m$, it holds by definition. We further claim that, for $a = m$ or $w = c^{n,m}_a(h,\left.w \mathbin| \varPi^{n,m}_a\right.)$,
\begin{equation}
	c^{n,m+1}_{m|1}\bigl(h \circ (\id \times \eta_a),\left.w \circ (\id \times \eta_a) \mathbin| \varPi^{n,m+1}_{m|1}\right.\bigr) = w \circ (\id \times \eta_a).
\label{eqn:16A.25.3}
\end{equation}

\begin{defn}\label{defn:cleavage+} Let $h: \varDelta^n \times I^m_+ \to G$, $w: \varPi^{n,m}_+ \to W$ be compatible in the sense that $qw = h \mathbin| \varPi^{n,m}_+$. There is a unique map $c^{n,m}_+(h,w): \varDelta^n \times I^m_+ \to W$ with the property that for every $a = 1$,~$\dotsc$,~$m$%
\begin{equation}
	c^{n,m}_+(h,w) \mathbin| \varDelta^n \times I^m_a = c^{n,m}_a\bigl(h \mathbin| \varDelta^n \times I^m_a,\left.c^{n,m}_+(h,w) \mathbin| \varPi^{n,m}_a\right.\bigr).
\label{eqn:cleavage+}
\end{equation}
The operator $c^{n,m}_+$ arising in this way is easily recognized to be a $\varPi^{n,m}_+ \xto\subset \varDelta^n \times I^m_+$-cleavage. We refer to it as the ($n,m$-th) \emph{canonical interpolation cleavage}. \end{defn}

\subsubsection*{Proof of Lemma \ref{lem:16A.7.1+}}

The argument is substantially the same we exploited to prove Lemma \ref{lem:16A.7.1}. We start by recording the following generalization of equation \eqref{eqn:16A.24.3*}.

\begin{lem}\label{lem:16A.24.3} Let\/ $w: \varDelta^n \times I^m \to W$ satisfy the condition\/ $w = c^{n,m}_{i|1}(qw,\left.w \mathbin| \varPi^{n,m}_{i|1}\right.)$ for a given\/ $i = 1$,~$\dotsc$,~$m$. Then, for every\/ $j = 0$,~$\dotsc$,~$n$,
\begin{equation}
	c^{n+1,m}_{i|1}\bigl(qw \circ (\upsilon_j \times \id),\left.w \circ (\upsilon_j \times \id) \mathbin| \varPi^{n+1,m}_{i|1}\right.\bigr) = w \circ (\upsilon_j \times \id).
\label{eqn:16A.24.3}
\end{equation} \end{lem}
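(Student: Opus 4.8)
The plan is to deduce this directly from Lemma~\ref{lem:16A.24.1}, applied to the normal cleavage of $W \xto{q} G$, by unwinding the definition of $c^{n,m}_{i|1}$. Recall that $c^{n,m}_{i|1}$ is by construction the image of the canonical cleavage $c_\sqcup^{A,B}$ associated with the pair $A = \varDelta^n \times I^{m-1} \supset B = \partial(\varDelta^n \times I^{m-1})$ under the identification $A \times I \simeq \varDelta^n \times I^m$ that inserts the interval factor as the $i$-th cube coordinate. Inserting a coordinate turns the facets of $I^{m-1}$ into the facets of $I^m$ in all directions $\neq i$, so under this identification the subcomplex $A \times 0 \cup B \times I$ of $A \times I$ is carried onto $\varPi^{n,m}_{i|1}$; consequently the hypothesis $w = c^{n,m}_{i|1}(h,\left.w \mathbin| \varPi^{n,m}_{i|1}\right.)$ of our lemma translates verbatim into the hypothesis $w = c_\sqcup^{A,B}(qw,\left.w \mathbin| A \times 0 \cup B \times I\right.)$ required by Lemma~\ref{lem:16A.24.1}.

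I would then invoke Lemma~\ref{lem:16A.24.1} with $A' = \varDelta^{n+1} \times I^{m-1}$, $B' = \partial(\varDelta^{n+1} \times I^{m-1})$, and $\kappa = \upsilon_j \times \id: A' \to A$. The single hypothesis of that lemma which is not purely formal is the containment $\kappa^{-1}(B) \subseteq B'$. Since $\kappa^{-1}(B) = \upsilon_j^{-1}(\partial\varDelta^n) \times I^{m-1} \cup \varDelta^{n+1} \times \partial I^{m-1}$, this reduces to the combinatorial fact $\upsilon_j^{-1}(\partial\varDelta^n) \subseteq \partial\varDelta^{n+1}$, which holds because $\upsilon_j: [n+1] \to [n]$ is onto: if $\theta: [k] \to [n+1]$ were surjective, then so would be $\upsilon_j\theta$, so any simplex $\theta$ of $\varDelta^{n+1}$ lying over $\partial\varDelta^n$ already lies in $\partial\varDelta^{n+1}$.

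Finally I would read off the conclusion. Transporting $c_\sqcup^{A',B'}$ back to $c^{n+1,m}_{i|1}$ by the same recipe with $n+1$ in place of $n$, and observing that under the two identifications the map $\kappa \times \id: A' \times I \to A \times I$ becomes precisely $\upsilon_j \times \id: \varDelta^{n+1} \times I^m \to \varDelta^n \times I^m$, while $A' \times 0 \cup B' \times I$ becomes $\varPi^{n+1,m}_{i|1}$, the conclusion \eqref{eqn:16A.24.1} of Lemma~\ref{lem:16A.24.1} becomes word for word the identity \eqref{eqn:16A.24.3} to be proved. The whole argument is thus bookkeeping with the identifying isomorphisms together with the single combinatorial observation $\upsilon_j^{-1}(\partial\varDelta^n) \subseteq \partial\varDelta^{n+1}$; I expect the only mildly delicate point to be making the two translations of boundary subcomplexes fully explicit, and no genuine obstacle there.
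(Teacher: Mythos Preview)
Your proposal is correct and follows exactly the approach the paper takes: the paper's proof is the single sentence ``This is an immediate corollary of Lemma~\ref{lem:16A.24.1} and the definitions,'' and you have simply spelled out those definitions and the verification of the hypothesis $\kappa^{-1}(B)\subseteq B'$ in detail.
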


\begin{proof} This is an immediate corollary of Lemma \ref{lem:16A.24.1} and our definitions. \end{proof}

Equation \eqref{eqn:16A.7.1+} may be established by induction on the recursive definition of the maps $P_+(g,v)$ with the aid of the following counterpart of equation \eqref{eqn:16A.24.3*}.

\begin{lem}\label{lem:16A.6.1+} Let\/ $w: \varDelta^n \times I^m_+ \to W$ be a simplicial map satisfying\/ $w = c^{n,m}_+(qw,\left.w \mathbin| \varPi^{n,m}_+\right.)$. For every\/ $j = 0$,~$\dotsc$,~$n$,
\begin{equation}
	c^{n+1,m}_+\bigl(qw \circ (\upsilon_j \times \id),\left.w \circ (\upsilon_j \times \id) \mathbin| \varPi^{n+1,m}_+\right.\bigr) = w \circ (\upsilon_j \times \id).
\label{eqn:16A.6.1+}
\end{equation} \end{lem}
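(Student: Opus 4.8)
The plan is to reduce the assertion, by two successive invocations of the uniqueness clauses built into the recursive definitions of the canonical interpolation cleavages, down to Lemma \ref{lem:16A.24.3}. Write $\upsilon$ for $\upsilon_j \times \id$, and set $w' = w \circ \upsilon$ and $h' = q \circ w' = h \circ \upsilon$; the goal is to prove that $w'$ coincides with $c^{n+1,m}_+(h',\left.w' \mathbin| \varPi^{n+1,m}_+\right.)$.

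First I would use the defining property \eqref{eqn:cleavage+}, which characterizes $c^{n+1,m}_+(h',\,\cdot\,)$ as the unique map whose restriction to $\varDelta^{n+1} \times I^m_a$ equals $c^{n+1,m}_a$ of the evident data, simultaneously for all $a = 1$,~$\dotsc$,~$m$. It therefore suffices to verify, for each such $a$, that $w' \mathbin| \varDelta^{n+1} \times I^m_a$ equals $c^{n+1,m}_a\bigl(h' \mathbin| \varDelta^{n+1} \times I^m_a,\left.w' \mathbin| \varPi^{n+1,m}_a\right.\bigr)$; in passing one records the routine inclusion $\varPi^{n+1,m}_+ \cap (\varDelta^{n+1} \times I^m_a) \subseteq \varPi^{n+1,m}_a$ needed for the restrictions to patch. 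Since $\upsilon$ acts only on the simplex factor, $w' \mathbin| \varDelta^{n+1} \times I^m_a = w_a \circ \upsilon$ with $w_a = w \mathbin| \varDelta^n \times I^m_a$, and the hypothesis $w = c^{n,m}_+(h,\,\cdot\,)$ gives $w_a = c^{n,m}_a(h_a,\left.w_a \mathbin| \varPi^{n,m}_a\right.)$, $h_a = q \circ w_a$. Thus the whole statement reduces to the local claim: if $w_a = c^{n,m}_a(h_a,\,\cdot\,)$, then $w_a \circ \upsilon = c^{n+1,m}_a(h_a \circ \upsilon,\,\cdot\,)$.

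For the local claim I would again appeal to uniqueness, now in the definition of $c^{n+1,m}_a$: the first of its two defining equations holds trivially for $w_a \circ \upsilon$, so only the second, self-referential one needs checking, namely that $w_a \circ \upsilon$ equals $c^{n+1,m+1}_{m|1}\bigl((h_a \circ \upsilon) \circ (\id \times \eta_a),\left.(w_a \circ \upsilon) \circ (\id \times \eta_a) \mathbin| \varPi^{n+1,m+1}_{m|1}\right.\bigr) \circ (\id \times \iota_a)$. Set $\tilde w = w_a \circ (\id \times \eta_a): \varDelta^n \times I^{m+1} \to W$. Since $w_a = c^{n,m}_a(h_a,\,\cdot\,)$ (or since $a = m$, invoking normality of the cleavage at that one spot), the identity \eqref{eqn:16A.25.3} shows $\tilde w = c^{n,m+1}_{m|1}(q \circ \tilde w,\left.\tilde w \mathbin| \varPi^{n,m+1}_{m|1}\right.)$, so Lemma \ref{lem:16A.24.3}, applied with $(n,m,i,j)$ replaced by $(n,m+1,m,j)$, gives $c^{n+1,m+1}_{m|1}\bigl((q \circ \tilde w) \circ \upsilon,\left.\tilde w \circ \upsilon \mathbin| \varPi^{n+1,m+1}_{m|1}\right.\bigr) = \tilde w \circ \upsilon$. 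Because $\upsilon = \upsilon_j \times \id$ and $\id \times \eta_a$ act on complementary tensor factors, they commute; hence $\tilde w \circ \upsilon = (w_a \circ \upsilon) \circ (\id \times \eta_a)$, and similarly on bases. Post-composing with $\id \times \iota_a$ and using $\eta_a \circ \iota_a = \id_{I^m_a}$ then collapses the right-hand side back to $w_a \circ \upsilon$, which is exactly the required relation.

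The main obstacle I expect is organizational rather than conceptual: one has to keep the two layers of uniqueness (for $c^{n+1,m}_+$, then for each $c^{n+1,m}_a$) and the various sub-simplicial-sets $\varPi^{n+1,m}_a$, $\varPi^{n+1,m+1}_{m|1}$ carefully apart, and track which of the cubical maps $\eta_a$, $\iota_a$ lives on which tensor factor so that commutation with $\upsilon_j \times \id$ is legitimate. The single genuinely interpolation-specific ingredient is the identity $\eta_a \circ \iota_a = \id_{I^m_a}$, whose elementary verification uses exactly the inequalities $\sum_{i<a} r_i \leq r_+ \leq \sum_{i\leq a} r_i$ cutting $I^m_a$ out of $\nerve([1]^m \times [m])$; everything else is formal juggling of maps between products of simplicial sets. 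Note also that, in contrast with the way this lemma is subsequently used inside the proof of Lemma \ref{lem:16A.7.1+}, no induction on the recursive construction of the maps $P_+(g,v)$ is needed here: the present statement concerns only the cleavage operator $c^{n,m}_+$, and is to the canonical interpolation cleavage what equation \eqref{eqn:16A.24.3*} is to the ordinary cubical cleavage $c^{n,m}$ in the proof of Lemma \ref{lem:16A.7.1}.
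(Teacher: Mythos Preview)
Your proof is correct and follows essentially the same route as the paper's: both reduce the question, via the recursive description \eqref{eqn:cleavage+} of $c^{n+1,m}_+$ in terms of the pieces $c^{n+1,m}_a$, to the identity \eqref{eqn:Claim*}, then to \eqref{eqn:16A.25.3}, and finally to Lemma~\ref{lem:16A.24.3}, using $\eta_a\iota_a = \id_{I^m_a}$ to collapse back. The only organizational difference is that the paper phrases the outer reduction as an explicit induction on $a = 1,\dotsc,m$ (needed because $\varPi^{n+1,m}_a$ for $a>1$ contains $\partial_\bot I^m_a = \partial_\top I^m_{a-1}$, which lies in $I^m_{a-1}$ rather than in $\varPi^m_+$), whereas you absorb that induction into the uniqueness clause of \eqref{eqn:cleavage+}; the underlying argument is the same.
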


\begin{proof} Proceeding by induction on $a = 1$,~$\dotsc$,~$m$, it is immediate to verify that \eqref{eqn:16A.6.1+} must be valid upon restriction to $\varPi^{n+1,m}_+ \cup \varDelta^{n+1} \times I^m_1 \cup \dotsb \cup \varDelta^{n+1} \times I^m_a$:%
\begin{alignat*}{2} &
 c^{n+1,m}_+\bigl(qw \circ (\upsilon_j \times \id),\left.w \circ (\upsilon_j \times \id) \mathbin| \varPi^{n+1,m}_+\right.\bigr) \mathbin| \varDelta^{n+1} \times I^m_a \\ &
	= c^{n+1,m}_a\bigl(qw \circ (\upsilon_j \times \id) \mathbin| \varDelta^{n+1} \times I^m_a,\left.c^{n+1,m}_+(\mathellipsis) \mathbin| \varPi^{n+1,m}_a\right.\bigr)
			&\quad	&\text{by \eqref{eqn:cleavage+}} \\ &
	= c^{n+1,m}_a\bigl(qw \circ (\upsilon_j \times \id) \mathbin| \varDelta^{n+1} \times I^m_a,\left.w \circ (\upsilon_j \times \id) \mathbin| \varPi^{n+1,m}_a\right.\bigr)
			&\quad	&\text{by induction} \\ &
	= c^{n+1,m+1}_{m|1}\bigl(qw \circ (\upsilon_j \times \eta_a),\left.w \circ (\upsilon_j \times \eta_a) \mathbin| \varPi^{n+1,m+1}_{m|1}\right.\bigr) \circ (\id \times \iota_a)
			&\quad	&\text{by \eqref{eqn:Claim*}} \\ &
	= w \circ (\id \times \eta_a)(\upsilon_j \times \id)(\id \times \iota_a)
			&\quad	&\text{by \eqref{eqn:16A.25.3} plus \eqref{eqn:16A.24.3}} \\ &
	= w \circ (\upsilon_j \times \id) \mathbin| \varDelta^{n+1} \times I^m_a.
\tag*{\qedhere}
\end{alignat*} \end{proof}

\subsubsection*{Proof of Lemma \ref{cor:16A.10.3+}}

We need a suitable analog of Lemma \ref{lem:16A.10.1}. To begin with, for any given $i = 0$,~$\dotsc$,~$m$, we seek a map $I^{m+1}_+ \to I^m_+$ whose composition with $I^m_+ \xto{\alpha_+} \varDelta^m$ equals $I^{m+1}_+ \xto{\alpha_+} \varDelta^{m+1} \xto{\upsilon_i} \varDelta^m$; an appropriate such turns out to be provided by what we call the \emph{$i$-th interpolation degeneracy map}:%
\begin{multline}
\label{eqn:epsilon+}
	\varepsilon_{+,i} = \varepsilon^m_{+,i}: I^{m+1}_+ \longto I^m_+, \quad (r_0,r_1,\dotsc,r_m;r_+) \longmapsto {}\\%
\begin{cases}
		(r_1,\dotsc,r_m;\left.r_+ - \min\{r_0,r_+\}\right.)
			&\text{for $i = 0$}\\
		(r_0,\dotsc,r_{i-2},\max\{r_{i-1},r_i\},\rlap{$r_{i+1},\dotsc,r_m;r_+)$} \\
			&\text{for $i > 0$ and $r_+ \leq \sum\limits_{a=1}^i r_{a-1}$} \\
		(r_0,\dotsc,r_{i-2},\max\{r_{i-1},r_i\},\rlap{$r_{i+1},\dotsc,r_m;\left.r_+ - \min\{r_{i-1},r_i\}\right.)$} \\
			&\text{for $i > 0$ and $r_+ \geq \sum\limits_{a=1}^{i+1} r_{a-1}$.}
\end{cases} \quad%
\end{multline}
By way of example, Figure \ref{fig:epsilon+} illustrates the effect of $\varepsilon^0_{+,0}$, $\varepsilon^1_{+,0}$, and $\varepsilon^1_{+,1}$.%
\begin{figure}
\begin{tikzpicture}[scale=0.5]
 \draw[dotted] (0,1)--(5,1) (0,1)--(5,6) (5,1)--(5,6);
 \draw[fill] (0,1) circle [radius=.05] (5,1) circle [radius=.05] (5,6) circle [radius=.05];
 \node[left] at (0,1) {$0$}; \node[right] at (5,1) {$0$}; \node[above right] at (5,6) {$0$};
 \path[fill=gray] (10,0)--(12,3)--(12,8)--(10,0); \draw[thick] (12,3)--(12,8);
 \path[fill=lightgray] (10,0)--(17,8)--(17,13)--(10,0); \draw[dashed,thick] (12,3)--(12,8);
 \draw (10,0)--(17,13) (10,0)--(17,8); \draw[dashed] (12,3)--(17,13);
 \path[fill=gray] (15,5)--(17,8)--(17,13)--(15,5); \draw[dashed] (10,0)--(17,8);
 \draw[thick] (17,8)--(17,13); \draw (10,0)--(17,3) (15,0)--(17,8);
 \draw[->,thick] (15,0)--(17,3); \draw[->,thick] (15,5)--(17,8); \draw[->,thick] (15,5)--(17,13);
 \draw[->,dashed,thick] (10,0)--(12,3); \draw[->,thick] (10,0)--(12,8);
 \draw[dotted] (10,0)--(15,0) (12,3)--(17,3) (15,0)--(15,5) (17,3)--(17,8);
 \draw[dotted] (10,0)--(15,5) (12,3)--(17,8) (12,8)--(17,13);
 \node[below left] at (10,0) {$00$}; \node[below right] at (15,0) {$00$}; \node[right] at (15,5) {$00$};
 \node[left] at (12,3) {$10$}; \node[right] at (17,3) {$10$}; \node[right] at (17,8) {$10$};
 \node[above left] at (12,8) {$11$}; \node[above right] at (17,13) {$11$};
 \path[fill=gray] (20,0)--(22,3)--(22,8)--(20,0); \draw[thick] (22,3)--(22,8);
 \path[fill=lightgray] (20,0)--(22,3)--(27,13)--(20,0); \draw[dashed,thick] (22,3)--(22,8);
 \path[fill=lightgray] (20,0)--(22,3)--(27,8)--(20,0); \draw (20,0)--(27,13) (22,3)--(27,13);
 \path[fill=lightgray] (20,0)--(27,3)--(27,8)--(20,0); \draw (20,0)--(27,3) (27,3)--(27,8);
 \path[fill=lightgray] (20,0)--(25,0)--(27,8)--(20,0); \draw (20,0)--(27,8) (22,3)--(27,8);
 \path[fill=gray] (20,0)--(25,0)--(25,5)--(20,0); \draw[dashed] (20,0)--(27,3); \draw (25,0)--(27,8);
 \draw[dotted] (22,3)--(27,3) (25,0)--(27,3) (25,5)--(27,8) (25,5)--(27,13) (27,8)--(27,13);
 \draw[->,thick] (20,0)--(22,3); \draw[->,thick] (20,0)--(22,8); \draw[dotted] (22,8)--(27,13);
 \draw[->,thick] (20,0)--(25,0); \draw[->,thick] (20,0)--(25,5); \draw[thick] (25,0)--(25,5);
 \node[below left] at (20,0) {$00$}; \node[below right] at (25,0) {$10$}; \node[right] at (25,5) {$11$};
 \node[left] at (22,3) {$10$}; \node[right] at (27,3) {$10$}; \node[right] at (27,8) {$11$};
 \node[above left] at (22,8) {$11$}; \node[above right] at (27,13) {$11$};
\end{tikzpicture}
\caption{Interpolation degeneracy maps $\varepsilon^m_{+,i}: I^{m+1}_+ \to I^m_+$, $m = 0$,~$1$}
\label{fig:epsilon+}
\end{figure}

\begin{lem}\label{lem:16A.16.1} Let\/ $w: \varDelta^n \times I^m_+ \to W$ be any simplicial map such that\/ $w = c^{n,m}_+(qw,\left.w \mathbin| \varPi^{n,m}_+\right.)$. The following identity holds for every\/ $i = 1$,~$\dotsc$,~$m$.
\begin{equation}
	c^{n,m+1}_+\bigl(qw \circ (\id \times \varepsilon_{+,i}),\left.w \circ (\id \times \varepsilon_{+,i}) \mathbin| \varPi^{n,m+1}_+\right.\bigr) = w \circ (\id \times \varepsilon_{+,i})
\label{eqn:16A.16.1}
\end{equation}
If in addition\/ $w\delta_\bot = c^{n,m}(qw\delta_\bot,\left.w\delta_\bot \mathbin| \varPi^{n,m}\right.)$, the same identity holds for\/ $i = 0$ as well. \end{lem}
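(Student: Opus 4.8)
The plan is to establish this in the same way equation \eqref{eqn:16A.9.1} is established for the standard cleavages, but performed block by block over the building blocks $I^{m+1}_a$ that enter the recursive definition \eqref{eqn:cleavage+} of the canonical interpolation cleavage $c^{n,m+1}_+$; the proof will then run in close parallel to that of Lemma \ref{lem:16A.6.1+}, and the statement is precisely the interpolation analog of \eqref{eqn:16A.9.1} that later enters the proof of Lemma \ref{cor:16A.10.3+} just as \eqref{eqn:16A.9.1} enters that of Lemma \ref{lem:16A.10.1}. Fix $i$ and set $h' = h \circ (\id \times \varepsilon_{+,i})$, $w' = w \circ (\id \times \varepsilon_{+,i})$. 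I would argue by induction on $a = 1$,~$\dotsc$,~$m + 1$ that $c^{n,m+1}_+\bigl(h',\left.w' \mathbin| \varPi^{n,m+1}_+\right.\bigr)$ coincides with $w'$ after restriction to $\varPi^{n,m+1}_+ \cup \varDelta^n \times I^{m+1}_1 \cup \dotsb \cup \varDelta^n \times I^{m+1}_a$; the base case is immediate, both sides restricting on $\varPi^{n,m+1}_+$ to $w' \mathbin| \varPi^{n,m+1}_+$ by the defining property of a cleavage.

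Before the inductive step I would work out the combinatorics of $\varepsilon_{+,i}$ on each block. Its three-clause formula yields, for each $a$, an index $b = b(a,i)$ with $\varepsilon_{+,i}(I^{m+1}_a) \subset I^m_b$, together with a commuting identity relating $\varepsilon_{+,i}$, the inclusions $\iota_\bullet$ of \eqref{eqn:iota}, the maps $\eta_\bullet$ of subsection \ref{sub:canonical}, and an ordinary cubical degeneracy $\varepsilon_j$, under which $\varepsilon_{+,i} \mathbin| I^{m+1}_a$ is transported into $\varepsilon_j$. The one exception is a single \emph{transitional} block --- $a = 1$ if $i = 0$, $a = i + 1$ if $i \geq 1$ --- on which $\varepsilon_{+,i}$ collapses the extra interpolation direction onto a face of $I^m_+$ and is, after the same transport, an iterate of a product projection $\pr$; which clause of $\varepsilon_{+,i}$ governs $I^{m+1}_a$ is dictated by whether $a$ is below, equal to, or above $b(a,i)$. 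One also checks, as in the proof that $c^{n,m}_+$ is a cleavage, that $\varPi^{n,m+1}_a$ is covered by $\varPi^{n,m+1}_+$ and the earlier blocks $I^{m+1}_1$,~$\dotsc$,~$I^{m+1}_{a-1}$, so the inductive hypothesis is available.

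The inductive step then proceeds exactly as in the proof of Lemma \ref{lem:16A.6.1+}: on $\varDelta^n \times I^{m+1}_a$, unfold the left side of \eqref{eqn:16A.16.1} via \eqref{eqn:cleavage+}; use the inductive hypothesis to replace the boundary datum on $\varPi^{n,m+1}_a$ by $w' \mathbin| \varPi^{n,m+1}_a$; rewrite $c^{n,m+1}_a$ through $c^{n,m+2}_{m+1|1}$ and the maps $\id \times \iota_a$, $\id \times \eta_a$ by \eqref{eqn:Claim*} (with the case $a = m + 1$ using the $a = m$ branch); use the commuting identity above to turn $\varepsilon_{+,i}$, precomposed with $\id \times \eta_a$, into a cubical degeneracy precomposed with $\id \times \eta_a$ --- or, on the transitional block, into an iterate of $\id \times \pr$; and invoke \eqref{eqn:16A.9.1} together with \eqref{eqn:16A.25.3}, i.e.\ Lemmas \ref{lem:16A.24.1} and \ref{lem:16A.24.2}, to collapse $c^{n,m+2}_{m+1|1}(\dotsb)$ back to $w \circ (\dotsb)$. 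Composing with $\id \times \iota_a$ and applying \eqref{eqn:16A.25.3} once more recovers $w' \mathbin| \varDelta^n \times I^{m+1}_a$, closing the induction.

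I expect the main obstacle to lie in the transitional block. For $i \geq 1$ it is $I^{m+1}_{i+1}$, where $\varepsilon_{+,i}$ collapses one interpolation direction --- but the collapse is still an iterate of a product projection out of a complex carrying the canonical interpolation structure, so Lemma \ref{lem:16A.24.2} applies with no further hypothesis, whence the unconditional conclusion for $i \geq 1$. For $i = 0$ the transitional block is $I^{m+1}_1$, which $\varepsilon_{+,0}$ collapses entirely onto $\partial_\bot I^m_+ \simeq I^m$, over which $w$ is given by $w\delta_\bot$; to apply Lemma \ref{lem:16A.24.2} there one needs $w\delta_\bot$ to already be of the canonical cubical form $c^{n,m}(h\delta_\bot,\left.w\delta_\bot \mathbin| \varPi^{n,m}\right.)$, which is exactly the supplementary hypothesis and is used only at this block. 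What remains is purely combinatorial bookkeeping --- the case-by-case identities for $\varepsilon_{+,i} \mathbin| I^{m+1}_a$ and the covering of $\varPi^{n,m+1}_a$ by earlier pieces; once those are settled the rest is a mechanical chase through \eqref{eqn:cleavage+}, \eqref{eqn:Claim*}, \eqref{eqn:16A.25.3} and \eqref{eqn:16A.9.1}, verbatim as in Lemma \ref{lem:16A.6.1+}.
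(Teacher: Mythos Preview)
Your approach is essentially the paper's: induct on $a$, unfold via \eqref{eqn:cleavage+}, feed in the inductive hypothesis, pass through \eqref{eqn:Claim*} to land in $c^{n,m+2}_{m+1|1}$, use a combinatorial identity for $\varepsilon_{+,i}\eta_a$, and then collapse via normality. You have also correctly located where the extra hypothesis for $i=0$ enters, namely at the block $a=1$.

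There are two points where your anticipated combinatorics diverge from what actually happens. First, the paper uses not \eqref{eqn:16A.9.1} itself but its mild generalization \eqref{eqn:16A.24.4+}, valid for the cleavages $c^{n,m+1}_{a|1}$ with arbitrary $a$ rather than just $a=m$; this follows from Lemmas \ref{lem:16A.24.1} and \ref{lem:16A.24.2} exactly as you say, but you will need the general form. Second, and more substantively, your description of the transitional block for $i\geq 1$ is off. The identity the paper finds there is
\[
	\varepsilon_{+,i}\eta_{i+1} = \eta_i\,(i\;m{+}1)^*\,\varepsilon_i\,(i{+}1\;m{+}2)^*,
\]
so after transport one sees a single cubical degeneracy $\varepsilon_i$ sandwiched between two coordinate permutations and an $\eta_i$, \emph{not} an iterate of a projection. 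Lemma \ref{lem:16A.24.2} alone does not dispatch this case; one needs in addition the permutation-equivariance
\[
	c^{n,m}_{i|1}\bigl(h(\id\times\theta^*),\, w(\id\times\theta^*)\!\mid\!\varPi^{n,m}_{i|1}\bigr)
		= c^{n,m}_{\theta^{-1}(i)|1}(h,\, w\!\mid\!\varPi^{n,m}_{\theta^{-1}(i)|1})\circ(\id\times\theta^*)
\]
of the cleavages $c^{n,m}_{i|1}$ (the paper's \eqref{eqn:theta*}), together with \eqref{eqn:16A.25.3} and \eqref{eqn:16A.24.4+}. For $i=0$ the analogous identity is $\varepsilon_{+,0}\eta_1 = \delta_\bot\varepsilon_0\varepsilon_0\,(m{+}2\,\cdots\,2)^*$, which does involve a factor $\delta_\bot$ and iterated degeneracies as you expected, and is where the canonical form of $w\delta_\bot$ is used. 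Once you substitute these three identities for your stated ones, the rest of your chase goes through verbatim.
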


\begin{proof} Let $a = 1$,~$\dotsc$,~$m + 1$, $i = 0$,~$\dotsc$,~$m$. In consequence of Lemmas \ref{lem:16A.24.1} and \ref{lem:16A.24.2}, the following generalization of equation \eqref{eqn:16A.9.1} involving the degeneracy maps \eqref{eqn:epsilon} of section \ref{sub:splitting} must hold for every map $w': \varDelta^n \times I^m \to W$ such that either $a = 1$, $i = 0$ or $w' = c^{n,m}_{\upsilon_i(a)|1}(qw',\left.w' \mathbin| \varPi^{n,m}_{\upsilon_i(a)|1}\right.)$.%
\begin{equation}
	c^{n,m+1}_{a|1}\bigl(qw' \circ (\id \times \varepsilon_i),\left.w' \circ (\id \times \varepsilon_i) \mathbin| \varPi^{n,m+1}_{a|1}\right.\bigr) = w' \circ (\id \times \varepsilon_i)
\label{eqn:16A.24.4+}
\end{equation}
As in the proof of Lemma \ref{lem:16A.6.1+}, we argue by induction on $a = 1$,~$\dotsc$,~$m + 1$. To begin with,
\begin{alignat*}{2} &
 c^{n,m+1}_+\bigl(qw \circ (\id \times \varepsilon_{+,i}),\left.w \circ (\id \times \varepsilon_{+,i}) \mathbin| \varPi^{n,m+1}_+\right.\bigr) \mathbin| \varDelta^n \times I^{m+1}_a \\ &
	= c^{n,m+1}_a\bigl(qw \circ (\id \times \varepsilon_{+,i}) \mathbin| \varDelta^n \times I^{m+1}_a,\left.c^{n,m+1}_+(\mathellipsis) \mathbin| \varPi^{n,m+1}_a\right.\bigr)
			&\quad	&\text{by \eqref{eqn:cleavage+}} \\ &
	= c^{n,m+1}_a\bigl(qw \circ (\id \times \varepsilon_{+,i}) \mathbin| \varDelta^n \times I^{m+1}_a,\left.w \circ (\id \times \varepsilon_{+,i}) \mathbin| \varPi^{n,m+1}_a\right.\bigr)
			&\quad	&\text{by induction} \\ &
	= c^{n,m+2}_{m+1|1}\bigl(qw \circ (\id \times \varepsilon_{+,i}\eta_a),\left.w \circ (\id \times \varepsilon_{+,i}\eta_a) \mathbin| \varPi^{n,m+2}_{m+1|1}\right.\bigr) \circ (\id \times \iota_a)
			&\quad	&\text{by \eqref{eqn:Claim*}.}
\tag{*}
\end{alignat*}
We observe that \(%
	\varepsilon_{+,i}\eta_a = \eta_{\upsilon_i(a)}\varepsilon_i
\) for $a \neq i + 1$, that \(%
	\varepsilon_{+,i}\eta_{i+1} = \eta_i \: (i\:m + 1)^*\varepsilon_i \: (i + 1\:m + 2)^*
\) for $i \geq 1$ (we are making use of notations we introduced while proving Lemma \ref{prop:17A.17.7} in section \ref{sub:auxiliary}), and that \(%
	\varepsilon_{+,0}\eta_1 = \delta_\bot\varepsilon_0\varepsilon_0 \: (m + 2\:\mathellipsis\:2)^*
\). By how $c^{n,m}_{i|1}$ was defined, Lemma \ref{lem:16A.24.1} implies that
\begin{equation}
 c^{n,m}_{i|1}\bigl(qw \circ (\id \times \theta^*),\left.w \circ (\id \times \theta^*) \mathbin| \varPi^{n,m}_{i|1}\right.\bigr)
	= c^{n,m}_{\theta^{-1}(i)|1}(qw,\left.w \mathbin| \varPi^{n,m}_{\theta^{-1}(i)|1}\right.) \circ (\id \times \theta^*)
\label{eqn:theta*}
\end{equation}
for all $\theta \in \operatorname{Sym}\{1,\dotsc,m\}$. Upon combining our hypotheses with \eqref{eqn:16A.25.3} and \eqref{eqn:16A.24.4+}, we conclude that
\begin{align*}
 \text{(*)}
	&=%
\begin{cases}
	w \circ (\id \times \eta_{\upsilon_i(a)}\varepsilon_i) \circ (\id \times \iota_a)
		&\text{if $a \neq i + 1$} \\
	w \circ \bigl(\id \times \eta_i \: (i\:m + 1)^*\varepsilon_i \: (i + 1\:m + 2)^*\bigr) \circ (\id \times \iota_{i+1})
		&\text{if $a = i + 1 \geq 2$} \\
	w \circ \bigl(\id \times \delta_\bot\varepsilon_0\varepsilon_0 \: (m + 2\:\mathellipsis\:2)^*\bigr) \circ (\id \times \iota_1)
		&\text{if $a = i + 1 = 1$}
\end{cases}
\\	&= w \circ (\id \times \varepsilon_{+,i}\eta_a) \circ (\id \times \iota_a)
\\	&= w \circ (\id \times \varepsilon_{+,i}) \mathbin| \varDelta^n \times I^{m+1}_a.
\tag*{\qedhere}
\end{align*} \end{proof}

\begin{lem}\label{lem:16A.10.1+} The identity below is valid for all\/ $g \in G_m$, $v \in V^{-n}_{sg}$ for every\/ $i = 0$,~$\dotsc$,~$m$.
\begin{equation}
	P_+(u_ig,v) = P_+(g,v) \circ (\id \times \varepsilon_{+,i}): \varDelta^n \times I^{m+1}_+ \longto W
\label{eqn:16A.10.1+}
\end{equation} \end{lem}

\begin{proof} It is not hard to establish \eqref{eqn:16A.10.1+} with the help of \eqref{eqn:16A.16.1} by induction on the recursive definition of the maps $P_+(g,v)$. \end{proof}

With the present lemma at disposal, it is straightforward to check that $\spl{\phi}$ does indeed satisfy the vanishing conditions \eqref{eqn:16A.10.3+}.

\subsection{Further applications}\label{sub:final}

Among the chief contributions of \cite{2018a,2022a} there is the elaboration of a functorial correspondence $\Rep^\infty_+(G) \to \VB^\infty(G)$ from unital representations up to homotopy of $G$ vanishing in positive degrees to vector fibrations over $G$. We conclude our paper with a synopsis of the key ideas involved in the process of upgrading this correspondence to a categorical equivalence between the associated derived or homotopy categories. Our splitting construction for morphisms and, specifically, our interpolation construction will prove instrumental in the argument sketched below to the effect that the derived functor must be a categorical equivalence. Familiarity with the cited references is of course desirable, yet we have tried to present our ideas in such a way that they may be grasped even by someone who has not read those works.

\paragraph{The semidirect product functor.} Let $N$ be a nonnegative integer or $\infty$. Let $\VB^N(G)$ be the full subcategory of $\VB^\infty(G)$ comprising all $N + 1$-strict vector fibrations; for the notion of $N + 1$-strictness, see the beginning of section \ref{sub:fibrations}. As in section \ref{sub:intertwiners}, let $\Rep^N_+(G)$ be the category of all $N + 1$-term unital representation up to homotopy of $G$ and their intertwiners. The \emph{semidirect product} construction of \cite[§3.1]{2018a}, whose defining formulas we are now going to reproduce for the reader's convenience, transforms each object $E = (E,R)$ of $\Rep^N_+(G)$ into one of $\VB^N(G)$. Let “$\nu: [l] \into_0 [n]$” be short for “$\nu$ is a poset injection of $[l]$ into $[n]$ sending $0 \mapsto 0$”. Given $\nu: [l] \into_0 [n]$, let $x_\nu: G_n \to G_0$ be the smooth map assigned by $G$ to $[0] \to [n]$, $0 \mapsto \nu(l)$. For each $n$ we obtain a smooth vector bundle $\sdp{E}_n \to G_n$ upon setting%
\begin{subequations}
\label{eqn:sdp}
\begin{equation}
	\sdp{E}_n = \bigoplus_{\nu:[l]\into_0[n]} {x_\nu}^*E^{-l} \in \VB(G_n)
\label{eqn:sdp1}
\end{equation}
(the direct sum, here, is supposed to run over all possible $\nu: [l] \into_0 [n]$, $0 \leq l \leq n$); each vector in $\sdp{E}_n$ can be written uniquely as the sum of $2^n$ ones of the form $(g,e,\nu)$, $g \in G_n$, $e \in E^{-l}_{x_\nu g}$. Viewing the projection onto the $\nu$-th summand as a morphism $\pr_\nu: \sdp{E}_n \to E^{-l}$ in $\VB$ covering $G_n \xto{x_\nu} G_0$, for every poset map $\theta: [m] \to [n]$ sending $0 \mapsto 0$ the prescription%
\begin{equation}
	\pr_\mu\sdp{E}_\theta(g,e,\nu) =%
\begin{cases}
		e &\text{if $k = l$ and $\theta\mu = \nu$}
	\\	0 &\text{otherwise}
\end{cases} \text{\qquad $\forall\mu: [k] \into_0 [m]$}
\label{eqn:sdp2}
\end{equation}
defines a morphism $\sdp{E}_\theta: \sdp{E}_n \to \sdp{E}_m$ in $\VB$ covering $G_\theta: G_n \to G_m$; on taking $\theta = \delta_i$, we obtain the face morphism $d_i: \sdp{E}_n \to \sdp{E}_{n-1}$, $1 \leq i \leq n$; on taking $\theta = \upsilon_j$, we obtain the degeneracy morphism $u_j: \sdp{E}_n \to \sdp{E}_{n+1}$, $0 \leq j \leq n$. The remaining face morphisms, $d_0: \sdp{E}_n \to \sdp{E}_{n-1}$, $n \geq 1$, stand quite apart: they involve the tensors $R_m$ and are given by our next formula, where we write $\mu^+$ for the poset map $[k + 1] \into_0 [n]$ obtained from $\mu: [k] \into_0 [n - 1]$ upon setting $\mu^+(j + 1) = \mu(j) + 1$.%
\begin{equation}
	\pr_\mu d_0(g,e,\nu) =%
\begin{cases}
		(-1)^{i-1}e                           &\text{if $k = l$ and $\mu^+\delta_i = \nu$, $\exists i \leq k$}
	\\	(-1)^kR_{k+1-l}(t_{k+1-l}G_{\mu^+}g)e &\text{if $k \geq l - 1$ and $\mu^+ \mathbin| [l] = \nu$}
	\\	0                                     &\text{otherwise}
\end{cases}
\label{eqn:sdp3}
\end{equation}
\end{subequations}

It is not hard to show (under no hypotheses on $G$ other than its being a Lie $\infty$-groupoid) that $\sdp{E} \to G$ given by \eqref{eqn:sdp} is a simplicial vector bundle \cite[§3.2]{2018a}, in fact, an $N + 1$-strict vector fibration \cite[Prop.~3.4]{2018a}, for every $E \in \Rep^N_+(G)$. This vector fibration comes with a \emph{canonical} cleavage, hereafter denoted by $\sdp{c} = \{\sdp{c}_{n,k}\}_{n>k\geq 0}$, which is both normal and coherent (Definitions \ref{defn:16A.3.1} and \ref{defn:coherent}) and moreover “weakly flat” in the sense of \cite[§2]{2022a}: by definition, the image $\sdp{C}_{n,k} \subset \sdp{E}_n$ of $\sdp{c}_{n,k}$ equals $\sdp{C}_n = \ker(\pr_{\smash{[n]\xto=[n]}})$ for all $k < n$. Thus $(\sdp{E},\sdp{c})$ is an object of the category whose objects are the pairs $(V,c)$ consisting of an $N + 1$-strict vector fibration $V \to G$ and a normal cleavage $c = \{c_{n,k}\}_{n>k\geq 0}$ of $V \to G$ and whose morphisms are the ones in $\VB^N(G)$ between the underlying vector fibrations of such pairs. In fact, if we denote the latter category by $\VB^N_+(G)$, the correspondence $E \mapsto (\sdp{E},\sdp{c})$ is part of a \emph{functor} $\Rep^N_+(G) \to \VB^N_+(G)$: the following formula for turning intertwiners $\varPhi: E \to F \in \Rep^N_+(G)$ into morphisms of simplicial vector bundles $\sdp{\varPhi}: \sdp{E} \to \sdp{F} \in \VB^N(G)$ appears in \cite[§6]{2022a}.
\begin{equation}
	\pr_\mu\sdp{\varPhi}_n(g,e,\nu) =%
\begin{cases}
		\varPhi_{k-l}(t_{k-l}G_\mu g)e &\text{if $k \geq l$ and $\mu \mathbin| [l] = \nu$}
	\\	0                              &\text{otherwise}
\end{cases} \text{\qquad $\forall\mu: [k] \into_0 [n]$}
\label{eqn:sdp+}
\end{equation}

The above constructions plus those of sections \ref{sub:splitting} and \ref{sub:overview} may be organized for mnemonic ease into the following scheme.
\begin{equation*}
\xymatrix@C=9em{%
 \VB^N(G)
 \save[]+<+4em,+13ex>
	*+{\VB^N_+(G)} \ar[]_{(V,c)\mapsto V}^\simeq \ar@/_.4pc/[r]_(.45){(V,c)\mapsto\spl{V}\!\quad}
	\ar@{<-}@/^.4pc/[r]^(.45){\quad\! E\mapsto(\sdp{E},\sdp{c})}
 \restore
 \ar@{<-->}[r]_-{\text{“virtual” equivalence?}}^-\simeq
 &	\Rep^N_+(G)}
\end{equation*}
The leftward descending arrow (forgetful functor) is a categorical equivalence (by Proposition \ref{prop:normal}). The rightward descending arrow (Dold–Kan “pseudofunctor”) is given by the splitting constructions of sections \ref{sub:splitting} and \ref{sub:overview}. Although the latter correspondence is almost always not functorial, the composite one, $E \mapsto (\sdp{E},\sdp{c}) \mapsto \spl{\sdp{E}} =: \spl{E}$, nevertheless invariably is, in fact, we have a strict identification of representations up to homotopy $\spl{E} \simeq E$ which we claim is \emph{natural} in $E$ and which extends the obvious identification of the underlying cochain complexes \cite[Thm.~5.1, part 1]{2018a}; accordingly, $E \mapsto (\sdp{E},\sdp{c})$ must be a categorical embedding. \em We contend that upon passing to the associated\/ \emph{derived} or\/ \emph{homotopy} categories of\/ $\Rep^N_+(G)$ and\/ $\VB^N_+(G)$ the two correspondences\/ $E \mapsto (\sdp{E},\sdp{c})$ and\/ $(V,c) \mapsto \spl{V}$ descend to a pair of mutually quasi-inverse functors. \em Before we substantiate our assertion, let us review the relevant concepts involved in it.

\paragraph{Simplicial homotopies and the derived functor.} Let $V \xto{p} G$, $W \xto{q} G$ be simplicial vector bundles over $G$. Let $\phi_0$,~$\phi_1: V \to W$ be morphisms in $[\Cat{\Delta}^\op,\VB]$ covering the identity transformation of $G$. A \emph{homotopy} $\omega: \phi_0 \hto \phi_1$ is a morphism $\omega: I \times V \to W$ in $[\Cat{\Delta}^\op,\VB]$ covering $I \times G \xto{\pr} G$ such that \[%
	V \simeq I^0 \times V \xto{\delta_{1|r}\times\id} I \times V \xto{\omega} W
\quad \text{equals}\quad
	V \xto{\phi_r} W \text{\quad for $r = 0$,~$1$,}
\] where we regard the fundamental interval $I = I^1$ as a discrete simplicial manifold and identify $I \times V$ with the pullback of $V$ along the smooth simplicial map $I \times G \xto{\pr} G$. For every $n$-simplex, $v \in \varDelta^n(V)$, in $V$ we obtain a corresponding “prism”, $\hmt{\omega}(v) \in (I \times \varDelta^n)(W)$, in $W$ by setting \[%
	\hmt{\omega}(v) = (I \times \varDelta^n \xto{\id\times v} I \times V \xto{\omega} W).
\] Since $\omega$ is a simplicial map, the correspondence $v \mapsto \hmt{\omega}(v)$ must be compatible with taking faces and degeneracies, in addition of course to satisfying $\hmt{\omega}(v) \circ (\delta_{1|r} \times \id) = \phi_rv$ for $r = 0$,~$1$:
\begin{gather*}
	\hmt{\omega}(v) \circ (\id \times \delta_i) = \hmt{\omega}(d_iv), \quad
	\hmt{\omega}(v) \circ (\id \times \upsilon_j) = \hmt{\omega}(u_jv).
\end{gather*}
The homotopy itself is completely encoded in the maps $V_n \to (I \times \varDelta^n)(W)$, $v \mapsto \hmt{\omega}(v)$, each one of which depends smoothly and linearly on $v$: for all $\epsilon \in \varDelta^n(I)$, $v \in \varDelta^n(V)$, \[%
	\omega_n(\epsilon,v) = (\varDelta^n \xto{(\epsilon,\id)} I \times \varDelta^n \xto{\hmt{\omega}(v)} W).
\] In fact, giving a homotopy $\omega$ is tantamount to giving a family of maps $v \mapsto \hmt{\omega}(v)$ with the indicated properties; apart from issues of smoothness and linearity, the equivalence between the two viewpoints is the content of \cite[p.~16, Prop.~6.2]{May67}. See also \cite{GZ67}.

Let $\varOmega: \varPhi \hto \varPsi$ be a homotopy between two intertwiners $\varPhi$,~$\varPsi: E \to F \in \Rep^N_+(G)$; see section \ref{sub:intertwiners} for the relevant definitions. It is not hard to write down a corresponding simplicial homotopy $\sdp{\varOmega}: \sdp{\varPhi} \hto \sdp{\varPsi}$ between the two morphisms $\sdp{\varPhi}$,~$\sdp{\varPsi}: \sdp{E} \to \sdp{F} \in \VB^N(G)$ associated via \eqref{eqn:sdp+} with these intertwiners: writing $\epsilon_j \in \varDelta^n(I)$ for the $n$-simplex corresponding to the poset map $[n] \to [1]$ that hits $1$ exactly $j$ times, where $j = 0$,~$\dotsc$,~$n + 1$, this is accomplished by our next formula, where $\mu: [k] \into_0 [n]$, $\nu: [l] \into_0 [n]$, and where $j_\mu \in [k + 1]$ denotes the only integer for which $\epsilon_j\mu = \epsilon_{j_\mu}$.
\begin{multline}
	\pr_\mu\sdp{\varOmega}_n(\epsilon_j;g,e,\nu) = {}\\*
\begin{cases}
		\varPhi_{k-l}(t_{k-l}G_\mu g)e + \sum\limits_{i=0}^{j_\mu-1} (-1)^i\varOmega_i(t_iG_\mu g)R_{k-l-i}(s_{k-l-i}t_{k-l}G_\mu g)e
	\\	                               &\qquad\llap{\makebox[14em][l]{if $k - j_\mu \geq l \geq 0$ and $\mu \mathbin| [l] = \nu$}}
	\\	\varPsi_{k-l}(t_{k-l}G_\mu g)e &\qquad\llap{\makebox[14em][l]{if $k \geq l > k - j_\mu$ and $\mu \mathbin| [l] = \nu$}}
	\\	0                              &\qquad\llap{\makebox[14em][l]{otherwise}}
\end{cases} \quad
\label{eqn:sdp++}
\end{multline}

The existence of the latter construction on homotopies has immediate implications. Let $\hcat{\Rep}^N_+(G)$ be the quotient of $\Rep^N_+(G)$ under the categorical congruence which identifies two intertwiners $\varPhi$,~$\varPsi: E \to F$ whenever there exists some homotopy $\varOmega: \varPhi \hto \varPsi$ between them; we refer to this as the \emph{derived category} of $\Rep^N_+(G)$ \cite[Rmk.~3.8]{AAC13}. Similarly, let $\hcat{\VB}^N_+(G)$ be the quotient of $\VB^N_+(G)$ obtained by identifying two morphisms whenever they are simplicially homotopic; we call this the \emph{homotopy category} of $\VB^N_+(G)$ \cite[p.~57, §IV.1]{GZ67}. Because of the existence of the operation $\varOmega \mapsto \sdp{\varOmega}$ given by \eqref{eqn:sdp++}, the functor $\Rep^N_+(G) \to \VB^N_+(G)$, $E \mapsto (\sdp{E},\sdp{c})$ drops down to a corresponding \emph{derived functor} $\hcat{\Rep}^N_+(G) \to \hcat{\VB}^N_+(G)$.

\paragraph{Essential surjectivity, faithfulness, and fullness.} The \emph{essential surjectivity} of the derived functor $\hcat{\Rep}^N_+(G) \to \hcat{\VB}^N_+(G)$, $E \mapsto (\sdp{E},\sdp{c})$ is precisely the content of \cite[Thm.~5.1, part 2]{2018a}; here are the keynote insights. Given $V = (V,c) \in \VB^N_+(G)$, let us write $\sdp{V} = \sdp{\spl{V}}$ for the result of first applying the Dold–Kan “pseudofunctor” $V \mapsto \spl{V}$ and then the semidirect product functor $E \mapsto \sdp{E}$ to $V$; we may think of $\sdp{V}$ as a suitable “straightening” of $V$. As explained in \cite[§5.3]{2018a}, we have a canonical morphism $\varTheta = \varTheta^V: V \to \sdp{V}$ in $\VB^N_+(G)$ whose construction involves ideas similar to those we exploited in section \ref{sub:overview} and proceeds as follows. We start by introducing certain morphisms $v \mapsto Q(v)$ of $V_n$ to $I^n_+(V)$ in $\VB$ covering $G_n \simeq \varDelta^n(G) \xto{\alpha_+^*} I^n_+(G)$ (interpolation blow-up, page \pageref{fig:alpha+}) and satisfying the boundary conditions $Q(v)\delta_\top = v\alpha$ (cubical blow-up, page \pageref{fig:alpha}), $Q(v)\delta_\bot = P(pv,sv)$, and a few recursive others such as $Q(v)\delta_{+,i|0} = Q(d_iv)$. We then take the normalized alternating sum $\spl{\vartheta}_n(v) \in \spl{V}^{-n}_{tpv}$ of the maximal simplices of $Q(v) \mathbin| \partial_{n|1}I^n_+$. We finally set $\pr_\mu\varTheta_n(v) = \spl{\vartheta}_k(v\mu)$ for all $\mu: [k] \into_0 [n]$. It turns out that $\varTheta$ admits a canonical homotopy inverse \cite[§5.4]{2018a} and, therefore, becomes an isomorphism within the homotopy category $\hcat{\VB}^N_+(G)$ of $\VB^N_+(G)$; in particular, down to the homotopy category, the functor $E \mapsto \sdp{E}$ becomes an essentially surjective one.

The reason why our derived functor is not only essentially surjective but also \emph{faithful} is that any simplicial homotopy $\omega: \phi \hto \psi$ between two morphisms $\phi$,~$\psi: V \to W$ in $\VB^N_+(G)$ gives rise canonically to a homotopy $\spl{\omega}: \spl{\phi} \hto \spl{\psi}$ between the corresponding Dold–Kan intertwiners $\spl{\phi}$,~$\spl{\psi}: \spl{V} \to \spl{W}$ in $\Rep^N_+(G)$. The idea behind the construction of $\spl{\omega}$ could hardly be less original at this point of our exposition: we replace $\varDelta^n \times I^m_+$ with $I \times \varDelta^n \times I^m_+$ and, proceeding recursively as in Lemma \ref{lem:P+(g,v)}, we define suitable simplicial maps \(%
	\hmt{P}_+(g,v): I \times \varDelta^n \times I^m_+ \to W
\) restricting on $\{0\} \times \varDelta^n \times I^m_+$ to $P^\phi_+(g,v)$ and on $\{1\} \times \varDelta^n \times I^m_+$ to $P^\psi_+(g,v)$; we then take $\spl{\omega}_m^{-n}(g)v$ to be the normalized alternating sum of the maximal simplices of the restriction of $\hmt{P}_+(g,v)$ to $I \times \varDelta^n \times \partial_{m|1}I^m_+$; the extra dimension coming from the $I$~factor accounts for $\spl{\omega}$ having degree $-1$. Because of the existence of a \emph{natural} identification $\spl{E} \simeq E$, whenever two intertwiners $\varPhi$,~$\varPsi: E \to F \in \Rep^N_+(G)$ give rise to simplicially homotopic morphisms $\sdp{\varPhi} \hto \sdp{\varPsi}$ in $\VB^N_+(G)$, they had to be already homotopic in $\Rep^N_+(G)$; the functor $E \mapsto \sdp{E}$ thus remains faithful after passing to the derived or homotopy categories.

The functor $\Rep^N_+(G) \to \VB^N_+(G)$, $E \mapsto \sdp{E}$ is \emph{full} only in exceptional cases. It nevertheless becomes full after passing to the derived or homotopy categories: although the diagram
\begin{equation*}
\xymatrix@C=4em@R=7ex{%
 V \ar[r]^\phi
 \ar[d]_(.4){\varTheta^V}
 &	W
	\ar[d]^(.4){\varTheta^W}
\\ \sdp{V} \ar[r]^{\sdp{\phi}}
 \ar@{=>}[]+<1.3em,3.5ex>;[ur]-<1.3em,2.5ex>^\omega
 &	\sdp{W}}
\end{equation*}
does not commute on the nose in $\VB^N_+(G)$, it does commute \emph{up to homotopy}; thus in a sense $\varTheta$ is “natural” (in spite of $V \mapsto \sdp{V}$ not even being a functor), and $E \mapsto \sdp{E}$ induces an honest categorical equivalence (a canonical quasi-inverse for it being provided by our Dold–Kan “pseudofunctor” $V \mapsto \spl{V}$) at the derived level. Our idea for the construction of a homotopy $\omega$ “filling” the above diagram is, as we shall see, somewhat more interesting than the idea behind the construction of $\hmt{P}_+(g,v)$ sketched in the previous paragraph.

In order to specify such an $\omega$ we must for every $v \in V_n$ specify a lift of $I \times \varDelta^n \xto{\pr} \varDelta^n \xto{g=pv} G$ to a prism $\hmt{\omega}(v): I \times \varDelta^n \to \sdp{W}$ in a way compatible with taking faces and degeneracies in the $\varDelta^n$~factor and also so that the restriction of $\hmt{\omega}(v)$ to $\{0\} \times \varDelta^n$ resp.~$\{1\} \times \varDelta^n$ be equal to $\sdp{\phi}\varTheta v$ resp.~$\varTheta\phi v$. Now, while for $n = 0$ we have $\sdp{\phi}\varTheta v = \varTheta\phi v$ and so we may take $\hmt{\omega}(v) = 1\phi v$, for $n \geq 1$ we might try building our prism recursively by using the already defined $\hmt{\omega}(d_iv)$, the $\delta_{1|0} \times \id$~face $\sdp{\phi}\varTheta v$, and the canonical cleavage of $\sdp{W}$. Unfortunately, the prism thus obtained will not work in that its $\delta_{1|1} \times \id$~face won't necessarily be equal to $\varTheta\phi v$. However, by our inductive hypothesis, all the components of that face in $\sdp{W}_n$ except perhaps the top i.e.~$\mu = \id$-th one will coincide with those of $\varTheta\phi v$. In order to obtain agreement in that component as well, the only thing we can possibly do is add something to the top component of the last maximal $n + 1$-simplex of our provisional prism. Let us call this “something” $\spl{\omega}_n(v)$; it is not clear whether we can always find it, since according to our formula for the $d_0$~face map of the semidirect product, cf.~\eqref{eqn:sdp3} above or \cite[eq.~(13a)]{2018a}, any such will in principle only contribute a $\pm\spl{S}_0(tg)\spl{\omega}_n(v)$ term to the top component of the $\delta_{1|1} \times \id$~face. We contend that the appropriate “correction” $\spl{\omega}_n(v)$ not only exists but in fact is provided canonically by the resolution of a certain recursive lifting problem which follows the same general pattern as the recursive lifting problems we encountered earlier in the present and in the former section.

\begin{defn*} Let $m \geq a \geq b \geq 1$ be integers. Let $I^m_{a,b}$ be the simplicial subset of $\nerve([1]^m \times [m]^2)$~(= the nerve of the poset $[1]^m \times [m] \times [m]$) whose $k$-simplices are all those maps $[k] \to [1]^m \times [m]^2$ ranging within the subset
\begin{multline*}
\textstyle%
	\bigl\{(r_1,\dotsc,r_m;r_+,r_-) \in [1]^m \times [m]^2: r_+ \geq r_- \text{, $\sum_{i=1}^{a-1} r_i \leq r_+ \leq \sum_{i=1}^a r_i$, and} \\ \textstyle
		\sum_{i=1}^{b-1} r_i \leq r_- \leq \sum_{i=1}^b r_i\bigr\}
\end{multline*}
that are order preserving with respect to the following partial ordering $\leq_{a,b}$ of $[1]^m \times [m]^2$:
\begin{multline*}
	(r_1,\dotsc,r_m;r_+,r_-) \leq_{a,b} (s_1,\dotsc,s_m;s_+,s_-) \text{\quad iff} \\
		(r_1,\dotsc,r_m;r_+) \leq_a (s_1,\dotsc,s_m;s_+) \text{\quad and\quad}
			(r_1,\dotsc,r_m;r_-) \leq_b (s_1,\dotsc,s_m;s_-).
\end{multline*}
We shall be interested in the simplicial subset $I^m_{++}$ of $\nerve([1]^m \times [m]^2)$ given by $\bigcup_{m\geq a\geq b\geq 1} I^m_{a,b}$ for $m \geq 1$ and by $I^0_+ = \nerve([1]^0 \times [0]^2) \simeq \varDelta^0$ for $m = 0$. \end{defn*}

Let $\alpha_{++}: I^m_{++} \to \varDelta^m$ be the simplicial map given by $(r_1,\dotsc,r_m;r_+,r_-) \mapsto \alpha(r_1,\dotsc,r_m)$; by way of example, the restriction of $\alpha_{++}$ to the boundary components $\partial_{i|r}I^m_{++}$ of $I^m_{++}$ spanned by all those vertices for which $r_i = r$ is illustrated in Figures \ref{fig:i|0++} and \ref{fig:i|1++} for $m = 2$.%
\begin{figure}
\begin{tikzpicture}[scale=0.5]
 \draw[->,thick] (0,0)--(5,0); \draw[->,thick] (0,0)--(7,3); \draw[->,thick] (0,0)--(7,8);
 \draw[dotted] (5,0)--(7,3) (5,0)--(7,8) (7,3)--(7,8);
 \node[below left] at (0,0) {$0$}; \node[below right] at (5,0) {$2$};
 \node[right] at (7,3) {$2$}; \node[above right] at (7,8) {$2$};
 \draw[->,thick] (12,0)--(17,0); \draw[->,thick] (12,0)--(19,3); \draw[->,thick] (12,0)--(19,8);
 \draw[dotted] (17,0)--(19,3) (17,0)--(19,8) (19,3)--(19,8);
 \node[below left] at (12,0) {$0$}; \node[below right] at (17,0) {$1$};
 \node[right] at (19,3) {$1$}; \node[above right] at (19,8) {$1$};
\end{tikzpicture}
\caption{Restriction of $\alpha_{++}: I^2_{++} \to \varDelta^2$ to $\partial_{i|0}I^2_{++} \simeq I^1_{++}$, $i = 1$,~$2$}
\label{fig:i|0++}
\vskip\baselineskip
\begin{tikzpicture}[scale=0.5]
 \draw[->,thick] (-2,-3)--(3,-3); \draw[->,thick] (0,0)--(5,0); \draw[->,thick] (0,0)--(7,3);
 \draw[->,thick] (0,5)--(5,5); \draw[->,thick] (0,5)--(7,8); \draw[->,thick] (0,5)--(7,13);
 \draw[dotted] (-2,-3)--(0,0) (-2,-3)--(0,5) (-2,-3)--(5,0) (-2,-3)--(5,5) (3,-3)--(5,0) (3,-3)--(5,5);
 \draw[dotted] (0,0)--(0,5) (0,0)--(5,5) (0,0)--(7,8) (5,0)--(5,5) (5,0)--(7,3) (5,0)--(7,8);
 \draw[dotted] (5,5)--(7,8) (5,5)--(7,13) (7,3)--(7,8) (7,8)--(7,13);
 \node[below left] at (-2,-3) {$1$}; \node[below right] at (3,-3) {$2$};
 \node[left] at (0,0) {$1$}; \node[right] at (5,0) {$2$}; \node[right] at (7,3) {$2$};
 \node[left] at (0,5) {$1$}; \node[right] at (5,5) {$2$}; \node[right] at (7,8) {$2$};
 \node[above right] at (7,13) {$2$};
 \draw[fill] (10,-3) circle [radius=.05] (15,-3) circle [radius=.05];
 \draw[fill] (12,0) circle [radius=.05] (17,0) circle [radius=.05] (19,3) circle [radius=.05];
 \draw[fill] (12,5) circle [radius=.05] (17,5) circle [radius=.05] (19,8) circle [radius=.05];
 \draw[fill] (19,13) circle [radius=.05];
 \draw[dotted,thick] (10,-3)--(17,0) (10,-3)--(17,5) (12,0)--(19,3) (12,0)--(19,8);
 \draw[dotted,thick] (10,-3)--(15,-3) (12,5)--(19,13);
 \draw[dotted] (10,-3)--(12,0) (10,-3)--(19,3) (10,-3)--(19,8) (15,-3)--(17,0) (15,-3)--(17,5);
 \draw[dotted] (17,0)--(17,5) (17,0)--(19,3) (17,0)--(19,8) (17,5)--(19,8) (19,3)--(19,8) (19,8)--(19,13);
 \draw[dotted] (10,-3)--(12,5) (10,-3)--(19,13) (12,0)--(12,5) (12,0)--(19,13) (17,5)--(19,13);
 \node[below left] at (10,-3) {$2$}; \node[below right] at (15,-3) {$2$};
 \node[left] at (12,0) {$2$}; \node[right] at (17,0) {$2$}; \node[right] at (19,3) {$2$};
 \node[left] at (12,5) {$2$}; \node[right] at (17,5) {$2$}; \node[right] at (19,8) {$2$};
 \node[above right] at (19,13) {$2$};
\end{tikzpicture}
\caption{Restriction of $\alpha_{++}: I^2_{++} \to \varDelta^2$ to $\partial_{i|1}I^2_{++}$, $i = 1$,~$2$}
\label{fig:i|1++}
\end{figure}
Besides the latter, there are three other boundary components, namely, the simplicial subsets $\partial_{\top\ast}I^m_{++}$, $\partial_{\ast\bot}I^m_{++}$, and $\partial_{\ast\ast}I^m_{++}$ of $I^m_{++}$ spanned by all those vertices for which, respectively, $r_+ = \sum_{i=1}^m r_i$, $r_- = 0$, and $r_+ = r_-$; each one of these components is~$\simeq I^m_+$.

Now, by a recursive procedure entirely analogous to that described in section \ref{sub:overview} and involving the given cleavage of $W$, we can define suitable simplicial maps \[%
	Q_+(v): I^n_{++} \to W \text{,\quad $v \in V_n$}
\] satisfying the lifting requirements \(%
	(I^n_{++} \xto{Q_+(v)} W \to G) = (I^n_{++} \xto{\alpha_{++}} \varDelta^n \xto{pv} G)
\) plus a bunch of more or less familiar boundary conditions, for instance, that $Q_+(v)$ restricted to $\partial_{i|0}I^n_{++} \simeq I^{n-1}_{++}$ be equal to $Q_+(d_iv)$, along with the following three, where as we discussed previously \(%
	Q^V: V_n \to I^n_+(V)\), \(%
	Q^W: W_n \to I^n_+(W)
\) are the $\VB$-morphisms entering in the construction of $\varTheta^V$, $\varTheta^W$, respectively.
\begin{gather*}
	Q_+(v) \mathbin| \partial_{\top\ast}I^n_{++} = \phi \circ Q^V(v) \\
	Q_+(v) \mathbin| \partial_{\ast\bot}I^n_{++} = P_+(pv,sv) \\
	Q_+(v) \mathbin| \partial_{\ast\ast}I^n_{++} = Q^W(\phi v)
\end{gather*}
We then define $\spl{\omega}_n(v)$ to be the normalized alternating sum of the maximal simplices of $Q_+(v) \mathbin| \partial_{n|1}I^n_{++}$; each one of these is an $n + 1$-simplex sitting over $1_{n+1}tpv$ so that, as expected, $\spl{\omega}_n(v) \in \spl{W}^{-1-n}_{tpv}$. Finally, on computing the total boundary of $\spl{\omega}_n(v)$, we obtain an equation which tells us precisely that $\spl{S}_0(tg)\spl{\omega}_n(v)$ contributes the appropriate correction to the top component of the $\delta_{1|1} \times \id$~face.

\begin{rmks*} Surprisingly enough, the simplicial complex $I^m_{++}$ can also be used to construct, for any two composable morphisms $W = W(\phi) \xfrom{\phi} V(\phi) = W(\psi) \xfrom{\psi} V(\psi) = V$ in $\VB^N_+(G)$, a canonical homotopy $\spl{\varOmega}: \spl{\phi} \circ \spl{\psi} \hto \append\spl{\phi \circ \psi}$ between the two intertwiners $\spl{\phi} \circ \spl{\psi}$,~$\append\spl{\phi \circ \psi}: \spl{V} \to \spl{W}$ in $\Rep^N_+(G)$. For that purpose, we first build certain simplicial maps $P_{++}(g,v): \varDelta^n \times I^m_{++} \to W$ recursively for all $g \in G_m$, $v \in V^{-n}_{sg}$ by imposing the boundary conditions below in addition to a few more or less obvious others. \[%
\begin{split}
	P_{++}(g,v) \mathbin| \varDelta^n \times \partial_{\top\ast}I^m_{++}&= \phi \circ P^\psi_+(g,v) \\
	P_{++}(g,v) \mathbin| \varDelta^n \times \partial_{\ast\bot}I^m_{++}&= P^\phi_+(g,\psi v) \\
	P_{++}(g,v) \mathbin| \varDelta^n \times \partial_{\ast\ast}I^m_{++}&= P^{\phi\circ\psi}_+(g,v)
\end{split}
\] We then define $\varOmega_m^{-n}(g)v \in W^{-1-m-n}_{tg}$ to be the alternating sum of the maximal simplices of the restriction of $P_{++}(g,v)$ to $\varDelta^n \times \partial_{m|1}I^m_{++}$. The desired homotopy term $\spl{\varOmega}_m^{-n}(g)v \in \spl{W}^{-1-m-n}_{tg}$ is obtained as usual by normalizing this. This indicates that, even though the Dold–Kan construction is not a functor, it may nonetheless be possible to understand it as a suitable “infinity-functor” or as a “functor up to homotopy”. We do not intend to pursue this point of view. \end{rmks*}

{\footnotesize

}%

\begin{thebibliography}{BBFW12}

\bibitem[AAC12]{AAC12}
Camilo Arias~Abad and Marius Crainic.
\newblock Representations up to homotopy of {L}ie algebroids.
\newblock {\em J. Reine Angew. Math.}, 663:91--126, 2012.

\bibitem[AAC13]{AAC13}
Camilo Arias~Abad and Marius Crainic.
\newblock Representations up to homotopy and {B}ott's spectral sequence for
  {L}ie groupoids.
\newblock {\em Adv. Math.}, 248:416--452, 2013.

\bibitem[AACD11]{AACD11}
Camilo Arias~Abad, Marius Crainic, and Benoit Dherin.
\newblock Tensor products of representations up to homotopy.
\newblock {\em J. Homotopy Relat. Struct.}, 6(2):239--288, 2011.

\bibitem[AAS13]{AAS13}
Camilo Arias~Abad and Florian Schätz.
\newblock The {$\mathsf{A}_\infty$} de {R}ham theorem and integration of
  representations up to homotopy.
\newblock {\em Int. Math. Res. Not.}, 16:3790--3855, 2013.

\bibitem[BBFW12]{BBFW12}
John~C. Baez, Aristide Baratin, Laurent Freidel, and Derek~K. Wise.
\newblock Infinite-dimensional representations of $2$-groups.
\newblock {\em Mem. Amer. Math. Soc.}, 219(1032):vi+120, 2012.

\bibitem[BG17]{BG17}
Kai Behrend and Ezra Getzler.
\newblock Geometric higher groupoids and categories.
\newblock In {\em Geometry, analysis and probability}, number 310 in Progress
  in Mathematics, pages 1--45. Birkhäuser/Springer, Cham, 2017.

\bibitem[BH11]{BH11}
John~C. Baez and John Huerta.
\newblock An invitation to higher gauge theory.
\newblock {\em Gen. Relativity Gravitation}, 43(9):2335--2392, 2011.

\bibitem[BHS11]{BHS11}
Ronald Brown, Philip~J. Higgins, and Rafael Sivera.
\newblock {\em Nonabelian algebraic topology}.
\newblock Number~15 in EMS Tracts in Mathematics. European Mathematical
  Society, Zürich, 2011.

\bibitem[BL04]{BL04}
John~C. Baez and Aaron~D. Lauda.
\newblock Higher-dimensional algebra. {V}. $2$-groups.
\newblock {\em Theory Appl. Categ.}, 12:423--491, 2004.

\bibitem[BM05]{BM05}
Lawrence Breen and William Messing.
\newblock Differential geometry of gerbes.
\newblock {\em Adv. Math.}, 198(2):732--846, 2005.

\bibitem[Bre90]{Bre90}
Lawrence Breen.
\newblock Bitorseurs et cohomologie non abélienne.
\newblock In {\em The {G}rothendieck {F}estschrift, vol.~{I}}, number~86 in
  Progress in Mathematics, pages 401--476. Birkhäuser, Boston MA, 1990.

\bibitem[CZ23]{CZ23}
Miquel Cueca and Chenchang Zhu.
\newblock Shifted symplectic higher {L}ie groupoids and classifying spaces.
\newblock {\em Adv. Math.}, 413(108829):1--64, 2023.

\bibitem[dHT21]{2018a}
Matias del Hoyo and Giorgio Trentinaglia.
\newblock Simplicial vector bundles and representations up to homotopy.
\newblock Preprint arXiv:2109.01062v1 [math.CT], 2~September 2021.

\bibitem[dHT23]{2022a}
Matias del Hoyo and Giorgio Trentinaglia.
\newblock Higher vector bundles over {L}ie groupoids.
\newblock Preprint arXiv:2109.01062v2 [math.CT], 12~June 2023.

\bibitem[Dus02]{Dus02}
John~W. Duskin.
\newblock Simplicial matrices and the nerves of weak $n$-categories. {I}.
  {N}erves of bicategories.
\newblock {\em Theory Appl. Categ.}, 9(10):198--308, 2002.
\newblock CT2000 Conference (Como).

\bibitem[Fre14]{Fre14}
Daniel~S. Freed.
\newblock Anomalies and invertible field theories.
\newblock In {\em String-{M}ath 2013}, number~88 in Proc. Sympos. Pure Math.,
  pages 25--45. Amer. Math. Soc., Providence RI, 2014.

\bibitem[Gir71]{Gir71}
Jean Giraud.
\newblock {\em Cohomologie non abélienne}.
\newblock Number 179 in Die Grundlehren der mathematischen Wissenschaften.
  Springer-Verlag, Berlin, New York, 1971.

\bibitem[GJ99]{GJ99}
Paul~G. Goerss and John~F. Jardine.
\newblock {\em Simplicial homotopy theory}.
\newblock Number 174 in Progress in Mathematics. Birkhäuser, Basel, 1999.

\bibitem[GSM17]{GSM17}
Alfonso Gracia-Saz and Rajan~Amit Mehta.
\newblock {$\mathcal{VB}$}-groupoids and representation theory of {L}ie
  groupoids.
\newblock {\em J. Symplectic Geom.}, 15(3):741--783, 2017.

\bibitem[GZ67]{GZ67}
Peter Gabriel and Michel Zisman.
\newblock {\em Calculus of fractions and homotopy theory}.
\newblock Number~35 in Ergebnisse der Mathematik und ihrer Grenzgebiete.
  Springer-Verlag, Berlin, Heidelberg, 1967.

\bibitem[Hen08]{Hen08}
André Henriques.
\newblock Integrating {$L_\infty$}-algebras.
\newblock {\em Compos. Math.}, 144(4):1017--1045, 2008.

\bibitem[May67]{May67}
J.~Peter May.
\newblock {\em Simplicial objects in algebraic topology}.
\newblock Number~11 in Van Nostrand Mathematical Studies. D. Van Nostrand,
  London, Princeton NJ, Toronto, 1967.

\bibitem[MS74]{MS74}
John~W. Milnor and James~D. Stasheff.
\newblock {\em Characteristic classes}.
\newblock Number~76 in Annals of Mathematics Studies. Princeton University
  Press, Princeton NJ, 1974.

\bibitem[MT11]{MT11}
Rajan~Amit Mehta and Xiang Tang.
\newblock From double {L}ie groupoids to local {L}ie $2$-groupoids.
\newblock {\em Bull. Braz. Math. Soc. (N.S.)}, 42(4):651--681, 2011.

\bibitem[Sha15]{Sha15}
Eric Sharpe.
\newblock Notes on generalized global symmetries in {QFT}.
\newblock {\em Fortschr. Phys.}, 63(11-12):659--682, 2015.

\bibitem[Zhu09]{Zhu09}
Chenchang Zhu.
\newblock $n$-groupoids and stacky groupoids.
\newblock {\em Int. Math. Res. Not.}, 21:4087--4141, 2009.

\end{thebibliography}
\end{document}